\begin{document}

\title{Characterizing the limiting critical Potts measures\\ on locally regular-tree-like expander graphs}
\author{Hang Du\footnote{Department of Mathematics, Massachusetts Institute of Technology} \and Yanxin Zhou\footnote{Department of Statistics, Stanford University}}
\date{\today}

\maketitle

\begin{abstract}
    For any integers $d,q\ge 3$, we consider the $q$-state ferromagnetic Potts model with an external field on a sequence of expander graphs that converges to the $d$-regular tree $\ttT_d$ in the Benjamini-Schramm sense. We show that along the critical line, any subsequential local weak limit of the Potts measures is a mixture of the free and wired Potts Gibbs measures on $\ttT_d$. Furthermore, we show the possibility of an arbitrary extent of strong phase coexistence: for any $\alpha\in [0,1]$, there exists a sequence of locally $\ttT_d$-like expander graphs $\{G_n\}$, such that the Potts measures on $\{G_n\}$ locally weakly converges to the $(\alpha,1-\alpha)$-mixture of the free and wired Potts Gibbs measures. Our result extends results of \cite{HJP23} which restrict to the zero-field case and also require $q$ to be sufficiently large relative to $d$, and results of \cite{BDS23} which restrict to the even $d$ case. We also confirm the phase coexistence prediction of \cite{BDS23}, asserting that the Potts local weak limit is a genuine mixture of the free and wired states in a generic setting. We further characterize the subsequential local weak limits of random cluster measures on such graph sequences, for any cluster parameter $q>2$ (not necessarily integer).
\end{abstract}

\medskip
\noindent\emph{2010 Mathematics Subject Classifications}.\quad60K35, 82B20, 82B27.\\
\noindent\emph{Key words.}\quad\!Potts measure, random cluster measure, local weak convergence, phase coexistence.

\section{Introduction}
Graphical models—such as the Ising and Potts models—on sparse locally-tree like graphs arise in many problems in statistical physics, combinatorics and theoretical computer science \cite{MM09}. Physicists have long conjectured that graphical models on these locally tree-like ensembles exhibit thermodynamic behavior very similar to those on regular lattices, especially with respect to phase transitions and the structure of Gibbs measures (e.g.\ \cite{abou1973selfconsistent, chalupa1979bootstrap, thouless1986spin}). At the same time, the absence of short loops and the recursive nature of their local neighborhoods often make these models more analytically tractable—and in many cases exactly solvable. For example, the Bethe approximation predicts thermodynamic quantities by locating the critical point(s) of a variational free energy functional (the ``Bethe functional''). Moreover, in the large-graph limit, the corresponding spin system is expected to be governed by the dominant critical point(s)—i.e., the global maximizer(s)—of the Bethe functional. We refer to \cite{DM10b} for a through review on the statistical physics insights of graphical models on locally tree-like graphs.

Building on physics predictions, over the past fifteen years, rigorous studies have examined graphical models on locally tree-like graphs, focusing on various aspects including the limiting free‐energy density \cite{DM10, DMS13, SS14, DGH14, DMSS14, Can17, Can19, BBC23, GGHJM25, CH25}, the local structure of Gibbs measures \cite{MMS12, BD17, HJP23, BDS23, YP24}, and related algorithmic questions \cite{Sly10, SS14, GSVY16, HJP23, CGGSV23, GSS25}. As noted above, a central theme of these studies is the analysis of critical points of the Bethe functional—all of which correspond to fixed points of the so-called belief propagation iteration—with particular emphasis on the dominant critical point(s). However, despite their shared structural features, the details of these analyses remain highly model‐dependent.

In this paper, we study the $q$-state ferromagnetic Potts model with an external field on graphs that locally resemble a $d$-regular tree, for integers $d,q\ge3$. The phase diagram in the large-system limit was characterized in \cite{DMSS14,CH25}; of particular interest to us is the critical line along which two different dominant critical points coexist. Along this line, the model is conjectured to exhibit strong coexistence of phases governed by these dominant critical points, and the corresponding limiting Gibbs measure is expected to decompose as a mixture of the associated pure states. In this work, we investigate these predictions in detail.

\subsection{Main results}\label{subsec-first}
In this subsection, we present the basic setup and state our main results. For the sake of clarity and brevity, some of the notation and definitions will be deferred to the next subsection. We begin with the precise definition of Potts model on a finite graph $G=(V,E)$. Fix an integer $q\ge 3$ as the number of states in the Potts model. Denote $[q]=\{1,2,\dots,q\}$. 

\begin{definition}[The Potts measure on a finite graph]
For two parameters $\beta,B>0$, the $q$-state Potts measure with inverse temperature $\beta$ and external field $B$ on $G$ is defined as follows:
\begin{equation}
\label{eq-def-potts}
    \mu_G^{\beta,B}[\sigma]:=\frac{1}{\mathcal Z_G^{\Po,\beta,B}}\exp\Bigg(\beta\sum_{(u,v)\in E}\delta_{\sigma(u),\sigma(v)}+B\sum_{v\in V}\delta_{\sigma(v),1}\Bigg)\,,\quad \forall \sigma\in [q]^{V}\,,
\end{equation}
where 
$$\calZ_G^{\Po,\beta,B}:=\sum_{\sigma\in [q]^{V}}\exp\Bigg(\beta\sum_{(u,v)\in E}\delta_{\sigma(u),\sigma(v)}+B\sum_{v\in V}\delta_{\sigma(v),1}\Bigg)$$ is the Potts partition function.  
\end{definition}

Fix another integer $d\ge 3$ and let $\ttT_d=\ttT_d(o)$ be the $d$-regular tree rooted at $o$. In this work, we focus on Potts measures on locally $\ttT_d$-like graphs, as defined next. For a finite graph $G=(V,E)$, a vertex $v\in V$, and $r\in\mathbb{N}$, let $\NB_r(G,v)$ be the subgraph of $G$ induced by all vertices within distance $r$ of $v$. 

\begin{definition}[Locally $\ttT_d$-like graphs]
    For a sequence of finite graphs $\{G_n\}$, let $v_n$ be a uniformly random vertex of $G_n$. We say that $\{G_n\}$ is \textit{uniformly sparse}, if 
    \[
    \lim_{L\to\infty}\lim_{n\to\infty}\mathbb{E}_{v_n\sim \operatorname{Uni}(V_n)}\big[\operatorname{deg}_{G_n}(v_n)\mathbf{1}\{\operatorname{deg}_{G_n}(v_n)>L\}\big]=0\,.
    \]
    
For a sequence of graphs $\{G_n\}_{n\ge1}$, we say that $G_n$ \emph{converges in the Benjamini-Schramm sense} (or converges locally) to $\ttT_d$ if the graphs are uniformly sparse and, for every $r\in\mathbb{N}$,
\[
\mathbb{P}_{v_n\sim\operatorname{Uni}(V_n)}\bigl[\operatorname{N}_r(G_n,v_n)\cong\operatorname{N}_r(\ttT_d,o)\bigr]\to1\,,
\quad\text{as }n\to\infty.
\]
By convention, the notation $
G_n \stackrel{\mathrm{loc}}{\longrightarrow} \ttT_d
$
means precisely that $\{G_n\}$ is a uniformly sparse sequence that converges locally to $\ttT_d$. We refer to such a sequence as \emph{locally} $\ttT_d$-\emph{like} graphs.
\end{definition}      

Given a sequence of graphs $G_n \stackrel{\operatorname{loc}}{\longrightarrow} \ttT_d$, we study the local weak limits of the Potts measures on $\{G_n\}$. Roughly speaking, local weak convergence of measures asserts that, in the large-graph limit, the measures locally resemble a limiting measure on $\ttT_d$ (see Definition~\ref{def-lwc-unified} for the precise formulation). Insights from statistical physics suggest that these local weak limits are governed by the dominant measure(s) emerging from the Bethe prediction (see Section~\ref{subsec-bg} for more background). This belief was fully-confirmed in the off-critical regime by \cite{BDS23}, where the dominant measure is unique. 

Our main interest lies in the critical regime: when $(\beta,B) \in \mathsf{R}_c$ (the Potts critical line as defined in \eqref{eq-R_c} below), there exist two different dominant measures, $\mu^{\free,\beta,B}$ and $\mu^{\wired,\beta,B}$, corresponding to the infinite Potts Gibbs measures on $\ttT_d$ with free and wired boundary conditions, respectively. It is expected that any local weak limit of $\mu_{G_n}^{\beta,B}$ is composed of mixtures of these two dominant measures, which has been proved in \cite{BDS23} for the case when $d$ is even. Even further, when the graphs are sufficiently well-connected, it is believed that the Gibbs measures $\mu_{G_n}^{\beta,B}$ can be well approximated by a mixture of the two dominant phases, leading to a stronger form of local weak convergence, known as \emph{local weak convergence in probability} (see Definition~\ref{def-lwc-unified}). This notion of well-connectedness is naturally formalized via graph expansion, which we now define.

\begin{definition}[Uniform edge-expander graphs]\label{def-expander}
    We say a sequence of graphs $G_n=(V_n,E_n)$ are \textit{uniform edge-expander} graphs, if for any $\varepsilon>0$, it holds that 
    \[
    \liminf_{n\to\infty}\inf_{\substack{S\subset V_n\\\varepsilon|V_n|\le |S|\le |V_n|/2}}\frac{|E_n(S;S^c)|}{|V_n|}>0\,,
    \]
where $E_n(S;S^c) \subset E_n$ is the set of edges with one endpoint in $S$, and another one in $S^c$.
\end{definition}
Assuming uniform edge-expansion of the sequence $\{G_n\}$, the strong form of local weak convergence of $\mu_{G_n}^{\beta,B}$ can be deduced from \cite{HJP23} for the case $B=0$ and $q \ge d^{400d}$. Our main contribution is to  present an alternative conceptual route to establishing the local weak convergence in probability of $\mu_{G_n}^{\beta,B}$ for general $(\beta,B) \in \mathsf{R}_c$ and arbitrary $d, q \ge 3$.

To state our main results, we first introduce some notations. For any two integers $d,q\ge 3$, following the notations in \cite{CH25}, we define the Potts critical line
\begin{align}\label{eq-R_c}
\mathsf R_c:=\{(\beta,B):\beta=\log(1+w_c(B)),0<B<B_+\}\,,
\end{align}
where
\begin{equation}\label{eq-w_c(B)}
w_c(B):=\ell_{q,d}((q-1)e^{-B})\,,\quad \ell_{q,d}(x)\equiv \frac{x^{2/d}-1}{1-\tfrac{1}{q-1}x^{2/d}}\,,
\end{equation}
and $B_+=B_+(d,q)>0$ is the unique positive solution of
\begin{equation}\label{eq-B_+}
(1+w_c(B))(1+w_c(B)/(q-1))=\left(\frac{d}{d-2}\right)^2\,.
\end{equation}
We refer to Definition~\ref{def-lwc-unified} for the precise definitions of local weak convergence and related terms.

\begin{theorem}\label{thm-main-main}
 Fix any two integers $d,q\ge 3$. Assume that $G_n\stackrel{\text{loc}}{\longrightarrow}\ttT_d$ are uniform edge-expander graphs. For $(\beta,B)\in \mathsf R_c$, it holds that any convergent subsequence of $\{\mu_{G_n}^{\beta,B}\}$ locally weakly converges in probability to a mixture of $\mu^{\free,\beta,B}$ and $\mu^{\wired,\beta,B}$. 
 In other words, any subsequential local weak limit point of $\mu_{G_n}^{\beta,B}$ is supported on a single measure in $\mathcal M$ as defined in \eqref{eq-mathcal-M} below. 
\end{theorem}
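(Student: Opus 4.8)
\emph{Proof strategy.}\quad The plan is to pass to the random cluster (FK) representation, establish there a clean dichotomy — a typical configuration has either a unique macroscopic cluster or none — and then transfer the conclusion back to the Potts model through the Edwards--Sokal coupling. Encode the external field by a ghost vertex: let $G_n^{+}$ be $G_n$ together with an extra vertex $g$ joined to every $v\in V_n$, and let $\phi_{G_n^{+}}$ be the random cluster measure on $G_n^{+}$ with cluster weight $q$, edge weight $1-e^{-\beta}$ on $E_n$, and edge weight $1-e^{-B}$ on the ghost edges. Colouring the cluster of $g$ with colour $1$ and every other cluster with an independent uniform colour from $[q]$ recovers $\mu_{G_n}^{\beta,B}$; applying the analogous construction on $\ttT_d$ to the free and wired FK--Gibbs measures $\phi^{\free},\phi^{\wired}$ on the rooted tree-with-ghost produces $\mu^{\free,\beta,B}$ and $\mu^{\wired,\beta,B}$. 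Here $\phi^{\free}$ and $\phi^{\wired}$ are extremal and tail-trivial, and they share a common free energy density $\mathsf{f}^{*}$ precisely because $(\beta,B)\in\mathsf R_c$. Thus it suffices to show that along any convergent subsequence $\phi_{G_n^{+}}$ converges locally weakly in probability to $\alpha\,\phi^{\wired}+(1-\alpha)\,\phi^{\free}$ for some $\alpha\in[0,1]$.

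First, extending the characterization of \cite{BDS23} to all $d\ge 3$, I would show that every subsequential local weak limit of $\{\phi_{G_n^{+}}\}$ is a mixture $\alpha\,\phi^{\wired}+(1-\alpha)\,\phi^{\free}$: such a limit is an FK--Gibbs measure on the infinite tree-with-ghost (a soft consequence of Benjamini--Schramm convergence together with the domain-Markov and finite-energy properties), the interpolation/Bethe free-energy computation of \cite{DMSS14,CH25} forces its free energy density to be $\mathsf{f}^{*}$, and on the critical line the only extremal FK--Gibbs measures attaining $\mathsf{f}^{*}$ are $\phi^{\free}$ and $\phi^{\wired}$, so the extremal decomposition of the limit is carried by these two points. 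This pins down each subsequential \emph{plain} local weak limit but not the \emph{in-probability} one: it remains to show that, with probability tending to $1$, the sample is macroscopically \emph{pure} — globally ordered or globally disordered — rather than exhibiting macroscopic coexistence of the two phases.

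This last point is where uniform edge-expansion and the criticality $(\beta,B)\in\mathsf R_c$ are essential, and I expect it to be the main obstacle. One must exclude configurations whose largest cluster has density in $(\varepsilon,\theta-\varepsilon)$, where $\theta$ is the wired-phase percolation probability, as well as configurations with two clusters each of density at least $\varepsilon$. A coarse-graining of any such configuration yields a set $S\subset V_n$ with $\varepsilon'|V_n|\le|S|\le|V_n|/2$ across whose boundary the configuration interpolates between an ordered and a disordered region; by uniform edge-expansion $|E_n(S;S^c)|\ge c|V_n|$, and a region-by-region estimate of the partition function along these $\ge c|V_n|$ closed boundary edges shows that the constrained free energy density is at most $\mathsf{f}^{*}-c'$ for some $c'=c'(\varepsilon)>0$. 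The point is that this strictly positive ``surface tension'' is \emph{not} offset, precisely because on $\mathsf R_c$ the ordered and disordered bulk free energies coincide; since the total free energy density tends to $\mathsf{f}^{*}$, such configurations carry $\phi_{G_n^{+}}$-probability at most $e^{-c'|V_n|/2}\to 0$. Making the coarse-graining and the region-by-region estimate rigorous for \emph{all} $q\ge 3$ — without the cluster-expansion (Pirogov--Sinai) input used by \cite{HJP23} for large $q$ — is the technical heart of the argument; here one leans on the FKG/monotonicity property of the random cluster measure and on the recursive structure of $\ttT_d$.

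Finally, by the previous two steps, with $\phi_{G_n^{+}}$-probability tending to $1$ the sample is either globally ordered (largest-cluster density $\approx\theta$) or globally disordered (largest-cluster density $\approx0$); write $\alpha_n$ for the probability of the former. On the disordered event there is no macroscopic cluster, so edge-expansion forces exponential decay of connectivities, the empirical local field concentrates, and by the first step its limit is $\phi^{\free}$; hence $\phi_{G_n^{+}}(\,\cdot\mid\text{disordered})$ converges locally weakly in probability to $\phi^{\free}$. On the ordered event, a finite-energy/FKG argument identifies the unique macroscopic cluster with the cluster of $g$, up to negligible error — forcing its colour to be $1$ — and the complementary decay together with the structure inside that cluster give conditional local weak convergence in probability to $\phi^{\wired}$. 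Passing to a subsequence along which $\alpha_n\to\alpha\in[0,1]$ and combining the two conditional statements yields local weak convergence in probability of $\phi_{G_n^{+}}$, and hence of $\mu_{G_n}^{\beta,B}$ through the Edwards--Sokal coupling, to $\alpha\,\mu^{\wired,\beta,B}+(1-\alpha)\,\mu^{\free,\beta,B}\in\mathcal M$, which is the claim. (That every $\alpha\in[0,1]$ is actually attained by some locally $\ttT_d$-like expander sequence is the content of the companion result and is proved by a separate construction.)
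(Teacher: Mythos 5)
Your high-level skeleton (pass to the random cluster model through the Edwards--Sokal coupling, prove a dichotomy for the density of the ghost cluster, establish conditional local weak convergence on each of the two events, and recombine) is indeed the route the paper takes. However, the step you yourself identify as the technical heart is not actually supplied, and the sketch you give for it would not work as stated. The event that must be excluded is that the cluster of the ghost vertex has density in an intermediate window between $\psi^{\free}$ and $\psi^{\wired}$; such configurations need not contain any linear-size vertex set $S$ whose boundary edges are closed, because ``being in the ordered or disordered phase'' is not a local feature of an FK configuration and there is no interface to coarse-grain. (For the sub-case of two disjoint macroscopic clusters a closed cut does exist, but the number of candidate cuts and cluster shapes is also exponential in $|V_n|$, so the bare energetic cost along the cut does not yield a strictly negative exponent without a genuine constrained free-energy computation --- this is precisely where \cite{HJP23} needs $q\ge d^{400d}$ and cluster expansion, which you explicitly want to avoid.) The paper replaces this heuristic by a quantitative mechanism: a \emph{local} rank-2 approximation of the partial random cluster partition function restricted to the bad window (extending \cite{BBC23,CH25}), which bounds it by $e^{o(|V_n|)}$ times a partial partition function of a zero-field ferromagnetic Ising model on $G_n$ with atypical magnetization, followed by an \emph{exponential} deviation estimate for that Ising magnetization on expanders, proved through the FK--Ising coupling and a sprinkling argument in the spirit of \cite{KLS20}. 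Without something playing the role of this reduction, your claim that the constrained free energy density is at most $\mathsf f^*-c'(\varepsilon)$ is an assertion, not a proof, and the whole argument collapses at this point.

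Two further inaccuracies are worth flagging. First, since $B>0$, the disordered phase also percolates to the ghost: the dichotomy is between ghost-cluster densities $\approx\psi^{\free}>0$ and $\approx\psi^{\wired}$, so your description ``on the disordered event there is no macroscopic cluster, hence exponential decay of connectivities'' is false and the conditional analysis built on it does not go through as written. Second, your opening step --- that every subsequential local weak limit is automatically a mixture of $\varphi^{\free,w,B}$ and $\varphi^{\wired,w,B}$ because only these extremal FK--Gibbs measures attain the Bethe free energy --- is exactly the statement that \cite{BDS23} proves only for even $d$, and invoking it for all $d\ge3$ begs the question. The paper does not need it: after conditioning on the ghost-cluster density, it sandwiches all subsequential limits between $\varphi^{\free,w,B}$ and $\varphi^{\wired,w,B}$ by FKG and the domain Markov property, and then identifies the limit uniquely from the prescribed value of $\varphi[o\leftrightarrow v^*]$ using the strict coordinatewise monotonicity of the random cluster belief propagation (the pre-message argument of Lemma~\ref{prop-characterization}). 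You would need to either reproduce such an identification argument or find a substitute; as written, this step too is asserted rather than proved.
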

Inspired by \cite{BDS23}, our approach to proving Theorem~\ref{thm-main-main} relies on the Edwards-Sokal coupling that relates the Potts model with the random cluster model as defined in Definition~\ref{def-RCM}.
The proof of Theorem~\ref{thm-main-main} is built on an analogous characterization of local weak limits of the random cluster measure. Notably, our result applies not only when $q$ is an integer, but extends to all real $q>2$. For any $d\ge 3$ and $q>2$, define the random-cluster critical line
\begin{align}
\label{eq-R_c'}&\mathsf R_c':=\{(w,B):w=w_c(B),0<B<B_+\}\,,
\end{align}
where $w_c(B)$ and $B_+$ are defined as in \eqref{eq-w_c(B)} and \eqref{eq-B_+}, respectively. 

\begin{theorem}\label{thm-main-RCM}
     Fix any integer $d\ge 3$ and a real $q>2$. Assume that $G_n\stackrel{\text{loc}}{\longrightarrow}\ttT_d$ are uniform edge expander graphs. For $(w,B)\in \mathsf R_c'$, it holds that any convergent subsequence of $\{\varphi_{G_n^*}^{w,B}\}$ converges locally weakly in probability to a mixture of $\varphi^{\free,\beta,B}$ and $\varphi^{\wired,\beta,B}$. In other words, any subsequential local weak limit point of $\varphi_{G_n^*}^{w,B}$ is supported on a single measure in $\mathcal N$ as defined in \eqref{eq-mathcal-N} below. 
\end{theorem}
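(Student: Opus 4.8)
The plan is to reduce Theorem~\ref{thm-main-RCM} to a structural dichotomy for the infinite-volume random-cluster measure on $\ttT_d$ together with an expansion argument, mirroring the strategy of \cite{BDS23} but upgrading the conclusion from ``supported on $\mathcal{N}$'' to ``supported on a single point of $\mathcal{N}$''. First I would invoke the Edwards--Sokal/random-cluster machinery to pass to subsequential local weak limits: by uniform sparsity and a standard tightness argument, along any subsequence the measures $\varphi_{G_n^*}^{w,B}$ converge locally weakly (possibly after a further subsequence) to some random measure, and by the characterization established earlier in the paper (the random-cluster analogue of the Bethe prediction on the critical line), this limit is supported on the mixture family $\mathcal{N}$, i.e.\ it is of the form $\varphi^{(\alpha)} := \alpha\,\varphi^{\wired,\beta,B} + (1-\alpha)\,\varphi^{\free,\beta,B}$ for a \emph{random} $\alpha \in [0,1]$. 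The content of the theorem is that, under edge-expansion, $\alpha$ is in fact deterministic (equivalently, the limit is local weak convergence \emph{in probability}).

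The heart of the argument is to show that $\alpha$ cannot fluctuate. The natural order parameter is the density of vertices lying in ``large'' clusters (the analogue of the magnetization): for the limiting measure $\varphi^{(\alpha)}$, the expected fraction of vertices in an infinite cluster, or more operationally the fraction of vertices $v$ such that the cluster of $v$ in $N_r(G_n,v)$ reaches the boundary, interpolates monotonically in $\alpha$ between the free value and the wired value (which are distinct precisely because $(w,B)\in\mathsf{R}_c'$ forces $\varphi^{\free}\neq\varphi^{\wired}$). Suppose for contradiction that $\alpha$ is genuinely random; then with non-negligible probability the empirical graph $G_n$ has a positive fraction of its vertices in ``free-like'' local neighborhoods and simultaneously a positive fraction in ``wired-like'' neighborhoods. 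I would then build a cut separating these two macroscopic regions with few cut edges: using finite-energy of the random-cluster measure together with the expansion hypothesis, a region of vertices that is locally in the free phase and a region locally in the wired phase must be separated by $\Omega(|V_n|)$ edges, yet one shows that conditionally the random-cluster configuration can be resampled on the cut edges at bounded cost, so the two macroscopic phases cannot coexist with a thin interface — this is essentially a Peierls-type / sprinkling argument showing the mixture weight equilibrates. Concretely, the step would compare $\varphi_{G_n^*}^{w,B}$ with the measure obtained by opening or closing the $O(|V_n|)$ interface edges and show the Radon--Nikodym derivative is exponentially large unless the two phases carry balanced weight, contradicting the assumed fluctuation.

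I expect the main obstacle to be making precise and quantitative the claim that ``two coexisting macroscopic phases force a linear-size interface that is energetically forbidden.'' Unlike the even-$d$ case of \cite{BDS23}, where a parity/unimodularity argument or an explicit interpolation on bipartite-like structure is available, here we have general $d,q\ge 3$ and a nonzero external field, so the comparison must be carried out purely through (i) the DLR/Markov property of the random-cluster measure restricted to a region, (ii) monotonicity (FKG) to sandwich the conditional law on a region between free and wired boundary conditions, and (iii) the expansion lower bound on the boundary size. The delicate point is that local weak convergence only controls $O(1)$-radius neighborhoods, so ``being in the free phase'' is only an approximate, statistical statement at each vertex; one must aggregate these local statistics into a genuine macroscopic partition of $V_n$ to which the expansion inequality applies, and then show that the free energy cost of the implied interface beats the entropy of choosing the partition. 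I would handle this by a two-scale argument: first fix a large radius $L$ and a threshold to define $L$-free and $L$-wired vertices via the cluster-reaching-the-boundary event, show these two sets each have density bounded away from $0$ on the fluctuation event, apply edge-expansion to get $\Omega(|V_n|)$ edges between them, and finally run the resampling comparison on these edges, taking $L\to\infty$ at the end to kill error terms. Once $\alpha$ is shown deterministic for the random-cluster model, Theorem~\ref{thm-main-main} follows by pushing the conclusion through the Edwards--Sokal coupling, since the Potts mixture weight is a deterministic function of the random-cluster mixture weight.
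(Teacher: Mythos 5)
Your proposal has two genuine gaps, and the second one is the heart of the matter.

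First, you take as input that every subsequential local weak limit is already known to be supported on the mixture family $\mathcal N$ (``by the characterization established earlier in the paper''). No such result is available in the generality of the theorem: the membership-in-$\mathcal N$ statement from \cite{BDS23} is proved only for even $d$ (and stated only for integer $q$), whereas the theorem covers all $d\ge 3$ and real $q>2$. The paper does not assume it; it proves membership in $\mathcal N$ and the singleton property simultaneously, by (i) a per-configuration dichotomy for the global order parameter $|V_n|^{-1}\#\{v:v\leftrightarrow v^*\}$ (Proposition~\ref{prop-admissibility-RCM}), (ii) a stochastic-domination sandwich $\varphi^{\free}\preceq\cdot\preceq\varphi^{\wired}$ for subsequential limits of the conditioned measures (Lemma~\ref{lem-subsequential-limit}), and (iii) a rigidity lemma identifying any sandwiched measure with $\varphi[o\leftrightarrow v^*]=\psi^\dagger$ as $\varphi^\dagger$ via strict monotonicity of the RCM belief-propagation recursion (Lemma~\ref{prop-characterization}). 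Your outline contains no substitute for (i) or (iii), so even granting your interface step, you would not recover the statement for general $d$ and non-integer $q$.

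Second, the mechanism you propose to rule out a fluctuating mixture weight—an expansion/Peierls argument in which two macroscopic phase regions are separated by $\Omega(|V_n|)$ cut edges that can be ``resampled at bounded cost,'' forcing an exponentially large Radon--Nikodym derivative ``unless the two phases carry balanced weight''—does not hold together as stated and skips the main difficulty. On the critical line the two phases have equal free energy, so there is no bulk penalty for either phase, and balancedness of their weights is not the obstruction; what you actually need is a quantitative surface-tension estimate showing that configuration-level spatial coexistence carries a per-interface-edge cost strong enough to beat the entropy of choosing the macroscopic partition. That is precisely the kind of estimate that cluster-expansion arguments (\cite{HJP23}) deliver only when $q\ge d^{400d}$, and obtaining it for all $d,q\ge 3$ with an external field is the open difficulty the paper is designed to circumvent. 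The paper's use of expansion is entirely different: after a (local) rank-2 approximation reduces the relevant partial partition functions to a zero-field ferromagnetic Ising model on $G_n$, expansion enters through a sprinkling argument for FK-Ising clusters (Lemmas~\ref{lem-RCM-Ber-domination}--\ref{lem-sprinkling}, following \cite{KLS20}) to prove \emph{exponential} deviation bounds for the magnetization (Proposition~\ref{prop-Ising-large-deviation}), which in turn yield the order-parameter dichotomy; no interface-energy estimate is ever needed. As written, your sketch also conflates the marginal-level statement ``a positive fraction of vertices have free-like local laws'' with a configuration-level partition into free and wired regions, and the phrase ``resampled at bounded cost'' is inconsistent with flipping $\Omega(|V_n|)$ edges; these would need to be repaired before the two-scale aggregation you describe could even be attempted.
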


The proofs of Theorems~\ref{thm-main-main} and~\ref{thm-main-RCM} contain the primary conceptual contributions of this work. Our approach combines several distinct ideas: transitioning from the Potts model to the random cluster model to exploit monotonicity \cite{BDS23}; applying and extending the rank-2 approximation techniques from \cite{BBC23, CH25} to a local setting; analyzing a transformed Ising model via the Edwards-Sokal coupling; and establishing exponential deviation estimates for (FK-Ising) percolation clusters on expander graphs \cite{KLS20}. Importantly, none of these components can be used as a black box—each step in the argument involves additional technical innovations to adapt and integrate these tools effectively.

Knowing that every local weak limit of the critical Potts measures on $\{G_n\}$ is a mixture of the free and wired Gibbs measures, \cite{BDS23} further asked whether this mixture can be genuinely nontrivial. We give an affirmative—and stronger—answer: for any integers $d,q\ge3$, one can construct locally $\ttT_d$-like expander graphs on which the critical Potts measure exhibits any desired mixture weight between the disordered and ordered phases.

\begin{theorem}\label{thm-main}
    Fix any two integers $d,q\ge 3$. For any $(\beta,B)\in \mathsf R_c$ and any $\alpha\in [0,1]$, there exists a sequence of uniform edge-expander graphs $G_n\stackrel{\loc}{\longrightarrow} \ttT_d$ with $\operatorname{girth}(G_n)\to\infty$, such that $$\mu_{G_n}^{\beta,B}\stackrel{\operatorname{lwcp}}{\longrightarrow} \alpha{\mu^{\free,\beta,B}}+(1-\alpha){\mu^{\wired,\beta,B}}\,.$$
\end{theorem}
A crucial building block in the proof of Theorem~\ref{thm-main} is the analysis of local weak limits of Potts measures on random $d$-regular graphs. However, we find that typical instances of such graphs are insufficient for establishing the theorem. In fact, the main challenge in the proof lies in analyzing how certain modifications to a typical random $d$-regular graph affect its Potts partition function. A more detailed technical overview of these difficulties and our approach is provided in Section~\ref{subsec-main-results}.

\subsection{Background and Related Work}\label{subsec-bg}
In this subsection, we provide additional technical background and review relevant prior work.

\vspace{0.5\baselineskip}

\noindent\textbf{Bethe prediction.}  
Fix a sequence of graphs $G_n\stackrel{\loc}{\longrightarrow}\ttT_d$ and an integer $q\ge 3$. A fundamental question about the $q$-state Potts models on $\{G_n\}$ concerns the limiting free energy density  
\begin{equation}
\label{eq-free-energy-limit}
\Psi^{\Po}(\beta,B)
:=\lim_{n\to\infty}\frac{1}{|V_n|}\log Z_{G_n}^{\Po,\beta,B}\,.
\end{equation}
The explicit value of $\Psi^{\Po}(\beta,B)$ was first predicted by statistical-physics heuristics and later rigorously established.  Intuitively, since $G_n$ locally resembles $\ttT_d$, the free energy per site in $G_n$ coincides in the limit with the free energy at the root of $\ttT_d$.  Building on a well-believed replica-symmetric assumption, the latter can be computed explicitly via a cavity-method calculation, yielding a precise characterization of the limiting free energy density for the Potts model on $G_n$.

To be precise, fix parameters $\beta$ and $B$, and let $\Delta^*$ denote the set of probability measures on $[q]$.  The Bethe functional $\Psi: \Delta^* \to \mathbb{R}$ is defined by
\begin{equation}
\label{eq-bethe-functional}
\Psi(\nu)
:=\log\Biggl(\sum_{i=1}^q e^{B\delta_{i,1}}\Bigl(\sum_{j=1}^q e^{\beta\delta_{i,j}}\nu(j)\Bigr)^d\Biggr)
\;-\;\frac d2\log\Biggl(\sum_{i,j=1}^q e^{\beta\delta_{i,j}+B(\delta_{i,1}+\delta_{j,1})}\nu(i)\nu(j)\Biggr).
\end{equation}
The Bethe prediction asserts that, for any sequence satisfying $G_n\stackrel{\loc}{\longrightarrow}\ttT_d$, the limit in \eqref{eq-free-energy-limit} exists and is given by $
\Psi^{\Po}(\beta,B)
=\sup_{\nu\in\Delta^*}\Psi(\nu)$.

This prediction has now been completely verified by a series of works \cite{DM10, DMSS14, HJP23, CH25}. Specifically, \cite{DM10} first verifies the Bethe prediction for the Ising case $q=2$ (their results apply to more general locally tree-like graphs). After that, \cite{DMSS14} resolves the general $q\ge 3$ cases for $d\in 2\mathbb N$ relying on an analysis fact that is specific to even $d$. Later, \cite{HJP23} proves the prediction for the case of zero field and general $d$ when $q\ge d^{400d}$, using the method of cluster expansion. Finally, building on connections to the random cluster model and the Ising model on locally $\ttT_d$-like graphs developed in \cite{BBC23}, \cite{CH25} completes the proof of Bethe prediction for the most general case that $d,q\ge 3$ are arbitrary. 
\vspace{0.5\baselineskip}

\noindent\textbf{BP fixed points.}  
Another key contribution of \cite{DMSS14} is the identification of the maximum of the Bethe functional. To that end, define the belief propagation operator $\operatorname{BP}:\Delta^*\to\Delta^*$ by
\begin{equation}
\label{def:BP}
\operatorname{BP}(\nu)(i)\;\propto\;e^{B\delta_{i,1}}\Bigl(\sum_{j=1}^q e^{\beta\delta_{i,j}}\nu(j)\Bigr)^{d-1}
\quad\forall i\in[q].
\end{equation}
A measure $\nu\in\Delta^*$ satisfying $\operatorname{BP}(\nu)=\nu$ is called a \emph{BP fixed point}.  One checks by direct calculation that the interior stationary points of the Bethe functional $\Psi(\cdot)$ coincide exactly with these BP fixed points.

On the other hand, although there may be many BP fixed points, \cite{DMSS14} shows that the global maximum of $\Psi$ can only be one of two candidates. Let $\nu^{\free,\beta,B}$ and $\nu^{\wired,\beta,B}$ be the limit points obtained by iterating the BP map from the initial measures $\pi^{\free}=\mathrm{Unif}([q])$ and $\pi^{\wired}=\delta_1$, respectively. Clearly, both $\nu^{\free,\beta,B}$ and $\nu^{\wired,\beta,B}$ are BP fixed points. \cite{DMSS14} proves that these two measures are the only candidates for the global maximizer of $\Psi$, so the Bethe prediction reduces to
\begin{equation}\label{eq-Bethe-prediction-2}
\lim_{n\to\infty}\frac{1}{|V_n|}
\log Z_{G_n}^{\Po,\beta,B}
=\max\{\Psi(\nu^{\free,\beta,B}),\,\Psi(\nu^{\wired,\beta,B})\}.
\end{equation}
Notably, it might be true that $\nu^{\free,\beta,B}=\nu^{\wired,\beta,B}$, and we refer to such regime of the parameters $\beta,B$ as the uniqueness regime.\footnote{This uniqueness regime is less restrictive than the classical “strong uniqueness regime” for factor models on $\ttT_d$, as it only requires the infinite wired Gibbs measure to coincide with the infinite free Gibbs measure, rather than demanding that all infinite‐volume Gibbs measures agree.} 
In particular, when $(\beta,B)$ lies in the uniqueness regime, the limit in \eqref{eq-Bethe-prediction-2} simply equals
$
\Psi(\nu^{\free,\beta,B}) \;=\; \Psi(\nu^{\wired,\beta,B}).
$
\vspace{0.5\baselineskip}

\noindent\textbf{The Potts phase diagram.}  
An important feature that makes the Potts model significantly more challenging than the Ising model is its two-dimensional phase diagram, which captures both uniqueness vs.\ non-uniqueness and ordered vs.\ disordered transitions.  While the Ising model’s uniqueness regime covers the entire first quadrant $\{(\beta,B):\beta,B>0\}$, for $q\ge3$ the Potts model exhibits a nontrivial non-uniqueness regime.  Within this regime, the measures $\nu^{\free,\beta,B}$ and $\nu^{\wired,\beta,B}$ differ, giving rise to two subregimes—known as the disordered regime and the ordered regime—depending on whether $\nu^{\free,\beta,B}$ or $\nu^{\wired,\beta,B}$ maximizes~$\Psi$ (recall that $\Psi$ depends on $(\beta,B)$).

To make this more precise, fix integers $d,q\ge3$. There exists a threshold $B_+=B_+(d,q)>0$ (see \eqref{eq-B_+} below for the precise definition) such that for all $B\ge B_+$, $\nu^{\free,\beta,B}=\nu^{\wired,\beta,B}$. However, whenever $0<B<B_+$, there are three critical inverse‐temperature parameters $0<\beta_{\mathrm{ord}}(B)<\beta_{\operatorname{crit}}(B)<\beta_{\mathrm{dis}}(B)<\infty$, which delineate the uniqueness, disordered, critical, and ordered regimes.\footnote{It may appear counterintuitive from the notations that $\beta_{\mathrm{ord}}(B)<\beta_{\mathrm{dis}}(B)$. Here, $\beta_{\mathrm{ord}}(B)$ marks the emergence of the ordered phase, while $\beta_{\mathrm{dis}}(B)$ marks the disappearance of the disordered phase. Because one of these phases must dominate at any given temperature, it follows naturally that $\beta_{\mathrm{ord}}(B)<\beta_{\mathrm{dis}}(B)$.}
Specifically, we have 

\begin{enumerate}[label = (\roman*)]
\label{def:critical_temp}
    \item In the uniqueness regime,
    $\mathsf R_{=}:=\{(\beta,B):B\ge B_+\text{ or }0<B<B_+,\beta\in (0,\beta_{\operatorname{ord}}(B))\cup(\beta_{\operatorname{dis}}(B),\infty)\}$, we have $\nu^{\free,\beta,B}=\nu^{\wired,\beta,B}$.
    \item In the nonuniqueness regime, $\mathsf R_{\neq}:= (0,\infty)^2 \backslash \mathsf R_{=}$, we have $\nu^{\free,\beta,B}\neq\nu^{\wired,\beta,B}$. Moreover, the region $\mathsf R_{\neq}$ can be partitioned into three distinct subregions depending on whether $\nu^{\free,\beta,B}$ or $\nu^{\wired,\beta,B}$ dominates the functional $\Psi$, as follows:
    \begin{enumerate}[label=\alph*)]
        \item In the disordered regime, $ \mathsf R_{\free}:=\{(\beta,B):0<B<B_+,\beta_{\operatorname{ord}}(B)\le\beta<\beta_{\operatorname{crit}}(B)\}$, $\nu^{\free,\beta,B}$ dominates, i.e. $\Psi(\nu^{\free,\beta,B}) > \Psi(\nu^{\wired,\beta,B})$.
        \item In the ordered regime, $\mathsf R_{\wired}:=\{(\beta,B):0<B<B_+,\beta_{\operatorname{crit}}(B)<\beta\le\beta_{\operatorname{dis}}(B)\}$, $\nu^{\wired,\beta,B}$ dominates, i.e. $\Psi(\nu^{\wired,\beta,B}) > \Psi(\nu^{\free,\beta,B})$.
        \item On the critical line, $\mathsf R_{c}:=\{(\beta,B): \beta = \beta_{\operatorname{crit}}(B), 0 < B < B_+\}$, $\nu^{\wired,\beta,B}$ and $\nu^{\free,\beta,B}$ are both dominant, i.e. $\Psi(\nu^{\wired,\beta,B}) = \Psi(\nu^{\free,\beta,B})$.
    \end{enumerate}
    
\end{enumerate}

See Figure~\ref{fig:phase_diagram} for an illustration. In this terminology, $\mathsf R_{=}$ denotes the uniqueness regime, while $\mathsf R_{\free}$ and $\mathsf R_{\wired}$ correspond to the disordered and ordered regimes, respectively, in which the disordered or ordered phase dominates. Moreover, along the critical line $\mathsf R_c$, the two dominant phases coexist, giving rise to strong ordered/disordered phase coexistence, which we discuss later in the paper.
\vspace{0.5\baselineskip}

\begin{figure}
    \centering
    \includegraphics[width=0.6\linewidth]{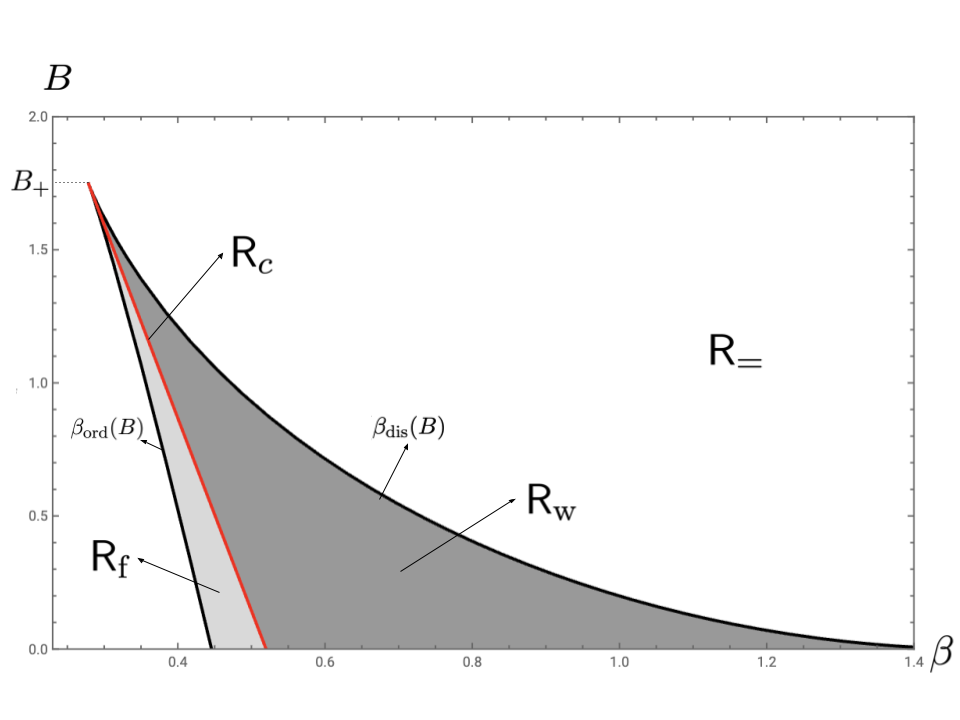}
    \caption{Phase diagram of the Potts model on $\ttT_d$ with $q = 45, d= 15$. The red line indicates the critical line $\mathsf R_c$, while the shaded region on the left (resp. right) is the disordered regime $\mathsf R_{\free}$ (resp. the ordered regime $\mathsf R_{\wired}$). The remaining white region is the uniqueness regime $\mathsf R_=$.}
    \label{fig:phase_diagram}
\end{figure}

\noindent\textbf{Connections to the random cluster model.}  
Through the well-known Edwards-Sokal coupling, the Potts model is closely related to the random cluster model, which we now define.

Fix a real parameter $q\ge1$, not necessarily integer.  For a finite graph $G=(V,E)$, let $v^*$ be a new ``ghost'' vertex and set  
\begin{equation}\label{eq-augmented-graph}
V^* = V\cup\{v^*\}, 
\qquad 
E^* = E\cup\bigl\{(v,v^*):v\in V\bigr\}.
\end{equation}
We write $G^*=(V^*,E^*)$ for this augmented graph in which $v^*$ is adjacent to every vertex of $G$.

\begin{definition}[The random cluster measure on a finite graph]\label{def-RCM}
Fix a graph $G=(V,E)$. For two parameters $w,B>0$, let $w_e=w$ for $e\in E$ and $w_e=e^B-1$ for $e\in E^*\setminus E$. The random cluster measure with weight parameters $w,B$ on $G^*$ is defined as follows:
    \[
    \varphi_{G^*}^{w,B}[\eta]:=\frac{1}{\mathcal Z_{G^*}^{\RC,w,B}}\prod_{e\in E^*}w_e^{\eta_e}\cdot q^{|\operatorname{C}(\eta)|-1}\,,\quad \eta\in \{0,1\}^{E^*}\,,
    \]
    where $\operatorname{C}(\eta)$ is the set of connected components on $V$ induced by the edge configuration $\eta$, and
\[
\mathcal Z_{G^*}^{\RC,w,B}:=\sum_{\eta\in \{0,1\}^{E^*}}\prod_{e\in E^*}w_e^{\eta_e} q^{|\operatorname{C}(\eta)|-1}
\]
is the random cluster partition function.\footnote{The definition is simply the extension of the common definition as appeared in \cite{CH25} to the augmented graph.}
\end{definition}

There is an analogous Bethe prediction for the free‐energy density of the random cluster model on locally $\ttT_d$‐like graphs.  In particular, when $q\ge2$ is an integer, the random cluster Bethe prediction corresponds directly to that of the Potts model via the Edwards–Sokal coupling (see Section~\ref{subsec-ES-coupling}).\footnote{In fact, parts of the Potts Bethe prediction were first established by proving the corresponding statements for the random cluster model and then transferring them via the Edwards–Sokal coupling.}  For noninteger $q>2$ with $B=0$, \cite{BBC23} introduced a rank‐2 approximation of the Potts partition function to characterize the limiting free‐energy density.  This method was extended in \cite{CH25} to $B>0$, yielding the free‐energy density of the random cluster model with an external field on locally $\ttT_d$‐like graphs.  That work also uncovers a first‐order phase transition for $q>2$ and provides an explicit description of the critical line $\mathsf R_c'$ (see \eqref{eq-R_c'} below).  
\vspace{0.5\baselineskip}

\noindent\textbf{Local weak convergence of measures.}  
We now turn to the main theme of this paper. Let $G_n=(V_n,E_n)$ be a sequence of graphs satisfying $G_n\stackrel{\loc}{\longrightarrow}\ttT_d$. Beyond the Bethe prediction for the limiting free‐energy density, statistical‐physics heuristics further suggest that the Potts measure $\mu_{G_n}^{\beta,B}$ (resp. the random cluster measure $\varphi_{G_n^*}^{w,B}$) locally resembles certain Gibbs measures on $\ttT_d$. This is formalized through the local weak convergence of measures. To state the precise definition, we first introduce some notations.

For simplicity, write $\ttT_d=(\ttV,\ttE)$ and, for each $r\in\mathbb{N}$, denote its $r$-neighborhood $\NB_r(\ttT_d,o)$ by $(\ttV_r,\ttE_r)$.  Likewise, let $
\ttT_d^* = (\ttV\cup\{v^*\},\,\ttE^*)$ and $
\NB_r^*(\ttT_d,o) = (\ttV_r\cup\{v^*\},\,\ttE_r^*)
$
be the corresponding augmented graphs.  Fix two finite sets $\mathcal X$ and $\mathcal Y$, and let 
$
\mathcal P(\mathcal X^{\ttV}\times\mathcal Y^{\ttE^*})$ (resp. $\mathcal P(\mathcal X^{\ttV_r}\times\mathcal Y^{\ttE_r^*})$)
denote the space of probability measures on $\mathcal X^{\ttV}\times\mathcal Y^{\ttE^*}$ (resp.\ $\mathcal X^{\ttV_r}\times\mathcal Y^{\ttE_r^*}$), each equipped with the weak topology.  For any probability measure $\mathtt p$ on $\mathcal P(\mathcal X^{\ttV}\times\mathcal Y^{\ttE^*})$ and $r\in\mathbb{N}$, let $\mathtt p_r$ be its projection measure  on $\mathcal P(\mathcal X^{\ttV_r}\times\mathcal Y^{\ttE_r^*})$.

\begin{definition}[Local weak convergence of measures]\label{def-lwc-unified}
For $G_n=(V_n,E_n)$ satisfying $G_n\stackrel{\loc}{\longrightarrow}\ttT_d$, let $G_n^*=(V_n\cup\{v^*\},E_n^*)$ be the augmented graph. For $n,r\in\mathbb{N}$ and $v\in V_n$, denote $\NB_r(G_n,v)=(V_{n,r}(v),E_{n,r}(v))$ and let $\NB_r^*(G_n,v)=(V_{n,r}(v)\cup\{v^*\},E_{n,r}^*(v))\subset G_n^*$ be its augmented graph. Let $\lambda_n$ be a probability measure on $\mathcal X^{V_n}\times\mathcal Y^{E_n^*}$. For each $v\in V_n$, let $\mathbf{P}_{n,r}(v)$ be the distribution of $(\NB_r(G_n,v),\sigma\!\mid_{V_{n,r}(v)},\eta\!\mid_{E_{n,r}^*(v)})$ when $(\sigma,\eta)\sim\lambda_n$.

Let $v_n$ be a uniformly random vertex of $G_n$. We say that $\lambda_n$ \emph{converges locally weakly} to a probability measure $\mathtt p$ on $\mathcal P(\mathcal X^{\ttV^*}\times\mathcal Y^{\ttE^*})$, written $\lambda_n\stackrel{\mathrm{lwc}}{\longrightarrow}\mathtt p$, if for every $r\in\mathbb{N}$ the law of $\mathbf{P}_{n,r}(v_n)$ converges weakly to $\delta_{\NB_r(\ttT_d,o)}\otimes\mathtt p_r$ as $n\to\infty$. Moreover, if $\mathtt p=\delta_\lambda$ for some $\lambda\in\mathcal P(\mathcal X^{\ttV^*}\times\mathcal Y^{\ttE^*})$, we say $\lambda_n$ \emph{converges locally weakly in probability} to $\lambda$, denoted $\lambda_n\stackrel{\mathrm{lwcp}}{\longrightarrow}\lambda$. Note that $\lambda_n\stackrel{\operatorname{lwcp}}{\longrightarrow}\lambda$ is equivalent to that
\begin{equation}\label{eq-characterization-lwcp}
\forall r\in\mathbb N\,,\varepsilon>0\,,\quad
\lim_{n\to\infty}\mathbb{P}_{v_n\sim\mathrm{Unif}(V_n)}\bigl[\mathrm{TV}(\mathbf{P}_{n,r}(v_n),\delta_{\NB_r(\ttT_d,o)}\otimes\lambda\!\mid_{[q]^{\ttV_r}\times \{0,1\}^{\ttE_r^*}})\ge\varepsilon\bigr]=0.
\end{equation}
\end{definition}

\begin{remark}
    There are two sources of randomness in the above definition: (i) the sampling of a configuration $(\sigma,\eta)\sim \lambda_n$, and (ii) the sampling of a uniform vertex $v_n\in V_n$. For a fixed $v\in V$, $\mathbf{P}_{n,r}(v)$ concerns source (i) of randomness, whereas ``the law of $\mathbf{P}_{n,r}(v_n)$'' involves that of source (ii). We refer to \cite[Definition 2.3]{MMS12} for a more detailed discussion on this.
\end{remark}

The above definition provides a unified framework for local weak convergence of the Potts and random cluster measures by choosing $\mathcal X=[q]$, $\mathcal Y=\{\mathbf{o}\}$ for the former and $\mathcal X=\{\mathbf{o}\}$, $\mathcal Y=\{0,1\}$ for the latter. Moreover, Definition~\ref{def-lwc-unified} will prove especially useful once we introduce the Edwards–Sokal coupling between the Potts and random cluster measures (see Section~\ref{subsec-ES-coupling}).

It is known that any sequence of Potts or random cluster measures on locally $\ttT_d$‐like graphs admits a locally weakly convergent subsequence (see, e.g., \cite[Lemma 2.9]{BDS23}). We now define the candidate local weak limit points of these measures, beginning with the Potts case. Recall that $\pi^{\free}=\mathrm{Unif}([q])$ and $\pi^{\wired}=\delta_1$. For each $r\ge1$, consider the Potts model on $\NB_r(\ttT_d,o)=(\ttV_r,\ttE_r)$ with free or wired boundary conditions on $\partial\ttV_r$. For $\dagger\in\{\free,\wired\}$, the Gibbs measure $\mu_{r}^{\dagger,\beta,B}$ on configurations $\sigma\in[q]^{\ttV_r}$ is given by
\begin{equation*}
\label{def:free_wired_gibbs}
\mu_{r}^{\dagger,\beta,B}[\sigma]
\;\propto\;
\exp\Biggl(\beta\sum_{(u,v)\in \ttE_r}\delta_{\sigma(u),\sigma(v)}
+ B\sum_{v\in \ttV_{r-1}}\delta_{\sigma(v),1}\Biggr)
\prod_{u\in \partial\ttV_r}\pi^{\dagger}(\sigma(u))\,.  
\end{equation*}
Additionally, let $\check{\nu}_{r}^{\dagger,\beta,B}$ denote the marginal of $\mu_{r}^{\dagger,\beta,B}$ at the root, i.e.\ on $\sigma(o)$, for $\dagger\in\{\free,\wired\}$. 

It is well known that as $r\to\infty$, $\mu_{r}^{\dagger,\beta,B}$ converges weakly to a limiting measure $\mu^{\dagger,\beta,B}$ on $[q]^{\ttV}$. Consequently, $\check{\nu}_{r}^{\dagger,\beta,B}$ converges weakly to the marginal $\check{\nu}^{\dagger,\beta,B}$ of $\mu^{\dagger,\beta,B}$ on $\sigma(o)$. One checks that $\check{\nu}^{\dagger,\beta,B}\in\Delta^*$ satisfies
\begin{equation}\label{eq-check-nu}
\check{\nu}^{\dagger,\beta,B}(i)\;\propto\;e^{B\delta_{i,1}}\Bigl(\sum_{j=1}^q e^{\beta\delta_{i,j}}\nu^{\dagger}(j)\Bigr)^{d},
\quad\forall\,i\in[q].
\end{equation}

It is natural to expect that if $G_n\stackrel{\loc}{\longrightarrow}\ttT_d$, then the Potts measure $\mu_{G_n}^{\beta,B}$ converges locally weakly in probability to the infinite Potts Gibbs measure on $\ttT_d$ corresponding to the dominant phase.  More precisely, recalling the three regions $\mathsf R_{=}$, $\mathsf R_{\free}$, and $\mathsf R_{\wired}$ in the Potts phase diagram: If $(\beta,B)\in\mathsf R_{=}$, then $\mu^{\free,\beta,B}=\mu^{\wired,\beta,B}$, and one expects  
  \begin{equation}\label{eq-lwc-R_=}
  \mu_{G_n}^{\beta,B}\stackrel{\mathrm{lwcp}}{\longrightarrow}\mu^{\free,\beta,B}=\mu^{\wired,\beta,B}\,,
  \end{equation}
  and if $(\beta,B)\in\mathsf R_{\dagger}$ for $\dagger\in\{\free,\wired\}$, then $\mu^{\dagger,\beta,B}$ is the unique dominant measure, and one expects  
  \begin{equation}\label{eq-lwc-R_dagger}
  \mu_{G_n}^{\beta,B}\stackrel{\mathrm{lwcp}}{\longrightarrow}\mu^{\dagger,\beta,B}.
  \end{equation}
Indeed, \eqref{eq-lwc-R_=} and \eqref{eq-lwc-R_dagger} were proved in \cite{MMS12, BDS23}, with \cite{MMS12} handling the Ising case $q=2$ and \cite{BDS23} covering the general case $q\ge3$.  

There is a similar story for the random cluster measures on locally $\ttT_d$-like graphs. First, one defines the infinite free and wired random cluster measures on $\ttT_d$, denoted by $\varphi^{\free,w,B}$ and $\varphi^{\wired,w,B}$, respectively (see Section~\ref{subsec:stoch_dom} for details). Let $\mathsf R'_{=}$, $\mathsf R'_{\free}$, $\mathsf R'_{\wired}$, and $\mathsf R'_c$ be the corresponding uniqueness regime, disordered regime, ordered regime and critical line of the random cluster model on locally $\ttT_d$-like graphs.\footnote{When $q\in \mathbb{N}$, by the Edwards-Sokal coupling, the four regimes coincide with the images of the regimes $\mathsf R_{=}$, $\mathsf R_{\free}$, $\mathsf R_{\wired}$, and $\mathsf R_c$ under the mapping $(\beta,B)\mapsto(e^\beta-1,B)$.} It is shown in \cite{BDS23} that when $(w,B)\in\mathsf R'_{=}$, 
$\varphi_{G_n^*}^{w,B}\stackrel{\mathrm{lwcp}}{\longrightarrow}\varphi^{\free,w,B}=\varphi^{\wired,w,B}$,
and when $(w,B)\in\mathsf R'_{\dagger}$ for $\dagger\in\{\free,\wired\}$, $
\varphi_{G_n^*}^{w,B}\stackrel{\mathrm{lwcp}}{\longrightarrow}\varphi^{\dagger,w,B}$.

Along the critical lines, however, as there are two distinct dominant phases, the situations become more subtle in both the Potts and the random cluster models.  It is conjectured that on the critical line $\mathsf R_c$ (resp. $\mathsf R_c'$), the local weak limit of $\mu_{G_n}^{\beta,B}$ (resp. $\varphi_{G_n^*}^{w,B}$) is a mixture of the two dominant Gibbs measures.  Partial confirmations appear in \cite{HJP23, Shr23, BDS23}.  In particular, leveraging the same even-$d$ analysis from \cite{DMSS14}, \cite{BDS23} proves that for $d\in 2\mathbb N$ and $(\beta,B)\in\mathsf R_c$, every subsequential local weak limit of $\mu_{G_n}^{\beta,B}$ (viewed as a random element of $\mathcal P([q]^{\ttV})$) is supported on  
\begin{equation}\label{eq-mathcal-M}
\mathcal M
:=\{\alpha\,\mu^{\free,\beta,B}+(1-\alpha)\,\mu^{\wired,\beta,B},\alpha\in[0,1]\}\,. 
\end{equation}
Similarly, for $d\in 2\mathbb N,q\ge 3$\footnote{\cite{BDS23} only states the result for $q\in \mathbb N$, but possibly their proof can be extended to real $q>2$.} and $(w,B)\in \mathsf R_c'$, any subsequential local weak limit of $\varphi_{G_n^*}^{w,B}$ is supported on the set
\begin{equation}\label{eq-mathcal-N}
\mathcal N := \{\alpha\,\varphi^{\free,w,B} + (1-\alpha)\,\varphi^{\wired,w,B} : \alpha\in[0,1]\}\,.
\end{equation}
A comparable result for $B=0$ on random $d$-regular graphs was obtained in \cite{Shr23}.

In general, the above is the best possible description. For example, if each $G_n$ is a disjoint union of $O(1)$ macroscopic components, then any subsequential local weak limit of $\mu_{G_n}^{\beta,B}$ (resp.\ $\varphi_{G_n^*}^{w,B}$) may indeed be supported on multiple points in $\mathcal{M}$ (resp.\ $\mathcal{N}$). However, when the graphs $G_n$ are well connected, e.g. if they are expander graphs, one expects that every convergent subsequence of $\mu_{G_n}^{\beta,B}$ (resp.\ $\varphi_{G_n^*}^{w,B}$) actually \emph{converges locally weakly in probability}—i.e., the limit is supported on a single point in $\mathcal{M}$ (resp.\ $\mathcal{N}$). Even stronger, it is believed that for these graphs, the Potts and random cluster measures decompose into two macroscopic phases—free-like and wired-like—but a typical configuration sampled from the Gibbs measure is locally uniform and resembles either entirely the free phase or entirely the wired phase.

The stronger form of local weak convergence in probability for Potts and random cluster measures on expander graphs was partially established in \cite{HJP23}. Under the assumptions that $G_n\stackrel{\mathrm{loc}}{\longrightarrow}\ttT_d$ are uniform edge-expander graphs and $q\ge d^{400d}$, \cite{HJP23} shows that at the critical weight $w=w_c$, every subsequential local weak limit of $\varphi_{G_n^*}^{w,0}$ is supported on a single measure in $\mathcal N$. More precisely, conditioning on the disordered and the ordered phases, $\varphi_{G_n^*}^{w,0}$ converges locally weakly in probability to $\varphi^{\free,w,0}$ and $\varphi^{\wired,w,0}$, respectively, mirroring the conditional convergence results for the zero-field Ising and Potts models on expander graphs (see \cite[Theorem 2.4]{MMS12} and \cite[Theorem 1.18]{BDS23}). Although not stated explicitly, their methods likewise imply the same strong local weak convergence of the zero-field Potts model whenever $q$ is sufficiently large relative to $d$. Prior to our work, the general question of strong local weak convergence for the Potts and random cluster measures—particularly in the presence of an external field or for arbitrary values of $d, q \ge 3$—remained open. Our results, Theorems~\ref{thm-main-main} and~\ref{thm-main-RCM}, resolve these gaps.

\vspace{0.5\baselineskip}

\noindent\textbf{Phase coexistence.}  
For the Potts model on locally $\ttT_d$-like graphs, when $(\beta,B)$ lies outside the uniqueness regime, two distinct phases—the disordered and the ordered ones—coexist.  Owing to its fundamental physical significance and intriguing algorithmic implications, this phase coexistence phenomenon has been studied in \cite{GSVY16, HJP23, CGGSV23} for details.

Of particular interest is the critical line $\mathsf R_c$, along which the ordered and disordered phases are both dominant. In this regime, the model is expected to exhibit strong phase coexistence—that is, both phases occupy a nontrivial fraction of the Potts Gibbs measure. This phenomenon is tied to the local weak limit of the Potts measures being a genuine mixture of the free and wired Gibbs measures, with mixing weights governed by the free and wired Potts partition functions—i.e., the subsums of the Potts partition function over configurations in the disordered and ordered phases.
 The partial partition functions were first analyzed in \cite{GSVY16}, who proved a weak form of strong coexistence on random $d$-regular graphs by showing that the ordered and disordered partial partition functions agree up to a logarithmic factor. In \cite{HJP23}, the authors establish true strong phase coexistence for the critical Potts model without external field on random $d$-regular graph--under the assumption that $q\ge d^{400d}$.  They also obtain the precise asymptotics of the free and wired Potts partition functions explicitly in terms of short‐cycle counts in the underlying graph.  

Despite these advances, prior to our result Theorem~\ref{thm-main}, it remained unclear whether the critical Potts model on random $d$-regular graphs—or more generally, on locally $\ttT_d$-like graphs—exhibits strong phase coexistence for arbitrary $d, q \ge 3$. Our result settles this question by characterizing the full set of possible local weak limits of Potts measures on locally $\ttT_d$-like expander graphs.

\subsection{Proof overview}\label{subsec-main-results}
In this subsection, we outline the proof of our main theorems.  We first sketch the proof of the local weak convergence in probability. By the Edwards-Sokal coupling, Theorem~\ref{thm-main-main} follows almost directly from Theorem~\ref{thm-main-RCM}, so we focus on the random cluster case.

Fix $(w,B)\in\mathsf R_c'$. Let $\psi^{\free}$ and $\psi^{\wired}$ denote the probabilities that the root $o$ of $\ttT_d$ is connected to $v^*$ under $\varphi^{\free,w,B}$ and $\varphi^{\wired,w,B}$, respectively. We show that for uniform edge‐expander graphs $G_n\stackrel{\mathrm{loc}}{\longrightarrow}\ttT_d$, a typical sample $\eta\in\{0,1\}^{E_n^*}$ from $\varphi_{G_n^*}^{w,B}$ has its giant component $C_*(\eta)$—the component containing $v^*$—of size close to either $\psi^{\free}|V_n|$ or $\psi^{\wired}|V_n|$ when $n$ is large.  Once this is established, the theorem follows by standard arguments: since $\varphi^{\free,w,B}$ and $\varphi^{\wired,w,B}$ are the minimal and maximal elements (under the natural stochastic ordering\footnote{This ordering is the main reason for passing from the Potts model to the random cluster model.}) among all subsequential local weak limits of $\varphi_{G_n^*}^{w,B}$, one deduces via stochastic domination that, conditioning on $|C_*(\eta)|\approx\psi^{\dagger}|V_n|$, the measure $\varphi_{G_n^*}^{w,B}$ converges locally weakly in probability to $\varphi^{\dagger,w,B}$ for each $\dagger\in\{\free,\wired\}$, thereby completing the proof of Theorem~\ref{thm-main-RCM}.

Turning to the proof of the claim, we show that for any $\psi\notin\{\psi^{\free},\psi^{\wired}\}$, $\varphi_{G_n^*}^{w,B}\bigl[\,|C_*|\approx \psi\,|V_n|\,\bigr]\to0$ as $n\to\infty$. By definition,
\begin{equation}\label{eq-goal-probability}
\varphi_{G_n^*}^{w,B}\bigl[\,|C_*|\approx \psi\,|V_n|\,\bigr]
=\frac{\sum_{\eta:\,|C_*(\eta)|\approx \psi|V_n|}\;\prod_{e\in E_n^*}w_e^{\eta_e}\,q^{|\operatorname{C}(\eta)|-1}}
{\mathcal Z_{G_n}^{\RC,w,B}}\,.
\end{equation}
The key input is the rank-2 approximation from \cite{BBC23,CH25}, which approximates $\mathcal Z_{G_n}^{\RC,w,B}$ by the partition function of a suitable two-spin model on $G_n$. We also establish a local rank-2 approximation, giving a corresponding approximation for the numerator in \eqref{eq-goal-probability} via a partial partition function of the same two-spin model. As observed in \cite{SS14}, any two-spin system can be transformed into an Ising model. Combining these steps, we reduce \eqref{eq-goal-probability} to the probability that the magnetization of a certain (ferromagnetic) Ising model on $G_n$ lies near an atypical value. Since each approximation incurs only an $\exp(o(|V_n|))$ multiplicative error, it remains to show that this Ising probability decays like $\exp(-\Theta(|V_n|))$, which completes the argument.

The critical condition $(w,B)\in\mathsf R_c'$ implies that the corresponding Ising model has no external field and lies in the non‐uniqueness regime. Although it is shown in \cite{MMS12} that for the zero‐field Ising model on expander graphs in this regime, the normalized magnetization concentrates on two typical values with high probability, the tightness arguments in \cite{MMS12} yield only an abstract $o(1)$ bound on the deviation probability.\footnote{We also note that a large‐deviation estimate for the normalized magnetization was obtained in the uniqueness regime by \cite{Can19}, but those techniques seem difficult to adapt to our setting, as the expander condition must be exploited here.}
 We instead obtain exponential deviation estimates by exploiting the Edwards–Sokal coupling. The problem then reduces to analyzing the asymptotics and fluctuations of large FK‐Ising percolation clusters on expander graphs, which we tackle using combinatorial‐probabilistic methods that are largely inspired from \cite{KLS20}. This concludes the proof of Theorems~\ref{thm-main-main} and \ref{thm-main-RCM}.

We remark that, although Theorems~\ref{thm-main-main} and \ref{thm-main-RCM} are stated for $B>0$, we expect the same conclusions to hold at zero field.  The $B>0$ assumption simplifies the control of non-giant clusters in $\eta\sim\varphi_{G_n^*}^{w,B}$ (see Lemma~\ref{lem-cluster-size}), but it is not essential.  In the case $B=0$, one can still obtain sufficient bounds on the cluster sizes in $\eta\sim\varphi_{G_n^*}^{w,0}$.  In particular, combining the arguments of Section~\ref{subsec-large-deviation} with the conditional local weak convergence results for zero‐field Potts models from \cite{BDS23} should yield the desired extensions.

To prove Theorem~\ref{thm-main}, we start with random $d$-regular graphs, which are locally $\ttT_d$-like expander graphs.  Adapting the small-graph conditioning method of \cite{GSVY16}, we obtain precise asymptotics for the free and wired Potts partition functions in terms of short-cycle counts (extending the large-$q$ results of \cite{HJP23}). However, numerical evidence shows that on typical random $d$-regular graphs the disordered phase retains weight bounded away from zero in every local weak limit. To complete the proof, we further construct modified graphs—by adding and deleting carefully chosen edges—to tune the ratio of free to wired partition functions via a cavity-method calculation.  Applying this modification procedure to appropriately sampled random $d$-regular graphs yields graph sequences realizing all mixture weights, completing the proof.  The same construction extends directly to the case $B=0$.

\vspace{0.5\baselineskip}

\noindent\textbf{Paper organization.}  
In Section~\ref{sec-pre} we collect the necessary preliminaries.  Section~\ref{sec-local-weak-limit} is devoted to the proofs of Theorems~\ref{thm-main-main} and \ref{thm-main-RCM}, and Section~\ref{sec-phase-coexistence} contains the proof of Theorem~\ref{thm-main}.  Proofs of some technical results are deferred to the appendix to preserve the flow of the presentation.

\vspace{0.5\baselineskip}

\noindent\textbf{Acknowledgement.} We warmly thank Amir Dembo for introducing this problem. We are also grateful to Anirban Basak, for sharing simulation codes, and to Amir Dembo, Allan Sly and Nike Sun, for having stimulating discussions and providing many helpful comments on an early draft of this paper. H. Du is partially supported by an NSF-Simons research collaboration grant (award number 2031883).


\section{Preliminaries}\label{sec-pre}
In this section we provide necessary preliminaries of the proof of our main results. Throughout, we fix $d\ge 3,q>2$, and we do not assume $q$ is an integer when our consideration is restricted to the random cluster model. We also fix three parameters $\beta,w,B>0$ with $w=e^\beta-1$. We omit the dependence of $\beta,w,B$ in the notations when it is clear from the context.

\subsection{The Edwards-Sokal coupling}\label{subsec-ES-coupling}
We start with the well-known Edwards-Sokal coupling. Fix $\beta,B>0$.
For a finite graph $G=(V,E)$, let $G=(V^*,E^*)$ be its augmented graph defined as in \eqref{eq-augmented-graph}. For an edge $e\in E^*$, we write 
\begin{equation}\label{eq-pe}
    p_e:=\begin{cases}
        1-e^{-\beta}\,,&\text{ if }e\in E\,,\\
        1-e^{-B}\,,&\text{ if }e\in E^*\setminus E\,.
    \end{cases}
\end{equation}

\begin{definition}[The Edwards-Sokal measure on a finite graph]
\label{def:ES_measure}
    For a finite graph $G=(V,E)$, let $G^*=(V^*,E^*)$ be its augmented graph and $\varpi=\varpi_{G^*}^{\beta,B}$ be the Edwards-Sokal (ES) measure on $[q]^{V}\times \{0,1\}^{E^*}$, such that for any $(\sigma,\eta)\in [q]^{V}\times \{0,1\}^{E^*}$ (we use the convention that $\sigma(v^*)=1$ and $0^0=1$),
    \[
\varpi(\sigma,\eta)\propto \prod_{e=(u,v)\in E^*}(1-p_{e})^{1-\eta_{e}}(p_e\delta_{\sigma(u),\sigma(v)})^{\eta_e}\,.
    \]
\end{definition}
Under the Edwards–Sokal (ES) coupling, the marginal of $\varpi$ on $\sigma\in[q]^V$ coincides with the $q$-state Potts measure $\mu_G^{\beta,B}$, while its marginal on $\eta\in\{0,1\}^{E^*}$ coincides with the random cluster measure $\varphi_{G^*}^{w,B}$.  A direct consequence is the following relation between their partition functions.
\begin{lemma}\label{lem-Potts-RCM-partition-function}
    For any finite graph $G=(V,E)$ and parameters $\beta,w,B>0$ such that $w=e^\beta-1$, 
    \begin{equation}
        \label{eq-Potts-RCM-partition-function}
            \mathcal Z_{G}^{\Po,\beta,B}=\mathcal Z_{G^*}^{\RC,w,B}\,.
    \end{equation}
\end{lemma}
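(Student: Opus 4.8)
The plan is to compute the total mass of the (unnormalized) Edwards--Sokal weights,
\[
\mathcal Z^{\varpi}:=\sum_{(\sigma,\eta)\in[q]^{V}\times\{0,1\}^{E^*}}\ \prod_{e=(u,v)\in E^*}(1-p_e)^{1-\eta_e}\bigl(p_e\,\delta_{\sigma(u),\sigma(v)}\bigr)^{\eta_e}\,,
\]
in two different orders of summation — first summing out $\eta$, then summing out $\sigma$ — and to observe that the two evaluations produce $\mathcal Z_G^{\Po,\beta,B}$ and $\mathcal Z_{G^*}^{\RC,w,B}$ respectively, each multiplied by the \emph{same} explicit constant. Cancelling that constant then yields \eqref{eq-Potts-RCM-partition-function}. (This is essentially the same computation that underlies the marginal identities stated after Definition~\ref{def:ES_measure}, but packaging it as a two-way evaluation of $\mathcal Z^{\varpi}$ is the most direct route to the partition-function identity.)

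First I would fix $\sigma$ and sum over $\eta$. Since the summand factorizes over edges, $\sum_{\eta}(\cdots)=\prod_{e=(u,v)\in E^*}\bigl[(1-p_e)+p_e\,\delta_{\sigma(u),\sigma(v)}\bigr]$. Using $1-p_e=e^{-\beta}$ for $e\in E$ and $1-p_e=e^{-B}$ for $e\in E^*\setminus E$, a one-line check shows each factor equals $e^{-\beta}e^{\beta\delta_{\sigma(u),\sigma(v)}}$ for $e\in E$ and $e^{-B}e^{B\delta_{\sigma(v),1}}$ for a ghost edge $e=(v,v^*)$ (recall $\sigma(v^*)=1$). Multiplying these over all edges and then summing over $\sigma\in[q]^V$ reassembles the Potts Hamiltonian of \eqref{eq-def-potts}, giving $\mathcal Z^{\varpi}=e^{-\beta|E|-B|V|}\,\mathcal Z_G^{\Po,\beta,B}$.

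Next I would fix $\eta$ and sum over $\sigma$. Splitting the summand into the $\sigma$-independent factor $\prod_{e}(1-p_e)^{1-\eta_e}p_e^{\eta_e}$ and the indicator $\prod_{e=(u,v):\eta_e=1}\delta_{\sigma(u),\sigma(v)}$, the latter sums over $\sigma\in[q]^V$ to $q$ raised to the number of open connected components that carry a free spin; by the component-counting convention for $\operatorname{C}(\eta)$ on the augmented graph in Definition~\ref{def-RCM} — the component of $v^*$ being pinned to colour $1$ by $\sigma(v^*)=1$ and hence not free — this count is $|\operatorname{C}(\eta)|-1$. For the prefactor, $\prod_{e}(1-p_e)^{1-\eta_e}p_e^{\eta_e}=\bigl(\prod_{e\in E^*}(1-p_e)\bigr)\prod_{e\in E^*}\bigl(p_e/(1-p_e)\bigr)^{\eta_e}=e^{-\beta|E|-B|V|}\prod_{e\in E^*}w_e^{\eta_e}$, because $p_e/(1-p_e)=e^{\beta}-1=w$ on $E$ and $p_e/(1-p_e)=e^{B}-1=w_e$ on $E^*\setminus E$. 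Summing over $\eta$ gives $\mathcal Z^{\varpi}=e^{-\beta|E|-B|V|}\,\mathcal Z_{G^*}^{\RC,w,B}$. Equating the two expressions for $\mathcal Z^{\varpi}$ and dividing by $e^{-\beta|E|-B|V|}>0$ completes the proof.

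The calculation is entirely elementary, and the only point that requires genuine care is the identification, in the second step, of the exponent of $q$ with $|\operatorname{C}(\eta)|-1$: one must line up the convention $\sigma(v^*)=1$ with the definition of $\operatorname{C}(\eta)$ on $G^*$ so that \emph{every} open subgraph — including degenerate cases where $v^*$ or some vertex of $V$ is isolated — contributes exactly $q^{|\operatorname{C}(\eta)|-1}$ colourings. Once the conventions are aligned this is immediate, and no other obstacle arises.
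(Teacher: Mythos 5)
Your proposal is correct and is essentially the paper's own proof: both evaluate the total unnormalized Edwards--Sokal mass \eqref{eq-sum-ES} in the two orders of summation, obtain $e^{-\beta|E|-B|V|}\mathcal Z_G^{\Po,\beta,B}$ and $e^{-\beta|E|-B|V|}\mathcal Z_{G^*}^{\RC,w,B}$ respectively, and cancel the common constant. Your care about the exponent $|\operatorname{C}(\eta)|-1$ (the component of $v^*$ being pinned to colour $1$) matches the convention used in Definition~\ref{def-RCM}, so there is nothing to add.
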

\begin{proof}
    Recall $p_e$ as defined in \eqref{eq-pe}. We consider the sum (using the same conventions as before)
    \begin{equation}\label{eq-sum-ES}
    \sum_{\sigma\in [q]^{V}}\sum_{\eta\in \{0,1\}^{E^*}}\prod_{e=(u,v)\in E^*}(1-p_{e})^{1-\eta_{e}}(p_e\delta_{\sigma(u),\sigma(v)})^{\eta_e}\,.
    \end{equation}
    On the one hand, by fixing $\sigma$ and summing over $\eta$, \eqref{eq-sum-ES} becomes
    \[
    \sum_{\substack{\sigma\in [q]^{V}}}\prod_{\substack{e=(u,v)\in {E^*}\\\sigma(u)\neq \sigma(v)}}(1-p_e)=\sum_{\sigma\in [q]^{V}}\exp\Bigg(-\beta\sum_{(u,v)\in E}(1-\delta_{\sigma(u),\sigma(v)})-B\sum_{v\in V}(1-\delta_{\sigma(v),1})\Bigg)\,,
    \]
    which equals $e^{-\beta|E|-B|V|}\mathcal Z_{G}^{\Po,\beta,B}$ by definition. On the other hand, if we exchange the ordering of the sum and fix $\eta$ and sum over $\eta$, we see \eqref{eq-sum-ES} also equals
    \[
    \sum_{\eta\in\{0,1\}^{E^*}}\prod_{e\in E^*}p_e^{\eta_e}(1-p_e)^{1-\eta_e}\cdot q^{|\operatorname{C}(\eta)|-1}=\prod_{e\in E^*}(1-p_e)\cdot \sum_{\eta\in\{0,1\}^{E^*}}\prod_{e\in E^*}w_e^{\eta_e}\cdot q^{|\operatorname{C}(\eta)|-1}\,,
    \]
    where $w_e=\tfrac{p_e}{1-p_e}$ for $e\in E^*$, meaning that $w_e=e^\beta-1=w$ for $e\in E$ and $w=e^B-1$ for $e\in E^*$. Thus, the above equals $e^{-\beta|E|-B|V|}\mathcal Z_{G^*}^{\RC,w,B}$. Altogether it yields \eqref{eq-Potts-RCM-partition-function}.
\end{proof}
Furthermore, the ES measure has the following property that makes it particularly useful.
\begin{proposition}\label{prop-ES-independent-smapling}
    For any finite graph $G$, under the ES coupling $\varpi$, conditioned on any particular realization of $\eta$, the spin configuration $\sigma$ can be sampled as follows: Let $\operatorname{C}(\eta)=\{C_*,C_1,\dots,C_\ell\}$ be the set of connected components induced by the edge configuration $\eta$ with $v^*\in C_*$. Let $c_*=1$ and $c_1,\dots,c_\ell \in [q]$ be chosen uniformly and independently at random, assign $\sigma(i)=c_k$ for any $k\in \{*,1,\dots,\ell\}$ and $i\in C_k$. 
\end{proposition}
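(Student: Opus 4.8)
The plan is to compute the conditional law $\varpi(\sigma\mid\eta)$ directly from Definition~\ref{def:ES_measure} and recognize it as the claimed product measure. First I would fix an edge configuration $\eta\in\{0,1\}^{E^*}$ and note that, by the definition of conditional probability, $\varpi(\sigma\mid\eta)$ is proportional, as a function of $\sigma$, to $\prod_{e=(u,v)\in E^*}(1-p_e)^{1-\eta_e}(p_e\delta_{\sigma(u),\sigma(v)})^{\eta_e}$. For fixed $\eta$, the factors $(1-p_e)^{1-\eta_e}$ and $p_e^{\eta_e}$ are constants that do not depend on $\sigma$, so they may be absorbed into the normalization; hence
\[
\varpi(\sigma\mid\eta)\;\propto_\sigma\;\prod_{e=(u,v)\in E^*:\ \eta_e=1}\delta_{\sigma(u),\sigma(v)}\,.
\]

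Next I would interpret this indicator. The right-hand side equals $1$ exactly when $\sigma$, extended by the convention $\sigma(v^*)=1$, is constant on every connected component of the subgraph $(V^*,\{e\in E^*:\eta_e=1\})$, and equals $0$ otherwise. Writing $\operatorname{C}(\eta)=\{C_*,C_1,\dots,C_\ell\}$ with $v^*\in C_*$, the component $C_*$ is then forced to take the value $\sigma(v^*)=1$, while each $C_k$ with $k\in\{1,\dots,\ell\}$ may independently take an arbitrary value in $[q]$; moreover every such $\sigma$ receives the same conditional weight. (The conditional measure is well defined since the all-$1$ configuration is always consistent, so the normalization is positive.) Therefore $\varpi(\cdot\mid\eta)$ is precisely the uniform distribution over the spin configurations that are component-wise constant with $C_*$ coloured $1$, which is exactly the sampling rule in the statement: set $c_*=1$, draw $c_1,\dots,c_\ell$ independent and uniform on $[q]$, and put $\sigma(i)=c_k$ for all $i\in C_k$.

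There is essentially no obstacle here; the only point requiring a little care is the bookkeeping around the ghost vertex $v^*$ and its convention $\sigma(v^*)=1$, which is what pins the colour of the component $C_*$. Everything else follows from the single observation that conditioning on $\eta$ eliminates all $\sigma$-dependence except the hard constraints $\delta_{\sigma(u),\sigma(v)}$ along the occupied edges.
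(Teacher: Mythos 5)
Your proof is correct: conditioning on $\eta$ kills all $\sigma$-dependence except the hard constraints $\delta_{\sigma(u),\sigma(v)}$ on occupied edges, and the resulting uniform measure over component-wise constant colourings (with $C_*$ pinned to $1$ via $\sigma(v^*)=1$) is exactly the stated sampling rule. The paper does not spell this out but simply cites \cite{duminil2017lectures}, and your direct computation is precisely the standard argument given there, so there is nothing to add.
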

The proof of Proposition~\ref{prop-ES-independent-smapling} can be found in, e.g., \cite{duminil2017lectures}. In particular, we have the following easy corollary.
\begin{corollary}\label{cor-Potts-RC}
    For graph $G$ with $G^*=(V^*,E^*)$ and parameters $\beta,w,B>0$ such that $w=e^\beta-1$, let $\mu_G=\mu_G^{\beta,B},\varphi_{G^*}=\varphi_{G^*}^{w,B}$ be the Potts and random cluster measures on $G,G^*$, respectively. For and any $u,v\in V^*$ it holds (let $\sigma(v^*)=1$, $\mu_G$-a.s.), 
    \[
    \mu_G\big[\sigma(u)=\sigma(v)\big]=\frac{q-1}{q}\varphi_{G^*}[u\leftrightarrow v]+\frac{1}{q}\,.
    \]
    Here, $u\leftrightarrow v$ denotes for that $u,v$ are connected in $\eta$, i.e. $u,v$ belong to the same cluster in $\operatorname{C}(\eta)$. 
\end{corollary}
\begin{proof}
    By Proposition~\ref{prop-ES-independent-smapling} we have in the ES coupling, for any $u,v\in V^*$, given that $u\leftrightarrow v$ in $\eta$, then $\sigma(u)=\sigma(v)$ almost surely; otherwise $\sigma(u)=\sigma(v)$ happens with conditional probability $q^{-1}$. Therefore, we have
    \begin{align*}
\mu_G\big[\sigma(u)=\sigma(v)\big]=&\ \varpi_G\big[\sigma(i)=\sigma(j)\big]=\varpi_G[\{\eta:u{\leftrightarrow}v\}]+\frac{1}{q}\varpi\big[\{\eta:u\not\leftrightarrow v\}\big]\\
=&\ \varphi_{G^*}[u\leftrightarrow v]+\frac{1}{q}\varphi_{G^*}[u\not\leftrightarrow v]=\frac{q-1}{q}\varphi_{G^*}[u\leftrightarrow v]+\frac{1}{q}\,.\qedhere
    \end{align*}
\end{proof}

\subsection{Stochastic ordering}
\label{subsec:stoch_dom}
The random cluster measure is more tractable because it satisfies a natural monotonicity property, which we now introduce. For $r\in\mathbb{N}$, let $\NB_r(\ttT_d,o)=(\ttV_r,\ttE_r)$ be the depth-$r$ $d$-regular tree rooted at $o$, and let its augmentation be $\NB_r^*(\ttT_d,o)=(\ttV_r^*,\ttE_r^*)$. Denote by $\partial \ttV_r$ the leaves of $\NB_r(\ttT_d,o)$, so $|\partial \ttV_r|=d(d-1)^{r-1}$ and each $v\in\partial \ttV_r$ is at distance $r$ from $o$. A \emph{boundary condition at level $r$} is a partition $\mathscr C=\{C_*,C_1,\dots,C_\ell\}$ of $\partial \ttV_r$, and we write $\mathfrak C(r)$ for the set of all such partitions. We now define the ES and random cluster measures on $\NB_r^*(\ttT_d,o)$ with boundary condition $\mathscr C\in\mathfrak C(r)$.

\begin{definition}\label{def-ES-coupling}
    Fix $r\in \mathbb{N}$ and a boundary condition $\mathscr C=\{C_*,C_1,\dots,C_\ell\}\in \mathfrak C(r)$. Consider the graph $\NB_r^*(\ttT_d,o)$ and $p_e$ as defined in \eqref{eq-pe}. For $\eta\in \{0,1\}^{\ttE_r^*}$, we define $\varpi_{r}^{\mathscr C}=\varpi_{r}^{\mathscr C,\beta,B}$ as the probability measure on $[q]^{\ttV_r}\times \{0,1\}^{\ttE_r^*}$ such that for all $(\sigma,\eta)\in [q]^{\ttV_r}\times \{0,1\}^{\ttE_r^*}$ (using the same conventions as before), 
    \begin{align*}
    \varpi_{r}^{\mathscr C}[\sigma,\eta]\propto 
   &\ \prod_{e=(u,v)\in \ttE_r^*}(1-p_{e})^{1-\eta_{e}}(p_e\delta_{\sigma(u),\sigma(v)})^{\eta_e}\cdot \\
   \times&\ \mathbf{1}\{\sigma(v)=1,\forall v\in C_*;\sigma(u)=\sigma(v),\forall u,v\in C_k,1\le k\le \ell\}\,.
    \end{align*}
  Additionally, we define $\varphi_{r}^{\mathscr C}=\varphi_{r}^{\mathscr C,w,B}$ as the probability measure on $\{0,1\}^{\ttE_r^*}$ such that 
    \[
    \varphi_{r}^{\mathscr C}[\eta]\propto \prod_{e\in \ttE_r^*}p_e^{\eta_e}(1-p_e)^{1-\eta_e}q^{|\operatorname{C}(\eta,\mathscr C)|}\,,\ \forall \eta\in \{0,1\}^{\ttE_r^*}\,,
    \]
    where $\operatorname{C}(\eta,\mathscr C)$ is the set of components in the graph on $\ttV_r^*$ induced by $\eta$ given that all the vertices in the same set of $\{v^*\}\cup C_*,C_1,\dots,C_\ell$ are viewed as connected. It is straightforward to check that $\varphi_{r}^{\mathscr C}$ is nothing but the marginal of $\varpi_{r}^{\mathscr C}$ on $\eta$. We will refer to the measure $\varpi_{r}^{\mathscr C}(\text{resp. }\varphi_{r}^{\mathscr C})$ as the ES measure (resp. the random cluster measure) on $\NB_r^*(\ttT_d,o)$ with boundary condition $\mathscr C$. 
\end{definition}

For any $r\in \mathbb{N}$, we define a partial ordering $\prec$ on $\mathfrak C(r)$: for two partitions $\mathscr C,\mathscr C'$ of $\partial \ttV_r$, we have $\mathscr C\preceq \mathscr C'$ if $C_*\subset C_*'$ and $\mathscr C$ is a refinement of $\mathscr C'$. The partial ordering $\prec$ induces stochastic domination relations on set the random cluster measures on $\NB_r^*(\ttT_d,o)$ with various boundary conditions, as given by the next proposition.
\begin{proposition}\label{prop-stochastic-domination}
    For any $r\in \mathbb{N}$ and any two boundary conditions $\mathscr C,\mathscr C'\in \mathfrak C(r)$ with $\mathscr C\preceq \mathscr C'$, we have $\varphi_{\beta,h,r}^{\mathscr C}\preceq_{\operatorname{st}}\varphi_{\beta,h,r}^{\mathscr C'}$, that is, there is a coupling $(\eta,\eta')$ of $\varphi_{\beta,h,r}^{\mathscr C},\varphi_{\beta,h,r}^{\mathscr C'}$ such that almost surely $\eta\le \eta'$ holds entrywise. 
\end{proposition}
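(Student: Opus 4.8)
The plan is to deduce the claim from Holley's criterion for stochastic domination on the Boolean lattice $\{0,1\}^{\ttE_r^*}$. Since $\beta,B>0$ gives $p_e\in(0,1)$ for every $e\in\ttE_r^*$, both $\varphi_r^{\mathscr C}$ and $\varphi_r^{\mathscr C'}$ are strictly positive, so by the single-edge form of Holley's theorem it will suffice to verify that for every edge $e=(u,v)\in\ttE_r^*$ and every pair of configurations $\xi\le\zeta$ on $\ttE_r^*\setminus\{e\}$,
\[
\varphi_r^{\mathscr C'}\big[\eta_e=1\mid\eta|_{\ttE_r^*\setminus\{e\}}=\zeta\big]\ \ge\ \varphi_r^{\mathscr C}\big[\eta_e=1\mid\eta|_{\ttE_r^*\setminus\{e\}}=\xi\big].
\]
Specializing to $\mathscr C=\mathscr C'$ this is the FKG lattice condition for a single measure, while specializing to $\xi=\zeta$ it is the pointwise comparison of the two measures; Holley's theorem packages the two into the asserted stochastic domination, which is in turn equivalent, via Strassen's theorem, to the existence of the desired monotone coupling $(\eta,\eta')$.

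The first step will be to compute these conditional probabilities explicitly. Writing $w_e=p_e/(1-p_e)$ and using that inserting the edge $e$ into a configuration either leaves the number of (boundary-identified) clusters unchanged — exactly when the endpoints of $e$ already share a cluster — or else decreases it by exactly one, I expect to obtain
\[
\varphi_r^{\mathscr C}\big[\eta_e=1\mid\xi\big]=\frac{w_e}{\,w_e+q^{\,\mathbf{1}\{u\not\leftrightarrow v\}}\,},
\]
where $u\leftrightarrow v$ denotes the event that $u$ and $v$ lie in the same component of the graph on $\ttV_r^*$ induced by $\xi$ once the vertices within each block of $\{v^*\}\cup C_*,C_1,\dots,C_\ell$ have been identified. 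As $q\ge1$, this expression is nondecreasing in the indicator $\mathbf{1}\{u\leftrightarrow v\}$. The heart of the argument — and the step I expect to require the most care — is then the joint monotonicity of this connection event: if $\xi\le\zeta$ and $\mathscr C\preceq\mathscr C'$, then $u\leftrightarrow v$ in $(\xi,\mathscr C)$ forces $u\leftrightarrow v$ in $(\zeta,\mathscr C')$. Indeed, opening more edges cannot destroy a connection, and passing from $\mathscr C$ to $\mathscr C'$ only coarsens the identification pattern — since $C_*\subseteq C_*'$ and $\mathscr C$ refines $\mathscr C'$, the partition of $\ttV_r^*$ used to count clusters under $\varphi_r^{\mathscr C}$ refines the one used under $\varphi_r^{\mathscr C'}$ — so any connecting path survives the further identifications of $\mathscr C'$. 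Plugging this back into the formula gives the required inequality between conditional probabilities, completing the verification of Holley's condition.

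I do not expect a genuine obstacle beyond bookkeeping: one must consistently treat the ghost block $\{v^*\}\cup C_*$ as a single super-vertex when speaking of clusters, and invoke the precise single-edge version of Holley's inequality (e.g.\ as in Grimmett's account of the random-cluster model). As an alternative that avoids citing Holley as a black box, I could construct the coupling directly by running heat-bath single-site Glauber dynamics for $\varphi_r^{\mathscr C}$ and $\varphi_r^{\mathscr C'}$ in parallel from the all-closed configuration with common uniform randomness; the conditional-probability comparison above shows that the relation $\eta_t\le\eta_t'$ is preserved at every update, and ergodicity of each chain yields the monotone coupling in the $t\to\infty$ limit.
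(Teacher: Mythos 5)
Your argument is correct: the single-bond conditional probability computation, the monotonicity of the (boundary-identified) connection event in both the configuration and the coarsening of the boundary partition, and the reduction to Holley's criterion (or, as you note, the coupled heat-bath dynamics that proves it) are exactly the standard proof of this comparison inequality. The paper itself does not prove the proposition but attributes it to positive association of the random cluster model and cites \cite{duminil2017lectures}; your write-up is essentially the argument given in such references, so there is no substantive difference in approach.
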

Proposition~\ref{prop-stochastic-domination} is due to the positive association of the random cluster model, see, e.g. \cite{duminil2017lectures} for its proof. 

Now, consider two extreme boundary conditions $\mathscr C_{\free}$ where $C_*=\emptyset$ and all the $C_i$'s are singletons, and $\mathscr C_{\wired}$  where $C_*=\partial \ttV_r$, it holds that $\mathscr C_{\free}\preceq \mathscr C\preceq \mathscr C_{\wired}$ for any boundary condition $\mathscr C\in \mathfrak C(r)$. As a direct consequence of Proposition~\ref{prop-stochastic-domination}, abbreviating $\varphi_{r}^{\dagger}$ for $\varphi_{r}^{\mathscr C_\dagger},\dagger\in \{\free,\wired\}$, we see that the random cluster measure on $\NB_r^*(\ttT_d,o)$ with any boundary condition stochastically dominates $\varphi_{\beta,h,r}^{\free}$, and in turn stochastically dominated by $\varphi_{\beta,h,r}^{\wired}$. In particular, since the event $\{o\leftrightarrow v^*\}$ is increasing, we conclude that for any boundary condition $\mathscr C$ at level $r$, 
\begin{equation}\label{eq-domnnation-relation}
\varphi_{r}^{\free}[o\leftrightarrow v^*]\le \varphi_{r}^{\mathscr C}[o\leftrightarrow v^*]\le \varphi_{r}^{\wired}[o\leftrightarrow v^*]\,.
\end{equation}

Additionally, for $\dagger\in \{\free,\wired\}$ and any $r\le r'$, $\varphi_{r'}^{\dagger}$ restricting on $\ttE_r^*$ is a mixture of random cluster measures on $\ttE_r^*$ with various boundary conditions at level $r$, by the domain Markov property, it holds that 
\[
\varphi_{r'}^{\free}\!\mid_{\ttE_r^*}\succeq_{\operatorname{st}}\varphi_r^{\free}\,,\quad\varphi_{r'}^{\wired}\!\mid_{\ttE_r^*}\preceq_{\operatorname{st}}\varphi_r^{\wired}\,,\quad\forall r\le r'\,.
\]
This monotone relation implies that for $\dagger\in \{\free,\wired\}$, as $r\to\infty$, $\varphi_r^\dagger$ converges weakly to a probability measure $\varphi^\dagger=\varphi^{\dagger,w,B}$ on $\ttE^*$. This defines the candidates of local weak limit of random cluster measure on locally $\ttT_d$-like trees, as promised in Section~\ref{subsec-bg}.

\subsection{Small components in the random cluster model}
We end this section with an easy but useful lemma on the tail of a non-giant component in $\eta\sim \varphi_{G^*}$. 

\begin{lemma}\label{lem-cluster-size}
For any $B>0$, there exists $c<1$ such that for any $G=(V,E)$, $v\in V$ and $k\in \mathbb N$, 
\[
\varphi_{G^*}[v\not\leftrightarrow v^*\,,|C(v)|\ge k]\le c^k\,,
\]
where $C(v)$ is the component containing $v$. In particular, the probability that there exists a component not containing $v^*$ that is of size at least $(\log|V|)^2$ vanishes as $|V|\to\infty$. 
\end{lemma}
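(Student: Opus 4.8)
The plan is to establish the bound via a direct union bound over candidate clusters, using a stochastic-domination comparison with an independent (Bernoulli) percolation that dominates the random cluster measure from above on the relevant event. First I would exploit the elementary fact that, because the external field satisfies $B>0$, the cluster $C(v)$ in $\eta\sim\varphi_{G^*}$ — \emph{conditioned on} $v\not\leftrightarrow v^*$ — can be stochastically dominated by a percolation in which each non-ghost edge is open with some probability $p<1$ bounded away from $1$ uniformly in $G$. The key point is that opening an edge inside a putative cluster merges two components, which decreases the number $|\operatorname{C}(\eta)|$ and hence multiplies the weight by $q^{-1}<1$ (since $q>2>1$), while also the field edges to $v^*$ from that cluster are all closed on the event $v\not\leftrightarrow v^*$, which is itself an event of probability decaying in the cluster size. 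Concretely, comparing the weight of a configuration in which a spanning tree of a size-$k$ set $S\ni v$ is open and all $E^*$-edges leaving $S$ are closed against the configuration with those tree edges removed, one gains a factor at most $\big(\tfrac{w}{1}\big)^{k-1} q^{-(k-1)}$ from the internal edges and a factor $(e^B-1)^0$-vs-normalization... more cleanly: the contribution of $\{C(v)=S,\ v\not\leftrightarrow v^*\}$ to $\mathcal Z_{G^*}^{\RC}$ is at most $(e^{-B})^{|S|}$ times the contribution of configurations restricted to $E^*\setminus E^*[S]$ with $S$ deleted, because every one of the $|S|$ ghost-edges from $S$ must be closed, each contributing a factor $1$ versus weight $e^B-1$ if open, and closing them costs a factor bounded by $e^{-B}$ relative to the free normalization; the internal edge/cluster weights only help (they are $\le \max(w,1)^{|S|}$ and the cluster count only decreases). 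This gives $\varphi_{G^*}[C(v)=S,\ v\not\leftrightarrow v^*]\le A^{|S|} e^{-B|S|}$ for an absolute constant $A$ depending only on $w,q$, after also accounting for the ratio of partition functions.

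Next I would run the union bound over all connected vertex sets $S$ with $v\in S$, $|S|=m\ge k$. The number of such sets in a graph is not uniformly bounded (degrees are unbounded in the uniformly-sparse setting), so a naive $D^m$ bound with $D$ the maximal degree is not available. The fix is to bound instead by the number of \emph{trees} on $m$ labelled... no — better: I would use the standard observation that $\sum_{S\ni v,\ |S|=m,\ S\ \mathrm{connected}} 1$ can be controlled because each such $S$ supports at least one spanning subtree, and we can reorganize the sum over $(S,\text{spanning tree})$ pairs as an exploration process: revealing the tree edge-by-edge from $v$, each new edge is chosen among edges incident to the currently-revealed set. The subtlety of unbounded degree is absorbed into the weight: the per-edge factor is $A = w/(1+\text{something})$ coming from the edge weights themselves, and one sums $\prod_{\text{edges}} w_e q^{-1}$ over choices. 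Because $w_e q^{-1} < 1$ would not suffice on its own at high degree, one pairs it with the decay $e^{-B}$ per vertex; choosing the geometric decay rate $c = \big(\inf\text{over such local estimates}\big) <1$ small enough that $c := \sup_G \big( \text{per-vertex growth}\times e^{-B}\big) < 1$, which holds precisely because $B>0$ makes the $e^{-B}$ factor strictly helpful. I expect the cleanest route is actually to cite the known fact (e.g.\ from \cite{duminil2017lectures}) that the random cluster measure on $G^*$, restricted to the complement of the ghost vertex, is stochastically dominated by Bernoulli($p$) percolation on $G$ with $p = \frac{w}{w+q\min(1,\ldots)}$ adjusted for the field, and then invoke a standard subcritical-cluster tail bound; but since we need uniformity over all $G$ with no degree bound, the safest is the self-contained exploration argument above.

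Once $\varphi_{G^*}[v\not\leftrightarrow v^*,\ |C(v)|=m]\le c^m$ (absorbing polynomial factors by shrinking $c$), summing over $m\ge k$ gives $\varphi_{G^*}[v\not\leftrightarrow v^*,\ |C(v)|\ge k]\le \frac{c^k}{1-c}$, and after a final adjustment of the constant this is $\le c^k$ as stated. For the ``in particular'' clause: apply the bound with $k=(\log|V|)^2$ at each fixed vertex $v$, obtaining $\varphi_{G^*}[v\not\leftrightarrow v^*,\ |C(v)|\ge (\log|V|)^2]\le c^{(\log|V|)^2}$, then union-bound over the $|V|$ vertices; since $|V|\cdot c^{(\log|V|)^2} = \exp\big(\log|V| - (\log(1/c))(\log|V|)^2\big)\to 0$, we conclude that with high probability no component avoiding $v^*$ exceeds size $(\log|V|)^2$. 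The main obstacle I anticipate is handling the lack of a uniform degree bound: one must ensure the geometric rate $c$ truly does not depend on $G$, which forces the argument to extract the decay from the field strength $B>0$ (the $e^{-B}$ per boundary vertex) rather than from any edge-weight smallness, and to set up the cluster-counting sum so that the field contribution dominates the entropy of connected sets regardless of how large local degrees are.
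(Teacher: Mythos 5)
There is a genuine gap in the core estimate. Your plan is a union bound over connected sets $S\ni v$ with a per-set bound of the form $\varphi_{G^*}[C(v)=S,\,v\not\leftrightarrow v^*]\le A^{|S|}e^{-B|S|}$, and you acknowledge that the entropy of connected sets is the problem when degrees are unbounded, proposing to ``absorb'' it into the per-vertex factor $e^{-B}$. That absorption cannot work uniformly in $G$: the number of connected sets of size $m$ containing $v$ is not bounded by $C^m$ for any constant $C=C(w,q,B)$ — e.g.\ in a star with center $v$ and $N$ leaves there are $\binom{N}{m-1}$ such sets, so your per-set bound summed over sets diverges as $N\to\infty$ even for fixed $m$, while the true probability stays below $1$ (the events are disjoint). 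The per-vertex entropy grows with the degree, whereas $e^{-B}$ (or more precisely $1-q^{-1}(1-e^{-B})$, since opening a ghost edge may merge clusters and cost a factor $q^{-1}$) is a fixed constant, so no choice of $c<1$ depending only on $(d,q,w,B)$ — and the lemma allows arbitrary $G$, with no degree bound at all — can beat it. Your fallback, stochastic domination by a subcritical Bernoulli percolation plus a standard cluster-tail bound, fails for the same reason and also misreads the mechanism: the lemma does not assert that large clusters are unlikely (at criticality they are macroscopic); it asserts that a large cluster is unlikely to \emph{avoid} $v^*$, so the decay must come exclusively from the ghost edges, not from the internal cluster structure or from any entropy-versus-energy competition.

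The paper's proof avoids all counting: condition on the restriction $\hat\eta=\eta\!\mid_E$ and let $\hat C(v)$ be the component of $v$ in $\hat\eta$; on the event $\{v\not\leftrightarrow v^*,\,|C(v)|\ge k\}$ one has $C(v)=\hat C(v)$, $|\hat C(v)|\ge k$, and every ghost edge from $\hat C(v)$ to $v^*$ is closed. By the finite-energy property, conditionally on $\hat\eta$ the ghost edges stochastically dominate i.i.d.\ Bernoulli with parameter $p_0=q^{-1}(1-e^{-B})>0$, so the conditional probability that at least $k$ specified ghost edges are all closed is at most $(1-p_0)^k=:c^k$; averaging over $\hat\eta$ gives the first claim with a constant depending only on $q,B$, uniformly in $G$, and the ``in particular'' clause then follows by the union bound over $v\in V$ exactly as you describe (that part of your argument is fine). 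If you want to salvage your write-up, replace the sum over connected sets by this conditioning on $\eta\!\mid_E$: the point is that each vertex of the cluster carries its own (conditionally dominated, independent) chance $p_0$ of linking to $v^*$, which is what turns ``size $\ge k$'' directly into the geometric factor.
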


\begin{proof}
Fix $v\in V$ and $k\in\mathbb{N}$.  Condition on a realization of $\hat\eta=\eta\mid_E$ drawn from the marginal of $\varphi_{G^*}^{w,B}$ on $\{0,1\}^{E}$, and let $\hat C(v)$ be the component of $\hat\eta$ containing $v$.  By the finite‐energy property of the random cluster model, for any such $\hat\eta$, the conditional law of $\check\eta\in\{0,1\}^{E^*\setminus E}$ stochastically dominates the Bernoulli percolation with parameter $p_0=q^{-1}(1-e^{-B})>0$.  Hence, if $|\hat C(v)|\ge k$, the conditional probability that $v$ is not connected to $v^*$ in the full configuration $\eta$ is at most $(1-p_0)^k=:c^k$.  The first claim then follows by the iterated expectation theorem, and the second follows by applying this tail bound together with the union bound.
\end{proof}

\section{Local weak limits of Potts and random cluster measures}\label{sec-local-weak-limit}
In this section, we prove Theorems~\ref{thm-main-main} and~\ref{thm-main-RCM}.  Throughout, we fix an integer $d\ge3$ and a real $q>2$, and when discussing the Potts model we further assume that $q\in\mathbb{N}$.  We also fix parameters $\beta,w,B>0$ satisfying $(w,B)\in\mathsf R'_c$ and $\beta=\log(1+w)$.  Finally, we let $G_n=(V_n,E_n)$ be a sequence of uniform edge–expander graphs with $G_n\stackrel{\mathrm{loc}}{\longrightarrow}\ttT_d$, and for brevity we write $\mu_n=\mu_{G_n}^{\beta,B}$ and $\varphi_n=\varphi_{G_n^*}^{w,B}$. We also abbreviate $\mu^\dagger=\mu^{\dagger,\beta,B}$ and $\varphi^\dagger=\varphi^{\dagger,w,B}$ for $\dagger\in \{\free,\wired\}$. 

Since it is known from \cite[Lemma 2.9-(a)]{BDS23} that any subsequence of $\{\mu_n\}$ or $\{\varphi_n\}$ contains a locally weakly convergent subsequence, we may assume without loss of generality that both $\{\mu_n\}$ and $\{\varphi_n\}$ converge locally weakly. It remains for us to show that $\{\mu_n\}$ and $\{\varphi_n\}$ in fact \emph{converge locally weakly in probability} to measures in $\mathcal M$ and $\mathcal N$, respectively.

For any $n\in \mathbb N$, $\dagger\in \{\free,\wired\}$ and $\varepsilon>0$, recall the measures $\check{\nu}^{\free},\check{\nu}^{\wired}\in \Delta^\star$. We define the events 
\begin{equation}\label{eq-def-V-n-eps}
\mathcal V_{n,\varepsilon}^{\dagger}:=\Big\{\sigma\in [q]^{V_n}:\big||V_n|^{-1}\#\{v\in V_n:\sigma(v)=i\}-\check{\nu}^{\dagger}(i)\big|<\varepsilon\,, \forall i \in [q]\Big\}\,,\quad \dagger\in \{\free,\wired\}\,.
\end{equation}
Moreover, recall that we denote $\psi^{\dagger}=\varphi^{\dagger}[o\leftrightarrow v^*],\dagger\in \{\free,\wired\}$. We also define
the events
\begin{equation}\label{eq-def-E-n-eps}
           \mathcal E_{n,\varepsilon}^\dagger:=\Big\{\eta\in \{0,1\}^{E^*_n}:\big||V_n|^{-1}\#\{v\in V_n:v\leftrightarrow v^*\}- \psi^{\dagger}\big|\le\varepsilon\Big\}\,,\quad\dagger\in \{\free,\wired\}\,.
\end{equation}
We have the following easy relation from the ES coupling. 
\begin{lemma}\label{lem-relation-ES}
    For any $\varepsilon>0$, it holds that as $n\to\infty$,
    \[
    \mu_n[\mathcal V_{n,\varepsilon}^\dagger]\ge \varphi_n[\mathcal E_{n,\varepsilon}^\dagger]-o(1)\,,\quad\forall \dagger\in \{\free,\wired\}\,.
    \]
\end{lemma}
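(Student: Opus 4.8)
\textbf{Proof plan for Lemma~\ref{lem-relation-ES}.}
The plan is to use the Edwards--Sokal coupling $\varpi_n = \varpi_{G_n^*}^{\beta,B}$, whose $\eta$-marginal is $\varphi_n$ and whose $\sigma$-marginal is $\mu_n$, and to show that on the event $\mathcal E_{n,\varepsilon'}^\dagger$ (for a slightly smaller $\varepsilon' < \varepsilon$) the conditional $\varpi_n$-probability that $\sigma \in \mathcal V_{n,\varepsilon}^\dagger$ is $1 - o(1)$. Combining this with $\varphi_n[\mathcal E_{n,\varepsilon'}^\dagger] \ge \varphi_n[\mathcal E_{n,\varepsilon}^\dagger]$ and then taking $\varepsilon' \uparrow \varepsilon$ (or just absorbing the slack) gives $\mu_n[\mathcal V_{n,\varepsilon}^\dagger] = \varpi_n[\sigma \in \mathcal V_{n,\varepsilon}^\dagger] \ge \varphi_n[\mathcal E_{n,\varepsilon}^\dagger] - o(1)$, which is the claim. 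The point is that the $\sigma$-statistics $|V_n|^{-1}\#\{v:\sigma(v)=i\}$ are, conditionally on $\eta$, an average of i.i.d.\ color choices over the clusters, so they concentrate around their conditional mean, which on $\mathcal E_{n,\varepsilon'}^\dagger$ is close to $\check\nu^\dagger(i)$.

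First I would recall Proposition~\ref{prop-ES-independent-smapling}: given $\eta$ with clusters $C(\eta) = \{C_*, C_1,\dots,C_\ell\}$ (with $v^* \in C_*$), one sets $\sigma \equiv 1$ on $C_*$ and assigns i.i.d.\ uniform colors $c_1,\dots,c_\ell \in [q]$ to $C_1,\dots,C_\ell$. Fix a color $i \in [q]$. Write $N_i(\sigma) := \#\{v \in V_n : \sigma(v) = i\}$ and decompose it as a sum over clusters. For $i = 1$ this is $|C_* \cap V_n| + \sum_{k=1}^\ell |C_k| \mathbf 1\{c_k = 1\}$; for $i \ne 1$ it is $\sum_{k=1}^\ell |C_k| \mathbf 1\{c_k = i\}$. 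The conditional expectation is therefore
\[
\mathbb{E}[N_i(\sigma) \mid \eta] = \mathbf 1\{i=1\}\,|C_*\cap V_n| + \tfrac1q \sum_{k=1}^\ell |C_k| = \mathbf 1\{i=1\}\,|C_*\cap V_n| + \tfrac1q\bigl(|V_n| - |C_*\cap V_n|\bigr),
\]
which, writing $\rho := |V_n|^{-1}|C_*\cap V_n| = |V_n|^{-1}\#\{v : v \leftrightarrow v^*\}$, equals $|V_n|\bigl(\mathbf 1\{i=1\}\rho + \tfrac1q(1-\rho)\bigr)$. By Corollary~\ref{cor-Potts-RC} applied on the tree (or equation~\eqref{eq-mu-psi-relation}), $\check\nu^\dagger(1) = \tfrac{q-1}{q}\psi^\dagger + \tfrac1q$ and $\check\nu^\dagger(i) = \tfrac{1-\psi^\dagger}{q}$ for $i\ne1$; so on $\mathcal E_{n,\varepsilon'}^\dagger$, where $|\rho - \psi^\dagger| \le \varepsilon'$, the normalized conditional mean $|V_n|^{-1}\mathbb{E}[N_i(\sigma)\mid\eta]$ is within $\varepsilon'$ of $\check\nu^\dagger(i)$ for every $i$.

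Next I would control the deviation of $N_i(\sigma)$ from its conditional mean. Conditionally on $\eta$, $N_i(\sigma)$ is a sum of independent bounded terms $|C_k|\mathbf 1\{c_k=i\}$, so a Hoeffding/Azuma bound gives, for any $t>0$,
\[
\varpi_n\bigl[\,|N_i(\sigma) - \mathbb{E}[N_i(\sigma)\mid\eta]| \ge t \;\big|\; \eta\,\bigr] \le 2\exp\!\Bigl(-\frac{2t^2}{\sum_{k=1}^\ell |C_k|^2}\Bigr).
\]
With $t = \tfrac12(\varepsilon - \varepsilon')|V_n|$ this is small provided $\sum_k |C_k|^2 = o(|V_n|^2)$. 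Here is where I would invoke Lemma~\ref{lem-cluster-size}: with $\varphi_n$-probability $1-o(1)$, every cluster other than $C_*$ has size at most $(\log|V_n|)^2$, and on that event $\sum_{k=1}^\ell |C_k|^2 \le (\log|V_n|)^2 \sum_k |C_k| \le (\log|V_n|)^2 |V_n| = o(|V_n|^2)$, so the conditional deviation probability is $\exp(-\Omega(|V_n|/(\log|V_n|)^4)) = o(1)$. Taking a union bound over the $q$ colors and combining the ``small clusters'' event, the concentration event, and $\mathcal E_{n,\varepsilon'}^\dagger$ shows that $\varpi_n[\sigma \in \mathcal V_{n,\varepsilon}^\dagger] \ge \varphi_n[\mathcal E_{n,\varepsilon'}^\dagger] - o(1) \ge \varphi_n[\mathcal E_{n,\varepsilon}^\dagger] - o(1)$ (using $\varepsilon' < \varepsilon$ so $\mathcal E_{n,\varepsilon}^\dagger \subseteq \mathcal E_{n,\varepsilon'}^\dagger$, hence monotonicity of probability), which is exactly the asserted bound since the $\sigma$-marginal of $\varpi_n$ is $\mu_n$. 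The one mild subtlety — the ``main obstacle'' if any — is making sure the bound on $\sum_k|C_k|^2$ is genuinely $o(|V_n|^2)$ rather than merely $O(|V_n|^2)$; this is precisely what the $(\log|V_n|)^2$ cluster-size bound from Lemma~\ref{lem-cluster-size} (which uses $B>0$) delivers, and without it one would only get concentration, not exponential decay, but $o(1)$ is all that is needed here.
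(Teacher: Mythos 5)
Your overall route is the same as the paper's: pass to the ES coupling, condition on $\eta$, compute the conditional mean of the color counts from the uniform cluster-coloring (Proposition~\ref{prop-ES-independent-smapling}), and get concentration from Azuma/Hoeffding together with the $(\log|V_n|)^2$ bound on non-giant clusters from Lemma~\ref{lem-cluster-size}. However, the final chaining step is wrong as written: for $\varepsilon'<\varepsilon$ the inclusion goes the other way, namely $\mathcal E_{n,\varepsilon'}^\dagger\subseteq\mathcal E_{n,\varepsilon}^\dagger$, so $\varphi_n[\mathcal E_{n,\varepsilon'}^\dagger]\le\varphi_n[\mathcal E_{n,\varepsilon}^\dagger]$, and your bound $\varpi_n[\sigma\in\mathcal V_{n,\varepsilon}^\dagger]\ge\varphi_n[\mathcal E_{n,\varepsilon'}^\dagger]-o(1)$ does not yield the lemma, which requires the lower bound $\varphi_n[\mathcal E_{n,\varepsilon}^\dagger]-o(1)$ with the full tolerance $\varepsilon$.

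The gap is easily repaired, and the repair is already implicit in your own mean computation: work on $\mathcal E_{n,\varepsilon}^\dagger$ itself, with no shrinking of $\varepsilon$. There, writing $\rho=|V_n|^{-1}|C_*\cap V_n|$, the normalized conditional mean of $N_1(\sigma)$ differs from $\check\nu^\dagger(1)$ by $\tfrac{q-1}{q}|\rho-\psi^\dagger|\le\tfrac{q-1}{q}\varepsilon$, and for $i\neq 1$ the difference is at most $\tfrac{\varepsilon}{q}$; in every case there is leftover slack at least $\tfrac{\varepsilon}{q}$. Hence Azuma at deviation $\tfrac{\varepsilon}{2q}|V_n|$ (say), combined with the small-cluster event and a union bound over colors, gives $\varpi_n[\sigma\notin\mathcal V_{n,\varepsilon}^\dagger,\ \eta\in\mathcal E_{n,\varepsilon}^\dagger]=o(1)$, and therefore $\mu_n[\mathcal V_{n,\varepsilon}^\dagger]\ge\varphi_n[\mathcal E_{n,\varepsilon}^\dagger]-o(1)$ directly. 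This is exactly how the paper argues: it centers the non-giant color counts at $\tfrac1q|V_n\setminus C_*|$ and bounds the probability of a discrepancy of size $\tfrac{\varepsilon}{q}|V_n|$, rather than introducing a second tolerance $\varepsilon'$. With that one correction your proof coincides with the paper's.
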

\begin{proof}
Fix $\epsilon>0$ and $\dagger\in \{\free,\wired\}$. 
Let $\varpi_n$ be the ES measure on $[q]^{V_n}\times \{0,1\}^{E_n^*}$ that couples $\mu_n$ and $\varphi_n$. 
It suffices to show that as $n\to\infty$, $\varpi_n[\sigma\notin \calV_{n,\varepsilon}^\dagger, \eta\in \mathcal E_{n,\varepsilon}^\dagger] = o(1)$.

For $\eta\in \mathcal E_{n,\varepsilon}^\dagger$, let $C_*$ be the component of $\eta$ containing $v^*$ and let $C_1, \ldots,C_\ell$ be the remaining components. It is straightforward to check that for $\sigma\notin \mathcal V_{n,\varepsilon}^\dagger$, there exists $i\in [q]$ such that 
\begin{equation}\label{eq-color-discrepancy}
\big|\#\{v\in C_1\cup\cdots\cup C_\ell:\sigma(v)=i\}-\tfrac{1}{q}|V_n\setminus C_*|\big|\ge \tfrac{\varepsilon}{q}|V_n|\,.
\end{equation}

Conditioning on any realization of $\eta$, by Proposition~\ref{prop-ES-independent-smapling} and Azuma's inequality together with the union bound, we see the conditional probability that \eqref{eq-color-discrepancy} happens for some $i\in [q]$ is at most
\[
2q\cdot \exp\left(-\frac{\varepsilon^2|V_n|^2/q^2}{2\sum_{i=1}^\ell |C_i|^2}\right)\,.
\]
Additionally, Lemma~\ref{lem-cluster-size} states that with probability $1-o(1)$ as $n\to\infty$, $\max_{1\le i\le \ell}|C_i|\le (\log |V_n|)^2$, which implies $\sum_{i=1}^{\ell}|C_i|^2\le |V_n|(\log|V_n|)^2$. Altogether it yields that as $n\to\infty$,
\[
\varpi_n[\sigma\notin \mathcal V_{n,\varepsilon}^\dagger,\eta\in \mathcal E_{n,\varepsilon}^\dagger]\le o(1)+2q\cdot \exp\left(-\frac{\varepsilon^2|V_n|}{2q^2(\log|V_n|)^2}\right)=o(1)\,,
\]
as desired. This completes the proof.
\end{proof}

The main technical input of this section is the following proposition.

\begin{proposition}\label{prop-admissibility-RCM}
For any $\varepsilon>0$, it holds that $\varphi_n[\mathcal{E}_{n,\varepsilon}^{\free}\cup \mathcal{E}_{n,\varepsilon}^{\wired}]=1-o(1)$ as $n\to\infty$.
\end{proposition}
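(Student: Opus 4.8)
The plan is to reduce the proposition to an exponential large–deviation estimate for a single ``magnetization'' observable. Write $N(\eta):=\#\{v\in V_n:v\leftrightarrow v^*\}$ for the number of vertices of $G_n$ in the component $C_*(\eta)$ of $v^*$, so that the complement of $\mathcal E_{n,\varepsilon}^{\free}\cup\mathcal E_{n,\varepsilon}^{\wired}$ is exactly $\{\,|N/|V_n|-\psi^{\free}|>\varepsilon\,\}\cap\{\,|N/|V_n|-\psi^{\wired}|>\varepsilon\,\}$. Since $N/|V_n|\in[0,1]$ and, on the critical line, $\psi^{\free}\neq\psi^{\wired}$, it suffices to prove: for every $\psi\in[0,1]\setminus\{\psi^{\free},\psi^{\wired}\}$ there exist $\delta_\psi,c_\psi>0$ such that $\varphi_n[\,|N/|V_n|-\psi|<\delta_\psi\,]\le e^{-c_\psi|V_n|}$ for all large $n$. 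The complement event is a compact subset of $[0,1]$ disjoint from $\{\psi^{\free},\psi^{\wired}\}$, so a finite subcover and a union bound then give $\varphi_n[(\mathcal E_{n,\varepsilon}^{\free}\cup\mathcal E_{n,\varepsilon}^{\wired})^c]\le e^{-\Theta(|V_n|)}$, which is stronger than needed.

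To obtain the exponential bound I would proceed in three steps. \emph{Step 1 (local rank-2 approximation).} By the rank-2 approximation of \cite{BBC23,CH25}, the random cluster partition function of $G_n^*$ agrees, up to a multiplicative $e^{o(|V_n|)}$ factor, with the partition function of an auxiliary pairwise two-spin model on $G_n$ whose spin at a vertex heuristically records membership in the giant cluster. I would establish a \emph{local} refinement: the partial sum $\sum_{\eta:\,N(\eta)\approx\psi|V_n|}\prod_{e\in E_n^*}w_e^{\eta_e}q^{|\operatorname{C}(\eta)|-1}$ is likewise approximated, up to $e^{o(|V_n|)}$, by the partial partition function of the two-spin model over configurations with $\approx\psi|V_n|$ giant-type spins. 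Dividing by the (rank-2 approximated) full partition function yields $\varphi_n[\,N/|V_n|\approx\psi\,]\le e^{o(|V_n|)}\cdot\mu_n^{\mathrm{2sp}}[\text{magnetization}\approx\psi]$. \emph{Step 2 (reduction to a zero-field Ising model).} Any pairwise two-spin model becomes an Ising model after a change of variables (as in \cite{SS14}), so $\mu_n^{\mathrm{2sp}}$ is an Ising model on $G_n$ with parameters determined by $(w,B,q,d)$. Using precisely that $(w,B)\in\mathsf R_c'$ — equivalently that the two Bethe fixed points are equally dominant — I would verify that this Ising model has \emph{zero external field} and lies in its non-uniqueness regime, its two typical magnetizations corresponding under the change of variables to $\psi^{\free}$ and $\psi^{\wired}$.

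\emph{Step 3 (Ising large deviations on expanders).} It then remains to show that the normalized magnetization of a zero-field ferromagnetic Ising model on the expander sequence $\{G_n\}$ has exponentially small probability of lying outside neighborhoods of its two typical values. Here I would use an Edwards–Sokal coupling with FK-Ising percolation: conditioned on the FK configuration, the magnetization is a signed sum of cluster sizes with i.i.d.\ $\pm1$ signs, so it is controlled once the cluster-size profile is understood. Exploiting the uniform edge-expansion, I would show by combinatorial-probabilistic arguments in the spirit of \cite{KLS20} — together with a small-cluster tail bound analogous to Lemma~\ref{lem-cluster-size} — that with probability $1-e^{-\Theta(|V_n|)}$ the FK-Ising configuration has a single macroscopic cluster of size concentrated near a deterministic $\theta|V_n|$ and all other clusters of size $O(\mathrm{polylog}\,|V_n|)$; conditioning on this forces the magnetization into an $e^{o(|V_n|)}$-neighborhood of $\pm\theta$ with exponentially small probability elsewhere. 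Chaining Steps 1–3 and absorbing the subexponential errors (which must be made uniform in $\psi$) gives the claimed bound.

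The main obstacle I anticipate is Step 3: the purely qualitative tightness arguments of \cite{MMS12} yield only an $o(1)$ deviation bound, so one genuinely needs the expansion hypothesis and a hands-on analysis of the asymptotics and fluctuations of the giant FK-Ising cluster, in particular controlling the joint effect of expansion and the polylog-size non-giant clusters. A secondary technical point is making the local rank-2 approximation in Step 1 precise enough that the $e^{o(|V_n|)}$ error is uniform over $\psi$, so that the final union bound over the finite subcover goes through.
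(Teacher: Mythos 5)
Your proposal follows the same route as the paper: a local refinement of the rank-2 approximation of \cite{BBC23,CH25} bounding the restricted sum by a partial two-spin partition function (the paper's Proposition~\ref{prop-rank-2-approximation}), the observation that on $\mathsf R_c'$ the resulting Ising model has zero external field with typical magnetizations $\pm m(\beta^*)$ matching $\psi^{\free},\psi^{\wired}$ under the affine change of variables (Lemma~\ref{lem-numerical}), and an exponential deviation estimate for that Ising model proved through the ES coupling with FK-Ising and sprinkling arguments in the spirit of \cite{KLS20} (Proposition~\ref{prop-Ising-large-deviation}). Your covering/union-bound packaging over $\psi$ versus the paper's direct use of the set $I_\varepsilon$ is immaterial.

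There is, however, a concrete problem in your Step 3 as stated. You want the event ``one macroscopic cluster near $\theta|V_n|$ and \emph{all} other clusters of size $O(\operatorname{polylog}|V_n|)$'' to hold with probability $1-e^{-\Theta(|V_n|)}$. Per-vertex cluster-tail bounds of the form $c^{(\log n)^2}$, even if available, only give failure probability $e^{-\Theta((\log n)^2)}$ after a union bound, not $e^{-\Theta(|V_n|)}$; and since Step 1 costs a multiplicative $e^{o(|V_n|)}$, any bound that is not exponential in $|V_n|$ is useless here. Moreover, the tail bound of Lemma~\ref{lem-cluster-size} you invoke relies on the ghost edges ($B>0$), which are absent for the zero-field FK-Ising model arising in Step 3. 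The paper circumvents both issues by never asking for polylog control of individual small clusters: it fixes a constant truncation level $R=R(\bbeta,\delta)$ and works with $X_n(R)=\sum_k|C_k|\mathbf 1\{|C_k|\ge R\}$, which changes by at most $2R$ under a single edge toggle and hence concentrates at scale $e^{-\Theta(|V_n|)}$ by Azuma; its mean is pinned down using the merely-$o(1)$ magnetization result of \cite{MMS12} together with the stochastic domination $\psi^{\bw}\succeq_{\operatorname{st}}\psi^{\bw'}\oplus\operatorname{Ber}^{\xi}$ and the expander sprinkling lemma (Lemmas~\ref{lem-RCM-Ber-domination}--\ref{lem-X(R)}), which also upgrades the ``total mass in large clusters'' bound to a bound on the single giant cluster. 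The clusters of size $<R$ are then handled by Azuma over the i.i.d.\ signs with variance at most $R|V_n|$, again exponentially. So the overall architecture of your proof is the paper's, but the specific intermediate claim you lean on in Step 3 would fail and must be replaced by this fixed-$R$ truncation argument.
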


The proof of Proposition~\ref{prop-admissibility-RCM} is deferred to Sections~\ref{subsec-rank-2-approximation} and \ref{subsec-large-deviation}. Combining Proposition~\ref{prop-admissibility-RCM} with Lemma~\ref{lem-relation-ES}, we can pick a sequence $\{\varepsilon_n\}$ such that the following properties hold as $n\to\infty$:
\begin{align}
   \label{eq-choice-eps_n-1} \varepsilon_n\to 0&\,,\quad \varepsilon_n|V_n|\to\infty\,,\\
   \label{eq-choice-eps_n-2} \varphi_n[\mathcal E_{n,\varepsilon_n}^{\free}\cup& \mathcal E_{n,\varepsilon_n}^{{\wired}}]=1-o(1)\,,\\
   \label{eq-choice-eps_n-3} \mu_n[\mathcal V_{n,\varepsilon_n}^\dagger]=\varphi_n[&\mathcal E_{n,\varepsilon_n}^\dagger]+o(1),\quad\forall \dagger\in \{\free,\wired\}\,.
\end{align}
Moreover, we define 
\[
\mu_n^\dagger[\cdot]:=\mu_n[\cdot\mid \mathcal V_{n,\varepsilon_n}^\dagger]\,,\quad \varphi_n^{\dagger}[\cdot]:=\varphi_n[\cdot\mid \mathcal E_{n,\varepsilon_n}^\dagger]\,,\quad\forall \dagger\in \{\free,\wired\}\,.
\]
We claim that it suffices to show the following local weak convergence in probability result for $\mu_n^\dagger$ and $\varphi_n^\dagger$. 
\begin{proposition}\label{prop-lwcp}
    For $\dagger\in \{\free,\wired\}$, assume that $\liminf_{n\to\infty}\varphi_n[\mathcal E_{n,\varepsilon_n}^\dagger]>0$, then we have
    $$\mu_n^\dagger\stackrel{\operatorname{lwcp}}{\longrightarrow}\mu^\dagger\,,\quad \varphi_n^\dagger\stackrel{\operatorname{lwcp}}{\longrightarrow}\varphi^{\dagger}\,.$$
\end{proposition}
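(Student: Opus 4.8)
## Proof proposal for Proposition~\ref{prop-lwcp}

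\textbf{Strategy.} The plan is to exploit the stochastic-ordering structure from Section~\ref{subsec:stoch_dom}, together with the fact that conditioning on $\mathcal E_{n,\varepsilon_n}^\dagger$ pins down the size of the giant cluster $C_*(\eta)$, to sandwich any subsequential local weak limit of $\varphi_n^\dagger$ between $\varphi^{\free}$ and $\varphi^{\wired}$ and then force it to equal $\varphi^\dagger$. Once the random-cluster statement $\varphi_n^\dagger\stackrel{\operatorname{lwcp}}{\longrightarrow}\varphi^\dagger$ is established, the Potts statement $\mu_n^\dagger\stackrel{\operatorname{lwcp}}{\longrightarrow}\mu^\dagger$ follows by transferring through the Edwards--Sokal coupling and Proposition~\ref{prop-ES-independent-smapling}, using that $\mathcal V_{n,\varepsilon_n}^\dagger$ and $\mathcal E_{n,\varepsilon_n}^\dagger$ are matched up to $o(1)$ by \eqref{eq-choice-eps_n-3} and Lemma~\ref{lem-relation-ES}, exactly as in the proof of that lemma.

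\textbf{Main steps for the random-cluster case.} First I would pass to a subsequence along which $\varphi_n^\dagger$ converges locally weakly (possible by \cite[Lemma 2.9]{BDS23}) to some random measure $\mathtt q$ on $\mathcal P(\{0,1\}^{\ttE^*})$; the goal is to show $\mathtt q=\delta_{\varphi^\dagger}$. The key is a two-sided comparison. \emph{Upper bound on connections:} by the domain Markov property and Proposition~\ref{prop-stochastic-domination}, the restriction of $\varphi_n$ to the edges of any fixed neighborhood $\NB_r^*(G_n,v_n)\cong\NB_r^*(\ttT_d,o)$ is a mixture of finite-volume random-cluster measures with boundary conditions at level $r$, all stochastically dominated by $\varphi_r^{\wired}$ and dominating $\varphi_r^{\free}$; hence any local weak limit of $\varphi_n$ (conditioned or not) is sandwiched in the stochastic order between $\varphi^{\free}$ and $\varphi^{\wired}$, and in particular the root connection probability $\mathtt q[o\leftrightarrow v^*]$ lies in $[\psi^{\free},\psi^{\wired}]$. \emph{Pinning via the counting identity:} conditioned on $\mathcal E_{n,\varepsilon_n}^\dagger$, the number of vertices connected to $v^*$ is $(\psi^\dagger+o(1))|V_n|$, while by Lemma~\ref{lem-cluster-size} all other clusters are $o(|V_n|)$ (indeed $\le(\log|V_n|)^2$ w.h.p.), so for a uniformly random $v_n$ the probability $\varphi_n^\dagger[v_n\leftrightarrow v^*]\to\psi^\dagger$; by local weak convergence this forces $\mathtt q[o\leftrightarrow v^*]=\psi^\dagger$. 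Combined with the sandwich and the fact that $\varphi^{\free}$ (resp. $\varphi^{\wired}$) is the \emph{minimal} (resp. \emph{maximal}) such limit, a measure stochastically between $\varphi^{\free}$ and $\varphi^{\wired}$ whose root-connection probability equals that of the extreme point must coincide with that extreme point — this is where I would invoke the standard rigidity argument: if $\varphi^{\free}\preceq_{\operatorname{st}}\lambda\preceq_{\operatorname{st}}\varphi^{\wired}$ and $\lambda[o\leftrightarrow v^*]=\varphi^\dagger[o\leftrightarrow v^*]$, then in the monotone coupling the connection events agree a.s., and an FKG/percolation argument on $\ttT_d$ (the connection cluster of $v^*$ determines the configuration in the relevant sense, cf.\ \cite[Lemma 2.11]{BDS23}) propagates this to $\lambda=\varphi^\dagger$. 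Since every subsequential limit is the single point $\varphi^\dagger$, we conclude $\varphi_n^\dagger\stackrel{\operatorname{lwcp}}{\longrightarrow}\varphi^\dagger$; the hypothesis $\liminf_n\varphi_n[\mathcal E_{n,\varepsilon_n}^\dagger]>0$ is used to ensure $\varphi_n^\dagger$ is well-defined along the subsequence and that the $o(1)$ error terms, which are absolute, translate into $o(1)$ conditional errors.

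\textbf{Transfer to the Potts model.} For $\mu_n^\dagger$, I would run the Edwards--Sokal coupling $\varpi_n$ conditioned on $\mathcal E_{n,\varepsilon_n}^\dagger$: given $\eta$, the spins are constant on clusters, equal to $1$ on $C_*$ and i.i.d.\ uniform on the small clusters. Fix a radius $r$; on the high-probability event that $\NB_r(G_n,v_n)\cong\NB_r(\ttT_d,o)$, that all clusters meeting $\NB_r(G_n,v_n)$ other than the one through $v^*$ are of size $\le(\log|V_n|)^2$, and that $\eta$ restricted near $v_n$ is TV-close to $\varphi^\dagger\!\mid_{\ttE_r^*}$ (from the RC convergence just proven), the induced spin configuration on $\ttV_r$ is TV-close to the law obtained from $\varphi^\dagger$ by the same cluster-coloring rule — which is exactly $\mu^\dagger\!\mid_{\ttV_r}$, since $\mu^\dagger$ and $\varphi^\dagger$ are Edwards--Sokal-coupled on $\ttT_d$. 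Quantitatively, the only discrepancy comes from small clusters straddling $\partial\ttV_r$ that also reach $v^*$ or reach outside $\NB_R$; a union bound over the $O_d((\log|V_n|)^2)$ boundary-adjacent vertices and $c^k$-tail of Lemma~\ref{lem-cluster-size} makes this negligible, giving \eqref{eq-characterization-lwcp} for $\mu_n^\dagger$.

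\textbf{Expected main obstacle.} I expect the delicate point to be the rigidity step — upgrading ``$\varphi^{\free}\preceq_{\operatorname{st}}\mathtt q\preceq_{\operatorname{st}}\varphi^{\wired}$ and matching root-connection probability'' to ``$\mathtt q=\delta_{\varphi^\dagger}$''. One must argue that the local weak limit is a.s.\ a \emph{deterministic} measure (not a genuine mixture): a priori $\mathtt q$ could be supported on several translation-invariant measures between $\varphi^{\free}$ and $\varphi^{\wired}$ all having the same root-connection probability. Ruling this out requires either that $\varphi^{\free}$ and $\varphi^{\wired}$ are the \emph{only} tree measures arising as limits (a known structural fact in this setting, e.g.\ via \cite[Lemma 2.11]{BDS23} relating the limit's root marginal to $\psi^\dagger$ and the characterization of finite-energy tree RC measures), or a separate ergodicity/extremality argument. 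A secondary nuisance is bookkeeping the conditional $o(1)$'s: every ``$o(1)$'' from Lemma~\ref{lem-cluster-size}, Lemma~\ref{lem-relation-ES}, and the unconditional local weak convergence must be divided by $\liminf_n\varphi_n[\mathcal E_{n,\varepsilon_n}^\dagger]>0$, which is harmless but must be tracked carefully when the two events $\mathcal E_{n,\varepsilon_n}^{\free}$, $\mathcal E_{n,\varepsilon_n}^{\wired}$ overlap or when only one of them has positive limiting mass.
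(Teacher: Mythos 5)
Your overall route coincides with the paper's (sandwich via stochastic ordering, pin the connection probability through the conditioning event, a rigidity step to identify the limit, then transfer to Potts through the Edwards--Sokal coupling), but the two steps you leave as assertions are precisely where the real work lies. First, your claim that the local restriction of the \emph{conditioned} measure $\varphi_n^\dagger$ is still a mixture of boundary-condition random cluster measures, so that the sandwich holds ``conditioned or not'', does not follow from the domain Markov property or from Proposition~\ref{prop-stochastic-domination}: the event $\mathcal E_{n,\varepsilon_n}^\dagger$ is global and non-monotone, so conditioning on it couples the inside and outside of the ball and can in principle distort the local conditional law (FKG does not apply to a two-sided constraint). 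The paper's Lemma~\ref{lem-subsequential-limit} repairs this by showing that, given the configuration outside the ball, the conditioning is vacuous unless $\#\{v:v\leftrightarrow v^*\}$ falls within $3d^r$ of the threshold $(\psi^\dagger\pm\varepsilon_n)|V_n|$, and that this exceptional event has vanishing conditional probability by the anti-concentration estimate of Lemma~\ref{lem-local-CLT}; this is also exactly where the hypothesis $\liminf_n\varphi_n[\mathcal E_{n,\varepsilon_n}^\dagger]>0$ enters. Your proposal contains no analogue of this anti-concentration input.

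Second, the rigidity step, which you yourself flag as the main obstacle, is not supplied: knowing $\varphi^{\free}\preceq_{\operatorname{st}}\lambda\preceq_{\operatorname{st}}\varphi^{\wired}$ together with $\lambda[o\leftrightarrow v^*]=\psi^\dagger$ does not force $\lambda=\varphi^\dagger$. The monotone coupling only gives a.s.\ agreement of the single indicator $\mathbf{1}\{o\leftrightarrow v^*\}$, which says nothing about other local events, and ``an FKG/percolation argument propagates this'' is not a proof. The paper establishes the implication (Lemma~\ref{prop-characterization}) only for measures in the class $\widetilde{\mathcal R}$ produced by Lemma~\ref{lem-subsequential-limit}, i.e.\ measures whose finite-volume marginals are mixtures of boundary-condition random cluster measures: for such $\varphi$ one can define the tail-measurable pre-messages $s_{u\to v}(\varphi)$, show they lie between the extreme fixed points $b^{\free}$ and $b^{\wired}$ and satisfy $s_{u\to v}=\widehat{\operatorname{BP}}(\mathbf{s}_{\partial u\setminus\{v\}\to u})$, and then use the strict coordinatewise monotonicity of $\widehat{\operatorname{BP}}$ to conclude that attaining the endpoint value of the root connection probability forces all pre-messages to equal $b^\dagger$, which characterizes $\varphi^\dagger$. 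Without this (or an equivalent DLR/extremality argument) the identification of the subsequential limit, and hence the proposition, does not go through. Your transfer to the Potts statement via cluster coloring is in line with the paper's Lemmas~\ref{lem-TV-ES} and~\ref{lem-fact} and is fine once the random cluster statement is in place.
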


The proof of Proposition~\ref{prop-lwcp} is given in Section~\ref{subsec-lwcp}. We now prove Theorems~\ref{thm-main-main} and \ref{thm-main-RCM}. 

\begin{proof}[Proof of Theorems~\ref{thm-main-main} and \ref{thm-main-RCM} assuming Propositions~\ref{prop-lwcp}]
Since we have assumed that $\varphi_n$ converges locally weakly, it suffices to show a subsequence of $\varphi_n$ converges locally weakly in probability to a measure in $\mathcal N$. We pick a subsequence $\{\varphi_{n_k}\}$ such that $\alpha=\lim_{k\to\infty}\varphi_{n_k}[\mathcal V_{{n_k},\varepsilon_{n_k}}^{\free}]$ and $\alpha'=\lim_{k\to\infty}\varphi_{n_k}[\mathcal V_{{n_k},\varepsilon_{n_k}}^{\wired}]$ both exist. 
    It follows from \eqref{eq-choice-eps_n-2} that $\alpha'=1-\alpha$ and thus as $k\to\infty$, 
    $
    \operatorname{TV}(\varphi_{n_k},\alpha\varphi_{n_k}^{\free}+(1-\alpha)\varphi_{n_k}^{\wired})=o(1)
    $.
    
    If $\alpha=1$ or $0$, by proposition~\ref{prop-lwcp}, we have that $\varphi_n$ converges locally weakly in probability to $\varphi^{\free}$ or $\varphi^{\wired}$, and the result follows. 
    If $0<\alpha<1$, 
    By Proposition~\ref{prop-lwcp}, we have $\varphi_n^\dagger\stackrel{\operatorname{lwcp}}{\longrightarrow}\varphi^\dagger,\dagger\in \{\free,\wired\}$. From the equivalent condition of local weak convergence in probability \eqref{eq-characterization-lwcp}, it is easy to deduce that 
    $\alpha\varphi_n^{\free}+(1-\alpha)\varphi_n^{\wired}\stackrel{\operatorname{lwcp}}{\longrightarrow}\alpha\varphi^{\free}+(1-\alpha)\varphi^{\wired}$. This implies that $\varphi_n\stackrel{\operatorname{lwcp}}{\longrightarrow}\alpha\varphi^{\free}+(1-\alpha)\varphi^{\wired}$, and thus concludes Theorem~\ref{thm-main-RCM}. Theorem~\ref{thm-main-main} can be proved analogously. 
\end{proof}

\subsection{The rank-2 approximation and reduction to the Ising model}\label{subsec-rank-2-approximation}
The current and the next subsections are devoted to prove Proposition~\ref{prop-admissibility-RCM}. Fix $\varepsilon>0$, our goal is to show that as $n\to\infty$, 
\begin{equation}\label{eq-partition-fucntion-RCM-ratio}
\varphi_n\big[(\mathcal E_{n,\varepsilon}^{\free}\cup \mathcal E_{n,\varepsilon}^{\wired})^c\big]=\frac{\sum_{\eta\in (\mathcal E_{n,\varepsilon}^{\free}\cup \mathcal E_{n,\varepsilon}^{\wired})^c}\prod_{e\in E_n^*}w_e^{\eta_e}q^{|\operatorname{C}(\eta)|-1}}{\mathcal Z_{G^*_n}^{\RC,w,B}}=o(1)\,.
\end{equation}
We will relate the expression in \eqref{eq-partition-fucntion-RCM-ratio} to a certain deviation probability in a suitable Ising model. 

To this end, The rank-2 approximation of the random cluster partition function established in \cite{BBC23, CH25} plays an essential role.
We define
\begin{equation}\label{eq-rank-2-approximation-def}
\widetilde{\mathcal Z}_{G_n^*}^{\RC,w,B}:=\sum_{S\subset V_n}e^{B|S|}(1+w)^{|E(S)|}(q-1)^{|V_n\setminus S|}\left(1+\frac{w}{q-1}\right)^{E(V_n\setminus S)}\,.
\end{equation}
Moreover, for any $\varepsilon>0$, denoting $$I_\varepsilon=\mathbb{R}\setminus \Big(\big(\tfrac{q-1}{q}\psi^{\free} + \tfrac{1}{q}-{\varepsilon},\tfrac{q-1}{q}\psi^{\free} + \tfrac{1}{q}+{\varepsilon}\big)\cup\big(\tfrac{q-1}{q}\psi^{\wired} + \tfrac{1}{q}-\varepsilon,\tfrac{q-1}{q}\psi^{\wired} + \tfrac{1}{q}+{\varepsilon}\big)\Big)\,,$$ we further define
\begin{equation}\label{eq-rank-2-approximation-local-def}
\widetilde{\mathcal Z}_{G_n^*}^{\RC,w,B}(\varepsilon):=\sum_{\substack{S\subset V_n\\|S|/|V_n|\in I_\varepsilon}}e^{B|S|}(1+w)^{|E(S)|}(q-1)^{|V_n\setminus S|}\left(1+\frac{w}{q-1}\right)^{E(V_n\setminus S)}\,.
\end{equation}
We have the following key proposition.

\begin{proposition}\label{prop-rank-2-approximation}
    For any $n\in \mathbb{N}$, if holds
    \begin{equation}\label{eq-rank-2-approximation-global}
    {\mathcal Z}_{G_n^*}^{\RC,w,B}\ge \widetilde{\mathcal Z}_{G_n^*}^{\RC,w,B}\,.
    \end{equation}
    Moreover, for any $\varepsilon> 0$, it holds that as $n\to\infty$,
    \begin{equation}\label{eq-rank-2-approximation-local}
        \sum_{\eta\in (\mathcal E_{n,\varepsilon}^{\free}\cup \mathcal E_{n,\varepsilon}^{\wired})^c}\prod_{e\in E_n^*}w_e^{\eta_e}q^{|\operatorname{C}(\eta)|-1}
        \le e^{o(|V_n|)}\cdot\widetilde{\mathcal Z}_{G_n^*}^{\RC,w,B}(\varepsilon)+o(\mathcal Z_{G_n^*}^{\RC,w,B})\,.
    \end{equation}
\end{proposition}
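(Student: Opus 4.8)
Throughout write $\mathcal Z_n:=\mathcal Z_{G_n^*}^{\RC,w,B}$, $\widetilde{\mathcal Z}_n:=\widetilde{\mathcal Z}_{G_n^*}^{\RC,w,B}$, and for $J\subseteq\mathbb R$ let $\widetilde{\mathcal Z}_n[J]$ be the sum in \eqref{eq-rank-2-approximation-def} restricted to $S$ with $|S|/|V_n|\in J$, so $\widetilde{\mathcal Z}_{G_n^*}^{\RC,w,B}(\varepsilon)=\widetilde{\mathcal Z}_n[I_\varepsilon]$ for the set $I_\varepsilon$ appearing in \eqref{eq-rank-2-approximation-local-def}.

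\textbf{The global bound \eqref{eq-rank-2-approximation-global}.} I would prove this by an exact decomposition of $\mathcal Z_n$ plus one termwise estimate, using neither the expander nor the tree-like hypothesis. Writing $q=1+(q-1)$ and expanding the product $\prod q$ over the components of $\eta$ not containing $v^*$ gives the identity $q^{|\operatorname{C}(\eta)|-1}=\sum_M(q-1)^{c_M(\eta)}$, where $M$ ranges over all unions of components of $\eta$ avoiding $v^*$ and $c_M(\eta)$ counts those components contained in $M$. Substituting into $\mathcal Z_n$ and exchanging the two sums yields
\[
\mathcal Z_n=\sum_{M\subseteq V_n}\Bigl(\sum_{\omega\subseteq E(M)}w^{|\omega|}(q-1)^{k(\omega)}\Bigr)\Bigl(\sum_{\eta'}\prod_e w_e^{\eta'_e}\Bigr),
\]
where $k(\omega)$ is the number of components of $(M,\omega)$, and $\eta'$ runs over all configurations on the edges of $G_n$ inside $V_n\setminus M$ and on the ghost-edges at $V_n\setminus M$, the components within $V_n\setminus M$ (including that of $v^*$) being unmarked and hence carrying weight $1$; this is valid because $M$ being a union of $\eta$-components forces every edge between $M$ and $V_n\setminus M$, and every ghost-edge at $M$, to be closed. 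The second bracket factorizes exactly to $e^{B|V_n\setminus M|}(1+w)^{|E(V_n\setminus M)|}$, while in the first bracket $k(\omega)\ge|M|-|\omega|$ (each edge merges at most two components) and $q-1>1$ give $\sum_\omega w^{|\omega|}(q-1)^{k(\omega)}\ge(q-1)^{|M|}\sum_\omega(w/(q-1))^{|\omega|}=(q-1)^{|M|}(1+w/(q-1))^{|E(M)|}$. Summing over $M$ and re-indexing $S=V_n\setminus M$ gives \eqref{eq-rank-2-approximation-global}; the inequality is exact for every $n$.

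\textbf{The local bound \eqref{eq-rank-2-approximation-local}: reduction to a restricted two-spin sum.} I would run the same decomposition on the restricted sum. By Lemma~\ref{lem-cluster-size} the set $\mathcal G$ of $\eta$ all of whose non-$v^*$ components have size at most $(\log|V_n|)^2$ has $\varphi_n[\mathcal G^c]=o(1)$, so the contribution of $\eta\notin\mathcal G$ is $\varphi_n[\mathcal G^c]\,\mathcal Z_n=o(\mathcal Z_n)$, absorbed in the error term. On $\mathcal G\cap(\mathcal E_{n,\varepsilon}^{\free}\cup\mathcal E_{n,\varepsilon}^{\wired})^c$ the marking expansion rewrites the sum as one over pairs $(\eta,M)$ with weight $\prod_e w_e^{\eta_e}(q-1)^{c_M(\eta)}$; here the two-spin magnetization $|S|:=|V_n\setminus M|$ differs from $\#\{v:v\leftrightarrow v^*\}$ only by the total size of the unmarked (small) components, which, as $M$ varies with weight $\propto(q-1)^{c_M(\eta)}$ (so each component lies in $M$ independently with probability $\tfrac{q-1}{q}$), is a sum of independent $[0,(\log|V_n|)^2]$-valued variables with mean $\tfrac1q\#\{v:v\not\leftrightarrow v^*\}$. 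Hence by Hoeffding's inequality, for any fixed $\delta>0$ the $(q-1)^{c_M(\eta)}$-weight of markings with $\bigl||S|-\tfrac{q-1}{q}\#\{v:v\leftrightarrow v^*\}-\tfrac1q|V_n|\bigr|>\delta|V_n|$ is at most $q^{c_M(\eta)}\exp(-c|V_n|/(\log|V_n|)^2)$ with $c=c(\delta)>0$; summing these "atypical markings" over $\mathcal G$ gives another $o(\mathcal Z_n)$. For the remaining markings, $\eta\notin\mathcal E_{n,\varepsilon}^{\free}\cup\mathcal E_{n,\varepsilon}^{\wired}$ forces $\#\{v:v\leftrightarrow v^*\}/|V_n|$ to be $\varepsilon$-far from both $\psi^{\free},\psi^{\wired}$, and since $\tfrac{q-1}{q}\psi^{\dagger}+\tfrac1q$ is the center of the $\dagger$-window defining $I_\varepsilon$, the resulting $|S|/|V_n|$ lies in $I_{\varepsilon'}$ with $\varepsilon':=\tfrac{q-1}{q}\varepsilon-\delta$. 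After discarding the two $o(\mathcal Z_n)$ contributions and dropping the constraints $\eta\in\mathcal G$ and $\eta\notin\mathcal E_{n,\varepsilon}^{\free}\cup\mathcal E_{n,\varepsilon}^{\wired}$, what is left is at most $\sum_{(\eta,M):\,|V_n\setminus M|/|V_n|\in I_{\varepsilon'}}\prod_e w_e^{\eta_e}(q-1)^{c_M(\eta)}$, and re-grouping by $M$ and factorizing the $\eta'$-part exactly as above identifies this with $\mathcal Z_n[I_{\varepsilon'}]:=\sum_{S:\,|S|/|V_n|\in I_{\varepsilon'}}\bigl(\sum_{\omega\subseteq E(V_n\setminus S)}w^{|\omega|}(q-1)^{k(\omega)}\bigr)e^{B|S|}(1+w)^{|E(S)|}$.

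\textbf{The local bound \eqref{eq-rank-2-approximation-local}: the main obstacle, and conclusion.} It remains to establish the \emph{local} rank-2 upper bound $\mathcal Z_n[J]\le e^{o(|V_n|)}\widetilde{\mathcal Z}_n[J]$ for $J=I_{\varepsilon'}$. Granting this, since $I_{\varepsilon'}\setminus I_\varepsilon$ is contained in finitely many intervals at distance at least $\varepsilon'>0$ from $\tfrac{q-1}{q}\psi^{\free}+\tfrac1q$ and $\tfrac{q-1}{q}\psi^{\wired}+\tfrac1q$ — the only magnetizations on which $\widetilde{\mathcal Z}_n$ (which by \cite{SS14} is an Ising partition function) concentrates — the Ising large-deviation estimate of Section~\ref{subsec-large-deviation} gives $\widetilde{\mathcal Z}_n[I_{\varepsilon'}\setminus I_\varepsilon]\le e^{-\Theta(|V_n|)}\widetilde{\mathcal Z}_n\le e^{-\Theta(|V_n|)}\mathcal Z_n=o(\mathcal Z_n)$ by \eqref{eq-rank-2-approximation-global}; combining, $\mathcal Z_n[I_{\varepsilon'}]\le e^{o(|V_n|)}\widetilde{\mathcal Z}_n[I_{\varepsilon'}]\le e^{o(|V_n|)}\widetilde{\mathcal Z}_n[I_\varepsilon]+o(\mathcal Z_n)$, which is \eqref{eq-rank-2-approximation-local}. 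The one substantive ingredient is thus the local rank-2 upper bound, and this is where the locally $\ttT_d$-like hypothesis is used essentially: the plan is to localize the rank-2/transfer-operator argument of \cite{BBC23,CH25} — which proves the global inequality $\mathcal Z_n\le e^{o(|V_n|)}\widetilde{\mathcal Z}_n$ via a variational principle for the $S$-density — to a prescribed density window, equivalently, to show that on such graphs the $S$-density obeys a large-deviation bound with exactly the two-spin (Ising) rate function. Carrying out this localization, uniformly over the relevant density windows and with error only $e^{o(|V_n|)}$, is the technical heart of the argument and the step that cannot be treated as a black box.
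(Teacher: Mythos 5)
Your proof of the global bound \eqref{eq-rank-2-approximation-global} is correct and self-contained (the marking expansion $q^{|\operatorname{C}(\eta)|-1}=\sum_{M}(q-1)^{c_M(\eta)}$, exact factorization for fixed $M$, and the termwise inequality $(q-1)^{k(\omega)}\ge(q-1)^{|M|-|\omega|}$); the paper simply cites \cite[Lemma 2.3]{CH25} for this. The problem is the local bound \eqref{eq-rank-2-approximation-local}. Your argument reduces it to the window-restricted inequality $\mathcal Z_n[I_{\varepsilon'}]\le e^{o(|V_n|)}\widetilde{\mathcal Z}_n[I_{\varepsilon'}]$, which you then declare to be ``the technical heart'' and only sketch as a plan (``localize the rank-2/transfer-operator argument of \cite{BBC23,CH25}''). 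But that window-restricted rank-2 upper bound \emph{is} the content of the proposition: you have reduced the statement to an unproven statement of essentially the same nature, so the proof has a genuine gap. Worse, your reduction makes the remaining step harder than it needs to be: because you mark \emph{all} components of $\eta$ avoiding $v^*$, the inner sum for fixed $M$ is the full random-cluster sum $\sum_{\omega\subseteq E(M)}w^{|\omega|}(q-1)^{k(\omega)}$, and the termwise inequality you used for the global bound goes the wrong way for an upper bound (one has $(q-1)^{k(\omega)}\ge(q-1)^{|M|-|\omega|}$, with an exponentially large ratio when $\omega$ carries $\Theta(|V_n|)$ independent cycles, which long cycles in an induced subgraph of positive density easily produce even when the girth diverges). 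So $\mathcal Z_n[J]\le e^{o(|V_n|)}\widetilde{\mathcal Z}_n[J]$ cannot be obtained term-by-term in $S$, and it is not clear that the global variational machinery of \cite{BBC23,CH25} localizes to a fixed density window without substantial new work.

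The paper closes exactly this gap with a different combinatorial decomposition. It splits $\operatorname{C}(\eta)=\{C_*\}\cup\operatorname{TC}(\eta)\cup\operatorname{NTC}(\eta)$ and applies the binomial expansion only over the \emph{tree} components, so the marked set $R$ is a union of tree components and the configuration restricted to $E_n(R)$ is a forest $F$; the exact identity $|\operatorname{C}(F)|=|R|-|F|$ then makes the comparison with the rank-2 weight an honest upper bound after simply dropping the forest constraint, yielding $(q-1)^{|R|}(1+w/(q-1))^{|E_n(R)|}$ for each fixed $R$ in the window. The non-tree components, which are the obstruction in your scheme, are handled separately: since $G_n\stackrel{\operatorname{loc}}{\longrightarrow}\ttT_d$, every non-tree component either contains one of the $o(|V_n|)$ short cycles or has size at least $L$, so $\sup_\eta|\operatorname{NTC}(\eta)|=o(|V_n|)$ deterministically and the factor $q^{|\operatorname{NTC}(\eta)|}$ costs only $e^{o(|V_n|)}$ (their total vertex weight is controlled in probability by Lemma~\ref{lem-cluster-size}, giving the $o(\mathcal Z_{G_n^*}^{\RC,w,B})$ error term). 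Your concentration step relating $|S|$ to $\#\{v:v\leftrightarrow v^*\}$ (Hoeffding in place of the paper's Chebyshev) is fine and parallels the paper, but without the tree/non-tree splitting the decisive inequality is missing, so as written the proposal does not prove \eqref{eq-rank-2-approximation-local}.
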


Notice that \eqref{eq-rank-2-approximation-global} is simply a restatement of \cite[Lemma 2.3]{CH25}, so in what follows we focus on the proof of \eqref{eq-rank-2-approximation-local}. While the proof largely draws inspiration from \cite[Theorem 2.4]{BBC23}, we need to address several additional technical points here. 
For an edge configuration $\eta\in \{0,1\}^{E_n^*}$, we write
\[
\operatorname{C}(\eta)=\{C_*\}\cup \operatorname{TC}(\eta)\cup \operatorname{NTC}(\eta)\,,
\]
where $C_*=C_*(\eta)$ denotes the component in the graph induced by $\eta$ that contains $v^*$, and $\operatorname{TC}(\eta)$ (resp. $\operatorname{NTC}(\eta)$) represents the set of tree components (resp. non-tree components) that do not contain $v^*$.
We start with the following easy lemma.

\begin{lemma}\label{lem-mathcal C_n}
    For $\delta>0$, let $\mathcal C_n(\delta)$ be the event that $\sum_{C\in \operatorname{NTC}(\eta)}|C|\le \delta n$.
    Then, for any $\delta>0$, $\varphi_n[\mathcal C_n(\delta)]=1-o(1)$ as $n\to\infty$. 
\end{lemma}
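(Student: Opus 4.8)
\textbf{Proof plan for Lemma~\ref{lem-mathcal C_n}.}
The plan is to control the expected total size of the non-tree components off the giant, i.e.\ to show
\[
\mathbb E_{\varphi_n}\Big[\sum_{C\in\operatorname{NTC}(\eta)}|C|\Big]
=\sum_{v\in V_n}\varphi_n\big[v\text{ lies in some }C\in\operatorname{NTC}(\eta)\big]
=o(|V_n|)\,,
\]
and then conclude by Markov's inequality, since $\mathcal C_n(\delta)^c=\{\sum_{C\in\operatorname{NTC}(\eta)}|C|>\delta|V_n|\}$.

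The key point is a purely geometric observation: if $v$ lies in a component $C\not\ni v^*$ that is not a tree, then $C$ contains a cycle $\gamma$, and since no ghost edge can appear in $C$, all edges of $\gamma$ lie in $E_n$, so $\gamma$ is a genuine cycle of $G_n$. Consequently, whenever $\NB_r(G_n,v)\cong\NB_r(\ttT_d,o)$ — so that $\NB_r(G_n,v)$ is a tree — the cycle $\gamma$ cannot be contained in $\NB_r(G_n,v)$, hence $\gamma$, and therefore $C$, meets a vertex at distance $>r$ from $v$; being connected, $C$ then has $|C|\ge r$ vertices. Recording this for every fixed $r\in\mathbb N$ gives the inclusion
\[
\{v\text{ lies in some }C\in\operatorname{NTC}(\eta)\}\ \subseteq\ \{\NB_r(G_n,v)\not\cong\NB_r(\ttT_d,o)\}\ \cup\ \{v\not\leftrightarrow v^*,\ |C(v)|\ge r\}\,.
\]

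Summing this over $v\in V_n$ and taking $\varphi_n$-expectations, the first event contributes $\#\{v:\NB_r(G_n,v)\not\cong\NB_r(\ttT_d,o)\}=o(|V_n|)$ by the hypothesis $G_n\stackrel{\mathrm{loc}}{\longrightarrow}\ttT_d$, while the second contributes at most $|V_n|\,c^{\,r}$ by Lemma~\ref{lem-cluster-size} (applicable since $B>0$ on $\mathsf R_c'$). Dividing by $|V_n|$ and letting $n\to\infty$ yields $\limsup_{n\to\infty}|V_n|^{-1}\,\mathbb E_{\varphi_n}\big[\sum_{C\in\operatorname{NTC}(\eta)}|C|\big]\le c^{\,r}$ for every $r$; since $c<1$, letting $r\to\infty$ gives $\mathbb E_{\varphi_n}\big[\sum_{C\in\operatorname{NTC}(\eta)}|C|\big]=o(|V_n|)$, and Markov's inequality then gives $\varphi_n[\mathcal C_n(\delta)^c]\le(\delta|V_n|)^{-1}o(|V_n|)\to0$ for each fixed $\delta>0$.

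The step to be careful about is precisely this geometric reduction, and in particular the temptation to instead bound $|\operatorname{NTC}(\eta)|$ (or the number of ``surplus'' open edges) by a count of \emph{all} short open cycles of $G_n$ must be resisted: because $(w,B)\in\mathsf R_c'$ places the open-edge percolation in the supercritical regime, the giant cluster $C_*$ alone already carries exponentially many short open cycles, so such a count is hopelessly lossy. The essential input that circumvents this is the exponential tail of Lemma~\ref{lem-cluster-size}, which restricts attention to clusters \emph{off} the giant — it is exactly this that makes the $c^{\,r}$ bound available, and it is worth noting that the argument uses only local tree-likeness and this tail bound, with no appeal to the expander hypothesis.
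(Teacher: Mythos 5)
Your proof is correct and follows essentially the same route as the paper: a first-moment bound on $\mathbb{E}_{\varphi_n}\big[\sum_{C\in \operatorname{NTC}(\eta)}|C|\big]$ obtained from the dichotomy that a vertex of a non-tree component off the giant either lies in a cluster of size at least $r$ (controlled by the exponential tail of Lemma~\ref{lem-cluster-size}, valid since $B>0$) or has a cycle forced into its tree-like $r$-neighborhood (rare by $G_n\stackrel{\mathrm{loc}}{\longrightarrow}\ttT_d$), followed by Markov's inequality. The only difference is bookkeeping: the paper sums cluster sizes over the $o(|V_n|)$ vertices lying on short cycles of $G_n$ and invokes the bound $\mathbb{E}\big[|C(v)|\mathbf{1}\{v\not\leftrightarrow v^*\}\big]\le (1-c)^{-1}$, whereas your per-vertex inclusion absorbs the non-tree-like neighborhoods directly into the $o(|V_n|)$ count, a slight simplification that changes nothing essential.
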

\begin{proof}
    Fix any $\varepsilon,\delta>0$. Recall $c<1$ defined as in Lemma~\ref{lem-cluster-size}, we pick $L\in \mathbb{N}$ such that $c^L\le \varepsilon$. Fix $\eta\in \{0,1\}^{E_n^*}$. Note that for any $v\in V_n$ such that $C(v)\in \operatorname{NTC}(\eta)$, either $|C(v)|\ge L$ or $C(v)$ contains a cycle with length no more than $L$. For each $3\le k\le L$, let $V_{n,k}\subset V_n$ be the set of vertices that lie on a length $k$ cycle in $G_n$. Since $G_n\stackrel{\operatorname{loc}}{\longrightarrow}\ttT_d$, we have $|V_{n,k}|=o(|V_n|)$ as $n\to\infty$. Utilizing Lemma~\ref{lem-cluster-size}, it follows that as $n\to\infty$, 
    \begin{align*}
        \mathbb{E}_{\eta\sim \varphi_{n}}\Big[\sum_{C\in \operatorname{NTC}(\eta)}|C|\Big]\le&\  \sum_{v\in V_n}\varphi_n\big[v\not\leftrightarrow v^*,|C(v)|\ge L\big]+\sum_{k=3}^L\sum_{v\in V_{n,k}}\mathbb{E}_{\eta\sim \varphi_n}\big[|C(v)|\mathbf{1}\{v\not\leftrightarrow v^*\}\big]\\
        \le&\ c^L|V_n|+\sum_{k=3}^L|V_{n,k}|\cdot \max_{v\in V_n}\sum_{t=1}^\infty \varphi_n\big[v\not\leftrightarrow v^*,|C(v)|\ge t\big]\\
        \le&\ \varepsilon|V_n|+o(|V_n|)\cdot (1-c)^{-1}\le 2\varepsilon|V_n|\,.
    \end{align*}
    Therefore, Markov's inequality yields that for all large enough $n\in \mathbb N$, $\varphi_n[\mathcal C_n(\delta)]\le 2\varepsilon/\delta$. Since $\varepsilon>0$ is arbitrary, the result follows.
\end{proof}
We now complete the proof of Proposition~\ref{prop-rank-2-approximation}.
\begin{proof}[Proof of \eqref{eq-rank-2-approximation-local}]
    For an edge configuration $\eta\in \{0,1\}^{E_n^*}$, we say a set $R\subset V_n$ is compatible with $\eta$, denoted by $\eta\sim R$, if $R$ is a union of some components (possibly nothing) in $\operatorname{TC}(\eta)$. Moreover, for $\eta\sim R$, we let $k(\eta,R)$ be the number of components in $\operatorname{TC}(\eta)$ that lie in $R$. 
    
    Denote for simplicity $\mathcal E_n:=(\mathcal E_{n,\varepsilon}^{\free}\cup \mathcal E_{n,\varepsilon}^{\wired})^c\cap \mathcal C_n(\varepsilon/q)$. We start by noticing that
    \begin{align*}
                \sum_{\eta\in (\mathcal E_{n,\varepsilon}^{\free}\cup \mathcal E_{n,\varepsilon}^{\wired})^c}\prod_{e\in E_n^*}w_e^{\eta_e}q^{|\operatorname{C}(\eta)|-1}=&\         \sum_{\eta\in \mathcal E_n}\prod_{e\in E_n^*}w_e^{\eta_e}q^{|\operatorname{NTC}(\eta)|+|\operatorname{TC}(\eta)|}+o(\mathcal Z_{G_n^*}^{\RC,w,B})\\
                =&\ \sum_{\eta\in \mathcal E_n}\prod_{e\in E_n^*}w_e^{\eta_e}q^{|\operatorname{NTC}(\eta)|}\sum_{R:\eta\sim R}(q-1)^{k(\eta,R)}+o(\mathcal Z_{G_n^*}^{\RC,w,B})\,,
    \end{align*}
    where the first equality follows from Lemma~\ref{lem-mathcal C_n} and the second one follows from the binomial identity.
    Define the set 
    \[
    \Omega_n(\varepsilon)=\Big\{(\eta,R):\eta\sim R, \Big||R|-\tfrac {q-1} q\sum_{C\in \operatorname{TC}(\eta)}|C|\Big|\le \tfrac \varepsilon{q^2}|V_n|\Big\}\,.
    \]
    Since under $\mathcal C_n(\varepsilon/q)$, we have
    \[
    |V_n|^{-1}\sum_{C\in \operatorname{TC}(\eta)}|C|\in \big(1-\tfrac{|C_*|}{|V_n|}-\tfrac{\varepsilon}{q},1-\tfrac{|C_*|}{|V_n|}+\tfrac{\varepsilon}{q}\big)\,,
    \]
    it follows that for any $\eta\in \mathcal E_n$, any set $R\subset V_n$ such that $(\eta,R)\in \Omega_n(\varepsilon)$ must satisfy that $1-|R|/|V_n|\in I_\varepsilon$. Therefore, we have
    \begin{align}
        &\ \sum_{\eta\in \mathcal E_n}\prod_{e\in E_n^*}w_e^{\eta_e}q^{|\operatorname{NTC}(\eta)|}\sum_{R:\eta\sim R}(q-1)^{k(\eta,R)}\nonumber
        \\\le &\ \sum_{\substack{R\subset V_n\\1-|R|/|V_n|\in I_\varepsilon}}\sum_{\eta:\eta\sim R}\prod_{e\in E_n^*}w_e^{\eta_e}q^{|\operatorname{NTC}(\eta)|}(q-1)^{k(\eta,R)}\label{eq-first-term}
        \\+&\ \sum_{\eta\in \mathcal E_n}\prod_{e\in E_n^*}w_e^{\eta_e}q^{|\operatorname{NTC}(\eta)|}\sum_{R:(\eta,R)\notin \Omega_n(\varepsilon)}(q-1)^{k(\eta,R)}\,,\label{eq-second-term}
    \end{align}
    
    We first prove that \eqref{eq-second-term} is $o(\mathcal Z_{G_n^*}^{\RC,w,B})$. The intuition is that for each fixed $\eta$, we may think of $R$ as a random union of components in $\operatorname{TC}(\eta)$ where each component is included independently with probability $\tfrac{q-1}{q}$, and thus typically $|R|$ is close to its expectation $\tfrac{q-1}{q}\sum_{C\in \operatorname{TC}(\eta)}|C|$. We prove this via the Chebyshev inequality: for any $\eta$, 
    \begin{align*}
    \sum_{R:(\eta,R)\notin \Omega_n(\varepsilon)}(q-1)^{k(\eta,R)}\le&\ \frac{q^4}{\varepsilon^2|V_n|^2}\sum_{R:\eta\sim R}(q-1)^{k(\eta,R)}\Big(|R|-\frac {q-1} q\sum_{C\in \operatorname{TC}(\eta)}|C|\Big)^2\\
    =&\ \frac{q^4}{\varepsilon^2|V_n|^2}\cdot q^{\operatorname{TC}(\eta)}\cdot
    q^{-1}(1-q^{-1})\sum_{C\in \operatorname{TC}(\eta)}|C|^2\,.
    \end{align*}
    Therefore, by dropping the term $q^{-1}(1-q^{-1})<1$, we have
    \begin{align*}
    \eqref{eq-second-term}\le&\  \sum_{\eta\in \mathcal E_n^*}\prod_{e\in E_n^*}w_e^{\eta_e}q^{|\operatorname{NTC}(\eta)|+|\operatorname{TC}(\eta)|}\cdot \frac{q^4}{\varepsilon^2|V_n|^2}\sum_{C\in \operatorname{TC}(\eta)}|C|^2\\
    \le &\ \mathcal Z_{G_n^*}^{\RC,w,B}\cdot \frac{q^4}{\varepsilon^2|V_n|^2}\mathbb{E}_{\eta\sim \varphi_n}\Big[\sum_{C\in \operatorname{C}(\eta)\setminus \{C_*\}}|C|^2\Big]=o(\mathcal Z_{G_n^*}^{\RC,w,B})\,,
    \end{align*} 
where the last inequality follows from the fact that as $n\to\infty$ (recall Lemma~\ref{lem-cluster-size}),
\[
\mathbb{E}_{\eta\sim \varphi_n}\Big[\sum_{C\in \operatorname{C}(\eta)\setminus \{C_*\}}|C|^2\Big]\le \varphi_n\big[\max_{C\neq C_*}|C|\ge (\log |V_n|)^2\big]|V_n|^2+|V_n|(\log |V_n|)^2=o(|V_n|^2)\,.
\]

We now address the sum in \eqref{eq-first-term}. Note that as $n\to\infty$, we have deterministically,
\[
\sup_{\eta\in \{0,1\}^{E_n^*}}|\operatorname{NTC}(\eta)|=o(|V_n|)\,.
\]
To show this, we note that for any $L\in \mathbb{N}$, it holds
\[
\sup_{\eta\in \{0,1\}^{E_n^*}}|\operatorname{NTC}(\eta)|\le X_{n,3}+X_{n,4}+\cdots+X_{n,L-1}+\frac{n}{L}\,,
\]
where $X_{n,k}$ is the number of $k$-cycles in $G_n$. This is because for any $\eta$ and $C\in \operatorname{NTC}(\eta)$, either $C$ contains a cycle with length less than $L$, or $C$ has size at least $L$. Since $G_n$ is locally tree-like, we have for any fixed $k$, $X_{n,k}=o(|V_n|)$ as $n\to\infty$, and thus the claim follows by taking $n\to\infty$ and subsequently $L\to\infty$.

Fix $R\subset V_n$, we have as $n\to\infty$,
\begin{align}
\nonumber&\ \sum_{\eta:\eta\sim R}\prod_{e\in E_n^*}w_e^{\eta_e}q^{|\operatorname{NTC}(\eta)|}(q-1)^{k(\eta,R)}\\
\nonumber\le&\ q^{o(|V_n|)}\sum_{\eta:\eta\sim R}\prod_{e\in E_n^*}w_e^{\eta_e}(q-1)^{k(\eta,R)}\\
\label{eq-forest}=&\ e^{o(|V_n|)}\cdot e^{B|V_n\setminus R|}(1+w)^{E_n(V_n\setminus R)}\sum_{F\subset E_n(R)\text{ forest}}w^{|F|}(q-1)^{|\operatorname{C}(F)|}\,.
\end{align}
Using the fact that for a forest $F\subset E_n(R)$, $|\operatorname{C}(F)|=|R|-|F|$, we see that \eqref{eq-forest} is further upper-bounded by
\[
\sum_{F\subset E_n(R)}w^{|F|}(q-1)^{|R|-|F|}=(q-1)^{|R|}\left(1+\frac{w}{q-1}\right)^{E(R)}\,.
\]
Combining these bounds and rearranging, we obtain that
\[
\eqref{eq-first-term}\le e^{o(|V_n|)}\sum_{\substack{R\subset V_n\\1-|R|/|V_n|\in I_\varepsilon}}e^{B}(1+w)^{E(V_n\setminus R)}(q-1)^{|R|}\left(1+\frac{w}{q-1}\right)^{|E(R)|}\,.
\]
Substituting $R$ with $V_n\setminus S$ and combining this with the upper-bound of \eqref{eq-second-term}, we obtain the inequality \eqref{eq-rank-2-approximation-local}. This concludes the proof. 
\end{proof}

Given Proposition~\ref{prop-rank-2-approximation}, we have
\begin{equation}\label{eq-relaxation-1}
\frac{\sum_{\eta\in (\mathcal E_{n,\varepsilon}^{\free}\cup \mathcal E_{n,\varepsilon}^{\wired})^c}\prod_{e\in E_n^*}w_e^{\eta_e}q^{|\operatorname{C}(\eta)|-1}}{\mathcal Z_{G^*_n}^{\RC,w,B}}\le e^{o(|V_n|)}\cdot\frac{\widetilde{\mathcal Z}_{G_n^*}^{\RC,w,B}(\varepsilon)}{\widetilde{\mathcal Z}_{G_n^*}^{\RC,w,B}}+o(1)\,.
\end{equation}
We define three new parameters $\beta^*,k,h$ as follows:
\begin{align}
\label{eq-beta^*}\beta^*&=\frac{1}{4}\log((1+w)(1+w/(q-1))\,,\\
k&=\frac{1}{4}\log\left(\frac{1+w}{1+w/(q-1)}\right)\,,\\ h&=\frac{1}{2}\log(e^B/(q-1))\,.
\end{align}
For $v\in V_n$, let $d_v$ be the degree of $v$ in $G$. Recall \eqref{eq-rank-2-approximation-def} for the definition of $\widetilde{\mathcal Z}_{G_n}^{\RC,w,B}$. By considering the mapping from $S\subset V_n$ to $\sigma\in \{\pm 1\}^{V_n}$ defined by $S\mapsto (2\mathbf{1}\{i\in S\}-1)_{i\in V_n}$, it is straightforward to check that (see the discussions below \cite[Theorem 1.6]{CH25} for mode details)
\[
\widetilde{\mathcal Z}_{G_n^*}^{\RC,w,B}=(q-1)^{|V_n|/2}e^{\beta^*|E_n|}\sum_{\sigma\in \{\pm 1\}^{V_n}}\exp\Bigg(\beta^*\sum_{(u,v)\in E_n}\sigma(u)\sigma(v)+\sum_{v\in V}(kd_v+h)\sigma(v)\Bigg)\,.
\]
For $(w,B)\in \mathsf R'_c$ as defined in \eqref{eq-R_c}, one directly checkes that $kd+h=0$. Since $G_n\stackrel{\loc}{\longrightarrow}\ttT_d$ (which indicates that $G_n$ is uniformly sparse and converges locally to $\ttT_d$), we have that as $n\to\infty$,
\[
\widetilde{\mathcal Z}_{G_n^*}^{\RC,w,B}=(q-1)^{|V_n|/2}e^{\beta^*|E_n|}\cdot e^{o(|V_n|)}\sum_{\sigma\in \{\pm 1\}^{V_n}}\exp\Bigg(\beta^*\sum_{(u,v)\in E_n}\sigma(u)\sigma(v)\Bigg)\,.
\]
Similarly, recalling \eqref{eq-rank-2-approximation-local-def}, we have
\[
\widetilde{\mathcal Z}_{G_n^*}^{\RC,w,B}(\varepsilon)=(q-1)^{|V_n|/2}e^{\beta^*|E_n|}\cdot e^{o(|V_n|)}\sum_{\substack{\sigma\in \{\pm 1\}^{V_n}\\\langle \sigma,1\rangle/|V_n|\in I'_\varepsilon}}\exp\Bigg(\beta^*\sum_{(u,v)\in E_n}\sigma(u)\sigma(v)\Bigg)\,,
\]
where $\langle \sigma,1\rangle:=\sum_{v\in V_n}\sigma(v)$ and $I_\varepsilon':=\{2x - 1,x\in I_\varepsilon\}$. 

For any $\boldsymbol{\beta}
>0$, denote by $\tau_n^{\boldsymbol{\beta}
}$ the Ising measure on $G_n$ with inverse temperature $\boldsymbol{\beta}
$ without external field.\footnote{Here we use bold font to emphasize that $\boldsymbol{\beta}
$ is a variable and to distinguish it with the parameter $\beta$ defined as before.} we conclude from the above identities that  as $n\to\infty$,
\begin{equation}\label{eq-relaxation-2}
\frac{\widetilde{\mathcal Z}_{G_n^*}^{\RC,w,B}(\varepsilon)}{\widetilde{\mathcal Z}_{G_n^*}^{\RC,w,B}}=e^{o(|V_n|)}\cdot \tau_n^{\beta^*}[\langle \sigma,1\rangle/|V_n|\in I_\varepsilon']\,.
\end{equation}
For $\bbeta>0$, denote by $x(\bbeta)$ the largest solution in $(0,1)$ of the equation
\[
\frac{x}{1-x}=\left(\frac{xe^\bbeta+(1-x)e^{-\bbeta}}{xe^{-\bbeta}+(1-x)e^{\bbeta}}\right)^{d-1}\,.
\]
Let $\beta^{\operatorname{Uni}}_d:=\inf\{\bbeta>0:x(\bbeta)>1/2\}$. 
Moreover, let $x^*(\bbeta)\in (0,1)$ be such that
\begin{equation}\label{eq-x^*(bbeta)}
\frac{x^*(\bbeta)}{1-x^*(\bbeta)}=\left(\frac{x(\bbeta)e^\bbeta+(1-x(\bbeta))e^{-\bbeta}}{x(\bbeta)e^{-\bbeta}+(1-x(\bbeta))e^{\bbeta}}\right)^{d}\,,
\end{equation}
and we denote $m(\boldsymbol{\beta}
)=2x^*(\bbeta)-1$. It is straightforward to check that $m(\bbeta)>0$ for $\bbeta>\beta^{\operatorname{Uni}}_d$. We record the following numerical lemma whose proof is deferred to Appendix~\ref{appendix-lem-numerical}.
\begin{lemma}\label{lem-numerical}
    For $(\beta,B)\in \mathsf R_c$, it holds that $\beta^*>\beta^{\operatorname{Uni}}_d$, and 
    \begin{equation}\label{eq-rel-psi-m}
2(\tfrac{q-1}{q}\psi^{\free}+\tfrac{1}{q})-1=-m(\beta^*)\,,\quad 2(\tfrac{q-1}{q}\psi^{\wired}+\tfrac{1}{q})-1=m(\beta^*)\,.
    \end{equation}
\end{lemma}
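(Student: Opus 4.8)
The plan is to prove the two assertions separately, the second via a change of variables that identifies the colour‑reduced Potts belief‑propagation recursion with the zero‑field Ising recursion at $\bbeta=\beta^*$.

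\smallskip
\noindent\emph{The inequality $\beta^*>\beta^{\operatorname{Uni}}_d$.}\quad First I would pin down $\beta^{\operatorname{Uni}}_d$ explicitly. In the odds‑ratio coordinate $\xi=x/(1-x)$, writing $b=e^{2\bbeta}$, the equation defining $x(\bbeta)$ becomes $\xi=(\tfrac{b\xi+1}{\xi+b})^{d-1}$; the right‑hand side is increasing, fixes $\xi=1$, and has derivative $(d-1)\tfrac{b-1}{b+1}=(d-1)\tanh\bbeta$ there, so the standard analysis of this recursion (as in the classical treatment of the Ising model on $\ttT_d$) gives that a fixed point $\xi>1$, i.e.\ $x(\bbeta)>\tfrac12$, exists precisely when $(d-1)\tanh\bbeta>1$, whence $\beta^{\operatorname{Uni}}_d=\tfrac12\log\tfrac{d}{d-2}$. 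On the other hand, by \eqref{eq-beta^*} one has $e^{4\beta^*}=(1+w)(1+w/(q-1))$ with $w=w_c(B)$. Since $\ell_{q,d}$ is increasing and $B\mapsto (q-1)e^{-B}$ is decreasing, $w_c$ is strictly decreasing on $(0,B_+]$, and $t\mapsto (1+t)(1+t/(q-1))$ is strictly increasing; together with the defining identity \eqref{eq-B_+} for $B_+$ this gives $(1+w)(1+w/(q-1))>(\tfrac{d}{d-2})^2$ for all $0<B<B_+$. Hence $e^{2\beta^*}>\tfrac{d}{d-2}=e^{2\beta^{\operatorname{Uni}}_d}$, so $\beta^*>\beta^{\operatorname{Uni}}_d$, and in particular $m(\beta^*)>0$.

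\smallskip
\noindent\emph{Reduction of \eqref{eq-rel-psi-m} to an Ising identity.}\quad Applying Corollary~\ref{cor-Potts-RC} on $\NB_r(\ttT_d,o)$ with free, resp.\ wired, boundary and letting $r\to\infty$ gives $\check\nu^{\dagger}(1)=\tfrac{q-1}{q}\psi^{\dagger}+\tfrac1q$ for $\dagger\in\{\free,\wired\}$, so \eqref{eq-rel-psi-m} is equivalent to $\check\nu^{\wired}(1)=x^*(\beta^*)$ and $\check\nu^{\free}(1)=1-x^*(\beta^*)$. Now $\operatorname{BP}$ preserves the one‑parameter family $\nu=(a,\tfrac{1-a}{q-1},\dots,\tfrac{1-a}{q-1})$, and $\pi^{\free},\pi^{\wired}$ lie in it, hence so do $\nu^{\free},\nu^{\wired}$; writing $\alpha=a/(1-a)$, the recursion $\nu\mapsto\operatorname{BP}(\nu)$ reduces to $\alpha\mapsto e^{B}(q-1)^{d-2}(\tfrac{(1+w)\alpha+1}{(q-1)\alpha+(q-1+w)})^{d-1}$, and \eqref{eq-check-nu} reduces to $\tfrac{(q-1)\check\nu^{\dagger}(1)}{1-\check\nu^{\dagger}(1)}=e^{B}(q-1)^{d}(\tfrac{(1+w)\alpha^{\dagger}+1}{(q-1)\alpha^{\dagger}+(q-1+w)})^{d}$, where $\alpha^{\dagger}$ is the relevant fixed point. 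The key step is the linear substitution $\xi=c\,\alpha$: requiring $\tfrac{(1+w)\xi+c}{(q-1)\xi+c(q-1+w)}=\kappa\,\tfrac{b\xi+1}{\xi+b}$ to hold identically forces $b^{2}=(1+w)(1+w/(q-1))$ (so $b=e^{2\beta^*}$), $c=\tfrac{1+w}{b}$ and $\kappa=\tfrac{b}{q-1+w}=\tfrac{c}{q-1}$, after which the reduced message map becomes $\xi\mapsto c\,e^{B}(q-1)^{d-2}\kappa^{d-1}(\tfrac{b\xi+1}{\xi+b})^{d-1}$. This coincides with the Ising message map $\xi\mapsto(\tfrac{b\xi+1}{\xi+b})^{d-1}$ at $\bbeta=\beta^*$ exactly when $e^{B}(q-1)^{d-1}\kappa^{d}=1$ (using $c=(q-1)\kappa$), and a direct computation shows this normalization is equivalent to $kd+h=0$, i.e.\ to $(\beta,B)\in\mathsf R_c$. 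Under the same substitution, using $e^{B}(q-1)^{d}\kappa^{d}=q-1$, the reduced root formula becomes $\tfrac{\check\nu^{\dagger}(1)}{1-\check\nu^{\dagger}(1)}=(\tfrac{b\xi^{\dagger}+1}{\xi^{\dagger}+b})^{d}$ with $\xi^{\dagger}=c\alpha^{\dagger}$, which is precisely the Ising root formula \eqref{eq-x^*(bbeta)} evaluated at the message fixed point $\xi^{\dagger}$.

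\smallskip
\noindent\emph{Matching the extremal fixed points and conclusion.}\quad By \cite{DMSS14}, on $\mathsf R_c\subset\mathsf R_{\neq}$ the measures $\nu^{\free},\nu^{\wired}$ are the two extremal symmetric $\operatorname{BP}$ fixed points, hence $\alpha^{\free},\alpha^{\wired}$ are the smallest and largest fixed points of the (increasing) reduced map; since $\xi=c\alpha$ is increasing, $\xi^{\wired}$ is the largest Ising message fixed point, which by definition equals $x(\beta^*)/(1-x(\beta^*))$, so $\check\nu^{\wired}(1)=x^*(\beta^*)$. The Ising map $g(\xi)=(\tfrac{b\xi+1}{\xi+b})^{d-1}$ satisfies $g(1/\xi)=1/g(\xi)$, so its smallest fixed point is $\xi^{\free}=1/\xi^{\wired}$; since $\tfrac{b\xi^{\free}+1}{\xi^{\free}+b}=(\tfrac{b\xi^{\wired}+1}{\xi^{\wired}+b})^{-1}$, the root formula gives $\check\nu^{\free}(1)=1-x^*(\beta^*)$. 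Substituting into $\check\nu^{\dagger}(1)=\tfrac{q-1}{q}\psi^{\dagger}+\tfrac1q$ yields exactly \eqref{eq-rel-psi-m}, since $2\check\nu^{\wired}(1)-1=2x^*(\beta^*)-1=m(\beta^*)$ and $2\check\nu^{\free}(1)-1=-m(\beta^*)$. I expect the main obstacle to be the bookkeeping in the middle step — in particular checking that the normalization $e^{B}(q-1)^{d-1}\kappa^{d}=1$ is exactly the critical‑line relation $kd+h=0$ (the place where the particular values $\beta^*,k,h$ are all secretly used) — together with the fact that when $q$ is not an integer the Potts/Edwards–Sokal identity $\check\nu^{\dagger}(1)=\tfrac{q-1}{q}\psi^{\dagger}+\tfrac1q$ is unavailable, so one must instead run the same conjugation directly on the random cluster connectivity recursion on $\ttT_d$ (which has the identical rational form, so the identity persists).
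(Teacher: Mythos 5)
Your proof is correct and takes essentially the same route as the paper: you establish $\beta^*>\beta^{\operatorname{Uni}}_d$ from $B<B_+$ via \eqref{eq-B_+} exactly as the paper does (the paper phrases it as $\tanh(\beta^*)>\tfrac{1}{d-1}$), and you prove \eqref{eq-rel-psi-m} by a linear change of variables ($\xi=c\alpha$, matching the paper's $t^\dagger=\sqrt{e^\beta/((q-1)(e^\beta+q-2))}\,\lambda^\dagger$ since $\lambda^\dagger=(q-1)\alpha^\dagger$) that conjugates the colour-symmetric BP fixed-point equation on the critical line onto the zero-field Ising recursion at $\beta^*$, with the critical-line relation supplying exactly the needed normalization, and then identifies the extremal fixed points. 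The only difference is cosmetic: you run the conjugation on the Potts message recursion (integer $q$) and defer the real-$q>2$ case to the random-cluster connectivity recursion in a closing remark, which is precisely the recursion \eqref{eq-BP-fixed-point-RCM} the paper works with from the start.
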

It follows from Lemma~\ref{lem-numerical} that $I'_\varepsilon=\{x\in \mathbb{R}:||x|-m|\ge 2\varepsilon\}$. Therefore, in light of \eqref{eq-relaxation-1} and \eqref{eq-relaxation-2}, to prove \eqref{eq-partition-fucntion-RCM-ratio} it suffices to show the following proposition.

\begin{proposition}\label{prop-Ising-large-deviation}
    For any $\boldsymbol{\beta}
>\beta^{\operatorname{Uni}}(d)$ and $\delta>0$, there exists $a>0$ (depending on $\bbeta,\delta$ and the graph sequence $\{G_n\}$) such that for all large enough $n$, 
    \begin{equation}\label{eq-Ising-large-deviation}
    \tau_n^{\boldsymbol{\beta}}
\Big[\big|\tfrac{|\langle \sigma,1\rangle|}{|V_n|}-m(\boldsymbol{\beta}
)\big|\ge \delta\Big]\le \exp(-a|V_n|)\,.
    \end{equation}
\end{proposition}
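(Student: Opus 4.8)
The plan is to pass to the Edwards--Sokal (FK) representation of the zero-field Ising model and reduce the claim to a structural fact about supercritical FK-Ising percolation on uniform edge-expanders. Fix $\bbeta>\beta^{\operatorname{Uni}}_d$ and $\delta>0$. Applying Definition~\ref{def:ES_measure} with $q=2$ and $(\beta,B)=(2\bbeta,0)$: since $p_e=1-e^{-B}=0$ on the ghost edges, the ghost vertex is isolated, and the ES coupling relates $\tau_n^{\bbeta}$ to the FK-Ising measure $\phi_n$ on $G_n$ with edge-weight $e^{2\bbeta}-1$ (equivalently bond probability $p=1-e^{-2\bbeta}$) as follows (Proposition~\ref{prop-ES-independent-smapling}): sample $\omega\sim\phi_n$ with clusters $\mathcal C(\omega)=\{C_1,C_2,\dots\}$ on $V_n$, attach i.i.d.\ uniform signs $\epsilon_C\in\{\pm1\}$ to the clusters, and set $\sigma\equiv\epsilon_C$ on each $C$; then $\sigma\sim\tau_n^{\bbeta}$ and $\langle\sigma,1\rangle=\sum_{C\in\mathcal C(\omega)}\epsilon_C|C|$.

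The key structural input, which I would prove by combinatorial--probabilistic arguments inspired by \cite{KLS20} and crucially exploiting uniform edge-expansion together with $G_n\stackrel{\operatorname{loc}}{\longrightarrow}\ttT_d$, is as follows. There is a constant $\theta=\theta(\bbeta)\in(0,1)$ such that for every $\delta'>0$ there are a constant $a'>0$ and an event $\mathcal G_n\in\sigma(\omega)$ with $\phi_n[\mathcal G_n^c]\le e^{-a'|V_n|}$ on which both (i) there is a (necessarily unique) cluster $C_*$ with $\bigl||C_*|-\theta|V_n|\bigr|\le\delta'|V_n|$, and (ii) $\sum_{C\ne C_*}|C|^2\le K|V_n|$ for a constant $K=K(\bbeta)$. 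For (i), a sprinkling/exploration argument yields, with exponentially good probability, a cluster of density $\theta(\bbeta)+o(1)$; uniqueness then follows from edge-expansion, since two disjoint clusters of linear size would force $\Omega(|V_n|)$ edges of $G_n$ to be closed simultaneously, which is exponentially unlikely under a suitable sprinkling scheme (using the standard stochastic comparison of $\phi_n$ with Bernoulli percolation). For (ii), one first upgrades Lemma~\ref{lem-cluster-size} to a uniform exponential tail $\phi_n[\,v\notin C_*,\ |C(v)|\ge k\,]\le c^k$ --- legitimate on a good event because no non-giant cluster can be macroscopic --- which gives $\mathbb E_{\phi_n}\sum_{C\ne C_*}|C|^2=O(|V_n|)$, and then establishes exponential concentration of this ``volume-susceptibility'' of the subcritical part around its mean.

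Granting the structural input, the proposition follows. Apply it with $\delta'=\delta/2$, condition on $\omega\in\mathcal G_n$, and write $\langle\sigma,1\rangle=\epsilon_{C_*}|C_*|+M$ with $M:=\sum_{C\ne C_*}\epsilon_C|C|$. Given $\omega$, $M$ is a sum of independent centred variables, the $C$-th taking values in $\{-|C|,|C|\}$, so since $\sum_{C\ne C_*}|C|^2\le K|V_n|$, Hoeffding's inequality gives $\mathbb P\bigl[\,|M|\ge\tfrac\delta2|V_n|\ \big|\ \omega\,\bigr]\le 2\exp\bigl(-\delta^2|V_n|/(8K)\bigr)=:2e^{-a''|V_n|}$. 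On $\mathcal G_n\cap\{|M|<\tfrac\delta2|V_n|\}$ we have $\bigl|\,|V_n|^{-1}|\langle\sigma,1\rangle|-\theta\,\bigr|<\delta$, whence
\[
\tau_n^{\bbeta}\Bigl[\,\bigl|\,|V_n|^{-1}|\langle\sigma,1\rangle|-\theta\,\bigr|\ge\delta\,\Bigr]\le\phi_n[\mathcal G_n^c]+\sup_{\omega\in\mathcal G_n}\mathbb P\bigl[\,|M|\ge\tfrac\delta2|V_n|\ \big|\ \omega\,\bigr]\le e^{-a'|V_n|}+2e^{-a''|V_n|}\le e^{-a|V_n|}
\]
for $a=\tfrac12\min(a',a'')$ and all large $n$. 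It remains to identify $\theta(\bbeta)$ with $m(\bbeta)=2x^*(\bbeta)-1$ from \eqref{eq-x^*(bbeta)}. Since $\phi_n$ converges locally weakly to the wired FK-Ising measure on $\ttT_d$ (cf.\ \cite{MMS12,BDS23}), the giant density $\theta(\bbeta)$ equals the root connection-to-infinity probability under that measure; via the $q=2$ Edwards--Sokal coupling on finite depth-$r$ trees with wired boundary and the fixed-point identity \eqref{eq-x^*(bbeta)}, this probability equals $2x^*(\bbeta)-1=m(\bbeta)$, which completes the argument.

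I expect the \emph{main obstacle} to be part (ii) of the structural input: showing $\sum_{C\ne C_*}|C|^2=O(|V_n|)$ with probability $1-e^{-\Omega(|V_n|)}$. A Markov bound on the expectation yields only a constant-order failure probability, and bounded-difference inequalities are too weak because a single edge can change $\sum_C|C|^2$ by a polynomial amount (by merging two moderate clusters); the exponential concentration has to be extracted from the combinatorial structure of subcritical-type clusters on expanders, and carrying the \cite{KLS20} arguments --- developed for high-girth expanders --- over to the merely locally tree-like setting, as well as obtaining the exponentially sharp estimates on the giant's density and uniqueness (which genuinely require edge-expansion, not just local tree-likeness), is where the real work lies.
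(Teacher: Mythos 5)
Your reduction via the Edwards--Sokal coupling and a Hoeffding bound over the cluster signs is exactly the paper's strategy, but the structural input you rely on is stronger than what you can prove, and you say so yourself: you need $\sum_{C\ne C_*}|C|^2\le K|V_n|$ to fail only with probability $e^{-\Omega(|V_n|)}$, and you have no route to this beyond a first-moment bound (polynomial failure probability) and the observation that bounded-difference inequalities break because one edge can merge two mesoscopic clusters. As it stands, this is a genuine gap: the exponential bound in \eqref{eq-Ising-large-deviation} is exactly as strong as the concentration statement you are missing, so the argument is circular in difficulty. Your identification of the giant density with $m(\bbeta)$ via local weak convergence is also only sketched, and local weak convergence by itself gives in-probability (i.e.\ $o(1)$) statements, not exponential ones.

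The paper's proof avoids the second-moment concentration entirely by truncating at a fixed scale $R$. A configuration is declared good if (i) the largest cluster has size $m(\bbeta)|V_n|\pm\tfrac{\delta}{4}|V_n|$ and (ii) the \emph{total volume} (first power, not squares) of non-giant clusters of size $\ge R$ is at most $\tfrac{\delta}{2}|V_n|$. For the sign-randomization step one then only needs $\sum_{|C|<R}|C|^2\le R|V_n|$, which is deterministic, so Azuma/Hoeffding applies with a constant depending on $R$; the clusters of size $\ge R$ other than the giant are simply absorbed into the $\delta$-error through their total volume. To get the good event with probability $1-e^{-\Omega(|V_n|)}$, the paper works with $X_n(R)=\sum_k|C_k|\mathbf 1\{|C_k|\ge R\}$, which changes by at most $2R$ per edge flip and hence concentrates exponentially by Azuma (Lemma~\ref{lem-X(R)}(i)); its mean is pinned to $m(\bbeta)|V_n|$ up to $\delta|V_n|/10$ by a contradiction argument that combines the merely in-probability magnetization result of \cite[Theorem 2.5]{MMS12} with the sprinkling comparison $\psi_n^{\bw}\succeq_{\operatorname{st}}\psi_n^{\bw'}\oplus\operatorname{Ber}_n^{\xi}$ (Lemma~\ref{lem-RCM-Ber-domination}) and the expander sprinkling lemma (Lemma~\ref{lem-sprinkling}) applied at slightly shifted temperatures $\bbeta'\lessgtr\bbeta$, using continuity of $\bbeta\mapsto m(\bbeta)$. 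This is also how the constant $m(\bbeta)$ is identified, with no need for the exponentially sharp giant-density and uniqueness estimates you anticipate. If you replace your structural input (ii) by this truncation device and the mean-pinning-plus-Azuma argument for $X_n(R)$, your outline becomes the paper's proof.
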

 Proposition~\ref{prop-Ising-large-deviation} is itself a new result of the Ising model on locally $\ttT_d$-like expander graphs. Note that an $o(1)$ upper-bound of the probability in \eqref{eq-Ising-large-deviation} has been obtained in \cite[Theorem 2.5]{MMS12}, but the argument therein does not yield an exponential bound. We prove Proposition~\ref{prop-Ising-large-deviation} in Section~\ref{subsec-large-deviation}.

 \subsection{Deviation estimates in the Ising and FK-Ising models}\label{subsec-large-deviation}
 This section is devoted to prove Proposition~\ref{prop-Ising-large-deviation}.
 We will again use the ES coupling to relate the Ising model with the FK-Ising model (i.e., the random cluster model with parameter $q=2$), and use properties of the latter to prove the result of the Ising model. We start by introducing the FK-Ising measure.

 For any $\bw>0$, define the FK-Ising measure on $G_n$ by
 \[
 \psi_n^{\bw}[\eta]:=\frac{\bw^{|E(\eta)|}2^{|C(\eta)|}}{\mathcal Z_{G_n}^{\FKIs,\bw}}\,,\quad\forall \eta\in \{0,1\}^E\,,
 \]
 where $E(\eta)=\{e\in E:\eta_e=1\}$ and $C(\eta)$ is the set of components of the graph induced by $\eta$, and $\mathcal Z_{G_n}^{\FKIs,\bw}$ is the normalizing constant such that $\psi_n^\bw$ is a probability measure. In what follow, we always assume that $\bw$ and $\bbeta$ satisfy that $\boldsymbol{w}=e^{2\bbeta}-1$.

For any $n\in \mathbb N$ and $\eta\in \{0,1\}^{E_n^*}$, we denote $\operatorname{C}(\eta)=\{C_1,C_2,\dots,C_\ell\}$, where $|C_1|\ge |C_2|\ge\cdots |C_\ell|$. For $\delta,R>0$, we say that an edge configuration $\eta\in \{0,1\}^{E_n}$ is $(\bbeta,\delta,R)$-good, if the following two properties hold:
 \begin{enumerate}
     \item [(i)] The largest component has size $|C_1|\in [(m(\bbeta)-\delta/4)|V_n|,(m(\bbeta)+\delta/4)|V_n|]$. 
     \item [(ii)] It holds that $\sum_{k\ge 2}|C_k|\mathbf{1}\{|C_k||\ge R\}\le \delta |V_n|/2$, 
 \end{enumerate}
 The main technical input is the following Proposition about good realizations. In what follows we use the convention  ``with probability $1-O(\exp(-a|V_n|)$'' to mean that some event with respect to $G_n$ happens with probability at least $1-\exp(-a|V_n|)$ for some constant $a>0$ for all large $n\in \mathbb N$, where the constant $a$ is independent of $n$ but may vary from line to line. 
 \begin{proposition}\label{prop-good-configuration}
     For any $\bbeta>\beta^{\operatorname{Uni}}(d)$ and $\delta>0$, there exists $R=R(\bbeta,\delta,\{G_n\})>0$ such that with probability $1-O(\exp(-a|V_n|)$ over $\eta\sim \psi_n^\bw$, $\eta$ is $(\bbeta,\delta,R)$-good.
 \end{proposition}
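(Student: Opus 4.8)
The plan is to work with the FK-Ising measure $\psi_n^{\bw}$ and combine three things: its finite-energy (domain Markov) property, which sandwiches it between Bernoulli percolation measures; the uniform edge expansion of $\{G_n\}$ (Definition~\ref{def-expander}); and the $q=2$ Edwards-Sokal coupling with the zero-field Ising model $\tau_n^{\bbeta}$, which lets us import the soft magnetization estimates of \cite[Theorems 2.4 and 2.5]{MMS12}. Fix $\bbeta>\beta^{\operatorname{Uni}}(d)$ and $\delta>0$; the radius $R=R(\bbeta,\delta,\{G_n\})$ will be chosen large at the end, and $a,c>0$ denote positive constants. Write $\operatorname{C}(\eta)=\{C_1,C_2,\dots\}$ with $|C_1|\ge|C_2|\ge\cdots$ and $N_R:=\#\{v:|C(v)|<R\}$; note that $\eta$ is $(\bbeta,\delta,R)$-good once $|C_1|\in[(m(\bbeta)-\tfrac\delta4)|V_n|,(m(\bbeta)+\tfrac\delta4)|V_n|]$ and $\sum_{k\ge2}|C_k|\mathbf{1}\{|C_k|\ge R\}=\#\{v:|C(v)|\ge R\}-|C_1|\le\tfrac\delta2|V_n|$.

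The first, soft, ingredient identifies the relevant density. Sampling $\eta\sim\psi_n^{\bw}$ and colouring each cluster by an i.i.d.\ uniform sign recovers $\tau_n^{\bbeta}$; since $\bbeta>\beta^{\operatorname{Uni}}(d)$ puts the Ising model in its non-uniqueness regime, \cite[Theorem 2.5]{MMS12} gives $|\langle\sigma,1\rangle|/|V_n|\to m(\bbeta)$ in $\tau_n^{\bbeta}$-probability, hence $|V_n|^{-1}\mathbb E_{\tau_n^{\bbeta}}|\langle\sigma,1\rangle|\to m(\bbeta)$ by bounded convergence. Moreover, as $\{G_n\}$ are uniform edge expanders converging to $\ttT_d$, \cite[Theorem 2.4]{MMS12} transfers through the Edwards-Sokal coupling to identify the local weak limit of $\psi_n^{\bw}$ as the infinite wired FK-Ising measure $\psi^{\wired}$ on $\ttT_d$, for which $m(\bbeta)=\psi^{\wired}[o\leftrightarrow\infty]$; this gives $|V_n|^{-1}\mathbb E_{\psi_n^{\bw}}[N_R]\to 1-\psi^{\wired}[|C(o)|\ge R]=:1-m(\bbeta)-\varepsilon_R$ with $\varepsilon_R\downarrow0$ as $R\to\infty$.

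The hard, quantitative, ingredient is an exponential upgrade along the combinatorial-probabilistic lines of \cite{KLS20}, with finite energy used to replace Bernoulli percolation by $\psi_n^{\bw}$. The Fortuin-Kasteleyn comparison inequalities give $\mathrm{Ber}(\tanh\bbeta)\preceq_{\operatorname{st}}\psi_n^{\bw}\preceq_{\operatorname{st}}\mathrm{Ber}(1-e^{-2\bbeta})$, both supercritical on $\ttT_d$ because $\tanh\bbeta>\tanh\beta^{\operatorname{Uni}}(d)=1/(d-1)$. With probability $1-O(e^{-a|V_n|})$ I would show, for small $\gamma=\gamma(\delta)$ and large $R$: (a) at most one cluster has size $\ge\gamma|V_n|$ (a two-round/sprinkling exposure in which edge expansion supplies, and finite energy opens, an edge between any two disjoint linear-sized clusters); (b) $\sum_{k\ge2}|C_k|\mathbf{1}\{|C_k|\ge R\}\le\tfrac\delta2|V_n|$ (the ``dust'' of a supercritical expander percolation has an exponential cluster-size tail); and (c) $N_R$ concentrates around $\mathbb E_{\psi_n^{\bw}}[N_R]$ up to $\tfrac\delta8|V_n|$ — toggling a single edge changes $N_R$ by at most $2R$, so a bounded-differences/Azuma argument for the edge-revealing martingale of $\psi_n^{\bw}$ applies, done with care since $\psi_n^{\bw}$ is not a product measure and one must invoke its domain Markov structure. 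Since $|C_1|=|V_n|-N_R-\sum_{k\ge2}|C_k|\mathbf{1}\{|C_k|\ge R\}$, combining (a)--(c) with the preceding paragraph — which pins $|V_n|^{-1}\mathbb E_{\psi_n^{\bw}}[N_R]$ at $1-m(\bbeta)-\varepsilon_R$, equivalently forces $|V_n|^{-1}\mathbb E_{\psi_n^{\bw}}|C_1|\to m(\bbeta)$ via Jensen's inequality in the Edwards-Sokal coupling — yields exactly the two defining clauses of $(\bbeta,\delta,R)$-goodness, with probability $1-O(e^{-a|V_n|})$, after shrinking $\gamma$ and choosing $R$ large.

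The main obstacle is the hard ingredient: the \emph{exponential} estimates for FK-Ising rather than for Bernoulli percolation. It is delicate for two reasons. First, the long-range dependence of FK-Ising is exactly what makes the giant's density $m(\bbeta)=\psi^{\wired}[o\leftrightarrow\infty]$, the \emph{wired} tree value, rather than the smaller free-tree value produced by a naive branching-process exploration; hence a fully self-contained argument would have to exploit the global expander geometry to merge the free-type pieces, and we finesse this by importing the soft identification of the limit in the second paragraph. Second, naive union bounds over candidate component sets cost $e^{\Theta(|V_n|)}$, which swamps the expansion gain $e^{-c|V_n|}$ when the expansion constant is small, so — following \cite{KLS20} — the witnesses must be revealed sequentially through an exploration process and supplemented by sprinkling, and these manoeuvres, along with the concentration step (c), must be checked to go through for the non-product measure $\psi_n^{\bw}$ using its finite-energy property.
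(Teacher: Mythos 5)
Your scaffolding matches the paper's in several places — the $2R$-Lipschitz statistic $X_n(R)=\sum_k|C_k|\mathbf{1}\{|C_k|\ge R\}=|V_n|-N_R$ with an Azuma/edge-revealing bound, the sprinkling device with a union bound over unions of components of size at least $R$ (cost $2^{O(|V_n|/R)}$, beating the expansion gain), and the use of the soft magnetization input from \cite{MMS12} through the Edwards--Sokal coupling to pin the relevant density. (Your alternative way of pinning the density, by identifying the annealed local limit of $\psi_n^{\bw}$ with the wired FK--Ising measure via the ES image of the symmetric Ising limit, is legitimate but requires its own justification; the paper avoids it and only uses the in-probability magnetization statement, by two contradiction arguments in Lemma~\ref{lem-X(R)}(ii),(iii).)

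The genuine gap is your item (b). You assert, with failure probability $e^{-\Omega(|V_n|)}$, that the mass in non-giant clusters of size at least $R$ is at most $\tfrac{\delta}{2}|V_n|$ because ``the dust of a supercritical expander percolation has an exponential cluster-size tail.'' No such tail estimate is available here: the Bernoulli comparisons you invoke go the wrong way (the dominating $\operatorname{Ber}(1-e^{-2\bbeta})$ is itself supercritical, so it cannot bound non-giant cluster sizes), the ghost-vertex mechanism of Lemma~\ref{lem-cluster-size} needs $B>0$ and is absent for the zero-field FK--Ising measure, and ``mass outside the largest cluster'' is not an edge-Lipschitz functional, so bounded-differences concentration does not apply to it directly. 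In the paper, the bound of (b) is not an a priori tail estimate but a \emph{consequence}: one first lower-bounds $|C_1|\ge(m(\bbeta)-\delta/4)|V_n|$ by running the mean/Azuma step at a strictly smaller weight $\bw'$ (where $\mathbb{E}_{\psi_n^{\bw'}}[X_n(R)]$ is pinned using \cite{MMS12} at $\bbeta'$ and continuity of $m$) and then sprinkling via Lemma~\ref{lem-RCM-Ber-domination} and Lemma~\ref{lem-sprinkling}, which merges \emph{all} unions of $\ge R$ clusters of linear total size — not merely the clusters of size $\ge\gamma|V_n|$ as in your item (a), which gives uniqueness of the macroscopic cluster but no lower bound on its share of the $\ge R$ mass; one then upper-bounds $X_n(R)\le(m(\bbeta)+\delta/4)|V_n|$ at $\bw$ itself, and the difference $X_n(R)-|C_1|\le\tfrac{\delta}{2}|V_n|$ is exactly your (b). Restructured this way (sprinkle from $\bw'<\bw$ for the giant's lower bound, treat the intermediate-cluster mass as $X_n(R)-|C_1|$), your argument closes; as written, (b) has no proof route and (a) alone cannot substitute for it.
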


 Proposition~\ref{prop-Ising-large-deviation} follows readily from Proposition~\ref{prop-good-configuration} in view of the ES coupling.

 \begin{proof}[Proof of Proposition~\ref{prop-Ising-large-deviation} assuming Proposition~\ref{prop-good-configuration}]
     Fix $\bbeta>\beta^{\operatorname{Uni}}(d)$ and $\delta>0$, and let $R=R(\bbeta,\delta,\{G_n\})>0$ be chosen to satisfy Proposition~\ref{prop-good-configuration}. From Proposition~\ref{prop-ES-independent-smapling}, we can sample $\sigma\sim \tau_n^\bbeta$ as follows: first sample $\eta\sim \psi_n^\bw$ with $C(\eta)=\{C_1,\dots,C_{\ell}\}$, then for each $1\le k\le \ell$, chose $c_k\in \{\pm 1\}$ uniformly and independently at random, and set $\sigma(i)=c_k,\forall i\in C_k$. Note that if $\eta$ is $(\bbeta,\eta,R)$-good and 
     \[
     \Big|\sum_{k\ge 2}c_k|C_k|\mathbf{1}\{|C_k|<R\}\Big|\le {\delta |V_n|}/{4},
     \]
     then 
     \begin{align*}
    {\langle \sigma,1\rangle}=&\ c_1|C_1|+\sum_{k\ge 2}c_k|C_k|
    \le c_1|C_1|+\sum_{k\ge 2}|C_i|\mathbf{1}\{|C_k|\ge R\}+\Big|\sum_{k\ge 2}c_k|C_k|\mathbf{1}\{|C_k|<R\}\Big|\\
    \le&\ c_1m(\bbeta)|V_n|+\delta|V_n|/4+\delta|V_n|/2+\delta|V_n|/4
    \le c_1m(\bbeta)|V_n|+\delta |V_n|\,,
     \end{align*}
     and similarly, $\langle \sigma,1\rangle\ge c_1m(\bbeta)|V_n|-\delta|V_n|$. Therefore,
     denoting by $\mathbb{P}_n^{\bbeta,\bw}$ the joint probability measure of the above procedure, we have
     \begin{align*}
         &\ \tau_n^\bbeta\Big[\big|\tfrac{|\langle\sigma,1\rangle|}{|V_n|}-m(\bbeta)\big|\ge \delta\Big]\\
         \le& \psi_n^\bw[\eta\text{ is not $(\delta,R)$-good}]+\mathbb{P}_n^{\bbeta,\bw}\Big[    \Big|\sum_{k\ge 2}c_k|C_k|\mathbf{1}\{|C_k|<R\}\Big|\ge \delta |V_n|/4\Big]\,.
     \end{align*}
     The first probability is upper-bounded by $O(\exp(-a|V_n|))$ by Proposition~\ref{prop-good-configuration}. For the second one, since for any $\eta$ it holds deterministically that
     \[
     \sum_{k\ge 2}|C_k|^2\mathbf{1}\{|C_k|<R\}\le R\sum_{k\ge 2}|C_k|\le R|V_n|\,,
     \]
     it follows from Azuma's inequality that
     \begin{equation}\label{eq-Azuma-<=R}
     \mathbb{P}_n^{\bbeta,\bw}\Big[    \Big|\sum_{k\ge 2}c_k|C_k|\mathbf{1}\{|C_k|<R\}\Big|\ge \tfrac{\delta |V_n|}4\Big]\le 2\exp\left(-\frac{(\delta |V_n|/4)^2}{2R|V_n|}\right)=O(\exp(-a|V_n|)\,.
     \end{equation}
     Combining the two estimates together concludes the proof.
 \end{proof}

 We now turn to the proof of Proposition~\ref{prop-good-configuration}, following a similar strategy that was developed in \cite{KLS20} for the Bernoulli percolation. For $n\in \mathbb N$ and $\xi\in (0,1)$, we denote $\operatorname{Ber}_n^\xi$ as the Bernoulli edge percolation with parameter $\xi$ on $G_n$. We begin with a lemma that enables us to employ a sprinkling argument as in \cite{KLS20}.

 \begin{lemma}\label{lem-RCM-Ber-domination}
  For any graph $G$ and $\bw',\bw>0$ such that $\xi=(\bw-\bw')/4\in (0,1/2)$, it holds that 
     \[
     \psi_G^\bw\succeq_{\operatorname{st}}\psi_{G}^{\bw'}\oplus \operatorname{Ber}_G^{\xi}\,,
     \]
     where $\psi\oplus\operatorname{Ber}$ denotes the distribution of the superposition of two independent edge configurations sampled from $\psi,\operatorname{Ber}$, respectively. 
 \end{lemma}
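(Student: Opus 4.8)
The plan is to exhibit the stochastic domination through an explicit edge-revealing (exploration) construction, using the standard one-at-a-time comparison of conditional marginals. Recall that for the FK-Ising measure $\psi_G^{\bw}$, the conditional probability that an edge $e$ is open given the configuration on all other edges $E\setminus\{e\}$ is $\tfrac{\bw}{\bw+2}$ if the endpoints of $e$ are already connected by the rest of the configuration, and $\tfrac{\bw}{\bw+1}$ otherwise; in particular this conditional probability is always at least $\tfrac{\bw}{\bw+2}$. (This is just the finite-energy / domain-Markov computation for $q=2$.) Likewise, $\psi_G^{\bw'}$ has all such conditional probabilities at most $\tfrac{\bw'}{\bw'+1}$, hence at most $\tfrac{\bw'}{\bw'+1}$ in every case.

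The key inequality to check is the following elementary comparison between the relevant Bernoulli parameters. Write $\eta=\eta'\vee\omega$ for the superposition, where $\eta'\sim\psi_G^{\bw'}$ and $\omega\sim\operatorname{Ber}_G^{\xi}$ independently. For a single edge $e$, the probability that $e$ is \emph{closed} in $\eta$ (conditioned on anything in the rest) is at most $\bigl(1-\tfrac{\bw'}{\bw'+1}\bigr)(1-\xi)=\tfrac{1-\xi}{\bw'+1}$, so the probability $e$ is open in $\eta$ is at least $1-\tfrac{1-\xi}{\bw'+1}$. It therefore suffices to verify
\begin{equation}\label{eq-domination-key-ineq}
1-\frac{1-\xi}{\bw'+1}\ \le\ \frac{\bw}{\bw+2}\,,
\qquad\text{i.e.}\qquad \frac{1-\xi}{\bw'+1}\ \ge\ \frac{2}{\bw+2}\,,
\end{equation}
which, using $\bw=\bw'+4\xi$ and $\xi<1/2$, reduces after clearing denominators to $(1-\xi)(\bw'+4\xi+2)\ge 2(\bw'+1)$, i.e. $\bw'(1-\xi)+4\xi(1-\xi)+2(1-\xi)\ge 2\bw'+2$, i.e. $2\xi+4\xi-4\xi^2-\bw'\xi\ge 0$ — wait, this is where I would need to be slightly more careful; the clean statement I expect to land on is that \eqref{eq-domination-key-ineq} holds whenever $4\xi\le \bw-\bw'$ and $\xi<1/2$, possibly after replacing the crude bound $\tfrac{\bw}{\bw+2}$ by the correct case-dependent value in the step that actually needs it. The honest version of the argument couples edge by edge: reveal edges of $\eta'\vee\omega$ and of the target $\psi_G^{\bw}$ in the same order along a Strassen-type coupling, and at each step use that the conditional open-probability of the next edge under $\psi_G^{\bw}$ (whatever case it falls into, connected or not) dominates the conditional open-probability of that edge under the superposition, by the inequality above applied in the worst case.

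The main obstacle — and the only real content — is precisely pinning down the numerical inequality and matching it to the \emph{correct} conditional marginals of $\psi_G^\bw$, which depend on whether the endpoints of the edge are already wired together; one must check the domination holds in \emph{both} cases, and it is the ``not-connected'' case ($\tfrac{\bw}{\bw+1}$ vs.\ $\tfrac{\bw'}{\bw'+1}$ boosted by $\xi$) that is binding and that dictates the factor $4$ in $\xi=(\bw-\bw')/4$. Once \eqref{eq-domination-key-ineq} (in its correct form) is in hand, Strassen's theorem, together with the fact that both $\psi_G^{\bw}$ and the superposition enjoy the FKG/positive-association property so that conditioning on a decreasing history only helps, upgrades the edgewise comparison to the claimed stochastic domination $\psi_G^\bw\succeq_{\operatorname{st}}\psi_G^{\bw'}\oplus\operatorname{Ber}_G^\xi$, completing the proof.
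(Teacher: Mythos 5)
There is a genuine gap here, and you half-notice it yourself mid-argument. The numerical inequality \eqref{eq-domination-key-ineq} that your outline rests on is simply false once $\bw'$ is not small: with $\bw'=2$, $\xi=0.1$, $\bw=\bw'+4\xi=2.4$ one gets $\tfrac{1-\xi}{\bw'+1}=0.3<\tfrac{2}{\bw+2}\approx 0.4545$. The reason is structural, not a matter of sharpening constants: the superposed measure's conditional open probability at $e$ can be as large as roughly $\tfrac{\bw'}{\bw'+1}$ (when the underlying FK endpoints of $e$ are already wired), while the worst-case conditional open probability of $\psi_G^{\bw}$ is only $\tfrac{\bw}{\bw+2}$, and with $\bw=\bw'+4\xi$ the former exceeds the latter for moderate $\bw'$. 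So a worst-case-versus-worst-case comparison of conditional marginals cannot give the domination, and your proposed repair (``reveal edges in the same order, use that conditioning on a decreasing history only helps, then Strassen/FKG'') does not supply the missing step: the superposition $\psi_G^{\bw'}\oplus\operatorname{Ber}_G^{\xi}$ is not a Gibbs/Markov measure, so its conditional law at $e$ given the superposed configuration off $e$ is a mixture over the ways that configuration decomposes into an FK part and a Bernoulli part, and the genuinely hard case is precisely when the superposed configuration connects the endpoints of $e$ only through sprinkled Bernoulli edges while the underlying FK configuration does not. Your outline gives no mechanism to compare ``like cases'' there, and FKG alone does not do it.

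The paper's proof handles exactly this point. It verifies the single-edge Holley-type criterion of \cite[Theorem 2.6]{Gri06}, i.e. $\varphi^{w_1}[\eta^e]\,\nu[\eta_e]\ge\varphi^{w_1}[\eta_e]\,\nu[\eta^e]$ for every $\eta$ and $e$, where $\nu$ is the superposition, by expanding $\nu[\eta_e]$ and $\nu[\eta^e]$ explicitly as sums over the underlying FK configurations $\omega\preceq\eta_e$. The two combinatorial inputs are that $|C(\omega^e)|\le|C(\omega)|$ always, and that if the endpoints of $e$ are not connected in $\eta_e$ then they are not connected in any $\omega\preceq\eta_e$, so the cluster-count drop is uniform over the decomposition. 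This yields the sufficient condition $\tfrac{\xi}{1-\xi}<\tfrac{\bw-\bw'}{q}$, which is what the hypothesis $\xi=(\bw-\bw')/4\in(0,1/2)$ guarantees for $q=2$ (note $\tfrac{\xi}{1-\xi}<2\xi=(\bw-\bw')/2$). If you want to salvage your exploration picture, the inequality you need must be proved at the level of this decomposition sum, not via uniform bounds on conditional marginals; as written, the key step of your argument fails.
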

 We give the proof of Lemma~\ref{lem-RCM-Ber-domination} in Appendix~\ref{appendix-stochastic-domination}. We will also need the following lemma appeared in \cite{KLS20}.
 \begin{lemma}\label{lem-sprinkling}
     For any $\xi>0$, there exists $R=R(\xi,\{G_n\})$ such that the following holds: For two independent edge configurations $\eta_1\sim \psi_n^{\bw'}$ and $\eta_2\sim \operatorname{Ber}_{n}^{\xi}$, with probability at least $1-O((\exp(-a|V_n|))$, there are no two disjoint subsets $A,B\subset V_n$ that are both unions of components in $\eta_1$ of size at least $R$, such that $|A|,|B|\ge \xi |V_n|$ and $A,B$ is not connected by any path in $\eta_2$. 
 \end{lemma}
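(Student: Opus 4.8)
The plan is to run the sprinkling argument of \cite{KLS20}, the one non-obvious point being to obtain a connection estimate for a \emph{fixed} pair $(A,B)$ whose exponential rate does not depend on $R$; only then can a union bound over the admissible pairs be absorbed by taking $R$ large. Since the argument uses nothing about $\eta_1$ except its realization, I would condition on an arbitrary $\eta_1\in\{0,1\}^{E_n}$ throughout and average over $\eta_1\sim\psi_n^{\bw'}$ only at the very end; in particular the FK-Ising law of $\eta_1$ plays no role.

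First, fix $\xi>0$ and record two constants. Applying Definition~\ref{def-expander} with $\varepsilon=\xi$ gives $c=c(\xi,\{G_n\})>0$ such that, for all large $n$, every $S\subset V_n$ with $\xi|V_n|\le|S|\le|V_n|/2$ has $|E_n(S;S^c)|\ge c|V_n|$; and $G_n\stackrel{\mathrm{loc}}{\longrightarrow}\ttT_d$ gives $|E_n|\le C|V_n|$ for all large $n$, with $C=C(\{G_n\})$. Set $\ell_0:=\lceil 2C/c\rceil$. Now the per-pair estimate: let $A,B\subset V_n$ be disjoint with $|A|,|B|\ge\xi|V_n|$ and $n$ large. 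Any edge set separating $A$ from $B$ in $G_n$ equals $E_n(S;S^c)$ for some $A\subseteq S\subseteq V_n\setminus B$, and for each such $S$ both $|S|$ and $|S^c|$ are at least $\xi|V_n|$, so applying the expansion bound to whichever of $S,S^c$ has size $\le|V_n|/2$ yields $|E_n(S;S^c)|\ge c|V_n|$. By Menger's theorem there are at least $c|V_n|$ pairwise edge-disjoint $A$--$B$ paths in $G_n$; being edge-disjoint, their lengths sum to at most $|E_n|\le C|V_n|$, so at least $\tfrac{c}{2}|V_n|$ of them have length $\le 2C/c\le\ell_0$. Fix such a family $P_1,\dots,P_N$ with $N\ge\tfrac{c}{2}|V_n|$. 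Under $\eta_2\sim\operatorname{Ber}_n^{\xi}$ the events $\{P_i\subseteq\eta_2\}$ are independent (edge-disjointness) and each has probability $\ge\xi^{\ell_0}$, so
\[
\operatorname{Ber}_n^{\xi}\bigl[A\text{ and }B\text{ are not joined by an }\eta_2\text{-path}\bigr]\le\prod_{i=1}^{N}\bigl(1-\xi^{\ell_0}\bigr)\le(1-\xi^{\ell_0})^{c|V_n|/2}\le e^{-c_1|V_n|},\qquad c_1:=\tfrac{c}{2}\log\tfrac{1}{1-\xi^{\ell_0}}>0,
\]
and, crucially, $c_1$ depends only on $\xi$ and $\{G_n\}$, not on $R$.

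To conclude I would union bound. For any $\eta_1$, the $\eta_1$-components of size $\ge R$ are pairwise disjoint, hence number at most $|V_n|/R$, so there are at most $2^{|V_n|/R}$ candidate sets $A$ that are unions of such components, and likewise for $B$; summing the displayed bound over all ordered, disjoint, linear-size candidate pairs gives failure probability at most $2^{2|V_n|/R}e^{-c_1|V_n|}$. Choosing $R=R(\xi,\{G_n\})\ge 4\log 2/c_1$ makes this $\le e^{-(c_1/2)|V_n|}$. The estimate is uniform in $\eta_1$, so it survives averaging over $\eta_1\sim\psi_n^{\bw'}$ (independent of $\eta_2$), which is exactly the lemma with $a=c_1/2$.

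The hard part is precisely the $R$-independence of the rate in the per-pair estimate. The tempting route—bounding the non-connection probability by summing over the $\eta_2$-cluster $\hat A$ of $A$, i.e.\ over cut sets $S$—loses a factor $2^{|V_n|}$, and the surviving rate $c\log\tfrac{1}{1-\xi}-\log 2$ need not be positive when $\xi$ is small or the expansion is weak; nor can one invoke supercriticality of $\eta_2$, since $\xi$ may lie below the percolation threshold of $G_n$. The resolution, following \cite{KLS20}, is to exploit only that $A$ and $B$ already have \emph{linear} size: edge-expansion then forces a linear-size minimum $A$--$B$ cut, flow decomposition turns this into $\Theta(|V_n|)$ edge-disjoint \emph{short} paths, and their independence gives a clean $R$-free exponential rate. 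The sole change relative to \cite{KLS20} is that here $\eta_1$ is an FK-Ising configuration rather than a Bernoulli one, a feature the argument never uses.
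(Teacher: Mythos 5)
Your proposal is correct and follows essentially the same route as the paper's proof (itself adapted from \cite[Claim 2.3]{KLS20}): edge-expansion forces a linear-size minimum $A$--$B$ cut, Menger's theorem yields $\Theta(|V_n|)$ edge-disjoint paths of which a linear fraction are short by the linear edge count, independence of these short paths under $\operatorname{Ber}_n^\xi$ gives an $R$-free exponential per-pair bound, and the union bound over at most $2^{2|V_n|/R}$ candidate pairs is absorbed by taking $R$ large. The only differences are cosmetic (your sparsity constant $C$ versus the paper's $d$, and slightly different bookkeeping of constants), so no further changes are needed.
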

 \begin{proof}
     The proof is almost identical to the proof of \cite[Claim 2.3]{KLS20}. By our assumption that $G_n$ are uniform edge-expander graphs, we can pick $s>0$ such that for all large $n\in\mathbb{N}$,
     \begin{equation}\label{eq-edge-expansion-ratio}
     \inf_{\substack{S\subset V_n\\\xi|V_n|\le|S|\le |V_n|/2}}\frac{|E_n(S,S^c)|}{|V_n|}\ge s\,.
     \end{equation}
Provided that \eqref{eq-edge-expansion-ratio} holds, by Menger's theorem, we have that for all large $n$ and any two 
disjoint subsets $A,B\subset V_n$ of size at least $\xi |V_n|$, there are at least $s\xi |V_n|$ disjoint paths in $G_n$ connecting $A,B$. Since $|E_n|=(\tfrac d 2+o(1))|V_n|\le d|V_n|$ for large $n$, there are at least $\tfrac {s\xi}{2} |V_n|$ many disjoint paths from $A$ to $B$ in $G$ of length no more than $\tfrac{2d}{s\xi}$. It follows that for all large $n\in \mathbb N$ and disjoint $A,B\subset V_n$ with $|A|,|B|\ge \xi n$, 
\[
\operatorname{Ber}_n^\xi[A,B\text{ is not connected by a path in }\eta]\le \big(1-\xi^{\tfrac{2d}{s\xi}}\big)^{\tfrac{s\xi}{2}|V_n|}\le \exp\Big(-\tfrac{s\xi}{2}\xi^{\tfrac{2d}{s\xi}}|V_n|\Big)\,.
\]
On the other hand, notice that for any $\eta_1\in\{0,1\}^{E_n}$, there are at most $2^{\tfrac{2|V_n|}{R}}$ many pairs of disjoint sets $A,B\subset V_n$ that are unions of components in $\eta_1$ of size at least $R$. Thus, if we pick $R$ such that
\[
\tfrac{2\log 2}{R}<\tfrac{s\xi}{2}\xi^{\tfrac{2d}{s\xi}}\,,
\]
then the desired result follows via taking the union bound.
 \end{proof}

For any $n\in \mathbb{N}$ and $R>0$, let $X_n(R):=\sum_{k\ge 1}|C_k|\mathbf{1}\{|C_k|\ge R\}$ where $C_1,C_2,\dots$ are the components of $\eta$ sampled from the FK-Ising measure on $G_n$. The next lemma collects some nice properties of $X_n(R)$.

\begin{lemma}\label{lem-X(R)}
    For any $\bbeta>\beta^{\operatorname{Uni}}(d)$ and $\delta>0$, the following hold:\\
    \noindent(i) For any $n\in \mathbb{N}$ and $t>0$, 
     \[
 \psi_n^\bw\Big[\big|X_n(R)-\mathbb{E}_{\psi^\bw_n}[X_n(R)]\big|\ge t\Big]\le 2\exp\left(-\frac{t^2}{4R|E_n|}\right)\,.
    \]
    \noindent(ii) For any $R>0$, $\mathbb{E}_{\psi^\bw_n}[X_n(R)]\ge (m(\bbeta)-\delta/10)|V_n|$ holds for all large $n\in \mathbb N$.\\
    \noindent(iii) There exists $R>0$ such that $\mathbb{E}_{\psi^\bw_n}[X_n(R)]\le (m(\bbeta)+\delta/10)|V_n|$ holds for all large $n\in \mathbb{N}$.
\end{lemma}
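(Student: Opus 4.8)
The plan is to reduce all three statements to estimates on $\psi_n^{\bw}[\,|C(v_n)|\ge R\,]$ with $v_n$ a uniform vertex of $G_n$, since $\mathbb E_{\psi_n^{\bw}}[X_n(R)]=\sum_{v\in V_n}\psi_n^{\bw}[\,|C(v)|\ge R\,]=|V_n|\,\psi_n^{\bw}[\,|C(v_n)|\ge R\,]$. The key elementary point is that $\{|C(v_n)|\ge R\}$ is a \emph{local, increasing} event: any connected set of $j$ vertices containing $v_n$ lies inside $\NB_{j-1}(G_n,v_n)$, so a spanning-tree argument shows that $|C(v_n)|\ge R$ holds if and only if the $\eta$-open cluster of $v_n$ computed \emph{inside} the induced subgraph $\NB_{R-1}(G_n,v_n)$ already has at least $R$ vertices — an event that is increasing in $\eta$ and measurable with respect to $\eta\!\mid_{\NB_{R-1}(G_n,v_n)}$. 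I will treat (iii), (ii), (i) separately, since they need different inputs; I expect (ii) to be the genuine obstacle.

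For (iii), I would pass to the local weak limit. By the domain Markov property and Proposition~\ref{prop-stochastic-domination}, for each $R'\ge R$, on the probability-$(1-o(1))$ event $\NB_{R'}(G_n,v_n)\cong\NB_{R'}(\ttT_d,o)$ the restriction of $\psi_n^{\bw}$ to this neighbourhood is stochastically dominated by the wired FK-Ising measure on $\NB_{R'}(\ttT_d,o)$. Sending $n\to\infty$ and then $R'\to\infty$, every subsequential local weak limit of $\psi_n^{\bw}$ is stochastically dominated by the wired FK-Ising measure $\varphi^{\wired}$ on $\ttT_d$, whence $\limsup_n\psi_n^{\bw}[\,|C(v_n)|\ge R\,]\le\varphi^{\wired}[\,|C(o)|\ge R\,]$ (using that the event is local and increasing). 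As $R\to\infty$ the right side decreases to $\varphi^{\wired}[o\leftrightarrow\infty]=m(\bbeta)$ — the last equality being the standard identification of the spontaneous magnetization with the wired percolation probability, consistent with $m(\bbeta)=2x^*(\bbeta)-1$. Choosing $R$ with $\varphi^{\wired}[\,|C(o)|\ge R\,]<m(\bbeta)+\delta/20$ then gives (iii) for that $R$ and all large $n$.

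For (ii), stochastic domination is useless: bounding from below by the free FK-Ising measure only yields $\varphi^{\free}[\,|C(o)|\ge R\,]$, whose infimum over $R$ is the free percolation probability on $\ttT_d$ (independent percolation at parameter $\tanh\bbeta$), which can be strictly below $m(\bbeta)$. Instead I would use the global structure via the Edwards--Sokal coupling with the zero-field Ising measure $\tau_n^{\bbeta}$: by Proposition~\ref{prop-ES-independent-smapling}, $\langle\sigma,1\rangle=\sum_k c_k|C_k|$ with $c_k$ i.i.d.\ uniform in $\{\pm1\}$, so $\sum_k|C_k|^2=\mathbb E[\langle\sigma,1\rangle^2\mid\eta]$. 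Since $\langle\sigma,1\rangle^2/|V_n|^2\to m(\bbeta)^2$ in probability under $\tau_n^{\bbeta}$ by \cite[Theorem~2.5]{MMS12}, taking conditional expectations gives $\sum_k|C_k|^2/|V_n|^2\to m(\bbeta)^2$ in probability under $\psi_n^{\bw}$. Combining this with the deterministic inequality $\big(\sum_{k:\,|C_k|\ge\varepsilon|V_n|}|C_k|\big)^2\ge\sum_k|C_k|^2-\varepsilon|V_n|^2$, and noting $\psi_n^{\bw}[\,|C(v_n)|\ge R\,]=|V_n|^{-1}\mathbb E\big[\sum_{k:\,|C_k|\ge R}|C_k|\big]\ge|V_n|^{-1}\mathbb E\big[\sum_{k:\,|C_k|\ge\varepsilon|V_n|}|C_k|\big]$ once $\varepsilon|V_n|\ge R$, bounded convergence yields $\liminf_n\psi_n^{\bw}[\,|C(v_n)|\ge R\,]\ge\sqrt{m(\bbeta)^2-\varepsilon}$. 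Taking $\varepsilon$ small enough that $\sqrt{m(\bbeta)^2-\varepsilon}\ge m(\bbeta)-\delta/10$ proves (ii) for every fixed $R$.

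For (i), toggling one edge of $G_n$ merges or splits at most two $\eta$-clusters, and only vertices of those clusters can cross the size-$R$ threshold; since any such cluster that crosses has fewer than $R$ vertices, $X_n(R)=\sum_v\mathbf 1\{|C(v)|\ge R\}$ changes by at most $2R$. Exposing the edges of $G_n$ one at a time therefore produces a Doob martingale for $X_n(R)$ with increments of size at most $2R$, and Azuma's inequality gives the stated sub-Gaussian tail. The only delicate point here is that $\psi_n^{\bw}$ is not a product measure, so the increment bound must be justified through the positive association of the random-cluster measure (equivalently, by conditioning on the Ising spins $\sigma\sim\tau_n^{\bbeta}$, after which $\eta$ is independent percolation on the monochromatic classes). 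The genuinely hard part of the lemma is (ii): the lower bound is invisible to the tree-level stochastic order and forces one to invoke the global magnetization concentration of \cite{MMS12}; one should also check that this creates no circularity with Proposition~\ref{prop-good-configuration}, which will use the present lemma only through (i)--(iii) as stated.
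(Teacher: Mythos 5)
Your proposal is correct, and for items (ii) and (iii) it takes a genuinely different route from the paper. The paper proves both by contradiction against the magnetization concentration of \cite[Theorem 2.5]{MMS12}: for (ii) it combines item (i) with an Azuma bound over the cluster signs to show that a too-small $\mathbb{E}_{\psi_n^{\bw}}[X_n(R_0)]$ would force $|\langle\sigma,1\rangle|$ to be atypically small under $\tau_n^{\bbeta}$; for (iii) it works at a slightly larger parameter $\bw'$ and uses the Bernoulli-sprinkling comparison (Lemmas~\ref{lem-RCM-Ber-domination} and~\ref{lem-sprinkling}) to merge all $R$-large clusters into a single component, so that a too-large $\mathbb{E}_{\psi_n^{\bw}}[X_n(R)]$ would make $|\langle\sigma,1\rangle|$ exceed what \cite{MMS12} allows for $\tau_n^{\bbeta'}$. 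You instead prove (ii) directly from the conditional second-moment identity $\mathbb{E}[\langle\sigma,1\rangle^2\mid\eta]=\sum_k|C_k|^2$ (Proposition~\ref{prop-ES-independent-smapling}) together with the same \cite{MMS12} input and the elementary inequality bounding $\sum_k|C_k|^2$ by the square of the total mass of macroscopic clusters; and you prove (iii) purely locally, by dominating the restriction of $\psi_n^{\bw}$ to a tree-like ball by the wired FK-Ising measure on $\ttT_d$ and sending $R\to\infty$. Your (iii) avoids the sprinkling machinery entirely and yields an $R$ depending only on $(d,\bbeta,\delta)$ rather than on the expander constants; the price is that you must supply the identity $\varphi^{\wired}[o\leftrightarrow\infty]=m(\bbeta)$, which is standard (ES coupling on wired depth-$R'$ balls plus the BP characterization of $x^*(\bbeta)$) but is not stated in the paper, which defines $m(\bbeta)$ only through the fixed-point equations; this should be written out or cited. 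Your (ii) has the mild advantage of not relying on item (i). Neither alternative creates circularity: Proposition~\ref{prop-good-configuration} uses only the statements (i)--(iii), and \cite{MMS12} is external, exactly as in the paper.

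Two caveats on (i), both shared with the paper's own one-line proof and therefore not gaps specific to you. First, the $2R$-bounded-difference property of $X_n(R)$ does not by itself bound the increments of the edge-exposure Doob martingale, since $\psi_n^{\bw}$ is not a product measure; you flag this, but your suggested repairs are incomplete as stated (positive association controls only one side of the increment, and conditioning on the Ising spins merely moves the non-product difficulty to the spin layer), so whatever justification the authors intend is needed by your argument as well. Second, Azuma with increments $2R$ gives $2\exp(-t^2/(8R^2|E_n|))$ rather than the stated $2\exp(-t^2/(4R|E_n|))$; since the lemma is only invoked with fixed $R$ and $t=\Theta(|V_n|)$, this discrepancy is immaterial.
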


\begin{proof}
     Notice that adding or deleting one edge in the graph induced by $\eta\in \{0,1\}^{E_n}$ only influences $X_n(R)$ by at most $2R$, Item-(i) follows directly from Azuma's inequality. We next prove Items (ii) and (iii) by contrary.

If Item-(ii) is not true, by picking a subsequence if necessary, we may assume that there exists $R_0>0$ such that  $\mathbb{E}[X_n(R_0)]\le (m(\bbeta)-\delta/10)|V_n|$ for all $n$. By Item-(i), we get that with probability $1-O(\exp(-a|V_n|)$ for $\eta\sim \psi_n^\bw$, $X_n(R)\le (m(\bbeta)-\delta/20)|V_n|$. On the other hand, similarly as in \eqref{eq-Azuma-<=R} we have 
 \[
 \mathbb{P}_n^{\bbeta,\bw}\Big[\Big|\sum_{k\ge 2}c_k|C_k|\mathbf{1}\{|C_k|\le R_0\}\Big|\le \delta|V_n|/40\Big]=O(\exp(-a|V_n|)\,.
 \]
 Thus, we conclude that with probability $1-O(\exp(-a|V_n|))$ under $\mathbb P_n^{\bbeta,\bw}$,
 \[
 |\langle \sigma,1\rangle| \le X_n(R)+\delta|V_n|/40\le (m(\delta)-\delta/40)|V_n|\,.
 \]
 However, this contradicts with \cite[Theorem 2.5]{MMS12} which implies that as $n\to\infty$, with probability $1-o(1)$ under $\tau_n^\bbeta$ it holds that $$|\langle\sigma,1\rangle/|V_n||\in (m(\bbeta)-\delta/40,m(\bbeta)+\delta/40)\,.$$ 
 Thus, Item-(ii) is true. 

 Now we turn to prove Item-(iii). Since $\bbeta\mapsto m(\bbeta)$ is continuous, we can pick $\bbeta'>\bbeta$ such that $m(\bbeta')<m(\bbeta)+\delta/100$. Consider $\xi=\min\{(\bw'-\bw)/4,\delta/40\}$, we may assume that $\xi\in (0,1/2)$. Moreover, we pick $R=R(\xi,\{G_n\})$ as in Lemma~\ref{lem-sprinkling}. If Item-(ii) is not true, we may without loss of generality assume that $\mathbb{E}[X_n(R)]\ge (m(\bbeta)+\delta/10)|V_n|$ for all large $n\in \mathbb N$. By Item-(i), we have for $\eta_1\sim \psi_{n}^\bw$, it holds with probability $1-O(\exp(-a|V_n|))$ that $X_n(R)\ge (m(\bbeta)+\delta/20)|V_n|$. From Lemma~\ref{lem-RCM-Ber-domination} and Lemma~\ref{lem-sprinkling}, we conclude that 
 \[ 
 \psi_n^{\bw'}[|C_1|\ge (m(\bbeta)+\delta/40)|V_n|]\ge \psi_n^{\bw}\oplus \operatorname{Ber}_n^\xi[|C_1|\ge (m(\bbeta)+\delta/40)|V_n|]\ge 1-O(\exp(-a|V_n|))\,.
 \]
 Conditioning on any realization of $\eta$, we have with probability at least $1/2$ under $\mathbb{P}_n^{\bbeta',\bw'}$ it holds that $c_1\cdot \sum_{k\ge 2}c_k|C_k|\ge 0$. It follows that with probability at least $1/2-o(1)$ under $\mathbb{P}_n^{\bbeta',\bw'}$, 
 \[
 |\langle \sigma,1\rangle|=\Big|\sum_{k\ge 1}c_k|C_k|\Big|\ge |C_1|\ge (m(\bbeta)+\delta/40)|V_n|\,.
 \]
 This again contradicts with \cite[Theorem 2.5]{MMS12} applying to $\tau_n^{\bbeta'}$. Thus, Item-(iii) is also true. This completes the proof.
\end{proof}

We now give the proof of Proposition~\ref{prop-good-configuration}.

\begin{proof}[Proof of Proposition~\ref{prop-good-configuration}]
Fix $\bbeta>\beta^{\operatorname{Uni}}(d)$ and $\delta>0$. We pick $\bbeta'<\bbeta$ such that $m(\bbeta')>m(\bbeta)-\delta/10$, and let $\xi=\min\{(\bw-\bw')/4,\delta/20\}$. Moreover, we pick $R=R(\xi,\{G_n\})$ as in Lemma~\ref{lem-sprinkling}. 
We now verify that a configuration $\eta\sim \psi_n^{\bw}$ is $(\bbeta,\delta,R)$-good with probability $1-O(\exp(-a|V_n|)$. 

First, we show that with probability $1-O(\exp(-a|V_n|)$ over $\eta\sim \psi_n^\bw$, it holds that $|C_1|\ge (m(\bbeta)-\delta/4)|V_n|$. Apply Items-(i) and (ii) to $X_n(R)$ under $\psi_n^{\bw'}$ in Lemma~\ref{lem-X(R)}, we have 
\[
\psi_n^{\bw'}[|X_n(R)|\ge m(\bbeta)-\delta/5)|V_n|]=1-O(\exp(-a|V_n|)\,.
\]
Combining this with Lemmas~\ref{lem-RCM-Ber-domination}, \ref{lem-sprinkling}, we see that 
\[
\psi_n^{\bw}[|C_1|\ge (m(\bbeta)-\delta/4)|V_n|]\ge \psi_n^{\bw'}\oplus \operatorname{Ber}_n^\xi[|C_1|\ge (m(\bbeta)-\delta/4)|V_n|]\ge 1-O(\exp(-a|V_n|))\,,
\]
as desired. 

Now, applying Item-(iii) in Lemma~\ref{lem-X(R)} to $X_n(R)$ under $\psi_n^{\bw}$, we see there exists $R$ such that $\mathbb{E}_{\psi_n^\bw}[X_n(R)]\le (m(\bbeta)+\delta/10)|V_n|$, and thus Item-(i) in Lemma~\ref{lem-X(R)},
\[
\psi_n^{\bw}[X_n(R)\le (m(\bbeta)+\delta/4)|V_n|]=1-O(\exp(-a|V_n|))\,.
\]
Note that when $n$ is large enough such that $(m(\bbeta)-\delta/4)|V_n|>R$, given that $\eta$ satisfies both $|C_1|\ge (m(\bbeta)-\delta/4)|V_n|$ and $X_n(R)\le (m(\bbeta)+\delta/4)|V_n|$, we have $|C_1|\le (m(\bbeta)+\delta/4)|V_n|$ and $\sum_{k\ge 2}|C_k|\mathbf{1}\{|C_k|\ge R\}\le \delta|V_n|/2$, meaning that $\eta$ is a $(\bbeta,\delta,R)$ good configuration. The proof is now completed.
\end{proof}

\subsection{Local weak convergence of the conditional measures}\label{subsec-lwcp}
In this subsection, we prove Proposition~\ref{prop-lwcp}. Throughout, we fix $\dagger\in \{\free,\wired\}$ and assume that 
\begin{equation}\label{eq-assumption-liminf>0}
\liminf_{n\to\infty}\varphi_n[\mathcal E_{n,\varepsilon_n}^\dagger]>0
\end{equation}

We first show that $\varphi_n^\dagger\stackrel{\operatorname{lwcp}}{\longrightarrow} \varphi^\dagger$. To this end, we need to understand the set of possible local weak limit points of $\varphi_n^\dagger$. We start with an easy lemma which implies that any subsequential local weak limit of $\varphi_n^\dagger$ is still a random cluster measure on $\ttT_d$ (whose meaning will become clear later).
\begin{lemma}\label{lem-local-CLT}
    It holds that as $n\to\infty$,
    \begin{equation}\label{eq-CLT-RCM}
    \sup_{t\in \mathbb N}\varphi_n\big[\#\{v\in V_n:v\leftrightarrow v^*\}=t\big]=o(1)\,,
    \end{equation}
    \begin{equation}\label{eq-CLT-Potts}
    \sup_{k\in [q]}\sup_{t\in \mathbb N}\mu_n\big[\#\{v\in V_n:\sigma(v)=k\}=t\big]=o(1)\,.
    \end{equation}
\end{lemma}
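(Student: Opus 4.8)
The plan is to prove \eqref{eq-CLT-RCM} and \eqref{eq-CLT-Potts} by injecting into the relevant count a macroscopic amount of bounded, conditionally independent randomness. The common input is a deterministic, well-separated family of tree-like vertices. Since $G_n$ converges locally to $\ttT_d$, a $1-o(1)$ fraction of $v\in V_n$ satisfy $\NB_2(G_n,v)\cong\NB_2(\ttT_d,o)$; in particular such $v$ have degree exactly $d$. Greedily selecting such vertices that are pairwise non-adjacent (each selected vertex forbids only its closed neighbourhood, of size $d+1$) yields an independent set $W=\{v_1,\dots,v_m\}\subset V_n$ with $m\ge c_0|V_n|$ for all large $n$, where $c_0=c_0(d)>0$, and with every $v_i$ of degree $d$.

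\emph{Proof of \eqref{eq-CLT-Potts}.} Fix $k\in[q]$ and put $N_k:=\#\{v\in V_n:\sigma(v)=k\}$. Conditioning on $\sigma\!\mid_{V_n\setminus W}$, the spins $(\sigma(v_i))_{i\le m}$ become independent (every neighbour of each $v_i$ lies outside $W$), and for each $i$ the conditional law of $\sigma(v_i)$ assigns mass at least $p_2:=(qe^{\beta d+B})^{-1}$ to every colour, so $\mathbf 1\{\sigma(v_i)=k\}$ is Bernoulli with parameter in $[p_2,1-p_2]$. Since $N_k$ equals a $\sigma\!\mid_{V_n\setminus W}$-measurable constant plus $\sum_{i\le m}\mathbf 1\{\sigma(v_i)=k\}$, the Kolmogorov--Rogozin anti-concentration inequality gives $\sup_t\mu_n[N_k=t\mid\sigma\!\mid_{V_n\setminus W}]\le C(q,d,\beta,B)/\sqrt{mp_2}$; averaging over $\sigma\!\mid_{V_n\setminus W}$ and using $m\ge c_0|V_n|$ yields \eqref{eq-CLT-Potts}, uniformly in $k$ and $t$.

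\emph{Proof of \eqref{eq-CLT-RCM}.} Write $N:=\#\{v\in V_n:v\leftrightarrow v^*\}$ and argue in two steps. First, letting $E(v_i)$ denote the $d$ edges of $G_n$ incident to $v_i$, reveal $\eta$ on $E_n^*\setminus\bigcup_i E(v_i)$ and then reveal $E(v_1),\dots,E(v_m)$ one block at a time. By the finite-energy property of $\varphi_n$, at every stage the conditional probability that all edges of the current block $E(v_i)$ are closed is at least $c':=(1+w)^{-d}$; coupling with i.i.d.\ $\mathrm{Bernoulli}(c')$ variables and applying Chernoff's bound shows that the number of isolated vertices of $\eta\!\mid_{E_n}$ is at least $c'c_0|V_n|/2$ except with probability $e^{-\Omega(|V_n|)}$. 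Second, condition on the whole internal configuration $\hat\eta:=\eta\!\mid_{E_n}$, whose clusters on $V_n$ we call $\hat C_1,\dots,\hat C_r$. The conditional law of the ghost edges given $\hat\eta$ factorizes over the $\hat C_j$ (the exponent satisfies $|C(\eta)|-1=\#\{j:\hat C_j\not\leftrightarrow v^*\}$), so the events $\{\hat C_j\leftrightarrow v^*\}$ are conditionally independent with $\varphi_n[\hat C_j\leftrightarrow v^*\mid\hat\eta]=(e^{B|\hat C_j|}-1)/(q-1+e^{B|\hat C_j|})$, and $N=\sum_j|\hat C_j|\,\mathbf 1\{\hat C_j\leftrightarrow v^*\}$. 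Hence the singleton clusters contribute to $N$ an independent $\mathrm{Bin}(\#\{\text{isolated vertices}\},p_1)$ summand with $p_1:=(e^B-1)/(q-1+e^B)\in(0,1)$, and therefore $\sup_t\varphi_n[N=t\mid\hat\eta]\le C(p_1)/\sqrt{\#\{\text{isolated vertices}\}\vee1}$. Combining the two steps, $\sup_t\varphi_n[N=t]\le C(p_1)/\sqrt{c'c_0|V_n|/2}+e^{-\Omega(|V_n|)}=o(1)$.

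The Potts bound is essentially routine once the deterministic set $W$ is in hand; the delicate part is the random-cluster bound, because $\{v\leftrightarrow v^*\}$ is a global event that cannot be randomized by a single local conditioning. The two-step device — first manufacturing $\Theta(|V_n|)$ isolated internal clusters via finite energy, then using the exact product form of the conditional law of the ghost edges given $\eta\!\mid_{E_n}$ — is what makes it go through, and the cluster-wise factorization underlying the conditional independence of the events $\{\hat C_j\leftrightarrow v^*\}$ is the main technical point to verify carefully.
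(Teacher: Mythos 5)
Your proposal is correct. The Potts bound \eqref{eq-CLT-Potts} is proved exactly as in the paper: condition on the spins outside an independent set of linearly many (tree-like) vertices, note the conditional spin laws there are independent with all color probabilities bounded away from $0$ and $1$, and apply an anti-concentration/local-CLT bound of order $(\text{size of the set})^{-1/2}$; your greedy construction of $W$ and the bound $p_2=(qe^{\beta d+B})^{-1}$ are fine.

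For the random cluster bound \eqref{eq-CLT-RCM} you take a genuinely different route. The paper fixes a large constant $N$, chooses $N$ vertices pairwise $2R$-separated, reveals $\eta$ outside their radius-$1$ augmented balls, uses Lemma~\ref{lem-cluster-size} to discard the $O(\varepsilon N)$ ``bad'' vertices whose connection to $v^*$ is not decided locally, and then applies a local CLT to the conditionally independent events $\{v_i\leftrightarrow v^*\}$ for the good vertices, giving $\sup_t\varphi_n[\cdot=t]\le 2\varepsilon+O(N^{-1/2})$ and hence $o(1)$ after sending $\varepsilon\to0$, $N\to\infty$. You instead (i) use finite energy and a sequential-revelation/Chernoff argument to produce $\Theta(|V_n|)$ vertices isolated in $\eta\!\mid_{E_n}$ with probability $1-e^{-\Omega(|V_n|)}$, and (ii) condition on the entire internal configuration $\hat\eta=\eta\!\mid_{E_n}$, observing that $|C(\eta)|-1$ counts the internal clusters not touching $v^*$, so the ghost-edge law factorizes over internal clusters and the events $\{\hat C_j\leftrightarrow v^*\}$ are exactly conditionally independent with probabilities $(e^{B|\hat C_j|}-1)/(q-1+e^{B|\hat C_j|})$; the singleton clusters then contribute an independent Binomial summand to $N$, whose concentration function bounds that of $N$. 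Both arguments are valid; yours buys a quantitative $O(|V_n|^{-1/2})$ anti-concentration bound and avoids the $\varepsilon$, $N$, $R$ bookkeeping and the appeal to Lemma~\ref{lem-cluster-size}, at the price of the finite-energy sprinkling step, while the paper's version needs only $O(1)$ reference vertices and reuses machinery it already has in hand. The one point to state carefully in your write-up is the cluster-wise factorization identity $|C(\hat\eta\vee\check\eta)|-1=\#\{j:\hat C_j\not\leftrightarrow v^*\}$, which you have correctly identified as the crux; as you note, this is what makes the conditional independence exact rather than approximate.
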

The proof of Lemma~\ref{lem-local-CLT} is in spirit similar to the proof of \cite[Lemma 4.1]{MMS12} and we leave it into Appendix~\ref{appendix-lem-local-CLT}. We now proceed to introduce the set of possible subsequential local weak limit point of $\varphi_n^\dagger$.

Recall that for any $r\in \mathbb{N}$ and $\mathscr C\in \mathfrak C(r)$, we define the random cluster measure $\varphi_r^\mathscr{C}=\varphi_{r}^{\mathscr C,w,B}$ on $\operatorname{N}^*_r(\ttT_d,o)=(\ttV_r^*,\ttE_r^*)$ with boundary condition $\mathscr C$ at level $r$. Let $\mathcal R_r,\widetilde{\mathcal R}_r\subset \mathcal P(\{0,1\}^{\ttE_r^*})$ be defined as follows:
\begin{align*}
\mathcal R_r&:=\Big\{\sum_{\mathscr C\in \mathfrak C(r)}\lambda(\mathscr C)\varphi^{\mathscr C}_{r}:\lambda\ge 0\,,\sum_{\mathscr C\in \mathfrak C(r)}\lambda(\mathscr C)=1\Big\}\,,\\
\widetilde{\mathcal R}_r&:=\{\varphi_r\in \mathcal R_r:\varphi^{\free}\!\mid_{\ttE_r^*}\preceq_{\operatorname{st}}\varphi_r\preceq_{\operatorname{st}}\varphi^{\wired}\!\mid_{\ttE^*_{r}}\}\,.
\end{align*}
It is straightforward to check that for any $r\in \mathbb N$, both $\mathcal R_r$ and $\widetilde{\mathcal R}_r$ are closed under the weak topology.
Moreover, we define $\widetilde{\mathcal{R}}$ to be the set of probability measures $\varphi$ on $\{0,1\}^{\ttE^*}$ such that for any $r\in \mathbb N$, it holds that $\varphi_r$, the marginal of $\varphi$ on $\{0,1\}^{\ttE_r^*}$, lies in $\widetilde{\mathcal R}_r$.

\begin{lemma}\label{lem-subsequential-limit}
   Under our assumption, any subsequential local weak limit point of $\{\varphi_n^\dagger\}$ os supported on the set $\widetilde{\mathcal R}$. 
\end{lemma}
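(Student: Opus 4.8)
The plan is to reduce the lemma to a single claim at each finite radius:
\[
\text{for every }r\in\mathbb N,\text{ any subsequential local weak limit }\mathtt p\text{ of }\{\varphi_n^\dagger\}\text{ has }\mathtt p_r\text{ supported on }\mathcal R_r.
\]
Granting this for all $r$ and using that each $\mathcal R_{r'}$ is weakly closed, a countable intersection shows that $\mathtt p$-a.e.\ $\varphi$ satisfies $\varphi|_{\ttE_{r'}^*}\in\mathcal R_{r'}$ for every $r'$ at once. Fix such a $\varphi$ and a radius $r$; for $r'\ge r$ write $\varphi|_{\ttE_{r'}^*}=\sum_{\mathscr C\in\mathfrak C(r')}\lambda(\mathscr C)\,\varphi_{r'}^{\mathscr C}$. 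Since $\mathscr C_{\free}\preceq\mathscr C\preceq\mathscr C_{\wired}$, Proposition~\ref{prop-stochastic-domination} together with the elementary fact that a convex combination of measures each $\succeq_{\operatorname{st}}\rho$ (resp.\ $\preceq_{\operatorname{st}}\rho$) is again $\succeq_{\operatorname{st}}\rho$ (resp.\ $\preceq_{\operatorname{st}}\rho$) gives $\varphi_{r'}^{\free}\preceq_{\operatorname{st}}\varphi|_{\ttE_{r'}^*}\preceq_{\operatorname{st}}\varphi_{r'}^{\wired}$. Restricting to $\ttE_r^*$ and letting $r'\to\infty$—using $\varphi_{r'}^{\dagger}|_{\ttE_r^*}\to\varphi^{\dagger}|_{\ttE_r^*}$ and that on the finite set $\{0,1\}^{\ttE_r^*}$ weak convergence is convergence of all expectations—yields $\varphi^{\free}|_{\ttE_r^*}\preceq_{\operatorname{st}}\varphi|_{\ttE_r^*}\preceq_{\operatorname{st}}\varphi^{\wired}|_{\ttE_r^*}$. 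Hence $\varphi|_{\ttE_r^*}\in\widetilde{\mathcal R}_r$ for every $r$, i.e.\ $\varphi\in\widetilde{\mathcal R}$.

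To prove the radius-$r$ claim, fix $r$ and a vertex $v$ with $\NB_r(G_n,v)\cong\NB_r(\ttT_d,o)$, which holds for a $1-o(1)$ fraction of vertices. By the domain Markov property, under the unconditioned measure $\varphi_n$ the law of $\eta|_{E_{n,r}^*(v)}$ is exactly $\mathbb E[\varphi_r^{\mathscr C}]\in\mathcal R_r$, where $\mathscr C\in\mathfrak C(r)$ is the partition of $\partial\ttV_r$ induced by the connectivity of the exterior configuration (with $C_*$ the part joined to $v^*$). For the conditional measure,
\[
\varphi_n^\dagger\!\left[\eta|_{E_{n,r}^*(v)}=\check\eta\right]=\varphi_n\!\left[\eta|_{E_{n,r}^*(v)}=\check\eta\right]\cdot\frac{\varphi_n\!\left[\mathcal E_{n,\varepsilon_n}^\dagger\mid\eta|_{E_{n,r}^*(v)}=\check\eta\right]}{\varphi_n[\mathcal E_{n,\varepsilon_n}^\dagger]}\,,
\]
so it suffices to show that this reweighting factor is $1+o(1)$ uniformly in $\check\eta$. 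Conditioning on $\eta|_{E_{n,r}^*(v)}=\check\eta$ affects the exterior only through the induced wiring of $\partial\ttV_r\cup\{v^*\}$, of which there are $O_r(1)$ possibilities; for each the exterior is again a random cluster measure, and passing from one such wiring to another changes the giant-cluster count $N:=\#\{u\in V_n:u\leftrightarrow v^*\}$ by at most the $\le|\ttV_r|$ interior vertices joined to $v^*$ plus the finitely many exterior clusters hanging off $\partial\ttV_r$ that become attached—which by Lemma~\ref{lem-cluster-size} total $O((\log|V_n|)^2)$ with probability $1-o(1)$. Hence the conditional probabilities $\varphi_n[\mathcal E_{n,\varepsilon_n}^\dagger\mid\eta|_{E_{n,r}^*(v)}=\check\eta]$ all agree up to an additive $O((\log|V_n|)^2)\cdot\sup_t\varphi_n[N=t]+o(1)$, and a fortiori agree with their $\varphi_n$-weighted average $\varphi_n[\mathcal E_{n,\varepsilon_n}^\dagger]$; since $\varphi_n[\mathcal E_{n,\varepsilon_n}^\dagger]$ is bounded below by \eqref{eq-assumption-liminf>0} and $\sup_t\varphi_n[N=t]=o(1)$ by Lemma~\ref{lem-local-CLT} (whose resampling proof in the spirit of \cite{MMS12} in fact delivers a bound of order $|V_n|^{-1/2+o(1)}$, absorbing the polylogarithmic factor), the reweighting factor is $1+o(1)$. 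Thus $\varphi_n^\dagger|_{E_{n,r}^*(v)}$ lies within total variation $o(1)$ of the compact set $\mathcal R_r$ for a $1-o(1)$ fraction of $v$; passing to the limit along the subsequence, the portmanteau theorem gives $\mathtt p_r(\mathcal R_r)=1$.

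The main obstacle is this last step—controlling how the \emph{global} conditioning event $\mathcal E_{n,\varepsilon_n}^\dagger$ distorts the local picture. In the Ising setting of \cite{MMS12}, conditioning on a bounded-radius neighborhood shifts the relevant bulk statistic by only $O(1)$, and a mere $o(1)$ anticoncentration bound suffices; here, re-routing connections through the neighborhood can attach whole non-giant clusters to the $v^*$-component, producing a perturbation of order $\operatorname{polylog}|V_n|$. Reconciling this with the anticoncentration of $N$ is exactly what forces the use of the quantitative form of Lemma~\ref{lem-local-CLT} together with the exponential small-cluster tail of Lemma~\ref{lem-cluster-size}; a minor additional point is the limiting argument in the stochastic-domination step, valid because weak convergence on a finite configuration space is convergence of all expectations.
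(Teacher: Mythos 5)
Your second half (writing each $\varphi|_{\ttE_{r'}^*}\in\mathcal R_{r'}$ as a mixture of $\varphi_{r'}^{\mathscr C}$, sandwiching by $\varphi_{r'}^{\free}\preceq_{\operatorname{st}}\cdot\preceq_{\operatorname{st}}\varphi_{r'}^{\wired}$, restricting to $\ttE_r^*$ and letting $r'\to\infty$) is fine and matches the paper's argument. The problem is the radius-$r$ claim. You condition on the \emph{interior} configuration $\check\eta$ and reduce everything to the assertion that the reweighting factor $\varphi_n[\mathcal E_{n,\varepsilon_n}^\dagger\mid\eta|_{E_{n,r}^*(v)}=\check\eta]\,/\,\varphi_n[\mathcal E_{n,\varepsilon_n}^\dagger]$ is $1+o(1)$ uniformly in $\check\eta$. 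This is false in exactly the situations the lemma is meant to cover. By Bayes the factor equals $\varphi_n[\check\eta\mid\mathcal E_{n,\varepsilon_n}^\dagger]/\varphi_n[\check\eta]$, i.e.\ the ratio of the conditional local marginal to the unconditional one. Under \eqref{eq-assumption-liminf>0} nothing forces $\varphi_n[\mathcal E_{n,\varepsilon_n}^\dagger]\to1$; in the genuine coexistence case (e.g.\ the graphs of Theorem~\ref{thm-main} with $\alpha\in(0,1)$) one has $\varphi_n[\check\eta]\approx\alpha\varphi_n[\check\eta\mid\mathcal E^{\free}_{n,\varepsilon_n}]+(1-\alpha)\varphi_n[\check\eta\mid\mathcal E^{\wired}_{n,\varepsilon_n}]$, and the two conditional local laws converge to $\varphi^{\free}_r\neq\varphi^{\wired}_r$, so the factor converges to a nontrivial function of $\check\eta$, not to $1$. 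In words: the local configuration carries non-vanishing information about the global phase, so conditioning on $\mathcal E_{n,\varepsilon_n}^\dagger$ genuinely reweights the radius-$r$ marginal; you cannot prove the conditional marginal is close to $\mathcal R_r$ by showing it is close to the \emph{unconditional} marginal. The heuristic you offer for the $1+o(1)$ claim also breaks down: changing the boundary wiring induced by $\check\eta$ does not merely shift the value of $N$ by $O(d^r(\log|V_n|)^2)$ for a fixed exterior configuration, it changes the exterior random cluster \emph{measure} itself, with a Radon--Nikodym factor of order $q^{\pm O_r(1)}$ (boundedly, not $1+o(1)$); and a merely bounded tilt of a member of $\mathcal R_r$ need not lie near $\mathcal R_r$. (Your parenthetical strengthening of Lemma~\ref{lem-local-CLT} to a $|V_n|^{-1/2+o(1)}$ rate is also unsubstantiated — the paper's proof gives $2\varepsilon+O(N^{-1/2})$ with $N$ a fixed constant — but this is secondary.)

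The paper avoids this by conditioning in the opposite order: fix the \emph{exterior} configuration $\check\eta=\eta|_{E_n^*\setminus E_{n,r}^*(v)}$. If the exterior count of vertices joined to $v^*$ lies in the window with a margin $3d^r$ (condition \eqref{eq-check-eta-good}), then the event $\mathcal E_{n,\varepsilon_n}^\dagger$ is determined by $\check\eta$ alone, so the conditioning drops out and the interior law under $\varphi_n^\dagger$ is \emph{exactly} the random cluster measure on $E_{n,r}^*(v)$ with the boundary condition induced by $\check\eta$ — an element of $\mathcal R_r$ — by the domain Markov property. The exceptional exterior configurations force the total count within $3d^r$ of an endpoint of the window, and their $\varphi_n^\dagger$-probability is $o(1)$ by the anticoncentration bound of Lemma~\ref{lem-local-CLT} together with \eqref{eq-assumption-liminf>0}; here only a margin of size $O_r(1)$ must be absorbed, so the qualitative $\sup_t\varphi_n[N=t]=o(1)$ suffices and no quantitative local CLT or polylog bookkeeping is needed. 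If you rewrite your radius-$r$ step along these lines, the rest of your argument goes through.
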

\begin{proof}
For any $\varphi\in \mathcal P(\{0,1\}^{\ttE^*})$ that lies in the support of a subsequential limit of $\varphi_n^\dagger$, it suffices to show that $\varphi_r\in \widetilde{\mathcal R}_r$ for any $r\in \mathbb{N}$. For any $v\in V_n$ such that $\operatorname{N}_r(G_n,v)\cong \NB_r(\ttT_d,o)$, upon fixing a realization of $\check{\eta}=\eta\!\mid_{E_n^*\setminus E_{n,r}^*}$, the remaining configuration $\eta\!\mid_{E_{n,r}^*}$ of $\eta\sim \varphi_n^\dagger$ has the distribution of $\varphi_n[\cdot\mid \mathcal V_{n,\varepsilon_n}^\dagger,\check{\eta}]$. Note that if $\check{\eta}$ satisfies that
\begin{equation}\label{eq-check-eta-good}
(\psi^\dagger-\varepsilon_n)|V_n|\le \#\{v\in V_n\setminus V_{n,r}:v\stackrel{\check{\eta}}{\leftrightarrow} v^*\}\le (\psi^\dagger+\varepsilon_n)|V_n|-3d^r\,,
\end{equation}
then since $|V_{n,r}^*|\le 3d^r$, it follows that 
\[
\varphi_n[\cdot\mid \mathcal E_{n,\varepsilon_n}^\dagger,\check{\eta}]=\varphi_n[\cdot\mid\check{\eta}]\,,
\]
which, by the domain Markov property of $\varphi_n$, is the random cluster measure on $\{0,1\}^{E_{n,r}^*}$ with the boundary condition induced by $\check{\eta}$. On the other hand, since any proper realization of $\check{\eta}=\eta\!\mid_{E_n^*\setminus E_{n,r}^*}$ with $\eta\sim \varphi_n^\dagger$ necessarily satisfies
\[
(\psi^\dagger-\varepsilon_n)|V_n|-3d^r\le \#\{v\in V_n\setminus V_{n,r}:v\stackrel{\check{\eta}}{\leftrightarrow} v^*\}\le (\psi^\dagger+\varepsilon_n)|V_n|\,,
\]
it follows that 
\begin{align*}
    \varphi_n^\dagger[\eqref{eq-check-eta-good}\text{ fails}]\le &\ \varphi_n^\dagger\Big[\min_{a \in \{\pm1\}}\big\{\big|\#\{v\in V_n:v\leftrightarrow v^*\}-(\psi^\dagger+a \varepsilon_n)|V_n|\big|\big\}\le 3d^r\Big]\\
    \le&\ \frac{12d^r}{\varphi_n[\mathcal E_{n,\varepsilon_n}^\dagger]}\times \max_{t\in \mathbb N}\varphi^\dagger_n\big[\#\{v\in V_n:v\leftrightarrow v^*\}=t\big]\,,
\end{align*}
which tends to $0$ as $n\to\infty$ by Lemma~\ref{lem-local-CLT} and our assumption \eqref{eq-assumption-liminf>0}. This proves that as $n\to\infty$, for any $v\in V_n$ such that $\operatorname{N}_r(G_n,v)\cong \NB_r(\ttT_d,o)$, the marginal of $\varphi_n^\dagger$ on $E_{n,r}^*$ is $o(1)$ TV-close to an element in $\mathcal R_r$. Since $\mathcal R_r$ is closed, it follows that $\varphi_r\in \mathcal R_r$.

Similarly, for any $R>r$, arguing as above we get that as $n\to\infty$, for $v\in V_n$ such that $\operatorname{N}_R(G_n,v)\cong \NB_R(\ttT_d,o)$, the marginal of $\varphi_n^\dagger$ on $\{0,1\}^{E_{n,R}^*}$ is $o(1)$ TV-close to an element in $\mathcal R_R$. It then follows from Proposition~\ref{prop-stochastic-domination} that the marginal of $\varphi_n^\dagger$ on $\{0,1\}^{E_{n,r}^*}$ is $o(1)$-close to an element that dominates 
$(\varphi_{R}^{\free})_r$ and in turn be dominated by $(\varphi_{R}^{\wired})_r$. Since for any $R>r$, the set of measures in $\mathcal P(\{0,1\}^{\ttE_{r}^*})$ satisfying the above property is also closed, we obtain that 
$(\varphi^{\free}_R)_r\preceq_{\operatorname{st}}\varphi_r\preceq_{\operatorname{st}}(\varphi^{\wired}_R)_r$. 
Sending $R\to\infty$, we conclude that $\varphi_r\in \widetilde{R}_r$. This completes the proof.
\end{proof}

Knowing that any subsequential local weak limit of $\varphi_n^\dagger$ has support in $\widetilde{\mathcal R}$, to show the support contains only $\varphi^\dagger$, we need to find a way of distinguishing $\varphi^{\dagger}$ from other elements in $\widetilde{\mathcal R}$. This is incorporated in the following lemma.

\begin{lemma}\label{prop-characterization}
     For any $\varphi\in \widetilde{\mathcal R}\setminus \{\varphi^{\free},\varphi^{\wired}\}$, it holds that $\psi^{\free}<\varphi[o\leftrightarrow v^*]<\psi^{\wired}$. 
\end{lemma}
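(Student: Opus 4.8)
The plan is to record the two non-strict inequalities first, which are immediate, and then upgrade them to strict ones by a contradiction argument that pins down $\varphi$ exactly when equality occurs. For the non-strict bounds: by the definition of $\widetilde{\mathcal R}$ we have $\varphi^{\free}\!\mid_{\ttE_r^*}\preceq_{\operatorname{st}}\varphi_r\preceq_{\operatorname{st}}\varphi^{\wired}\!\mid_{\ttE_r^*}$ for every $r\in\mathbb N$. Since $\{o\leftrightarrow v^*\text{ within }\NB_r^*(\ttT_d,o)\}$ is an increasing event and $\{o\leftrightarrow v^*\}$ is the increasing union over $r$ of these events, monotone convergence yields $\psi^{\free}\le\varphi[o\leftrightarrow v^*]\le\psi^{\wired}$. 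It therefore remains to show that if $\varphi\in\widetilde{\mathcal R}$ satisfies $\varphi[o\leftrightarrow v^*]=\psi^{\wired}$ then $\varphi=\varphi^{\wired}$, and symmetrically with $\free$ in place of $\wired$; either equality contradicts $\varphi\notin\{\varphi^{\free},\varphi^{\wired}\}$.

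The key tool will be the following strict version of the finite-level monotonicity, which I expect to follow from the finite-energy property together with Proposition~\ref{prop-stochastic-domination}: for every $r$ and every $\mathscr C\in\mathfrak C(r)$ with $\mathscr C\ne\mathscr C_{\wired}$,
\[
\varphi_r^{\mathscr C}[o\leftrightarrow v^*\text{ within }\NB_r^*(\ttT_d,o)]<\varphi_r^{\wired}[o\leftrightarrow v^*\text{ within }\NB_r^*(\ttT_d,o)]\,,
\]
and dually $\varphi_r^{\mathscr C}[o\leftrightarrow v^*]>\varphi_r^{\free}[o\leftrightarrow v^*]$ whenever $\mathscr C\succ\mathscr C_{\free}$. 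To prove this one chains through single refinement steps $\mathscr C\prec\mathscr C'$ (where $\mathscr C'$ is obtained from $\mathscr C$ by moving one boundary vertex $u$ into $C_*$), takes the monotone coupling $(\eta,\eta')$ of Proposition~\ref{prop-stochastic-domination}, and conditions on the explicit realization $\eta'=\omega$ in which precisely the tree-path from $o$ to $u$ is open and all other edges of $\NB_r^*(\ttT_d,o)$ are closed; this event has positive probability by finite energy, and on it $o\leftrightarrow v^*$ holds under $\mathscr C'$ but (since $\eta\le\omega$ and $u$ is then a singleton disconnected from $v^*$) fails under $\mathscr C$, giving the strict gap.

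Given this, suppose $\varphi[o\leftrightarrow v^*]=\psi^{\wired}$. Writing $\varphi_r=\sum_{\mathscr C\in\mathfrak C(r)}\lambda_r(\mathscr C)\varphi_r^{\mathscr C}$ and using the domain Markov property, the plan is to show that the boundary laws $\lambda_r$ are asymptotically concentrated on $\mathscr C_{\wired}$ as $r\to\infty$, and hence that $\varphi$ is a random cluster Gibbs measure on $\ttT_d^*$ with the same tail behaviour as $\varphi^{\wired}$; by the domain Markov property this forces $\varphi=\varphi^{\wired}$. Equivalently, one runs the monotone coupling $(\eta,\eta')$ of $\varphi$ and $\varphi^{\wired}$ on all of $\ttT_d^*$: equality in the increasing event $\{o\leftrightarrow v^*\}$ forces $o$ to lie in the $v^*$-cluster of $\eta$ exactly when it does in that of $\eta'$, almost surely, and one then propagates this agreement along $\ttT_d$ (applying the strict monotonicity above at successive vertices, re-rooting as one goes, and using the domain Markov property to handle the edges inside the $v^*$-cluster) to conclude $\eta=\eta'$ almost surely.

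The step I expect to be the main obstacle is precisely this last propagation/concentration step. The finite-level gaps in the displayed strict inequality shrink to $0$ as $r\to\infty$ for boundary conditions that are ``almost wired'', so equality of the single number $\varphi[o\leftrightarrow v^*]$ does not by itself force $\lambda_r\to\delta_{\mathscr C_{\wired}}$ in a naive way; making the passage from the finite trees to $\ttT_d$ rigorous will require combining the strict monotonicity with the domain Markov property and the tail triviality of $\varphi^{\free}$ and $\varphi^{\wired}$ to exclude any nontrivial discrepancy between $\varphi$ and $\varphi^{\wired}$.
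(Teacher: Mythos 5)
Your first step (the non-strict bounds $\psi^{\free}\le\varphi[o\leftrightarrow v^*]\le\psi^{\wired}$ from the stochastic ordering built into $\widetilde{\mathcal R}$) is correct and is also how the paper disposes of that part. Your finite-level strict monotonicity between boundary conditions, proved by a finite-energy/monotone-coupling argument, is also believable as stated. The problem is that the entire content of the lemma lies in the step you explicitly leave open: upgrading the single scalar equality $\varphi[o\leftrightarrow v^*]=\psi^{\ddagger}$ to the identity $\varphi=\varphi^{\ddagger}$. As you yourself note, the finite-level gaps degenerate as $r\to\infty$: since $\varphi_r^{\wired}[o\leftrightarrow v^*]$ decreases strictly to $\psi^{\wired}$, boundary laws $\lambda_r$ that put mass on ``almost wired'' but non-wired partitions can still reproduce the value $\psi^{\wired}$ at every finite $r$, so no concentration $\lambda_r\to\delta_{\mathscr C_{\wired}}$ follows from the hypothesis in the way your plan needs. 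Likewise, the global monotone coupling of $\varphi$ with $\varphi^{\wired}$ only yields a.s. agreement of the single indicator $\mathbf{1}\{o\leftrightarrow v^*\}$; the hypothesis says nothing about $\varphi[u\leftrightarrow v^*]$ for other vertices $u$, so the ``re-rooting and propagating'' step has nothing to propagate from. In short, the proposal is a plan with the decisive step missing, and the route you sketch does not obviously close.

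The paper's proof supplies exactly the missing idea: it works with the tail-measurable RCM pre-messages $s_{u\to v}(\varphi)=\varphi[u\leftrightarrow v^*\mid \eta_{(u,v)}=0,\mathscr T]$ of \cite{BDS23}. These satisfy, $\varphi$-a.s., the bounds $b^{\free}\le s_{u\to v}\le b^{\wired}$, the recursion $s_{u\to v}=\widehat{\operatorname{BP}}(\mathbf{s}_{\partial u\setminus\{v\}\to u})$, and the root identity $\varphi[o\leftrightarrow v^*\mid\mathscr T]=\widehat{\operatorname{BP}}_d(\mathbf{s}_{\partial o\to o})$, where $\widehat{\operatorname{BP}}_k$ is strictly increasing in each coordinate. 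Monotonicity gives $\psi^{\free}\le\varphi[o\leftrightarrow v^*\mid\mathscr T]\le\psi^{\wired}$ a.s., so equality of the expectation forces the conditional probability to equal $\psi^{\ddagger}$ a.s.; strict monotonicity then pins $s_{v\to o}=b^{\ddagger}$ for all $v\sim o$, and since $b^{\ddagger}$ is the extremal fixed point and the incoming messages are sandwiched in $[b^{\free},b^{\wired}]$, the recursion propagates $s_{u\to v}=b^{\ddagger}$ to every directed edge, which characterizes $\varphi=\varphi^{\ddagger}$. Conditioning on the tail and exploiting strict monotonicity of the BP map at the fixed point is what replaces (and rescues) your boundary-condition concentration step; without something of this kind your argument does not go through.
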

\begin{proof}
    Since for any $\varphi\in \widetilde{\mathcal R}$, it holds $\varphi^{\free}\preceq \varphi\preceq \varphi^{\wired}$ and hence $\psi^{\free}\le \varphi[o\leftrightarrow v^*]\le \psi^{\wired}$, it suffices to show that for $\ddagger\in \{\free,\wired\}$, $\varphi[o\leftrightarrow v^*]=\psi^{\ddagger}$ implies $\varphi=\varphi^{\ddagger}$.
    
    The proof is essentially an adaption of the proof of \cite[Lemma~5.4]{BDS23}. We first introduce the RCM pre-messages as defined in \cite[Definition~5.3]{BDS23}. For any $r\ge 0$, let $\mathscr T_{r}$ be the $\sigma$-algebra generated by the indicators $\eta_{e},e\in \ttE^*\setminus\ttE_r^*$ and denote $\mathscr T=\bigcap_{r\ge 0}\mathscr T_r$. For $(u,v)\in \ttE$, we define the $\mathscr T$-measurable random variable
    \[
    s_{u\to v}(\varphi)=\varphi[u\leftrightarrow v^*\mid \eta_{(u,v)}=0,\mathscr T]\,.
    \]
    $s_{u\to v}(\varphi),(u,v)\in \ttE$ are called the RCM pre-messages and they can be viewed as certain marginals of $\varphi$ with a specified boundary condition at infinity. 
    
    We denote $\gamma=(e^\beta-1)/(e^\beta+q-1)$, and define the function $\widehat{\operatorname{BP}}_k:[0,1]^k\to [0,1]$ as
    \begin{equation}\label{eq-BP-RCM}
    \widehat{\operatorname{BP}}_k(s_1,\dots,s_k):=\frac{e^B\prod_{i=1}^{k}(1+(q-1)\gamma s_i)-\prod_{i=1}^{k}(1-\gamma s_i)}{e^B\prod_{i=1}^{k}(1+(q-1)\gamma s_i)+(q-1)\prod_{i=1}^{k}(1-\gamma s_i)}\,.
    \end{equation}
    It can be checked that $\widehat{\operatorname{BP}}_k$ is coordinate-wise strictly increasing for any $k\in \mathbb N$. 
    Moreover, we write $\widehat{\operatorname{BP}}_{d-1}$ as $\widehat{\operatorname{BP}}$ for simplicity. 
    Define $b^{\free}$ (resp. $b^{\wired}$) as the $n\to\infty$ limit of $b_{n+1}=\widehat{\operatorname{BP}}(b_n,\dots,b_n)$ with initialization $b_0=0$ (resp. $b_0=1$). We have the fact that $b^{\free},b^{\wired}$ are the smallest and largest fixed points of $\widehat{\operatorname{BP}}(x,\dots,x)$, and $\widehat{\operatorname{BP}}_d(b^{\ddagger},\dots,b^{\ddagger})=\psi^{\ddagger},\forall \ddagger\in \{\free,\wired\}$. Furthermore, for $\ddagger\in \{\free,\wired\}$,  $\varphi^{\ddagger}$ is characterized by the property that $\varphi^\ddagger$-a.s. $s_{u\to v}(\varphi^\ddagger)=b^{\ddagger},\forall (u,v)\in \ttE$ (see, e.g. \cite[Proposition 4.6]{Gri06}). We will argue that $\varphi[o\leftrightarrow v^*]=\psi^{\ddagger}$ also implies this property, thereby deducing $\varphi=\varphi^{\ddagger}$.  
    
    Towards this end, we will make use the following $\varphi$-a.s. properties of the RCM pre-messages:
    \begin{itemize}
        \item $b^{\free}\le s_{u\to v}(\varphi)\le b^{\wired}$ for all $(u,v)\in \ttE$. 
        \item $s_{u\to v}=\widehat{\operatorname{BP}}(\mathbf{s}_{\partial u\setminus \{v\}\to u})$, where $\mathbf{s}_{\partial u\setminus \{v\}\to u}=(s_{w\to u})_{w\sim u,w\neq v}$. 
        \item $\varphi[o\leftrightarrow v^*\mid \mathscr T]=\widehat{\operatorname{BP}}_d(\mathbf{s}_{\partial o\to o})$, where $\mathbf{s}_{\partial o\to o}=(s_{v\to o})_{v\sim o}$. 
    \end{itemize}
    The first two properties are restatements of \cite[Lemma~5.4]{BDS23}, and the third one follows from a straightforward belief-propagation calculation. From these properties and the coordinate-wise monotonicity of $\widehat{\operatorname{BP}}_d$, we conclude that $\varphi$-a.s. it holds
    \[
    \psi^{\free}=\widehat{\operatorname{BP}}(b^{\free},\dots,b^{\free})\le\varphi[o\leftrightarrow v^*\mid \mathscr T]\le \widehat{\operatorname{BP}}(b^{\wired},\dots,b^{\wired})=\psi^{\wired}\,.
    \]
    Therefore, for $\ddagger\in \{\free,\wired\}$, the assumption 
    \[
    \psi^{\ddagger}=\varphi[o\leftrightarrow v^*]=\mathbb{E}\big[\varphi[o\leftrightarrow v^*\mid \mathscr T]\big]
    \]
    implies that $\varphi$-a.s. $\varphi[o\leftrightarrow v^*\mid \mathscr T]=\psi^{\ddagger}$. Hence, by the strict coordinate-wise increasing property of $\widehat{\operatorname{BP}}_{d}$, we have $\varphi$-a.s. $s_{v\to o}(\varphi)=b^{\ddagger},\forall v\sim o$. Then, since $\widehat{\operatorname{BP}}$ is also coordinate-wise strictly increasing and $b^{\ddagger}$ is the fixed point of $\widehat{\operatorname{BP}}(x,\dots,x)$, we can show inductively that $\varphi$-a.s. $s_{u\to v}(\varphi)=b^{\ddagger},\forall (u,v)\in \ttE$. This proves that $\varphi=\varphi^\ddagger$, as desired.
\end{proof}

We can now prove the local weak convergence in probability of $\varphi_n^\dagger$.

\begin{lemma}\label{lem-lwcp-RCM}
    Under our assumption, $\varphi_n^\dagger\stackrel{\operatorname{lwcp}}{\longrightarrow}\varphi^\dagger$. 
\end{lemma}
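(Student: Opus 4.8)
The plan is to reduce the statement to a claim about the single functional $\varphi\mapsto\varphi[o\leftrightarrow v^*]$. Since $\{\varphi_n^\dagger\}$ admits a locally weakly convergent subsequence (by the compactness recorded via \cite[Lemma 2.9-(a)]{BDS23}) and since a sequence converges locally weakly if and only if all of its subsequential limits coincide, it suffices to show that an arbitrary subsequential local weak limit point $\mathtt p$ of $\{\varphi_n^\dagger\}$ equals $\delta_{\varphi^\dagger}$. By Lemma~\ref{lem-subsequential-limit}, $\mathtt p$ is supported on $\widetilde{\mathcal R}$, so $\mathtt p$-a.s. $\varphi^{\free}\preceq_{\operatorname{st}}\varphi\preceq_{\operatorname{st}}\varphi^{\wired}$ and hence $\psi^{\free}\le\varphi[o\leftrightarrow v^*]\le\psi^{\wired}$. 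Recalling that $\psi^{\free}<\psi^{\wired}$ in the non-uniqueness regime, Lemma~\ref{prop-characterization} shows it is enough to prove that $\varphi[o\leftrightarrow v^*]=\psi^\dagger$ for $\mathtt p$-a.e.\ $\varphi$; indeed this forces $\varphi\in\{\varphi^{\free},\varphi^{\wired}\}$, and then $\varphi=\varphi^\dagger$ since $\psi^{\free}\neq\psi^{\wired}$.

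The first step is to use the conditioning to pin down the mean connection probability: since $\varphi_n^\dagger$ is supported on $\mathcal E^\dagger_{n,\varepsilon_n}$ with $\varepsilon_n\to0$, averaging over a uniform vertex $v_n\in V_n$ gives
\[
\mathbb E_{v_n}\big[\varphi_n^\dagger[v_n\leftrightarrow v^*]\big]=\mathbb E_{\eta\sim\varphi_n^\dagger}\Big[|V_n|^{-1}\#\{v\in V_n:v\leftrightarrow v^*\}\Big]\longrightarrow\psi^\dagger\,.
\]
The point is then to transfer this to the limit $\mathtt p$ in spite of the fact that $\{v\leftrightarrow v^*\}$ is not a local event, which I would do by sandwiching it between monotone sequences of finite-range events. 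Let $A_r(v)$ be the (increasing, finite-range) event that $v$ is joined to $v^*$ using only edges of $\NB^*_r(G_n,v)$; then $A_r(v)\subseteq\{v\leftrightarrow v^*\}$ and $A_r(v)\uparrow\{v\leftrightarrow v^*\}$. Let $D_r(v)$ be the event that the cluster of $v$ in $\eta\!\mid_{E_n}$ is contained in $\NB_r(G_n,v)$ and meets no open edge to $v^*$, and put $B_r(v):=D_r(v)^c$; then $D_r(v)\subseteq\{v\not\leftrightarrow v^*\}$ and $D_r(v)\uparrow\{v\not\leftrightarrow v^*\}$, so $B_r(v)$ is a decreasing, finite-range event with $\{v\leftrightarrow v^*\}\subseteq B_r(v)\downarrow\{v\leftrightarrow v^*\}$.

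Along the fixed subsequence, for each $r$ the local weak convergence of $\varphi_n^\dagger$ to $\mathtt p$ (together with $\NB_r(G_n,v_n)\cong\NB_r(\ttT_d,o)$ with probability $1-o(1)$ and the fact that $A_r,B_r$ are clopen in the finite local configuration space) yields $\mathbb E_{v_n}[\varphi_n^\dagger[A_r(v_n)]]\to\mathbb E_{\varphi\sim\mathtt p}[\varphi[A_r(o)]]$ and likewise for $B_r$. Since $A_r(v_n)\subseteq\{v_n\leftrightarrow v^*\}\subseteq B_r(v_n)$, combining with the displayed limit gives $\mathbb E_{\varphi\sim\mathtt p}[\varphi[A_r(o)]]\le\psi^\dagger\le\mathbb E_{\varphi\sim\mathtt p}[\varphi[B_r(o)]]$ for every $r$; letting $r\to\infty$ and applying monotone convergence on each side (with $A_r(o)\uparrow\{o\leftrightarrow v^*\}$ and $B_r(o)\downarrow\{o\leftrightarrow v^*\}$) forces $\mathbb E_{\varphi\sim\mathtt p}[\varphi[o\leftrightarrow v^*]]=\psi^\dagger$. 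Finally, for $\dagger=\free$ this mean coincides with the $\mathtt p$-a.s.\ lower bound $\psi^{\free}$ for $\varphi[o\leftrightarrow v^*]$, so $\varphi[o\leftrightarrow v^*]=\psi^{\free}$ $\mathtt p$-a.s.; for $\dagger=\wired$ it coincides with the $\mathtt p$-a.s.\ upper bound $\psi^{\wired}$, so $\varphi[o\leftrightarrow v^*]=\psi^{\wired}$ $\mathtt p$-a.s. As explained above, this gives $\varphi=\varphi^\dagger$ $\mathtt p$-a.s., i.e.\ $\mathtt p=\delta_{\varphi^\dagger}$, completing the proof.

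The main thing requiring care is the sandwich construction — in particular, checking that $D_r$ (hence $B_r$) is genuinely a finite-range event with $D_r\uparrow\{v\not\leftrightarrow v^*\}$, and that the interchange of the $n\to\infty$ and $r\to\infty$ limits is legitimate. By comparison the mean-connection identity and the concluding appeals to Lemmas~\ref{lem-subsequential-limit} and~\ref{prop-characterization} are routine.
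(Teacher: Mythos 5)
Your overall route is the same as the paper's: pin down $\lim_n |V_n|^{-1}\mathbb{E}_{\varphi_n^\dagger}[\#\{v:v\leftrightarrow v^*\}]=\psi^\dagger$ from the conditioning on $\mathcal E^\dagger_{n,\varepsilon_n}$, transfer it to a subsequential limit $\mathtt p$, use Lemma~\ref{lem-subsequential-limit} to place the support in $\widetilde{\mathcal R}$ so that $\varphi[o\leftrightarrow v^*]\in[\psi^{\free},\psi^{\wired}]$ a.s., and conclude via Lemma~\ref{prop-characterization} after observing that a mean equal to an a.s.\ bound forces a.s.\ equality. The paper simply asserts the convergence of the non-local observable $\#\{v:v\leftrightarrow v^*\}/|V_n|$ to $\int\varphi[o\leftrightarrow v^*]\,\mathrm d\mathtt p$, whereas you justify it by a sandwich of finite-range events; this is the right idea and is more detailed than the paper's proof at that step.

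However, the upper half of your sandwich is not closed, and you flag but do not resolve exactly the point where it can fail. On the finite graphs $D_r(v)\uparrow\{v\not\leftrightarrow v^*\}$ is trivial, but you apply monotone convergence on the tree side, where $\bigcup_r D_r(o)=\{o\not\leftrightarrow v^*\}\cap\{C(o)\text{ finite}\}$, so $B_r(o)\downarrow\{o\leftrightarrow v^*\}\cup\{o\not\leftrightarrow v^*,\ C(o)\text{ infinite}\}$. Without ruling out infinite clusters avoiding $v^*$ under ($\mathtt p$-a.e.) $\varphi$, the $B_r$ side only gives $\psi^{\wired}\le\mathbb E_{\mathtt p}\big[\varphi[o\leftrightarrow v^*]\big]+\mathbb E_{\mathtt p}\big[\varphi[o\not\leftrightarrow v^*,\,C(o)\text{ infinite}]\big]$, which is not enough to force $\varphi[o\leftrightarrow v^*]=\psi^{\wired}$ a.s.\ (the $\dagger=\free$ direction, which uses only the $A_r$ side, is fine). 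The gap is small and fixable in either of two ways: (i) close it in the prelimit, noting that $\{v\leftrightarrow v^*\}\subseteq B_r(v)$ and $B_r(v)\setminus\{v\leftrightarrow v^*\}\subseteq\{v\not\leftrightarrow v^*,\ |C(v)|\ge r\}$, so by Lemma~\ref{lem-cluster-size} and \eqref{eq-assumption-liminf>0}, $\varphi_n^\dagger[B_r(v)]-\varphi_n^\dagger[v\leftrightarrow v^*]\le c^r/\varphi_n[\mathcal E^\dagger_{n,\varepsilon_n}]$ uniformly in $n$ and $v$, and then send $n\to\infty$ before $r\to\infty$; or (ii) observe that the finite-energy bound of Lemma~\ref{lem-cluster-size} holds uniformly for the boundary-condition measures $\varphi_r^{\mathscr C}$, hence for every $\varphi\in\widetilde{\mathcal R}$ one has $\varphi[o\not\leftrightarrow v^*\text{ in }\NB_r^*,\ C(o)\text{ reaches }\partial\ttV_r]\le c^r$, so the defect event is $\varphi$-null and your monotone-convergence step goes through as written. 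With either patch the argument is complete and matches the paper's.
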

\begin{proof}
For any subsequential local weak limit point $\mathtt n$ of $\varphi_n$, there is a subsequence $\{\varphi_{n_k}\}$ of $\{\varphi_n\}$ such that 
\[
|V_{n_k}|^{-1}\mathbb{E}_{\varphi_{n_k}^\dagger}\big[\#\{v\in V_{n_k}:v\leftrightarrow v^*\}\big]\to \int\varphi[o\leftrightarrow v^*]\operatorname{d}\!\mathtt n
\]
    On the other hand, by the definition of $\mathcal E_{n,\varepsilon_n}^\dagger$, we have 
    \[
    \psi^\dagger-\varepsilon_n\le |V_n|^{-1}\mathbb{E}_{\varphi_n^\dagger}\big[\#\{v\in V_n:v\leftrightarrow v^*\}\big]\le \psi^\dagger+\varepsilon_n\,.
    \]
    Since $\varepsilon_n\to 0$, combining Lemmas~\ref{lem-subsequential-limit} and \ref{prop-characterization}, we conclude that $\mathtt n$ can only be supported on the single measure $\varphi^\dagger$. Thus by definition, $\varphi_n\stackrel{\operatorname{lwcp}}{\longrightarrow}\varphi^\dagger$, as desired.
\end{proof}


Turning to prove the local weak convergence in probability statement for $\mu_n^\dagger$, we first given an approximate coupling of $\mu_n^\dagger$ and $\varphi_n^\dagger$ in view of the ES coupling. Let $\varpi_n$ be the ES measure that couples $\mu_n$ and $\varphi_n$ as defined in Definition~\ref{def:ES_measure}. Moreover, we denote by $\varpi_n^\dagger[\cdot]:=\varpi_n[\cdot\mid \eta\in \mathcal E_{n,\varepsilon_n}^\dagger]$ and $\varpi_{n,s}^\dagger$ the marginal of $\varpi_n^\dagger$ on $\sigma\in [q]^{V_n}$.

\begin{lemma}\label{lem-TV-ES}
    Under our assumption, it holds that as $n\to\infty$,
    \begin{equation}\label{eq-TV-ES-1}
    \operatorname{TV}(\mu_n^\dagger,\varpi^\dagger_{n,s})=o(1)\,.
    \end{equation}
\end{lemma}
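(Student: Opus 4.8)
The plan is to push everything through the Edwards--Sokal coupling $\varpi_n$ of $\mu_n$ and $\varphi_n$, and to show that under $\varpi_n$ the spin event $A^\dagger:=\{\sigma\in\mathcal V_{n,\varepsilon_n}^\dagger\}$ and the percolation event $E^\dagger:=\{\eta\in\mathcal E_{n,\varepsilon_n}^\dagger\}$ agree up to a set of vanishing $\varpi_n$-probability. Since $A^\dagger$ is a $\sigma$-event, $\mu_n^\dagger=\mu_n[\,\cdot\mid\mathcal V_{n,\varepsilon_n}^\dagger]$ is exactly the $\sigma$-marginal of $\varpi_n[\,\cdot\mid A^\dagger]$, and by definition $\varpi_{n,s}^\dagger$ is the $\sigma$-marginal of $\varpi_n^\dagger=\varpi_n[\,\cdot\mid E^\dagger]$. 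Because projecting onto a coordinate does not increase total variation distance, it suffices to bound $\operatorname{TV}(\varpi_n[\,\cdot\mid A^\dagger],\varpi_n[\,\cdot\mid E^\dagger])$, so \eqref{eq-TV-ES-1} reduces to a statement about conditioning a single measure on two nearly identical events.

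For this I would first record the elementary inequality: if $P$ is a probability measure and $A,E$ are events with $P[A]\wedge P[E]\ge c>0$ and $P[A\,\triangle\,E]\le\delta$, then $\operatorname{TV}(P[\,\cdot\mid A],P[\,\cdot\mid E])\le 2\delta/c^2$. Indeed, writing $P[F\mid A]-P[F\mid E]$ over the common denominator $P[A]P[E]$ and splitting the numerator as $P[FA]\,(P[E]-P[A])+P[A]\,(P[FA]-P[FE])$, each piece is at most $\delta$ since $|P[A]-P[E]|\le\delta$ and $|P[FA]-P[FE]|\le P[A\,\triangle\,E]\le\delta$. It then remains, with $P=\varpi_n$, to check that $\varpi_n[A^\dagger]$ and $\varpi_n[E^\dagger]$ are bounded away from $0$ and that $\varpi_n[A^\dagger\,\triangle\,E^\dagger]=o(1)$.

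The uniform lower bounds are immediate: $\varpi_n[E^\dagger]=\varphi_n[\mathcal E_{n,\varepsilon_n}^\dagger]$ is bounded away from $0$ by the standing hypothesis \eqref{eq-assumption-liminf>0}, and $\varpi_n[A^\dagger]=\mu_n[\mathcal V_{n,\varepsilon_n}^\dagger]=\varphi_n[\mathcal E_{n,\varepsilon_n}^\dagger]+o(1)$ by \eqref{eq-choice-eps_n-3}, so the same holds for $\varpi_n[A^\dagger]$. For the symmetric difference I split $\varpi_n[A^\dagger\,\triangle\,E^\dagger]\le\varpi_n[E^\dagger\setminus A^\dagger]+\varpi_n[A^\dagger\setminus E^\dagger]$. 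The first term equals $\varpi_n[\sigma\notin\mathcal V_{n,\varepsilon_n}^\dagger,\ \eta\in\mathcal E_{n,\varepsilon_n}^\dagger]$, which is $o(1)$ by the argument in the proof of Lemma~\ref{lem-relation-ES}: conditionally on $\eta\in\mathcal E_{n,\varepsilon_n}^\dagger$, the giant component already carries $\psi^\dagger|V_n|\pm\varepsilon_n|V_n|$ vertices, the colours of the remaining vertices are assigned by independent uniform draws per component (Proposition~\ref{prop-ES-independent-smapling}), and Azuma's inequality together with the bound $\sum_i|C_i|^2\le|V_n|(\log|V_n|)^2$ from Lemma~\ref{lem-cluster-size} forces the empirical colour profile into the $\varepsilon_n$-ball around $\check\nu^\dagger$ with probability $1-o(1)$. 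For the second term, using $\varpi_n[E^\dagger]=\varpi_n[A^\dagger\cap E^\dagger]+\varpi_n[E^\dagger\setminus A^\dagger]$ gives $\varpi_n[A^\dagger\cap E^\dagger]=\varphi_n[\mathcal E_{n,\varepsilon_n}^\dagger]-o(1)$, hence $\varpi_n[A^\dagger\setminus E^\dagger]=\mu_n[\mathcal V_{n,\varepsilon_n}^\dagger]-\varphi_n[\mathcal E_{n,\varepsilon_n}^\dagger]+o(1)=o(1)$ by \eqref{eq-choice-eps_n-3}. Plugging $\delta=\varpi_n[A^\dagger\,\triangle\,E^\dagger]=o(1)$ and this $c$ into the elementary inequality yields \eqref{eq-TV-ES-1}.

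The only step needing care — and hence the main obstacle — is the $o(1)$ bound on $\varpi_n[\sigma\notin\mathcal V_{n,\varepsilon_n}^\dagger,\ \eta\in\mathcal E_{n,\varepsilon_n}^\dagger]$ with the shrinking tolerance $\varepsilon_n$ in place of a fixed $\varepsilon$: the Azuma estimate from the proof of Lemma~\ref{lem-relation-ES} degrades to $2q\exp(-\varepsilon_n^2|V_n|/(2q^2(\log|V_n|)^2))$, so one must make sure the sequence $\{\varepsilon_n\}$ fixed around \eqref{eq-choice-eps_n-1}--\eqref{eq-choice-eps_n-3} decays slowly enough (e.g. polynomially slower than $|V_n|^{-1/2}$) that this quantity is still $o(1)$. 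This imposes no new requirement, since it is precisely what is already needed for \eqref{eq-choice-eps_n-3} to hold; everything else in the argument is bookkeeping.
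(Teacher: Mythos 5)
Your proposal is correct and follows essentially the same route as the paper: both pass to the Edwards--Sokal coupling, show via the argument inside Lemma~\ref{lem-relation-ES} together with \eqref{eq-choice-eps_n-3} and \eqref{eq-assumption-liminf>0} that the spin event $\{\sigma\in\mathcal V_{n,\varepsilon_n}^\dagger\}$ and the bond event $\{\eta\in\mathcal E_{n,\varepsilon_n}^\dagger\}$ have $o(1)$ symmetric difference while each has probability bounded away from zero, and then conclude by the standard total-variation bound for conditioning one measure on two nearly identical events. Your explicit $2\delta/c^2$ inequality and the remark on choosing $\varepsilon_n$ to decay slowly enough for the Azuma step are just the quantitative bookkeeping the paper leaves implicit in its choice of $\{\varepsilon_n\}$.
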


\begin{proof}
Let $\varpi_{n,s}$ and $\varpi_{n,b}$ be the marginals on $\sigma\in [q]^{V_n}$ and $\eta\in \{0,1\}^{E_n^*}$ of $\varpi_n$, respectively. We have $\mu_n\stackrel{\operatorname{d}}{=}\varpi_{n,s}$ and $\varphi_n\stackrel{\operatorname{d}}{=}\varpi_{n,b}$ by the ES coupling. It follows from \eqref{eq-choice-eps_n-3} that as $n\to\infty$,
\[
\varpi_{n,s}[\sigma\in \mathcal V_{n,\varepsilon_n}^{\dagger}]=\mu_n[\mathcal V_{n,\varepsilon_n}^\dagger]=\varphi_n[\mathcal E_{n,\varepsilon_n}^\dagger]+o(1)=\varpi_{n,b}[\eta\in \mathcal E_{n,\varepsilon_n}^{\dagger}]+o(1)\,,
\]
which is bounded away from $0$ by our assumption \eqref{eq-assumption-liminf>0}. Upon inspecting the proof of Lemma~\ref{lem-relation-ES}, we have indeed shown that $\varpi_n[\{\eta\in \mathcal E_{n,\varepsilon_n}^\dagger\}\setminus \{\sigma\in \mathcal V_{n,\varepsilon_n}^\dagger\}]=o(1)$. The above yields that $\varpi_n[\{\eta\in \mathcal E_{n,\varepsilon_n}^\dagger\}\Delta \{\sigma\in \mathcal V_{n,\varepsilon_n}^\dagger\}]=o(1)$. Thus, as $n\to\infty$,
\[
\operatorname{TV}\big({\mu_n^\dagger,\varpi_{n,s}^\dagger}\big)=\operatorname{TV}\big(\varpi_{n,s}[\cdot\mid \sigma\in \mathcal V_{n,\varepsilon_n}^\dagger],\varpi_{n,s}[\cdot\mid \eta\in \mathcal E_{n,\varepsilon_n}^\dagger]\big)=o(1)\,.\qedhere 
\]
\end{proof}

We can define the local weak convergence of $\varpi_n$ by taking $\mathcal X=[q],\mathcal Y=\{0,1\}$ in Definition~\ref{def-lwc-unified}, and below we discuss the properties of local weak limit points of $\varpi_n^\dagger$. For any $r\in \mathbb{N}$, let $\eta$ be a bond configuration on $\ttE^*_r$ with boundary condition $\mathscr C=\{C_*,C_1,\dots,C_\ell\}$.\footnote{This means that vertices in the same sets of $\{v^*\}\cup C_*,C_1,\dots,C_\ell$ are viewed as connected.} We denote by $C_*,C_1,\dots,C_\ell$ the sets of connected components of $\ttV_r^*$ induced by $\eta$, and define $\mathtt{u}_r(\eta)$ as the distribution of spin configurations $\sigma\in [q]^{\mathbb{V}_r^*(o)}$ such that $\sigma(v)=1$ for $v\in C^*$, and $\sigma(v),v\in C_i$ take the same values that are uniformly and independently chosen from $[q]$ for $1\le i\le \ell$. Moreover, for any boundary condition $\mathscr C\in \mathfrak C(r)$, let 
$$
\theta_r(\varphi_{r}^{\mathscr C}):=\mathbb{E}_{\eta\sim \varphi_r^{\mathscr C}}[\mathtt{u}_r(\eta)]
$$ 
be the averaged distribution on $[q]^{\ttV_r^*}$ of $u_r(\eta)$ over $\eta\sim \varphi_r^{\mathscr C}$ viewed as to have boundary condition $\mathscr C$. Furthermore, for $\varphi_r\in \mathcal R_r$ with $\varphi_r=\sum_{\mathscr C\in \mathfrak C_r}\lambda(\mathscr C)\varphi_r^{\mathscr C}$, we define 
\[
\theta_r(\varphi_r):=\sum_{\mathscr C\in \mathfrak C(r)}\lambda(\mathscr C)\theta_r(\varphi_r^{\mathscr C})\,.
\]
For $\varphi\in \widetilde{R}$, define $\theta(\varphi)$ as the probability measure on $[q]^{\ttV}$ such that the marginal distribution of $\theta(\varphi)$ on $[q]^{\ttV_r}$ agrees with $\theta_r(\varphi_r)$, where $\varphi_r\in \mathcal R_r$ is the marginal of $\varphi$ on $\{0,1\}^{\ttE_r^*}$ as before. Note that the existence and uniqueness of $\theta(\varphi)$ is guaranteed by Kolmogrov's extension theorem.

Recall Lemma~\ref{lem-subsequential-limit} establishes that any subsequential local weak limit points of $\varpi_{n,b}^\dagger=\varphi_n^\dagger$ lies in $\widetilde{\mathcal R}$. The next lemma gives further relation between $\varpi_{\operatorname{b}}, \varpi_{\operatorname{s}}$ and $\varphi^{\dagger},\mu^{\dagger}$.

\begin{lemma}\label{lem-fact}
(a) Any subsequence of $\{\varpi_n^\dagger\}$ has a locally weakly convergent subsequence.\\
(b) any subsequential local weak limit point of $\{\varpi^\dagger_n\}$ is supported on the set $\{\varpi:\varpi_{\operatorname{s}}=\theta(\varpi_{\operatorname{b}})\}$, where $\varpi_{\operatorname{s}},\varpi_{\operatorname{b}}$ are the marginals of $\varpi$ on the spin and bond configurations. \\
(c) $\theta(\varphi^{\free})=\mu^{\free}$ and $\theta(\varphi^{\wired})=\mu^{\wired}$. 
\end{lemma}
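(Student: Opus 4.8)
The plan is to handle the three parts separately, with part~(b) carrying essentially all of the content. Part~(a) is routine compactness: for each fixed $r$ the law of $\mathbf{P}_{n,r}(v_n)$ is a probability measure on $\mathcal P([q]^{\ttV_r}\times\{0,1\}^{\ttE_r^*})$, a compact metric space since the underlying configuration space is finite, so it has a weakly convergent subsequence; a diagonal extraction over $r\in\mathbb N$ produces a consistent family of limits whose Kolmogorov extension is the desired local weak limit point, exactly as in \cite[Lemma~2.9]{BDS23}.

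For part~(b) I would first record the structural fact that conditioning on the bond event $\mathcal E_{n,\varepsilon_n}^\dagger$ does not disturb the Edwards--Sokal sampling of the spins: since this event depends only on $\eta$, the measure $\varpi_n^\dagger$ still enjoys the property in Proposition~\ref{prop-ES-independent-smapling}, namely that conditionally on the full bond configuration $\eta$ the spins are i.i.d.\ uniform on the components avoiding $v^*$ and equal to $1$ on the component of $v^*$. Now fix $r$ and a vertex $v$ with $\NB_r(G_n,v)\cong\NB_r(\ttT_d,o)$; write $\check\eta=\eta\!\mid_{E_n^*\setminus E_{n,r}^*(v)}$ and let $\mathscr C(\check\eta)\in\mathfrak C(r)$ be the boundary partition of $\partial V_{n,r}(v)$ that $\check\eta$ induces. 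Two exact identities then combine: by the domain Markov property, whenever $\check\eta$ satisfies the good--boundary condition \eqref{eq-check-eta-good} (so that $\varphi_n^\dagger[\cdot\mid\check\eta]=\varphi_n[\cdot\mid\check\eta]$) the conditional law of $\eta\!\mid_{E_{n,r}^*(v)}$ given $\check\eta$ is $\varphi_r^{\mathscr C(\check\eta)}$; and by the ES sampling rule the conditional law of $\sigma\!\mid_{V_{n,r}(v)}$ given the full $\eta$ is the kernel $\mathtt u_r$ applied to $\eta\!\mid_{E_{n,r}^*(v)}$ read together with the externally induced partition $\mathscr C(\check\eta)$. Hence, given $\check\eta$, the pair $(\sigma\!\mid_{V_{n,r}(v)},\eta\!\mid_{E_{n,r}^*(v)})$ is ``$\varphi_r^{\mathscr C(\check\eta)}$ pushed through $\mathtt u_r$'', whose bond-marginal is $\varphi_r^{\mathscr C(\check\eta)}$ and whose spin-marginal is $\theta_r(\varphi_r^{\mathscr C(\check\eta)})$. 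Averaging over $\check\eta$ and using that $\theta_r$ is affine on $\mathcal R_r$, I conclude that $\mathbf{P}_{n,r}(v)$ is, up to a total-variation error at most $\varphi_n^\dagger[\eqref{eq-check-eta-good}\text{ fails}]=o(1)$ --- which is uniform in $v$ by the estimate in the proof of Lemma~\ref{lem-subsequential-limit}, itself using \eqref{eq-assumption-liminf>0} --- a measure whose bond-marginal lies in $\mathcal R_r$ and whose spin-marginal is $\theta_r$ of that bond-marginal. Denoting by $\mathcal S_r$ the set of such measures, which is closed since $\mathcal R_r$ is closed and $\theta_r$ is continuous (being affine on the finite-dimensional polytope $\mathcal R_r$), this says $\mathbf{P}_{n,r}(v_n)$ is within $o(1)$ of $\mathcal S_r$ for a $1-o(1)$ fraction of $v_n$; since the law of $\mathbf{P}_{n,r}(v_n)$ converges weakly to $\delta_{\NB_r(\ttT_d,o)}\otimes\mathtt p_r$, the portmanteau theorem forces $\mathtt p_r$ to be supported on $\mathcal S_r$, i.e.\ $\mathtt p_r$-a.s.\ the spin-marginal equals $\theta_r$ of the bond-marginal. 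Running this over all $r$ yields part~(b); note that $\varpi_{\operatorname{b}}\in\widetilde{\mathcal R}$ by Lemma~\ref{lem-subsequential-limit}, so $\theta(\varpi_{\operatorname{b}})$ is well defined.

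For part~(c) I would use that $\theta_r(\varphi_r^{\mathscr C})$ is, by construction, the marginal on $[q]^{\ttV_r}$ of the spin-marginal of the ES measure $\varpi_r^{\mathscr C}$ on $\NB_r^*(\ttT_d,o)$, and that the same domain-Markov argument shows $\theta_r\big((\varphi_{r'}^{\mathscr C_\dagger})_r\big)$ equals the restriction to $\ttV_r$ of the spin-marginal of $\varpi_{r'}^{\mathscr C_\dagger}$ for any $r'>r$. Summing out $\eta$ as in the proof of Lemma~\ref{lem-Potts-RCM-partition-function} identifies this spin-marginal with the measure on $[q]^{\ttV_{r'}}$ proportional to $\exp\big(\beta\sum_{(u,v)\in\ttE_{r'}}\delta_{\sigma(u),\sigma(v)}+B\sum_{v\in\ttV_{r'}}\delta_{\sigma(v),1}\big)\prod_{u\in\partial\ttV_{r'}}\pi^\dagger(\sigma(u))$: for $\dagger=\wired$ the boundary weight pins $\sigma\equiv1$ on $\partial\ttV_{r'}$ so the extra last-layer field is a constant and the measure is exactly $\mu_{r'}^{\wired,\beta,B}$; for $\dagger=\free$ one checks by summing out the leaves that it equals $\mu_{r'+1}^{\free,\beta,B}\!\mid_{\ttV_{r'}}$. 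In either case, restricting to $\ttV_r$ and letting $r'\to\infty$ (using continuity of $\theta_r$ and $\varphi_{r'}^{\mathscr C_\dagger}$ converging weakly to $\varphi^\dagger$) gives $\theta(\varphi^\dagger)\!\mid_{\ttV_r}=\mu^\dagger\!\mid_{\ttV_r}$, whence $\theta(\varphi^\dagger)=\mu^\dagger$ since $r$ is arbitrary.

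I expect the main obstacle to lie squarely in part~(b): the careful bookkeeping of boundary partitions --- proving that, conditionally on the entire bond configuration, the spins in a finite ball are distributed exactly as the $\mathtt u_r$-kernel applied to the bond configuration in that ball together with the externally induced partition --- and verifying that this structure survives the conditioning on $\mathcal E_{n,\varepsilon_n}^\dagger$ and interacts correctly with the local-weak-convergence formalism, in particular that the resulting $o(1)$ errors are uniform over the base vertex. Parts~(a) and~(c) are, respectively, soft compactness and a direct Edwards--Sokal computation.
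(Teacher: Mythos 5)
Your proposal is correct, and it essentially reconstructs the arguments the paper itself omits: the paper proves Lemma~\ref{lem-fact} only by citing \cite[Lemma~2.9(a), Remark~2.10, Lemma~2.11(b)]{BDS23}, and your three steps (diagonal compactness for (a), the conditional Edwards--Sokal/domain-Markov structure surviving the bond-measurable conditioning and passing to the local weak limit for (b), and the finite-volume ES computation identifying the wired case exactly and the free case as $\mu^{\free,\beta,B}_{r'+1}\!\mid_{\ttV_{r'}}$ for (c)) are precisely the intended arguments behind those references. The only cosmetic refinement worth noting is that in (b) and (c) it is cleaner to work with the compact set of joint measures $\bigl\{\sum_{\mathscr C}\lambda(\mathscr C)\,(\varphi_r^{\mathscr C}\text{ pushed through }\mathtt u_r)\bigr\}$ rather than invoking $\theta_r$ as a function on $\mathcal R_r$, which sidesteps any question of uniqueness of the mixture representation.
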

Items-(a), (b) and (c) of Lemma~\ref{lem-fact} can be proved as in \cite[Lemma~2.9(a), Remark~2.10 and Lemma~2.11(b)]{BDS23}, respectively, and we omit the details here. Now we are ready to prove Proposition~\ref{prop-lwcp}.

\begin{proof}[Proof of Proposition~\ref{prop-lwcp}]
We have shown in Lemma~\ref{lem-lwcp-RCM} that $\varphi_n^\dagger\stackrel{\operatorname{lwcp}}{\longrightarrow}\varphi^\dagger$, and it remains to prove $\mu_n^\dagger\stackrel{\operatorname{lwcp}}{\longrightarrow}\mu^\dagger$. In light of Lemma~\ref{lem-TV-ES}, it suffices to prove that $\varpi_{n,s}^\dagger\stackrel{\operatorname{lwcp}}{\longrightarrow}\mu^\dagger$. 
  Let $\mathtt m$ be a subsequential local weak limit of $\varpi_{n,s}^\dagger$. By Lemma~\ref{lem-fact} (a), we may pick a sequence $\{\varpi_{n_k}\}$ such that $\varpi_{n_k}\stackrel{\text{lwc}}{\longrightarrow}\mathtt p$, where $\mathtt m$ is the margianl of $\mathtt p$ on $\mathcal P([q]^{\ttV})$. 
   
   By Lemma~\ref{lem-lwcp-RCM}, any $\varpi\in \mathcal P([q]^{\ttV}\times \{0,1\}^{\ttE^*})$ lying in the support of $\mathtt p$ satisfies that $\varpi_{\operatorname{b}}=\varphi^\dagger$. Thus, by Lemma~\ref{lem-fact} (b) and (c) we have $\varpi_{\operatorname{s}}=\theta(\varpi_{\operatorname{b}})=\theta(\varphi^{\dagger})=\mu^\dagger$. This means that $\mathtt m$ is supported on the single measure $\mu^\dagger$. Hence by definition, we have $\mu_n^\dagger\stackrel{\operatorname{lwcp}}{\longrightarrow}\mu^\dagger$, as desired. This concludes the proof.
\end{proof}

\section{Strong phase coexistence with arbitrary weights}\label{sec-phase-coexistence}

In this section we prove Theorem~\ref{thm-main}. Fix two integers $d,q\ge 3$ and $(\beta,B)\in \mathsf R_c$. For a sequence of uniform edge-expander graphs graphs $G_n\stackrel{\operatorname{loc}}{\longrightarrow}\ttT_d$, recall the events $\mathcal V_{n,\varepsilon}^{\dagger},\dagger\in \{\free,\wired\},\varepsilon>0$ as defined in \eqref{eq-def-V-n-eps}. Fix a small constant $\varepsilon_0>0$ such that $\mathcal V_{n,\varepsilon_0}^{\free}\cap\mathcal V_{n,\varepsilon_0}^{\wired}=\emptyset$, we further define the free and wired \emph{partial Potts partition functions} as follows:
\[
\mathcal Z_{G_n}^{\dagger}=\mathcal Z_{G_n}^{\Po,\beta,B}(\mathcal V_{n,\varepsilon_0}^\dagger):=\sum_{\sigma\in \mathcal V_{n,\varepsilon_0}^\dagger}\exp\Bigg(\beta\sum_{(u,v)\in E_n}\delta_{\sigma(u),\sigma(v)}+B\sum_{v\in V_n}\delta_{\sigma(v),1}\Bigg)\,,\quad\dagger\in \{\free,\wired\}\,.
\]
We start with a lemma that characterizes the weights of the free and wired Gibbs measures in the in-probability local weak limit of $\mu_n=\mu_{G_n}^{\beta,B}$ in terms of $\mathcal Z_{G_n}^{\dagger},\dagger\in \{\free,\wired\}$. 

\begin{lemma}\label{lem-mixture-ratio-characterization}
    For uniform edge-expander graphs $G_n\stackrel{\operatorname{loc}}{\longrightarrow}\ttT_d$, if there exists $\alpha\in (0,1)$ such that 
    \[
    \lim_{n\to\infty}\frac{\mathcal Z_{G_n}^{\free}}{\mathcal Z_{G_n}^{\wired}}=\frac{\alpha}{1-\alpha}\,,
    \]
    then $\mu_n\stackrel{\operatorname{lwcp}}{\longrightarrow}\alpha{\mu^{\free}}+(1-\alpha){\mu^{\wired}}$. 
\end{lemma}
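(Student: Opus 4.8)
The plan is to convert the hypothesis on the ratio $\mathcal Z_{G_n}^{\free}/\mathcal Z_{G_n}^{\wired}$ into the statement that $\mu_n$ places mass $\approx\alpha$ on $\mathcal V_{n,\varepsilon_0}^{\free}$ and mass $\approx 1-\alpha$ on $\mathcal V_{n,\varepsilon_0}^{\wired}$, and then to reassemble $\mu_n$ from the conditional measures $\mu_n^{\free},\mu_n^{\wired}$, whose local weak limits are already identified in Section~\ref{sec-local-weak-limit}.

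First I would record the elementary identity $\mu_n[\mathcal V_{n,\varepsilon_0}^{\dagger}]=\mathcal Z_{G_n}^{\dagger}/\mathcal Z_{G_n}^{\Po,\beta,B}$ for $\dagger\in\{\free,\wired\}$, immediate from the definitions of $\mu_n$ and of the partial partition functions. Since $\varepsilon_n\to 0$, for all large $n$ we have $\mathcal V_{n,\varepsilon_n}^{\dagger}\subseteq\mathcal V_{n,\varepsilon_0}^{\dagger}$, so by \eqref{eq-choice-eps_n-2}--\eqref{eq-choice-eps_n-3} and the disjointness $\mathcal V_{n,\varepsilon_0}^{\free}\cap\mathcal V_{n,\varepsilon_0}^{\wired}=\emptyset$,
\[
\mu_n[\mathcal V_{n,\varepsilon_0}^{\free}]+\mu_n[\mathcal V_{n,\varepsilon_0}^{\wired}]=\frac{\mathcal Z_{G_n}^{\free}+\mathcal Z_{G_n}^{\wired}}{\mathcal Z_{G_n}^{\Po,\beta,B}}=1-o(1)\,.
\]
Dividing through and feeding in $\mathcal Z_{G_n}^{\free}/\mathcal Z_{G_n}^{\wired}\to\alpha/(1-\alpha)$ gives $\mu_n[\mathcal V_{n,\varepsilon_0}^{\free}]\to\alpha$ and $\mu_n[\mathcal V_{n,\varepsilon_0}^{\wired}]\to 1-\alpha$; sandwiching with the inclusions $\mathcal V_{n,\varepsilon_n}^{\dagger}\subseteq\mathcal V_{n,\varepsilon_0}^{\dagger}$ and $\mu_n[\mathcal V_{n,\varepsilon_n}^{\free}]+\mu_n[\mathcal V_{n,\varepsilon_n}^{\wired}]=1-o(1)$ (again from \eqref{eq-choice-eps_n-2}--\eqref{eq-choice-eps_n-3}) then yields $\mu_n[\mathcal V_{n,\varepsilon_n}^{\free}]\to\alpha$ and $\mu_n[\mathcal V_{n,\varepsilon_n}^{\wired}]\to 1-\alpha$ as well. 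In particular $\liminf_n\mu_n[\mathcal V_{n,\varepsilon_n}^{\dagger}]>0$, so by \eqref{eq-choice-eps_n-3} the hypothesis $\liminf_n\varphi_n[\mathcal E_{n,\varepsilon_n}^{\dagger}]>0$ of Proposition~\ref{prop-lwcp} is met, giving $\mu_n^{\dagger}\stackrel{\operatorname{lwcp}}{\longrightarrow}\mu^{\dagger}$ for both $\dagger\in\{\free,\wired\}$.

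To conclude, decompose
\[
\mu_n=\mu_n[\mathcal V_{n,\varepsilon_n}^{\free}]\,\mu_n^{\free}+\mu_n[\mathcal V_{n,\varepsilon_n}^{\wired}]\,\mu_n^{\wired}+\mu_n\big[(\mathcal V_{n,\varepsilon_n}^{\free}\cup\mathcal V_{n,\varepsilon_n}^{\wired})^c\big]\,\rho_n\,,
\]
where $\rho_n$ is the conditional law of $\mu_n$ on the complementary event (a valid mixture since the two events are disjoint for large $n$). As the first two weights converge to $\alpha,1-\alpha$ and the third is $o(1)$, one gets $\operatorname{TV}\big(\mu_n,\alpha\mu_n^{\free}+(1-\alpha)\mu_n^{\wired}\big)=o(1)$. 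Then, exactly as in the proof of Theorem~\ref{thm-main-RCM}, the equivalent formulation \eqref{eq-characterization-lwcp} of local weak convergence in probability together with $\mu_n^{\dagger}\stackrel{\operatorname{lwcp}}{\longrightarrow}\mu^{\dagger}$ gives $\alpha\mu_n^{\free}+(1-\alpha)\mu_n^{\wired}\stackrel{\operatorname{lwcp}}{\longrightarrow}\alpha\mu^{\free}+(1-\alpha)\mu^{\wired}$, and an $o(1)$ total-variation perturbation cannot change this, so $\mu_n\stackrel{\operatorname{lwcp}}{\longrightarrow}\alpha\mu^{\free}+(1-\alpha)\mu^{\wired}$. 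The argument is essentially bookkeeping on top of Section~\ref{sec-local-weak-limit}; the only point needing care is the passage between the fixed threshold $\varepsilon_0$ (used to define $\mathcal Z_{G_n}^{\dagger}$) and the vanishing sequence $\varepsilon_n$ (used throughout Section~\ref{sec-local-weak-limit}), which is handled by the nesting of events and the fact that both unions carry $\mu_n$-mass $1-o(1)$; since the identified limit is subsequence-independent, Proposition~\ref{prop-lwcp} may be invoked along an arbitrary subsequence to get convergence of the whole sequence.
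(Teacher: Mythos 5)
Your argument is correct and follows essentially the same route as the paper's proof: identify $\mu_n[\mathcal V_{n,\varepsilon_0}^{\dagger}]$ with the partition-function ratios, use Lemma~\ref{lem-relation-ES}/Proposition~\ref{prop-admissibility-RCM} (via the choice of $\{\varepsilon_n\}$ in \eqref{eq-choice-eps_n-1}--\eqref{eq-choice-eps_n-3}) to get $\mu_n[\mathcal V_{n,\varepsilon_n}^{\free}]\to\alpha$ and $\mu_n[\mathcal V_{n,\varepsilon_n}^{\wired}]\to1-\alpha$, then invoke Proposition~\ref{prop-lwcp} and the TV/mixture argument from the proof of Theorem~\ref{thm-main-main}. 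Your extra detail on the $\varepsilon_0$ versus $\varepsilon_n$ bookkeeping and the explicit decomposition of $\mu_n$ only spells out what the paper leaves implicit, so there is nothing to fix.
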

\begin{proof}
By Lemma~\ref{lem-relation-ES} and Proposition~\ref{prop-admissibility-RCM}, we have that as $n\to\infty$,
\[
\mu_n[\sigma\in \mathcal V_{n,\varepsilon_0}^{\free}\cup \mathcal V_{n,\varepsilon_0}^{\wired}]=1-o(1)\,.
\]
By our assumption we have as $n\to\infty$,
\[
\frac{\mu_n[\mathcal V_{n,\varepsilon_0}^{\free}]}{\mu_n[\mathcal V_{n,\varepsilon_0}^{\wired}]}=\frac{\alpha}{1-\alpha}+o(1)\,.
\]
Altogether we have that as $n\to\infty$,
$
\mu_n[\mathcal V_{n,\varepsilon_0}^{\free}]=\alpha+o(1)$, and $\mu_n[\mathcal V_{n,\varepsilon_0}^{\wired}]=1-\alpha+o(1)
$.

Recalling our choice of the sequence $\{\varepsilon_n\}$ such that \eqref{eq-choice-eps_n-1}-\eqref{eq-choice-eps_n-3} hold. We have for large $n$, $\varepsilon_n\le \varepsilon_0$ and thus
$
\mu_n[\mathcal V_{n,\varepsilon_n}^\dagger]\le \mu_n[\mathcal V_{n,\varepsilon_0}^\dagger], \dagger\in \{\free,\wired\}
$. Meanwhile, since $\mu_n[\mathcal V_{n,\varepsilon_n}^{\free}]+\mu_n[\mathcal V_{n,\varepsilon_n}^{\wired}]=1-o(1)$, it follows that as $n\to\infty$,
$
\mu_n[\mathcal V_{n,\varepsilon_n}^{\free}]=\alpha+o(1)$, and $\mu_n[\mathcal V_{n,\varepsilon_n}^{\wired}]=1-\alpha+o(1)
$.
Therefore, as shown in the proof of Theorem~\ref{thm-main-main}, we have
\[
\mu_n\stackrel{\operatorname{lwcp}}{\longrightarrow}\lim_{n\to\infty}\mu_n[\mathcal V_{n,\varepsilon_n}^{\free}]\cdot \mu^{\free}+\lim_{n\to\infty}\mu_n[\mathcal V_{n,\varepsilon_n}^{\wired}]\cdot \mu^{\wired}=\alpha\mu^{\free}+(1-\alpha)\mu^{\wired}\,.\qedhere
\]
\end{proof}

In light of Lemma~\ref{lem-mixture-ratio-characterization}, Theorem~\ref{thm-main} reduces to showing that for any $\alpha\in (0,1)$, there are uniform edge-expander graphs $G_n\stackrel{\operatorname{loc}}{\longrightarrow}\ttT_d$, such that the ratio of $\mathcal Z_{G_n}^{\free}$ to $\mathcal Z_{G_n}^{\wired}$ converges to $\gamma:=\tfrac{\alpha}{1-\alpha}$. Provided this is true, the edge case $\alpha\in \{0,1\}$ follows subsequently by a diagonal argument. In what follows we fix $\alpha\in (0,1)$ and $\gamma=\tfrac{\alpha}{1-\alpha}\in (0,\infty)$. 

A natural starting point is to consider random $d$-regular graphs, which are known to have good expansion properties and are locally $\ttT_d$-like. We perform this analysis in Section~\ref{subsec-random-d-regular-graphs}, where we compute sharp asymptotics of the partial Potts partition functions for typical random $d$-regular graphs. Surprisingly, we find that these are \emph{not sufficient} to establish Theorem~\ref{thm-main}. Specifically, there appears to be a non-trivial a.a.s.\ lower bound on the ratio $\mathcal Z_{G_n}^{\free}/\mathcal Z_{G_n}^{\wired}$ for $G_n$ sampled as a random $d$-regular graph.\footnote{While we do not include a full proof of this fact, it is supported by numerical evidence.} To overcome this obstacle, in Section~\ref{subsec-modification} we introduce and analyze certain modified graphs derived from random $d$-regular graphs. Using the cavity method, we study the asymptotics of their partial Potts partition functions. We then complete the proof of Theorem~\ref{thm-main} in Section~\ref{subsec-proof-of-main}.

\subsection{Asymptotic partial Potts partition functions of random $d$-regular graphs}\label{subsec-random-d-regular-graphs}

This section collects our results for random $d$-regular graphs. We start by introducing the precise random graph model of interest.

\begin{definition}[The pairing model $\calG_{n,d}$]
\label{def:pariing_model}
    For two integers $n,d$ such that $nd$ is even, we define a random $d$-regular graph $G_n$ on $V_n$ with $|V_n|=n$ as follows. For each $v\in V_n$, we assign $d$ half-edges connecting $v$. We then make a random pairing between the $dn$ half edges to form a $d$-regular graph $G_n$. We denote the distribution of $G_n$ as $\calG_{n,d}$. 
\end{definition}

\begin{remark}
Note that a graph $G_n \sim \mathcal G_{n,d}$ is not necessarily simple; self-loops and multiple edges may occur. However, the probability that $G_n$ is simple remains bounded away from zero as $n \to \infty$. Moreover, conditioned on the event that $G_n$ is simple, its distribution coincides with that of a uniform random simple $d$-regular graph on $V_n$.
\end{remark}

To state our results we need to introduce some notations. Define the Potts interaction matrix $B=(B_{ij})_{1\le i,j\le q}$ by
\begin{equation}\label{eq-B-ij}
    B_{ij} = e^{\beta\delta_{i,j}+\tfrac B d(\delta_{i,1}+\delta_{j,1})}\,,\quad\forall i,j\in [q]\,.
\end{equation}
For $\dagger \in \{{\free},{\wired}\}$, define the matrix $Q^{\dagger}=(Q^\dagger_{ij})_{1\le i,j\le q}$ by 
\begin{equation}\label{eq-Q-ij}
Q^\dagger_{ij}:= \frac{ B_{ij}\sqrt{\nu^\dagger(i)\nu^\dagger(j)}}{\sqrt{[\rightarrow i]^\dagger}\sqrt{[\rightarrow j]^\dagger}}\,,\quad\forall i,j\in [q]\,,
\end{equation}
where $[\rightarrow i]^\dagger:= \sum_{j\in[q]} \nu^\dagger(j) B_{ij}$. Then, $Q^\dagger$ is symmetric, positive semi-definite, and has $1$ as its top eigenvalue. Denote the remaining eigenvalues of $Q^\dagger$ as $\lambda^\dagger_2, \ldots, \lambda^\dagger_{q}$. Moreover, for $k\ge 3$, let $\theta_k=(d-1)^k/2k$ and $\delta_k^\dagger=\sum_{j=2}^q(\lambda_j^\dagger)^j,\dagger\in \{\free,\wired\}$. 

\begin{proposition}\label{prop-small-graph-conditioning-conclusion}
For $G_n\sim \mathcal G_{n,d}$, the following holds with high probability as $n\to\infty$:
\begin{equation}
    \mathcal Z_{G_n}^{\free}+\mathcal Z_{G_n}^{\wired}=(1-o(1))\mathcal Z_{G_n}^{\Po,\beta,B}\,,
\end{equation}
and
\begin{equation}\label{eq-small-graph-conditioning-result}
\frac{\mathcal Z_{G_n}^\dagger}{\mathbb{E}[\mathcal Z_{G_n}^\dagger]}=(1+o(1))\prod_{k\ge 3}(1+\delta_k^\dagger)^{X_{n,k}}e^{-\theta_k\delta_k^\dagger}\,,
\end{equation}
where $\mathbb{E}$ is taken over $G\sim \mathcal G_{n,d}$, and $X_{n,k}$ is the number of $k$-cycles in $G_n$, $k\ge 3$. Moreover, as $n\to\infty$, $\mathbb{E}[\mathcal Z_{G_n}^{\free}]/\mathcal \mathbb{E}[Z_{G_n}^{\wired}]$ converges to a constant $\Gamma=\Gamma(d,q,\beta,B)\in (0,\infty)$. 
\end{proposition}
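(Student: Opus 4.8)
\textbf{Proof plan for Proposition~\ref{prop-small-graph-conditioning-conclusion}.}

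The plan is to apply the small-subgraph conditioning method of Robinson--Wormald, in the form developed in \cite{GSVY16} for partial partition functions, to each of the two random variables $\mathcal Z_{G_n}^{\free}$ and $\mathcal Z_{G_n}^{\wired}$ on $G_n\sim\mathcal G_{n,d}$. The first and conceptually most important step is to show that each partial partition function $\mathcal Z_{G_n}^\dagger$ is well approximated by a restricted sum over spin configurations whose empirical color profile is exactly $\nu^\dagger$ (not merely $\varepsilon_0$-close to $\check\nu^\dagger$). Here one uses that $\nu^\dagger$ is the BP fixed point maximizing the Bethe functional $\Psi$ along the color-profile constraint, so that the first-moment contribution is exponentially dominated by profiles near $\nu^\dagger$; the free-vs-wired separation $\mathsf{R}_c$ guarantees that the two maximizers contribute equally at exponential scale and that cross terms (profiles interpolating between $\nu^{\free}$ and $\nu^{\wired}$) are exponentially negligible, which is what gives $\mathcal Z_{G_n}^{\free}+\mathcal Z_{G_n}^{\wired}=(1-o(1))\mathcal Z_{G_n}^{\Po,\beta,B}$ with high probability once the small-subgraph conditioning concentration is in place. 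Fixing the color profile to $\nu^\dagger$ reduces $\mathcal Z_{G_n}^\dagger$ to a weighted count over perfect matchings of half-edges compatible with that profile, to which the pairing-model moment computations apply cleanly.

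The second step is the first- and second-moment computation. One computes $\mathbb E[\mathcal Z_{G_n}^\dagger]$ over $\mathcal G_{n,d}$ by summing over color profiles and pairings; a Laplace/saddle-point evaluation (the profile optimization of the previous paragraph, now at the level of expectations) identifies the exponential rate as $n\Psi(\nu^\dagger)$ up to a polynomial prefactor, and since $\Psi(\nu^{\free})=\Psi(\nu^{\wired})$ on $\mathsf R_c$ the ratio $\mathbb E[\mathcal Z_{G_n}^{\free}]/\mathbb E[\mathcal Z_{G_n}^{\wired}]$ tends to an explicit constant $\Gamma=\Gamma(d,q,\beta,B)\in(0,\infty)$ determined by the subexponential (Gaussian-integral) corrections around the two maximizers. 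The second moment $\mathbb E[(\mathcal Z_{G_n}^\dagger)^2]$ is computed by the standard overlap analysis for Potts on random regular graphs: one shows the dominant contribution comes from the ``diagonal'' overlap (two independent samples with product color profile $\nu^\dagger\otimes\nu^\dagger$), which requires checking that the relevant Hessian is negative definite --- equivalently that the matrix $Q^\dagger$ defined in \eqref{eq-Q-ij} has spectral radius $1$ attained only at the top eigenvector and $|\lambda_j^\dagger|<1$ for $j\ge2$. This last spectral fact is exactly the rank/contraction condition that makes the infinite-volume Gibbs measure $\mu^\dagger$ extremal, and it is where the hypothesis $(\beta,B)\in\mathsf R_c$ (rather than deep inside $\mathsf R_{\free}$ or $\mathsf R_{\wired}$) must be invoked; I expect verifying $\max_{j\ge2}|\lambda_j^\dagger|<1$ for \emph{both} $\dagger$ on the whole critical line to be the main obstacle, and it will likely require the same kind of explicit analytic manipulation of the BP recursion used in \cite{DMSS14, CH25}, quite possibly reducible to the numerical Lemma~\ref{lem-numerical}-type computations already in the paper.

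The third step assembles the small-subgraph conditioning conclusion. Having established $\mathbb E[(\mathcal Z_{G_n}^\dagger)^2]=(1+o(1))\,\bigl(\mathbb E[\mathcal Z_{G_n}^\dagger]\bigr)^2\prod_{k\ge3}\exp\!\bigl(\theta_k\delta_k^\dagger+\tfrac12(\theta_k\delta_k^\dagger)^2\bigr)$ for appropriate $\theta_k=(d-1)^k/2k$ and $\delta_k^\dagger=\sum_{j=2}^q(\lambda_j^\dagger)^k$ --- the point being that the short-cycle counts $X_{n,k}$ are asymptotically independent Poisson$(\theta_k)$ and each $k$-cycle multiplies $\mathcal Z_{G_n}^\dagger$ by roughly $(1+\delta_k^\dagger)$ in expectation --- one invokes the Robinson--Wormald theorem (e.g. in the form of \cite[Theorem 4]{GSVY16} or \cite{Wor99}) to conclude that, jointly over the two $\dagger$,
\[
\frac{\mathcal Z_{G_n}^\dagger}{\mathbb E[\mathcal Z_{G_n}^\dagger]}=(1+o(1))\prod_{k\ge3}(1+\delta_k^\dagger)^{X_{n,k}}e^{-\theta_k\delta_k^\dagger}
\]
with high probability, which is \eqref{eq-small-graph-conditioning-result}. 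The verification that the product $\prod_k(1+\delta_k^\dagger)e^{-\theta_k\delta_k^\dagger}$-type sums converge (needed to apply the theorem) follows from $|\lambda_j^\dagger|<1$ and $\theta_k\delta_k^\dagger=O((d-1)^k/k\cdot(\lambda^\dagger_{\max})^k)$ being summable, again hinging on the spectral gap. Combining this with the profile-restriction from the first step and the a.a.s.\ concentration it provides yields $\mathcal Z_{G_n}^{\free}+\mathcal Z_{G_n}^{\wired}=(1-o(1))\mathcal Z_{G_n}^{\Po,\beta,B}$, and the ratio statement for $\mathbb E[\mathcal Z_{G_n}^{\free}]/\mathbb E[\mathcal Z_{G_n}^{\wired}]$ is read off from the first-moment Laplace computation, completing the proof modulo the deferred spectral and numerical lemmas.
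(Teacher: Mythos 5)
Your overall route is the paper's route: approximate the partial partition functions by profile-restricted sums, compute first and second moments via the GSVY-type Laplace/overlap analysis, and apply Robinson--Wormald small subgraph conditioning with $\theta_k=(d-1)^k/2k$, $\delta_k^\dagger=\sum_{j\ge2}(\lambda_j^\dagger)^k$. However, your first step has a genuine gap. You propose to replace $\mathcal Z_{G_n}^\dagger$ by the sum over configurations whose empirical color profile is \emph{exactly} $\nu^\dagger$ (it should in any case be the vertex marginal $\check{\nu}^\dagger$, not the message $\nu^\dagger$). This fails: under the measure the color profile fluctuates at scale $n^{-1/2}$, so the exact-profile sum is only a $\Theta(n^{-(q-1)/2})$ fraction of $\mathcal Z_{G_n}^\dagger$, and it is \emph{not} a $(1-o(1))$-approximation. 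More importantly, proving \eqref{eq-small-graph-conditioning-result} for the single-profile quantity does not yield it for $\mathcal Z_{G_n}^\dagger$, because the ratio between the two is itself a random variable whose concentration is exactly what is at stake. The paper instead applies small subgraph conditioning to the sum over a shrinking window $\operatorname{B}_\infty(\check{\nu}^\dagger,n^{-1/4})$ of profiles, and the real technical work is to verify the SSC conditions \emph{uniformly over this window}: one needs the optimizers $\bx^\star(\balpha),\by^\star(\balpha)$ and the exponents $\Upsilon_1,\Upsilon_2$ to vary continuously/differentiably in $\balpha$ and to retain the product structure of Lemma~\ref{lemma:xstar} (this is Corollary~\ref{cor:xstar} and Corollary~\ref{cor:twice_extension}), so that the cycle-ratio and second-moment computations of \cite{GSVY16}, proved at $\balpha=\check{\nu}^\dagger$, extend to every profile in the window. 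Your plan contains no substitute for this perturbation step, and without it neither the claim $\mathcal Z_{G_n}^{\free}+\mathcal Z_{G_n}^{\wired}=(1-o(1))\mathcal Z_{G_n}^{\Po,\beta,B}$ nor \eqref{eq-small-graph-conditioning-result} follows.

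Two further points to correct. First, the variance condition in small subgraph conditioning is $\mathbb{E}[(\mathcal Z^\dagger)^2]/(\mathbb{E}[\mathcal Z^\dagger])^2\le\exp\bigl(\sum_{k}\theta_k(\delta_k^\dagger)^2\bigr)+o(1)$; your target $\prod_k\exp\bigl(\theta_k\delta_k^\dagger+\tfrac12(\theta_k\delta_k^\dagger)^2\bigr)$ is not the right quantity (it differs by a nontrivial constant factor), and with that target the theorem does not apply. Second, the spectral input you flag as the main obstacle is stated too weakly: $|\lambda_j^\dagger|<1$ does not suffice. What is needed is $\lambda_2^\dagger<1/(d-1)$ for both $\dagger$ — this is what makes $\prod_{i,j\ge2}(1-(d-1)\lambda_i^\dagger\lambda_j^\dagger)^{-1/2}$ finite, $\sum_k\theta_k(\delta_k^\dagger)^2$ summable, and the infinite product in Lemma~\ref{lem-infinite-product} convergent. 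The paper proves this on the whole non-uniqueness region (hence on $\mathsf R_c$) by identifying the equality $\lambda_2^\dagger=1/(d-1)$ with the spinodal temperatures $\beta_{\operatorname{dis}}(B)$, $\beta_{\operatorname{ord}}(B)$ (Claim~\ref{claim-lambda_2}); your plan correctly anticipates that an explicit BP-based argument is needed here, but with the wrong threshold.
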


The proof of Proposition~\ref{prop-small-graph-conditioning-conclusion} is based on the small subgraph conditioning method. Our proof closely follows the calculations in \cite{GSVY16}, and the details are deferred to Appendix~\ref{appendix-small-graph-conditioning}. We also record the following technical lemma that addresses the a.a.s. convergence of the infinite product in the right hand side of \eqref{eq-small-graph-conditioning-result}, whose proof is also provided in the appendix.
\begin{lemma}\label{lem-infinite-product}
For any $\dagger\in \{\free,\wired\}$, the infinite product $\prod_{k\ge 3}e^{-\theta_k\delta_k^\dagger}$ converges. Moreover, for any $\varepsilon>0$, there exists $K_0=K_0(\varepsilon)>0$ such that as $n\to\infty$, with probability at least $1-\varepsilon$ over $G_n\sim \mathcal G_{n,d}$, it holds for any $K\ge K_0$,
    \begin{equation}\label{eq-infinite-product}
    1\le \prod_{k>K}(1+\delta_k^\dagger)^{X_{n,k}}\le 1+\varepsilon\,.
    \end{equation}
\end{lemma}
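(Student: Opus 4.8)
## Proof Proposal for Lemma~\ref{lem-infinite-product}

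\textbf{Setup and the two claims.} The statement has two parts: (a) the deterministic product $\prod_{k\ge 3}e^{-\theta_k\delta_k^\dagger}$ converges, and (b) a uniform-in-$K$ tail estimate that holds with high probability over $G_n\sim\mathcal G_{n,d}$. For both, the starting point is a quantitative bound on $|\lambda_j^\dagger|$. Recall $Q^\dagger$ is a symmetric PSD matrix with top eigenvalue $1$; the remaining eigenvalues $\lambda_2^\dagger,\ldots,\lambda_q^\dagger$ lie in $[0,1)$, and since $(\beta,B)\notin\mathsf R_=$ (we are on the critical line, inside the non-uniqueness regime) one has a strict gap: there is $\rho=\rho(d,q,\beta,B)<1$ with $\max_j\lambda_j^\dagger\le\rho$ for both $\dagger\in\{\free,\wired\}$. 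Consequently $|\delta_k^\dagger|=\bigl|\sum_{j=2}^q(\lambda_j^\dagger)^k\bigr|\le (q-1)\rho^k$. Since $\theta_k=(d-1)^k/2k$, the product's logarithm is $-\sum_k\theta_k\delta_k^\dagger$, whose terms are bounded in absolute value by $\tfrac{(q-1)}{2k}\bigl((d-1)\rho\bigr)^k$. \emph{Here is the catch:} this is summable only if $(d-1)\rho<1$, which need not hold. So the naive bound is not enough, and the real content of part (a) is that $\delta_k^\dagger$ must in fact decay faster — like $(\lambda_2^\dagger)^k$ with a coefficient, and one needs $(d-1)\lambda_2^\dagger<1$. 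This should follow from the eigenvalue computation of $Q^\dagger$ at the critical point: the defining property of the critical line (the Ising reduction with $\beta^*>\beta^{\mathrm{Uni}}_d$ from Lemma~\ref{lem-numerical}, equivalently the belief-propagation fixed-point equations) forces the relevant subdominant eigenvalue to satisfy the stability bound $(d-1)\lambda_2^\dagger<1$. I would extract this from the same Jacobian/stability analysis that underlies the small-subgraph-conditioning variance computation in Appendix~\ref{appendix-small-graph-conditioning}; indeed $\sum_k\theta_k\delta_k^\dagger$ converging is exactly the condition that makes the small-subgraph conditioning method applicable, so this convergence is not an accident but a structural input that should already be available there. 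Granting $(d-1)\lambda_2^\dagger<\!1$ (and $(d-1)\lambda_j^\dagger<1$ for all $j\ge2$), the series $\sum_k\theta_k|\delta_k^\dagger|\le\sum_k\tfrac{1}{2k}\sum_{j=2}^q\bigl((d-1)\lambda_j^\dagger\bigr)^k<\infty$, proving part (a).

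\textbf{The probabilistic tail bound.} For part (b), fix $\varepsilon>0$. Write $\prod_{k>K}(1+\delta_k^\dagger)^{X_{n,k}}$; since $0\le\delta_k^\dagger\le(q-1)\rho^k\le\tfrac12$ for $k$ large (absorb the finitely many small $k$ into $K_0$), we have $0\le\log(1+\delta_k^\dagger)\le\delta_k^\dagger\le(q-1)\rho^k$, hence the product is $\ge1$ and
\[
\log\prod_{k>K}(1+\delta_k^\dagger)^{X_{n,k}}\;\le\;\sum_{k>K}X_{n,k}\,\delta_k^\dagger\;\le\;(q-1)\sum_{k>K}X_{n,k}\,\rho^k.
\]
It therefore suffices to show that $S_{n,K}:=\sum_{k>K}X_{n,k}\rho^k$ is small with high probability, uniformly in $K\ge K_0$; since $S_{n,K}$ is nonincreasing in $K$ it is enough to control $S_{n,K_0}$. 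It is classical (Bollobás; see also the short-cycle counting used in \cite{GSVY16}) that for $G_n\sim\mathcal G_{n,d}$ the cycle counts $X_{n,k}$ converge jointly to independent Poisson variables $Z_k$ with mean $\theta_k=(d-1)^k/2k$, and moreover $\mathbb E[X_{n,k}]\le C\theta_k$ uniformly in $n$ for each fixed $k$, with $\mathbb E\bigl[\sum_{k\le L}X_{n,k}\rho^k\bigr]$ bounded independent of $n,L$ precisely because $\sum_k\theta_k\rho^k<\infty$ when... —\emph{wait, here again we need $(d-1)\rho<1$.} This is the same obstruction resurfacing: $\mathbb E[X_{n,k}]\asymp\theta_k$ grows like $(d-1)^k/k$, so $\mathbb E[S_{n,K}]$ is finite only if $(d-1)\rho<1$, i.e. only if $\rho<1/(d-1)$. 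Thus \emph{for part (b) it is essential that we use the sharper decay $\delta_k^\dagger\le C(\lambda_2^\dagger)^k$ together with $(d-1)\lambda_2^\dagger<1$}, not merely $\delta_k^\dagger\le(q-1)\rho^k$. With that sharper input, $\mathbb E[S_{n,K}]=\sum_{k>K}\mathbb E[X_{n,k}]\,\delta_k^\dagger\le C'\sum_{k>K}\tfrac{(d-1)^k}{k}(\lambda_2^\dagger)^k\to0$ as $K\to\infty$, uniformly in $n$. Choosing $K_0$ so that this bound is $<\varepsilon^2/(q-1)$ for all $K\ge K_0$, Markov's inequality gives $\mathbb P[S_{n,K_0}\ge\varepsilon/(q-1)]\le\varepsilon$ for all large $n$, and on the complementary event $\log\prod_{k>K}(1+\delta_k^\dagger)^{X_{n,k}}\le(q-1)S_{n,K}\le(q-1)S_{n,K_0}<\varepsilon$ for every $K\ge K_0$ simultaneously, so $1\le\prod_{k>K}(1+\delta_k^\dagger)^{X_{n,k}}\le e^\varepsilon\le1+2\varepsilon$; rescaling $\varepsilon$ completes the proof.

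\textbf{Main obstacle.} The one genuinely non-routine point is establishing the sharp spectral bound $(d-1)\lambda_j^\dagger<1$ for all $j\ge2$ at every point of the critical line $\mathsf R_c$ — equivalently, that the subdominant eigenvalues of $Q^\dagger$ sit strictly below the "Kesten–Stigum / reconstruction" threshold $1/(d-1)$. Everything else (summability, Poisson cycle counts, Markov's inequality, uniformity in $K$ via monotonicity) is standard. I expect this spectral bound to be exactly the same inequality that guarantees the $L^2$-boundedness needed for the small-subgraph conditioning argument of Proposition~\ref{prop-small-graph-conditioning-conclusion}, so the cleanest route is to prove it once in Appendix~\ref{appendix-small-graph-conditioning} and cite it here; failing that, one derives it from the explicit eigenvalues of $Q^\dagger$ — which, by the $2$-parameter structure of the Potts phase diagram, reduce to the eigenvalues of a $2\times2$ block plus a $(q-2)$-fold degenerate eigenvalue, all expressible through $\nu^\dagger$, and the bound then follows from the defining equations \eqref{eq-w_c(B)}–\eqref{eq-B_+} of the critical line together with $\beta^*>\beta^{\mathrm{Uni}}_d$ from Lemma~\ref{lem-numerical}.
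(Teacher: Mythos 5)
Your overall architecture matches the paper's: compute/bound the non-top eigenvalues of $Q^\dagger$, reduce convergence of $\prod_k e^{-\theta_k\delta_k^\dagger}$ to $(d-1)\lambda_2^\dagger<1$, and handle the random tail product via $\mathbb{E}[X_{n,k}]\le 2\theta_k$ plus Markov. Your treatment of the probabilistic part is a legitimate minor variant: you apply Markov directly to the weighted sum $S_{n,K_0}=\sum_{k>K_0}X_{n,k}\delta_k^\dagger$ and use monotonicity in $K$, whereas the paper fixes $t\in(\max\{\lambda_2^{\free},\lambda_2^{\wired}\},\tfrac1{d-1})$, shows via Markov and a union bound that $X_{n,k}\le t^{-k}$ simultaneously for all $k>K_0$ with probability $1-\varepsilon$, and then bounds the product deterministically by $\exp(\sum_{k>K}\delta_k^\dagger t^{-k})$. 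Both work; yours is slightly more direct, the paper's gives a deterministic envelope on the cycle counts that is reusable.

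The genuine gap is the spectral bound itself: you explicitly grant $(d-1)\lambda_j^\dagger<1$ rather than prove it, and this is the entire nontrivial content of the lemma. Moreover, your first proposed shortcut does not work: the small-subgraph-conditioning hypotheses (conditions 3–4 of Proposition~\ref{thm:ssc}, and the finiteness of $\prod_{i,j\ge 2}(1-(d-1)\lambda_i^\dagger\lambda_j^\dagger)^{-1/2}$ in the second-moment computation) only require $\sum_k\theta_k(\delta_k^\dagger)^2<\infty$, i.e.\ $(d-1)(\lambda_2^\dagger)^2<1$, which is strictly weaker than the bound $(d-1)\lambda_2^\dagger<1$ needed for $\sum_k\theta_k\delta_k^\dagger<\infty$; so the needed inequality cannot simply be cited from the SSC setup. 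Your second route (explicit eigenvalues of $Q^\dagger$, a $2\times2$ block plus a $(q-2)$-fold degenerate eigenvalue) is the one the paper actually takes, but it still requires an argument you do not supply: the paper's Claim~\ref{claim-lambda_2} writes $\lambda_2^\dagger=\tfrac{e^\beta x^\dagger}{e^\beta x^\dagger+q-1}-\tfrac{x^\dagger}{e^\beta+x^\dagger+q-2}$, observes that $\lambda_2^\dagger=\tfrac1{d-1}$ is equivalent (via the fixed-point equation for $x^\dagger$) to the stationarity condition $p'(x^\dagger)=1$, which characterizes exactly the thresholds $\beta_{\mathrm{dis}}(B)$ (free) and $\beta_{\mathrm{ord}}(B)$ (wired), and then uses continuity in $\beta$ together with the limiting behavior at $\beta\to0$ (resp.\ $\beta\to\infty$) to conclude $\lambda_2^\dagger<\tfrac1{d-1}$ throughout the non-uniqueness regime $\mathsf R_{\neq}$ (in particular on $\mathsf R_c$). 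The bound is not deduced from Lemma~\ref{lem-numerical} ($\beta^*>\beta^{\operatorname{Uni}}_d$), as you suggest; it is a separate threshold/continuity argument, and without it your proof of both parts of the lemma is incomplete.
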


Proposition~\ref{prop-small-graph-conditioning-conclusion} implies that for large $n$ and a typical graph $G_n$ sampled from $\mathcal G_{n,d}$, it holds
\begin{equation}\label{eq-ratio}
\frac{\mathcal Z_{G_n}^{\free}}{\mathcal Z_{G_n}^{\wired}}=(\Gamma+o(1))\prod_{k\ge 3}\left(\frac{1+\delta_k^{\free}}{1+\delta_k^{\wired}}\right)^{X_{n,k}}e^{-\theta_k(\delta_k^{\free}-\delta_k^{\wired})}\,.
\end{equation}
Numerical results suggest that it is always true that $\delta_k^{\free}>\delta_k^{\wired},\forall k\ge 3$, and thus there is a non-trivial lower bound on the right hand side of \eqref{eq-ratio} (since $X_{n,k}\ge 0,\forall k\ge 3$). This means that for a sequence of typical random $d$-regular graphs, any in-probability local weak limit $\alpha \mu^{\free}+(1-\alpha)\mu^{\wired}$ of $\{\mu_n\}=\{\mu_{G_n}^{\beta,B}\}$ has $\alpha$ bounded away from $0$.

\subsection{Modified graphs and the cavity method}\label{subsec-modification}

The discussion above suggests that, to establish Theorem~\ref{thm-main}, we must go beyond typical random $d$-regular graphs. To this end, we introduce certain modified graphs based on $d$-regular graphs and show that these modifications effectively overcome the aforementioned issue.

To begin, we note that the following dichotomy holds: either for all large $k\in \mathbb N$, $\delta_k^{\free}>\delta_k^{\wired}$, or for all large $k\in \mathbb N$ the reverse holds.\footnote{To see this, recall that $\delta_k^\dagger=\sum_{j=2}^q(\lambda_j^\dagger)^k$, where $\lambda_j^\dagger,2\le j\le q$ are the non-top eigenvalues of $Q^\dagger$. The only way the above dichotomy does not hold is when $\{\lambda_2^{\free},\dots,\lambda_q^{\free}\}=\{\lambda_2^{\wired},\dots,\lambda_q^{\wired}\}$. However, this is not the case as it is known in \cite[Lemma 21]{GSVY16} that $\lambda_q^{\free}\neq \lambda_q^{\wired}$ (see also Appendix~\ref{appendix-lem-infinite-product}).}
Let $\star\in \{\free,\wired\}$ such that $\delta_k^{\star}>{(\delta_k^{\free}+\delta_k^{\wired})}/{2}$ when $k$ is large enough. We denote $d_\star=d+1$ if $\star=\wired$ and $d_\star=d-1$ if $\star=\free$. 

Now we introduce the modified graphs of a $d$-regular graph $G$. 

\begin{definition}\label{def-modified-graph}
    For a $d$-regular graph $G$ and $p\in 2\mathbb N$, we say $\widehat{G}$ and $\widetilde{G}$ are \emph{$p$-modified graphs} of $G$, if $\widehat{G}$ is obtained from $G$ by deleting $m=pd_\star/2$ edges $(v_i,v_{m+i}),1\le i\le m$ in $G$, and $\widetilde{G}$ is then obtained by adding $p$ new vertices $w_1,\dots,w_p$ to $\widehat{G}$, each of which connects to $d_\star$ vertices among $\{v_1,v_2,\dots,v_{2m}\}$, such that each $v_i$ is connected to exactly one $w_k$. 
\end{definition}

In what follows, we fix a sequence of uniform edge-expander graphs $G_n\stackrel{\operatorname{loc}}{\longrightarrow}\ttT_d$ as well as a sequence of $\{\varepsilon_n\}$ chosen to satisfy \eqref{eq-choice-eps_n-1}-\eqref{eq-choice-eps_n-3}. For convenience, we further assume that 
\begin{equation}\label{eq-assumption}
    \liminf_{n\to\infty}\mu_n[\mathcal V_{n,\varepsilon_n}^\dagger]>0,\quad \forall \dagger\in \{\free,\wired\}\,.
\end{equation}

As we will show later, the modification serves for tuning the ratio of the free and wired partial Potts partition functions. Before making this precise, we need the following technical result.
Given $m$ edges $(v_i,v_{m+i}),1\le i\le m$ of $G_n$, we denote $\widehat{G}_n$ as the graph obtained from $G_n$ by deleting the $m$ edges, and let $\hat{\mu}_n$ be the Potts measure on $\widehat{G}_n$. Moreover, for $\dagger\in \{\free,\wired\}$, let $\hat{\nu}_n^{\dagger,2m}$ be the marginal on $(\sigma(v_1),\dots,\sigma(v_{2m}))$ of the measure $\hat{\mu}_n[\cdot\mid \mathcal V_{n,\varepsilon_n}^{\dagger}]$.\footnote{Note that everything here indeed depends on the choices of the edges $(v_i,v_{i+m}),1\le i\le m$, though we suppress the dependence fron the notation.}

\begin{proposition}\label{prop-marginal-for-cavity}
    For any $m\in \mathbb{N}$, let $v_i,1\le i\le m$ be chosen from $V_n$ uniformly and independently at random, and let $v_{m+i}$ be an arbitrary neighbor of $v_i$ in $G_n$, $1\le i\le m$. Recall that $\nu^{\free}$ and $\nu^{\wired}$ are the BP fixed points. Then, as $n\to\infty$, with probability $1-o(1)$ over the choices of $(v_i,v_{m+i}),1\le i\le m$, it holds that 
    \begin{equation}
        \label{eq-marginal-for-cavity}
        \operatorname{TV}\big(\hat{\nu}_n^{\dagger,2m},({\nu}^{\dagger})^{\otimes 2m}\big)=o(1)\,.
    \end{equation}
\end{proposition}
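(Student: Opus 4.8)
The plan is to transfer the problem, via the Edwards--Sokal coupling, to the random cluster model on the modified graph $\widehat G_n$, and then feed in the local weak convergence \emph{in probability} proved in Section~\ref{sec-local-weak-limit}. First I would note that $\widehat G_n$ is obtained from $G_n$ by deleting the $m=O(1)$ edges $(v_i,v_{m+i})$, so it is still a uniform edge-expander with $\widehat G_n\stackrel{\mathrm{loc}}{\longrightarrow}\ttT_d$, and its Potts and random cluster partition functions differ from those of $G_n$ by a factor in $[e^{-\beta m},1]$; consequently assumption \eqref{eq-assumption} and the whole setup of Section~\ref{sec-local-weak-limit} transfer from $G_n$ to $\widehat G_n$, and Proposition~\ref{prop-lwcp} applied to $\widehat G_n$ yields $\hat\varphi_n^\dagger:=\hat\varphi_n[\,\cdot\mid\hat{\mathcal E}_{n,\varepsilon_n}^\dagger]\stackrel{\operatorname{lwcp}}{\longrightarrow}\varphi^\dagger$, where $\hat\varphi_n$ and $\hat{\mathcal E}_{n,\varepsilon_n}^\dagger$ are the evident $\widehat G_n$-analogues of $\varphi_n$ and $\mathcal E_{n,\varepsilon_n}^\dagger$. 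Using the $\widehat G_n$-analogue of Lemma~\ref{lem-TV-ES}, the measure $\hat\mu_n[\,\cdot\mid\mathcal V_{n,\varepsilon_n}^\dagger]$ is $o(1)$-close in total variation to the law obtained by sampling $\eta\sim\hat\varphi_n^\dagger$ and colouring its clusters independently (the $v^*$-cluster by spin $1$, the others uniformly in $[q]$). Since such a colouring sends the connecting pattern ``each of $v_1,\dots,v_{2m}$ lies in the $v^*$-cluster independently with probability $b^\dagger$ and is a singleton otherwise'' to the product measure with marginal $(\nu^\dagger)^{\otimes 2m}$ — here $b^\dagger$ is the extreme fixed point of $\widehat{\operatorname{BP}}_{d-1}$ from the proof of Lemma~\ref{prop-characterization}, and the identities $\nu^\dagger(1)=\tfrac{q-1}q b^\dagger+\tfrac1q$, $\nu^\dagger(i)=\tfrac{1-b^\dagger}q$ for $i\ge2$ follow from the Edwards--Sokal coupling on the tree (cf.\ \cite[Lemma~2.11]{BDS23}) — it suffices to show that, with probability $1-o(1)$ over the choice of $(v_i)_{i\le m}$, the connecting pattern of $\{v_1,\dots,v_{2m}\}$ under $\hat\varphi_n^\dagger$ is $o(1)$-close in total variation to that prescribed pattern.

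To establish this I would fix $\varepsilon>0$, choose $r$ and then $R\gg r$ large in terms of $\varepsilon$, and proceed as follows. By uniform sparsity and local convergence, with probability $1-o(1)$ over $(v_i)$ the $2m$ vertices are pairwise at distance $\ge R$ in $\widehat G_n$ with disjoint $R$-neighbourhoods that are trees of root-degree $d-1$ (deleting $(v_i,v_{m+i})$ drops both endpoints to degree $d-1$ and, since the $2R$-ball of $v_i$ in $G_n$ is a tree, forces $\operatorname{dist}_{\widehat G_n}(v_i,v_{m+i})\gtrsim R$). Next I would condition on $\check\eta:=\eta\!\mid_{\widehat E_n^*\setminus\bigcup_i\widehat E_{n,r}^*(v_i)}$ drawn from $\hat\varphi_n^\dagger$. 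By the anti-concentration input Lemma~\ref{lem-local-CLT} together with $\varepsilon_n|V_n|\to\infty$, exactly as in the proof of Lemma~\ref{lem-subsequential-limit}, with probability $1-o(1)$ the realization $\check\eta$ is \emph{generic}, meaning that inserting the $O(d^r)$ vertices of the small balls cannot move the $v^*$-cluster size out of the window defining $\hat{\mathcal E}_{n,\varepsilon_n}^\dagger$, so that $\hat\varphi_n[\,\cdot\mid\hat{\mathcal E}_{n,\varepsilon_n}^\dagger,\check\eta]=\hat\varphi_n[\,\cdot\mid\check\eta]$, which by the domain Markov property factorizes as a product over $i$ of random cluster measures on $\NB_r^*(\widehat G_n,v_i)$ with boundary conditions read off $\check\eta$. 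Under this product, Lemma~\ref{lem-cluster-size} applied to $\widehat G_n$ bounds by $\varepsilon$ (for $R$ large, conditioning incurring only a bounded loss since $\hat\varphi_n[\hat{\mathcal E}_{n,\varepsilon_n}^\dagger]$ is bounded away from $0$) the probability that any $v_i$ lies in a non-$v^*$ cluster leaving its $r$-ball, and in particular the probability that two distinct $v_i,v_j$ are joined to each other without touching $v^*$; so up to probability $\varepsilon$ the pattern is governed by the conditionally independent events $\{v_i\leftrightarrow v^*\}$, and it remains only to show that each conditional probability $\hat\varphi_n[v_i\leftrightarrow v^*\mid\check\eta]$ is within $\varepsilon$ of $b^\dagger$ with probability $1-o(1)$.

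That last point — the ``conditional local profile'' — is the step I expect to be the main obstacle, and it is precisely where the upgrade of the \cite{BDS23} convergence to local weak convergence \emph{in probability} (Proposition~\ref{prop-lwcp}) is needed. Given $\check\eta$, the quantity $\hat\varphi_n[v_i\leftrightarrow v^*\mid\check\eta]$ is obtained by propagating $\widehat{\operatorname{BP}}_{d-1}$ through the degree-$(d-1)$ tree $\NB_r(\widehat G_n,v_i)$ from the boundary messages that $\check\eta$ induces on $\partial\NB_r(\widehat G_n,v_i)$; by Proposition~\ref{prop-stochastic-domination} those messages, and hence the output, lie a priori in $[b^{\free},b^{\wired}]$. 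The plan is to argue that $\hat\varphi_n^\dagger\stackrel{\operatorname{lwcp}}{\longrightarrow}\varphi^\dagger$ forces the law of the boundary messages — which, up to a finite-depth approximation, are the random cluster pre-messages $s_{u\to v}$ of the proof of Lemma~\ref{prop-characterization} — to concentrate near the deterministic value $b^\dagger$ (for a uniformly chosen vertex, hence for each $v_i$ with probability $1-o(1)$), so that the output is within $\varepsilon$ of $\widehat{\operatorname{BP}}_{d-1}(b^\dagger,\dots,b^\dagger)=b^\dagger$ by the continuity and coordinatewise monotonicity of $\widehat{\operatorname{BP}}_{d-1}$. The delicate part is passing from the \emph{unconditional} local law that lwcp controls to the local law \emph{conditioned on the entire exterior}; the bridge is the extremality and tree-Markov structure of $\varphi^\dagger$ encoded by the pre-message identities in the proof of Lemma~\ref{prop-characterization} (all pre-messages of $\varphi^\dagger$ equal $b^\dagger$ almost surely), supplemented once more by Lemma~\ref{lem-cluster-size} to discard the contribution of small faraway clusters. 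Should this route prove awkward, an alternative is to rerun on $\widehat G_n$, with $\hat\varphi_n^\dagger$ in place of the Potts measure, a contradiction argument in the spirit of the tightness estimates of \cite{MMS12,BDS23} establishing such a conditional local-profile statement directly; the availability of lwcp only simplifies it. Once the conditional local profile is in hand, combining the three paragraphs and letting $\varepsilon\downarrow0$ gives the desired connecting-pattern statement, and colouring via the Edwards--Sokal coupling yields $\operatorname{TV}(\hat\nu_n^{\dagger,2m},(\nu^\dagger)^{\otimes2m})=o(1)$ with probability $1-o(1)$ over the choice of $(v_i)$, as claimed.
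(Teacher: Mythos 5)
Your overall architecture (pass to $\widehat G_n$, condition on the exterior random cluster configuration, use the domain Markov property plus Lemma~\ref{lem-cluster-size} to reduce to independent events $\{v_i\leftrightarrow v^*\}$, then colour via Edwards--Sokal) is coherent, and in fact it parallels an FK-based route one could take; but the step you yourself flag as ``the main obstacle'' is a genuine gap, not a routine consequence of what precedes it. Concretely, you never establish that, for $\check\eta$ drawn from $\hat\varphi_n^\dagger$, the exterior-conditional probabilities $\hat\varphi_n[v_i\leftrightarrow v^*\mid\check\eta]$ concentrate near $b^\dagger$. The bridge you sketch does not work as stated: (i) local weak convergence in probability (Proposition~\ref{prop-lwcp}) controls the \emph{unconditional} law of the configuration in a finite ball around a typical vertex, whereas the quantity you need is a conditional law given the \emph{entire} exterior, which for the random cluster model is governed by the boundary wiring induced by $\check\eta$ --- a nonlocal functional; turning it into an approximately radius-$R$-local functional requires a quantitative quasi-locality argument (ruling out boundary vertices whose exterior cluster exits the $R$-ball without meeting $v^*$, and whose connection to $v^*$ is realized only far away), which you gesture at but do not carry out; (ii) the pre-message identities from the proof of Lemma~\ref{prop-characterization} are statements about conditioning on the tail $\sigma$-algebra $\mathscr T$, so deducing concentration of the radius-$r$-exterior-conditional probability additionally needs a backward-martingale/non-reconstruction argument for $\varphi^\dagger$, which is exactly the kind of input you are implicitly assuming; and (iii) lwcp of $\hat\varphi_n^\dagger$ gives no direct information at the vertices $v_i,v_{m+i}$ themselves, since after deleting the edges their $R$-neighbourhoods in $\widehat G_n$ are not isomorphic to $\NB_R(\ttT_d,o)$, so one must transfer local statements from $\varphi_n^\dagger$ via a finite-energy/contiguity comparison --- another step you do not supply. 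Your fallback (``rerun a contradiction argument in the spirit of \cite{MMS12,BDS23}'') is precisely where the real work would lie, and it is left unproved.

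For comparison, the paper avoids this FK nonlocality altogether by arguing at the level of Potts spins: conditioning on the spins on $\partial V_{n,r}(v_i)$ exactly screens the ball interior from the exterior, so the relevant ``goodness'' of a vertex is a radius-$r$ marginal event of $\mu_n^\dagger$. That event is shown to be typical by combining $\mu_n^\dagger\stackrel{\operatorname{lwcp}}{\longrightarrow}\mu^\dagger$ (Proposition~\ref{prop-lwcp}) with a non-reconstruction lemma for the free and wired Potts Gibbs measures on $\ttT_d$ (Lemma~\ref{lem-non-recostruction}); the conditioning on $\mathcal V_{n,\varepsilon_n}^\dagger$ is rendered harmless by a Lemma~\ref{lem-local-CLT}-type genericity event, the deletion of the $m$ edges is handled by contiguity of $\mu_n^\dagger$ and $\hat\mu_n^\dagger$ restricted to the exterior spins, and the marginal at $v_i$ itself is recovered from the incoming messages through the continuity and fixed-point property of the BP map. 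If you wish to salvage your FK route, you would need to formulate and prove the conditional-profile statement (the analogue of ``most vertices $v$ satisfy $\varphi_n[v\leftrightarrow v^*\mid\eta|_{\mathrm{ext}}]\approx\psi^\dagger$ under the conditioned measure'') as a standalone proposition; as written, your argument assumes it rather than proves it.
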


Recall that $\ttT_d=\ttT_d(o)$ is the $d$-regular tree rooted at $o$. Denote $1,\dots,d$ as the $d$ children of $o$ in $\ttT_d$, and for $v\in \{1,\dots,d\}$, let $\ttT_d(v\to o)$ be the $(d-1)$-array tree rooted at $i$ not containing $o$. Let $\mu_{v\to o}=\mu_{v\to o}^{\beta,B}$ be the Potts measure with inverse temperature $\beta$ and external field $B$ on $\ttT_d(v\to o)$. The proof of Proposition~\ref{prop-marginal-for-cavity} relies on the following non-reconstruction lemma.

\begin{lemma}\label{lem-non-recostruction}
    Whenever $(\beta,B)\in \mathbb{R}_+^2\setminus \mathsf R_{=}$, it holds that for any $\varepsilon>0$ and $\dagger\in \{\free,\wired\}$, 
    \begin{equation}\label{eq-non-reconstruction}
        \lim_{r\to\infty}\mu^{\dagger}\Big[\sigma_r\in [q]^{\partial \ttV_r}:\exists v\in \{1,\dots,d\},\operatorname{TV}\bigl( \mu_{v \to o} \bigl[ \sigma(v) = \cdot \,\big|\, \sigma|_{\partial \ttV_r} = \sigma_r \bigr] \bigr)
,\nu^\dagger\big)\ge \varepsilon\Big]=0\,.
    \end{equation}
\end{lemma}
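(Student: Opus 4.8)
The plan is to recast the lemma as a non-reconstruction statement for the Gibbs measure $\mu^\dagger$ on $\ttT_d$, and to prove it by transferring the corresponding statement for the random cluster measure $\varphi^\dagger$ through the Edwards--Sokal coupling. First I would record a spatial Markov reduction. Fix a child $v\in\{1,\dots,d\}$ of $o$ and consider the $(d-1)$-ary rooted subtree $\ttT_d(v\to o)$; its sphere at distance $r-1$ from $v$ is exactly the part of $\partial\ttV_r$ lying below $v$. By the domain Markov property of tree Gibbs measures, the conditional law $\mu_{v\to o}[\sigma(v)=\cdot\mid\sigma|_{\partial\ttV_r}=\sigma_r]$ does not depend on which Gibbs measure $\mu_{v\to o}$ is used on $\ttT_d(v\to o)$: it equals the belief-propagation message $M_v^{(r-1)}=M_v^{(r-1)}(\sigma_r)$ obtained by iterating $\operatorname{BP}$ over the finite depth-$(r-1)$ truncation of the subtree with boundary spins $\sigma_r$. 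Since $\mu^\dagger$ is $\mathrm{Aut}(\ttT_d)$-invariant, the law of $M_v^{(r-1)}$ under $\sigma\sim\mu^\dagger$ is the same for each child $v$; hence, after a union bound over the $d$ children of $o$, the lemma is equivalent to the assertion that this law converges to $\delta_{\nu^\dagger}$ as $r\to\infty$.

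The heart of the argument is that $\mu^\dagger$ is an extremal (tail-trivial) Gibbs measure. For the random cluster side this is available: the free and wired random cluster measures on $\ttT_d$ are extremal, so their RCM pre-messages satisfy $s_{u\to v}(\varphi^\dagger)=b^\dagger$ almost surely (\cite[Proposition~4.6]{Gri06}, already used in the proof of Lemma~\ref{prop-characterization}; for $\dagger=\wired$ this also follows, more robustly, from stochastic monotonicity). I would then transfer extremality to $\mu^\dagger$ through the Edwards--Sokal coupling (Proposition~\ref{prop-ES-independent-smapling}): $\sigma$ is obtained from $\eta$ by colouring each component, the component of the ghost $v^*$ receiving colour $1$, and since $B>0$ the finite-energy property forces any unbounded component to be joined to $v^*$ almost surely, so the only spatially tail-measurable colour is the deterministic colour $1$. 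Thus $\mu^\dagger$ is extremal, and a standard reverse-martingale argument (conditioning on $\sigma|_{\partial\ttV_r}$, which separates $v$ from the exterior) gives $\mu^\dagger[\sigma(v)=\cdot\mid\sigma|_{\partial\ttV_r}]\to\check{\nu}^\dagger$ in $\mu^\dagger$-probability.

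It remains to pass from this single-site marginal to the message itself. On the tree, $\mu^\dagger[\sigma(v)=i\mid\sigma|_{\partial\ttV_r}]\propto M_v^{(r-1)}(i)\,N_v^{(r-1)}(i)$, where $N_v^{(r-1)}$ is the reverse message into $v$ from $o$'s side — a fixed $\operatorname{BP}$-type function of $o$'s external field and of the messages $(M_{v'}^{(r-1)})_{v'}$ at the other $d-1$ children of $o$. Conditionally on $\sigma(o)$, the $d$ messages $(M_{v'}^{(r-1)})_{v'\text{ child of }o}$ are i.i.d.; combining this with the previous paragraph, a tightness argument, and the relation between $\nu^\dagger$ and $\check{\nu}^\dagger$ from \eqref{eq-check-nu}, one gets that any subsequential limit of the law of $M_v^{(r-1)}$ is a distribution supported in the basin of the $\operatorname{BP}$ fixed point $\nu^\dagger$ and satisfying a fixed-point identity; since $\nu^\dagger$ is a locally attracting fixed point of $\operatorname{BP}$ throughout $\mathbb R_+^2\setminus\mathsf R_{=}$, this limit must be $\delta_{\nu^\dagger}$. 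This yields $M_v^{(r-1)}\to\nu^\dagger$ in $\mu^\dagger$-probability, hence the lemma.

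The main obstacle is precisely this last step: upgrading ``the single-site marginal at $v$ converges to $\check{\nu}^\dagger$'' to ``the subtree message $M_v^{(r-1)}$ converges to $\nu^\dagger$''. One has to rule out that a boundary sampled from $\mu^\dagger$ drives the message into a nontrivial mixture over the basins of the several $\operatorname{BP}$ fixed points coexisting in the non-uniqueness regime; this is where the extremality of $\varphi^\dagger$ (and not merely convergence of one marginal) is genuinely used, together with the local stability of $\nu^\dagger$, and where obtaining the quantitative ``in probability'' deviation control, rather than an abstract almost-sure limit, requires the most care.
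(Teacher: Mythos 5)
Your overall strategy (extremality of the limiting measures plus a transfer from the random cluster side) is genuinely different from the paper's, but as written it has a real gap at the decisive step, the third paragraph. From tail triviality of $\mu^{\dagger}$ you correctly get that the \emph{full-tree} conditional marginal $\mu^{\dagger}[\sigma(v)=\cdot\mid\sigma|_{\partial\ttV_r}]$ converges to $\check{\nu}^{\dagger}$, i.e.\ that the normalized product $M_v^{(r-1)}N_v^{(r-1)}$ converges. But the lemma is about the cavity message $M_v^{(r-1)}$ alone, and nothing you have established rules out the compensating-fluctuation scenario in which, with non-vanishing probability, $M_v^{(r-1)}$ sits near a different BP fixed point (or an asymmetric value) while $N_v^{(r-1)}$ compensates so that the product still looks like $\check{\nu}^{\dagger}$. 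Your proposed remedy — ``any subsequential limit of the law of $M_v^{(r-1)}$ is supported in the basin of $\nu^{\dagger}$ \dots since $\nu^{\dagger}$ is a locally attracting fixed point'' — is precisely the assertion that needs proof: local attractivity of the \emph{deterministic} BP iteration is a uniqueness-type condition and does not control the \emph{distributional} recursion satisfied by messages fed with boundary data sampled from the measure; distinguishing these two is exactly the content of the reconstruction problem, and in the non-uniqueness regime there are several coexisting fixed points, so ``tightness plus a fixed-point identity'' does not single out $\delta_{\nu^{\dagger}}$. (A separate, smaller issue: your extremality claims for $\varphi^{\free}$ and the Newman--Schulman-type transfer of tail triviality through the Edwards--Sokal coloring are plausible but only sketched, and strict attractivity of $\nu^{\free}$, resp.\ $\nu^{\wired}$, actually fails at the spinodal boundaries $\beta=\beta_{\operatorname{dis}}(B)$, resp.\ $\beta=\beta_{\operatorname{ord}}(B)$, which belong to $\mathbb R_+^2\setminus\mathsf R_=$.)

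For comparison, the paper avoids this issue entirely by a short quantitative argument: it reduces the lemma to non-reconstruction of the broadcasting process on the $(d-1)$-ary tree whose transition matrix is built from the BP fixed point $\nu^{\dagger}$, invokes the disagreement-percolation coupling of \cite[Lemma 53]{GSVY16}, and verifies the contraction condition $\kappa^{\dagger}=\lambda_2^{\dagger}<\tfrac{1}{d-1}$ via the eigenvalue computation in Claim~\ref{claim-lambda_2}. If you want to salvage your soft route, the cleaner fix is not the product decomposition at $o$ but to apply the tail-triviality argument directly to the subtree splitting Gibbs measure on $\ttT_d(v\to o)$ with root law $\nu^{\dagger}$ (which is mutually absolutely continuous, on the subtree spins, with the restriction of $\mu^{\dagger}$): tail triviality of that measure plus the backward-martingale/Markov-property argument yields $M_v^{(r-1)}\to\nu^{\dagger}$ at once, making your third paragraph unnecessary — but then you must actually prove extremality of that subtree measure (e.g.\ again through the random cluster side), which is the part currently only asserted.
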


\begin{proof}[Proof of Proposition~\ref{prop-marginal-for-cavity}]
    Fix an arbitrary $\delta>0$. We extend the mapping $\operatorname{BP}:\Delta^{\star}\to\Delta^\star$ defined as in \eqref{def:BP} to the mapping $\operatorname{BP}:(\Delta^*)^{d-1}\to \Delta^*$ defined by
    \[
        \operatorname{BP}(\nu_1,\dots,\nu_{d-1})(i)\propto e^{B\delta_{i,1}}\prod_{k=1}^{d-1}\sum_{j=1}^qe^{\beta\delta_{i,j}} \nu_k(j)\,,\quad \forall i\in [q]\,.
    \]
    Since the mapping is continuous and for $\dagger\in \{\free,\wired\}$, $\nu^\dagger$ satisfies $\operatorname{BP}(\nu^\dagger,\dots,\nu^\dagger)=\nu^\dagger$, we may choose $0<\varepsilon<\delta$ small enough such that for any $\dagger\in \{\free,\wired\}$, 
    \begin{equation}\label{eq-choice-delta}
    \operatorname{TV}(\nu_i,\nu^\dagger)\le \varepsilon\,,\forall 1\le i\le d-1\quad\Rightarrow\quad\operatorname{TV}\big(\operatorname{BP}(\nu_1,\dots,\nu_{d-1}),\nu^\dagger\big)\le \varepsilon\,.
    \end{equation}
    By Lemma~\ref{lem-non-recostruction}, we can choose $r=r(\varepsilon)\in \mathbb N$ such that 
    \[
    \mu^{\dagger}\Big[\sigma_r\in [q]^{\partial \ttV_r}:\exists v\in \{1,\dots,d\},\operatorname{TV}\big(\mu_{v\to o}[\sigma(v)=\cdot\mid \sigma\!\mid_{\partial \ttV_r}=\sigma_r],\nu^\dagger\big)\ge \varepsilon\Big]\le \delta\,,\quad\forall \dagger\in \{\free,\wired\}\,.
    \]
For a vertex $v\in V_n$ which satisfies $\NB_r(G_n,v)\cong \NB_r(\ttT_d,o)$, we denote by $v_1,\dots,v_d$ its neighbors in $G_n$. Moreover, we let $\mu_{G_n-(v,v_i)}$ be the Potts measure with inverse temperature $\beta$ and external field $B$ on the graph obtained from $G_n$ by deleting the edge $(v,v_i)$. For $\dagger\in \{\free,\wired\}$, define the set
\[
\mathcal G^\dagger(v):=\Big\{\sigma_r\in [q]^{\partial V_{n,r}(v)}:\operatorname{TV}\big(\mu_{G_n-(v,v_i)}[\sigma(v_i)=\cdot\mid \sigma\!\mid_{\partial V_{n,r}(v_i)}=\sigma_r],\nu^\dagger\big)\le \varepsilon,\forall 1\le i\le d\Big\}\,.
\]
We call a vertex $v\in V_n$ good, if $\NB_r(G_n,v)\cong \NB_r(\ttT_d,o)$, and $$\mu_n^\dagger\big[\sigma\!\mid_{\partial V_{n,r}(v)}\notin \mathcal G^\dagger(v)\big]\le 2\delta,\quad\forall \dagger\in \{\free,\wired\}\,.$$
    
    By our assumption and Proposition~\ref{prop-lwcp}, we have $\mu_n^\dagger\stackrel{\operatorname{lwcp}}{\longrightarrow}\mu^\dagger$, $\forall \dagger\in \{\free,\wired\}$. It follows that as $n\to\infty$, a uniform vertex $v\in V_n$ is good with probability $1-o(1)$. Therefore, if we pick $v_1,\dots,v_m\in V_n$ uniformly and independently at random, with probability $1-o(1)$, every $v_i$ is good, and $\operatorname{dist}_{G_n}(v_i,v_j)\ge 3r,\forall 1\le i<j\le m$. We claim that this implies
    \[
    \operatorname{TV}\big(\hat{\nu}_n^{\dagger,2m},(\nu^\dagger)^{\otimes 2m}\big)= O(\delta)\,,\quad\forall \dagger\in \{\free,\wired\}\,,
    \]
    and thus the desired result follows by sending first $n\to\infty$ and then $\delta\to 0$. 

    To show the claim, we fix $m$ good vertices $v_1,\dots,v_m\in V_n$ such that they are $3r$-away from each other on $G_n$. We denote by $V_n'=V_n\setminus \cup_{i=1}^m V_{n,r-1}(v_i)$, and we define the sets
    \[
    \mathcal G_n^\dagger:=\Big\{\check{\sigma}\in [q]^{V_n'}:\check{\sigma}\!\mid_{\partial V_{n,r}(v_i)}\in \mathcal G^\dagger(v_i),\forall 1\le i\le m\Big\}
    \]
    and 
    \[
    \mathcal H_n^\dagger:=\Big\{\check{\sigma}\in [q]^{V_n'}:\min_{a\in \{\pm 1\}}\big|\#\{v\in V_n':\sigma(v)=i\}-(\check{\nu}^\dagger(i)+a\varepsilon_n)|V_n|\big|\ge 2md^{r},\forall i\in [q]\Big\}\,.
    \]
    Since each $v_i$ is good, the union bound yields that $$\mu_n^\dagger\big[\sigma\!\mid_{V_n'}\in \mathcal G_n^\dagger\big]\ge 1-O(\delta),\quad\forall \dagger\in \{\free,\wired\}$$
    Additionally, by Lemma~\ref{lem-local-CLT} and similar arguments as in the proof of Lemma~\ref{lem-subsequential-limit} (recall our assumption \eqref{eq-assumption}), we obtain that as $n\to\infty$, $$\mu_n^\dagger\big[\sigma\!\mid_{V_n'}\in \mathcal H_n^\dagger\big]=1-o(1)\,,\quad\forall \dagger\in \{\free,\wired\}\,.$$ 
    Consequently, we have for large enough $n$, $\mu_n^\dagger\big[\sigma\!\mid_{V_n'}\in \mathcal G_n^\dagger\cap \mathcal H_n^\dagger\big]\ge 1-O(\delta)$, $\forall \dagger\in \{\free,\wired\}$.

    Let $v_{ik},1\le k\le d$ be the neighbors of $v_i$ in $G$. For any choices of $v_{m+i}\in \{v_{i1},\dots,v_{id}\},1\le i\le m$ and the graph $\widehat{G}_n$ obtained from $G_n$ by deleting the edges $(v_i,v_{m+i}),1\le i\le m$, it is straightforward to check that the two measures $\mu_n^\dagger,\hat{\mu}_n^\dagger$ restricting on $\sigma\!\mid_{V_n'}$ are contiguous with each other (i.e., their likelihood-ratio is uniformly bounded both from below and from above). Therefore, it follows that
    \begin{equation}\label{eq-hat-mu}
\hat{\mu}_n^\dagger\big[\sigma\!\mid_{V_n'}\in \mathcal G_n^\dagger\cap \mathcal H_n^\dagger\big]\ge 1-O(\delta)\,,\quad\forall \dagger\in \{\free,\wired\}\,.
    \end{equation}

    Now, fix $\dagger\in \{\free,\wired\}$ and a realization of $\check{\sigma}=\sigma\!\mid_{V_n'}$ that lies in $\mathcal G_n^\dagger\cap \mathcal H_n^\dagger$. Conditioned on that $\sigma\!\mid_{V_n'}=\check{\sigma}$ under $\mu_n^\dagger$, $\check{\sigma}\in \mathcal H_n^\dagger$ implies that the remaining spin configurations $\sigma\!\mid_{V_{n,r-1}(v_i)},1\le i\le m$ are conditionally independent, and have the distributions of $$\mu_{G_n-(v_i,v_{m+i})}\big[\sigma\!\mid_{V_{n,r-1}(v_i)}=\cdot\mid \sigma\!\mid_{\partial V_{n,r}(v_i)}=\check{\sigma}\!\mid_{\partial V_{n,r}(v_i)}\big],\quad 1\le i\le m\,.$$
    Additionally, we have $\sigma(v_i),\sigma(v_{m+i}),1\le i\le m$ are conditionally independent of each other.
    Since $\check{\sigma}\in \mathcal G_n^\dagger$, we have
    \[
    \operatorname{TV}\big(\mu_{G_n-(v_i,v_{ik})}[\sigma(v_{ik})=\cdot\mid \sigma\!\mid_{\partial V_{n,r}(v_i)}=\check{\sigma}\!\mid_{\partial V_{n,r}(v_i)}],\nu^\dagger\big)\le \varepsilon\,,\quad \forall 1\le i\le m,1\le k\le d\,.
    \]
    In particular, we have for all $1\le i\le m$, $\mu_{G_n-(v_i,v_{m+i})}[\sigma(v_{m+i})=\cdot\mid \sigma\!\mid_{\partial V_{n,r}(v_i)}=\check{\sigma}\!\mid_{\partial V_{n,r}(v_i)}]$ is within $\delta$ TV-distance to $\nu^\dagger$ (as $\varepsilon<\delta$). Moreover, for all $1\le i\le m$, by \eqref{eq-choice-delta} we get
    \begin{align*}
    &\ \mu_{G_n-(v_i,v_{m+i})}\big[\sigma(v_i)=\cdot\mid \sigma\!\mid_{\partial V_{n,r}(v_i)}=\check{\sigma}\!\mid_{\partial V_{n,r}(v_i)}\big]\\
    =&\ \operatorname{BP}\Big(\big\{\mu_{G_n-(v_i,v_{ik})}[\sigma[v_{ik}=\cdot\mid \sigma\!\mid_{\partial V_{n,r}(v_i)}=\check{\sigma}\!\mid_{\partial V_{n,r}(v_i)}]\big\}_{1\le k\le d:v_{ik}\neq v_{m+i}}\Big)
    \end{align*}
    is also within $\delta$ TV-distance to ${\nu}^\dagger$. This proves that the joint distribution $\hat{\nu}^{\dagger,2m}$ of $\sigma(v_i),1\le i\le 2m$ conditioned on $\check{\sigma}\in \mathcal G_n^\dagger\cap \mathcal H_n^\dagger$, is $O(\delta)$ TV-close to $(\nu^\dagger)^{\otimes 2m}$. Combining with \eqref{eq-hat-mu}, the claim follows and we conclude the proof.
\end{proof}

By Proposition~\ref{prop-marginal-for-cavity}, for any $p\in 2\mathbb N$, we can construct $p$-modified graphs $\widehat{G}_n,\widetilde{G}_n$ as defined in Definition~\ref{def-modified-graph}, such that \eqref{eq-marginal-for-cavity} holds as $n\to\infty$. The following Proposition characterizes how this modification alters the partial Potts partition functions.
For $\dagger\in \{\free,\wired\}$ and $d_\star\in\{d-1,d+1\}$, we define
\[
\Delta_{d_\star,\dagger}:=\frac{\sum_{i=1}^qe^{B\delta_{i,1}}\Big(\sum_{j=1}^q{\nu}^\dagger(j)e^{\beta\delta_{i,j}}\Big)^{d_\star}}{\big(\sum_{i,j=1}^q{\nu}^\dagger(i){\nu}^\dagger(j)e^{\beta\delta_{i,j}}\big)^{d_\star/2}}\,.
\]

\begin{proposition}\label{prop-partition-function-ratio}
    For uniform edge-expander graphs $G_n\stackrel{\operatorname{loc}}{\longrightarrow}\ttT_d$ and any $p\in 2\mathbb N$, assume that the $p$-modified graphs $\{\widehat{G}_n\},\{\widetilde{G}_n\}$ of $\{G_n\}$ are such that \eqref{eq-marginal-for-cavity} holds as $n\to\infty$. Then, as $n\to\infty$,
\begin{equation}\label{eq-partition-function-ratio}
\frac{\mathcal Z_{\widetilde{G}_n}^{\dagger}}{\mathcal Z^\dagger_{G_n}}= \Delta_{d_\star,\dagger}^{p}+o(1)\,,\quad\dagger\in \{\free,\wired\}\,.
\end{equation}
\end{proposition}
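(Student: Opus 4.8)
The plan is to interpolate between $G_n$ and $\widetilde{G}_n$ through the intermediate graph $\widehat{G}_n$, writing
\[
\frac{\mathcal Z_{\widetilde{G}_n}^{\dagger}}{\mathcal Z^\dagger_{G_n}}=\frac{\mathcal Z_{\widetilde{G}_n}^{\dagger}/\mathcal Z^\dagger_{\widehat{G}_n}}{\mathcal Z_{G_n}^{\dagger}/\mathcal Z^\dagger_{\widehat{G}_n}}\,,
\]
and computing each of the two ratios on the right as the expectation of a bounded function of the spins $(\sigma(v_1),\dots,\sigma(v_{2m}))$ under the Potts measure $\hat{\mu}_n$ on $\widehat{G}_n$ conditioned on $\mathcal V_{n,\varepsilon_0}^\dagger$. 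Once this is done, the hypothesis \eqref{eq-marginal-for-cavity} allows us to replace the joint law of $(\sigma(v_1),\dots,\sigma(v_{2m}))$ by the product law $(\nu^\dagger)^{\otimes 2m}$ at the price of an additive $o(1)$, each expectation then factorizes over the $m$ deleted edges (resp.\ the $p$ new vertices), and dividing the two expansions — whose leading terms are strictly positive bounded constants — produces $\Delta_{d_\star,\dagger}^{p}+o(1)$.

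In detail, set $W(\sigma)=\exp\bigl(\beta\sum_{(u,v)\in E(\widehat{G}_n)}\delta_{\sigma(u),\sigma(v)}+B\sum_{v\in V_n}\delta_{\sigma(v),1}\bigr)$ and $g(x_1,\dots,x_{d_\star})=\sum_{c=1}^q e^{B\delta_{c,1}}\prod_{j=1}^{d_\star}e^{\beta\delta_{x_j,c}}$, the latter being the factor obtained by summing out the spin of a degree-$d_\star$ vertex whose neighbours carry spins $x_1,\dots,x_{d_\star}$. Re-inserting the deleted matching edges $(v_i,v_{m+i})$, and respectively summing out $\sigma(w_1),\dots,\sigma(w_p)$, give
\[
\frac{\mathcal Z_{G_n}^\dagger}{\mathcal Z_{\widehat{G}_n}^\dagger}=\mathbb E_{\hat{\mu}_n[\,\cdot\mid\mathcal V_{n,\varepsilon_0}^\dagger]}\Bigl[\prod_{i=1}^m e^{\beta\delta_{\sigma(v_i),\sigma(v_{m+i})}}\Bigr]\,,\qquad
\frac{\mathcal Z_{\widetilde{G}_n}^\dagger}{\mathcal Z_{\widehat{G}_n}^\dagger}=\mathbb E_{\hat{\mu}_n[\,\cdot\mid\mathcal V_{n,\varepsilon_0}^\dagger]}\Bigl[\prod_{k=1}^p g\bigl(\sigma|_{N(w_k)}\bigr)\Bigr]\,,
\]
where $N(w_k)$ denotes the $d_\star$ neighbours of $w_k$ (the second identity holds up to the constraint subtlety discussed below). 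Both integrands are bounded functions of $(\sigma(v_1),\dots,\sigma(v_{2m}))$; the pairs $\{(v_i,v_{m+i})\}_{i\le m}$ form a matching on these $2m$ distinct vertices, and the blocks $\{N(w_k)\}_{k\le p}$ partition them into sets of size $d_\star$ (since $pd_\star=2m$ and each $v_i$ meets exactly one $w_k$). Hence, after replacing $\mathcal V_{n,\varepsilon_0}^\dagger$ by $\mathcal V_{n,\varepsilon_n}^\dagger$ as explained below and invoking \eqref{eq-marginal-for-cavity}, the first expectation tends to $\bigl(\sum_{i,j=1}^q\nu^\dagger(i)\nu^\dagger(j)e^{\beta\delta_{i,j}}\bigr)^{m}$ and the second to $\bigl(\sum_{i=1}^q e^{B\delta_{i,1}}\bigl(\sum_{j=1}^q\nu^\dagger(j)e^{\beta\delta_{i,j}}\bigr)^{d_\star}\bigr)^{p}$. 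Since $m=pd_\star/2$, these are precisely the $p$-th powers of the denominator and numerator of $\Delta_{d_\star,\dagger}$, and taking the quotient yields \eqref{eq-partition-function-ratio}.

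Two routine technical points remain. First, \eqref{eq-marginal-for-cavity} concerns the measure conditioned on $\mathcal V_{n,\varepsilon_n}^\dagger$ rather than $\mathcal V_{n,\varepsilon_0}^\dagger$; but for large $n$ one has $\mathcal V_{n,\varepsilon_n}^\dagger\subset\mathcal V_{n,\varepsilon_0}^\dagger$, and using the disjointness $\mathcal V_{n,\varepsilon_0}^{\free}\cap\mathcal V_{n,\varepsilon_0}^{\wired}=\emptyset$ the set difference $\mathcal V_{n,\varepsilon_0}^\dagger\setminus\mathcal V_{n,\varepsilon_n}^\dagger$ is contained in $(\mathcal V_{n,\varepsilon_n}^{\free}\cup\mathcal V_{n,\varepsilon_n}^{\wired})^c$, an event of $\mu_n$-probability $o(1)$ by Lemma~\ref{lem-relation-ES} and Proposition~\ref{prop-admissibility-RCM}; since moreover $\hat{\mu}_n$ and $\mu_n$ are mutually contiguous with likelihood ratio in $[e^{-\beta m},e^{\beta m}]$ and $\liminf_n\mu_n[\mathcal V_{n,\varepsilon_0}^\dagger]>0$ by \eqref{eq-assumption}, the measures $\hat{\mu}_n[\,\cdot\mid\mathcal V_{n,\varepsilon_0}^\dagger]$ and $\hat{\mu}_n[\,\cdot\mid\mathcal V_{n,\varepsilon_n}^\dagger]$ are $o(1)$-close in total variation, which suffices as the integrands are bounded. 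Second, the conditioning event defining $\mathcal Z_{\widetilde{G}_n}^\dagger$ refers to the colour profile over all $n+p$ vertices, so summing out the $\sigma(w_k)$ does not separate cleanly from the indicator; this is handled by sandwiching $\mathcal Z_{\widetilde{G}_n}^\dagger$ between the sums of $W(\sigma)\prod_k g(\sigma|_{N(w_k)})$ over $\sigma|_{V_n}\in\mathcal V_{n,\varepsilon_0-2p/n}^\dagger$ and over $\sigma|_{V_n}\in\mathcal V_{n,\varepsilon_0+2p/n}^\dagger$ (the $p$ new vertices shift rescaled colour counts by $O(p/n)$), and then arguing exactly as above that widening $\varepsilon_0$ by $2p/n\to0$ changes the partition function by a $1+o(1)$ factor.

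The substantive content of the statement is entirely carried by Proposition~\ref{prop-marginal-for-cavity}, which is assumed here; the remainder is a comparison of bounded expectations and bookkeeping with the conditioning events $\mathcal V_{n,\varepsilon}^\dagger$, so I do not anticipate a serious obstacle. The only place that demands genuine care is precisely this interplay between the several nested events $\mathcal V_{n,\varepsilon_0}^\dagger$, $\mathcal V_{n,\varepsilon_n}^\dagger$, and $\mathcal V_{n,\varepsilon_0\pm 2p/n}^\dagger$ on the two different vertex sets $V_n$ and $V_n\cup\{w_1,\dots,w_p\}$.
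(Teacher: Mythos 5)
Your proposal is correct and follows essentially the same route as the paper: decompose through $\widehat{G}_n$, express $\mathcal Z_{G_n}^{\dagger}/\mathcal Z_{\widehat G_n}^{\dagger}$ and $\mathcal Z_{\widetilde G_n}^{\dagger}/\mathcal Z_{\widehat G_n}^{\dagger}$ as bounded expectations of functions of $(\sigma(v_1),\dots,\sigma(v_{2m}))$ under the conditioned measure on $\widehat G_n$, invoke \eqref{eq-marginal-for-cavity} to factorize, and take the quotient using $m=pd_\star/2$. Your two "routine" points are exactly the bookkeeping the paper handles (somewhat more tersely) via Lemma~\ref{lem-local-CLT}, contiguity of $\hat\mu_n$ with $\mu_n$, and assumption \eqref{eq-assumption}, so your more explicit sandwich/TV arguments are a valid substitute rather than a different method.
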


\begin{proof}
    Fix $\dagger\in \{\free,\wired\}$. By definition, we have
\begin{equation}\label{eq-cavity-2}
\frac{\mathcal Z_{G_n}^{\dagger}}{\mathcal Z_{\widehat{G}_n}^{\dagger}}=\mathbb{E}_{\sigma\sim\hat{\mu}_n^\dagger}\Big[\exp\Big(\beta\sum_{i=1}^m\delta_{\sigma(v_i),\sigma(v_{m+i})}\Big)\Big]\,.
\end{equation}
Note that the above expression only depends on the joint distribution of $\sigma(v_i),1\le i\le 2m$ under $\hat{\mu}_n^\dagger$. \eqref{eq-marginal-for-cavity} yields that as $n\to\infty$,
\begin{equation}\label{eq-cavity-result-1}
\frac{\mathcal Z_{G_n}^{\dagger}}{\mathcal Z_{\widehat{G}_n}^{\dagger}}=\Bigg(\sum_{i,j=1}^q\nu(i)\nu(j)e^{\beta\delta_{i,j}}\Bigg)^{m}+o(1)\,. 
\end{equation}

Analogously, it is easy to see from Lemma~\ref{lem-local-CLT} that for $\dagger\in \{\free,\wired\}$, $\mathcal Z_{\widetilde{G}_n}^\dagger$ is $o(\mathcal Z_{G_n}^{\Po,\beta,B})$ plus
\[
\sum_{\substack{\sigma\in \mathcal V_{n,\varepsilon_0}^{\dagger}\\\tilde{\sigma}\in [q]^p}}\exp\Bigg(\beta\Big(\sum_{(u,v)\in \widehat{E}_n}\delta_{\sigma(u),\sigma(v)}+\sum_{k=1}^p\sum_{v\sim w_k}\delta_{\sigma(v),\tilde{\sigma}(w_k)}\Big)+B\Big(\sum_{v\in V_n}\delta_{\sigma(v),1}+\sum_{k=1}^p\delta_{\tilde{\sigma}(w_k),1}\Big)\Bigg)\,,
\]
where $w_1,\dots,w_p$ are the $p$ vertices vertices in $\widetilde{V}_n\setminus V_n$. Since we assume in \eqref{eq-assumption} that $$\liminf_{n\to\infty}\mu_n[\mathcal V_{n,\varepsilon_n}^\dagger]=\liminf_{n\to\infty}\frac{\mathcal Z_{G_n}^{\dagger}}{\mathcal Z_{G_n}^{\Po,\beta,B}}>0\,,$$ it follows that as $n\to\infty$, 
\begin{equation}\label{eq-cavity-3}
\begin{aligned}
\frac{\mathcal Z_{\widetilde{G}_n}^{\dagger}}{\mathcal Z_{\widehat{G}_n}^{\dagger}}=&\ \mathbb{E}_{\sigma\sim \hat{\mu}_n^\dagger}\Bigg[\sum_{\tilde{\sigma}\in [q]^p}\exp\Big(\beta\sum_{k=1}^p\sum_{v\sim w_k}\delta_{\sigma(v),\tilde{\sigma}(w_k)}+B\sum_{k=1}^p\delta_{\tilde{\sigma}(w_k),1}\Big)\Bigg]+o(1)\\
\stackrel{\eqref{eq-marginal-for-cavity}}{=}&\ \Bigg(\sum_{i=1}^qe^{B\delta_{i,1}}\Big(\sum_{j=1}^q\nu(j)e^{\beta\delta_{i,j}}\Big)^{d_\star}\Bigg)^{p}+o(1)\,.
\end{aligned}
\end{equation}
Combining \eqref{eq-cavity-result-1} and \eqref{eq-cavity-3}, we obtain that as $n\to\infty$, 
\[
\frac{\mathcal Z_{\widetilde{G}_n}^{\dagger}}{\mathcal Z_{G_n}^{\dagger}}=\frac{\mathcal Z_{\widetilde{G}_n}^{\dagger}}{\mathcal Z_{\widehat{G}_n}^{\dagger}}\Bigg/\frac{\mathcal Z_{G_n}^{\dagger}}{\mathcal Z_{\widehat{G}_n}^{\dagger}}=\Delta_{d_\star,\dagger}^p+o(1)\,.\qedhere
\]
\end{proof}

\subsection{Proof of Theorem~\ref{thm-main}}\label{subsec-proof-of-main}

Now we are ready to give the proof of Theorem~\ref{thm-main}. We first record the following numerical lemma, whose proof is purely elementary and is deferred to Appendix~\ref{appendix-lem-Delta_{d+1}}. 

\begin{lemma}\label{lem-Delta_{d+1}}
    $\Delta_{d-1,\free}>\Delta_{d-1,\wired}$ and $\Delta_{d+1,\free}<\Delta_{d+1,\wired}$.
\end{lemma}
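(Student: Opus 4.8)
## Proof proposal for Lemma~\ref{lem-Delta_{d+1}}

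The plan is to express $\Delta_{d-1,\dagger}$ and $\Delta_{d+1,\dagger}$ through the quantities $F_k(\nu):=\sum_{i\in[q]}e^{B\delta_{i,1}}\big(\sum_{j\in[q]}\nu(j)e^{\beta\delta_{i,j}}\big)^k$ and $G(\nu):=\sum_{i,j\in[q]}\nu(i)\nu(j)e^{\beta\delta_{i,j}}$, so that $\Delta_{d_\star,\dagger}=F_{d_\star}(\nu^\dagger)/G(\nu^\dagger)^{d_\star/2}$, and then to reduce everything to a one-variable comparison. The key algebraic input is that, since $\nu^\dagger$ is a fixed point of \eqref{def:BP}, one has $\nu^\dagger(i)=e^{B\delta_{i,1}}\big(\sum_j\nu^\dagger(j)e^{\beta\delta_{i,j}}\big)^{d-1}/F_{d-1}(\nu^\dagger)$, and hence
\[
G(\nu^\dagger)=\sum_i\nu^\dagger(i)\sum_j\nu^\dagger(j)e^{\beta\delta_{i,j}}=\frac{F_d(\nu^\dagger)}{F_{d-1}(\nu^\dagger)},\qquad \frac{F_{d+1}(\nu^\dagger)}{F_d(\nu^\dagger)}=\frac{\mathbb E_{\nu^\dagger}[(s^\dagger)^2]}{\mathbb E_{\nu^\dagger}[s^\dagger]},
\]
where $s^\dagger$ denotes the random variable taking value $\sum_j\nu^\dagger(j)e^{\beta\delta_{i,j}}$ when $i\sim\nu^\dagger$. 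Substituting $G(\nu^\dagger)=F_d(\nu^\dagger)/F_{d-1}(\nu^\dagger)$ yields the identities
\[
\Delta_{d-1,\dagger}=\frac{\Delta_{d,\dagger}}{\sqrt{G(\nu^\dagger)}},\qquad \Delta_{d+1,\dagger}=\Delta_{d,\dagger}\cdot\frac{F_{d+1}(\nu^\dagger)/F_d(\nu^\dagger)}{\sqrt{G(\nu^\dagger)}},\qquad \Delta_{d,\dagger}:=\frac{F_{d-1}(\nu^\dagger)^{d/2}}{F_d(\nu^\dagger)^{(d-2)/2}}.
\]

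The second ingredient is that the critical condition $(\beta,B)\in\mathsf R_c$ forces $\Delta_{d,\free}=\Delta_{d,\wired}=:\Delta_d$: the quantity $\log\Delta_{d,\dagger}=\tfrac d2\log F_{d-1}(\nu^\dagger)-\tfrac{d-2}2\log F_d(\nu^\dagger)$ is, up to a fixed additive constant independent of $\dagger$, the limiting free-energy density of the graphs restricted to the $\dagger$-phase, which on the critical line equals $\Psi^{\Po}(\beta,B)$ for both $\dagger$; I would verify this equality directly from \eqref{eq-R_c} (equivalently from $\Psi(\nu^\free)=\Psi(\nu^\wired)$). Now write $\nu^\dagger(1)=x_\dagger$ and $\nu^\dagger(i)=(1-x_\dagger)/(q-1)$ for $i\ge2$ (the color symmetry), and set $w=e^\beta-1$. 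Then $s^\dagger$ takes value $A_\dagger:=1+wx_\dagger$ with probability $x_\dagger$ and $a_\dagger:=1+w(1-x_\dagger)/(q-1)$ with probability $1-x_\dagger$, so $G(\nu^\dagger)=\mathbb E_{\nu^\dagger}[s^\dagger]=1+w\,p(x_\dagger)$ with $p(x):=x^2+(1-x)^2/(q-1)$. Since $p$ is strictly increasing on $[1/q,1]$ and it is classical (e.g.\ \cite{DMSS14}) that $1/q<x_{\free}<x_{\wired}<1$ on $\mathsf R_c$, we get $G(\nu^\free)<G(\nu^\wired)$, whence
\[
\frac{\Delta_{d-1,\free}}{\Delta_{d-1,\wired}}=\sqrt{\frac{G(\nu^\wired)}{G(\nu^\free)}}>1,
\]
which is the first inequality.

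For the second inequality, the same substitution gives $\Delta_{d+1,\dagger}=\Delta_d\cdot\mathbb E_{\nu^\dagger}[(s^\dagger)^2]\big/\mathbb E_{\nu^\dagger}[s^\dagger]^{3/2}$, so $\Delta_{d+1,\free}<\Delta_{d+1,\wired}$ is equivalent to
\[
\Phi(x):=\frac{\mathbb E[s(x)^2]}{\mathbb E[s(x)]^{3/2}}=\frac{(1+w\,p(x))^2+w^2 v(x)}{(1+w\,p(x))^{3/2}},\qquad v(x):=\frac{x(1-x)(qx-1)^2}{(q-1)^2},
\]
satisfying $\Phi(x_{\free})<\Phi(x_{\wired})$, where $s(x)$ is the above two-point variable; it suffices to show $\Phi$ is strictly increasing on $[1/q,1)$. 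Note $\Phi(x)=\sqrt{1+w\,p(x)}+w^2 v(x)/(1+w\,p(x))^{3/2}$: the first summand is increasing (as $p$ is), while $v$ vanishes at both $x=1/q$ and $x=1$ and is positive in between, so the second summand is not monotone, and the genuine content of the lemma is that the increase of $\sqrt{1+w p}$ dominates. This one-variable estimate is the main obstacle; I would handle it by differentiating, clearing denominators, and reducing to a polynomial inequality in $x$ with positive parameters $w,q$ — checking in particular (as the boundary cases suggest, where $\Phi'(1)=w(1+w)^{-3/2}>0$) that the competition between the two summands near $x=1$ goes the right way. If a clean global argument on $[1/q,1)$ proves elusive, one may instead use that $x_{\free},x_{\wired}$ are the two solutions of the BP fixed-point equation on $\mathsf R_c$ and exploit that equation to trade off terms on the sub-interval $[x_{\free},x_{\wired}]$. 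With the monotonicity of $\Phi$ established, the second inequality follows, completing the proof.
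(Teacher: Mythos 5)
Your reduction is correct as far as it goes: the fixed-point identity $G(\nu^\dagger)=F_d(\nu^\dagger)/F_{d-1}(\nu^\dagger)$ does hold for the BP fixed points, the resulting formulas $\Delta_{d-1,\dagger}=\Delta_{d,\dagger}/\sqrt{G(\nu^\dagger)}$ and $\Delta_{d+1,\dagger}=\Delta_{d,\dagger}\,\mathbb E_{\nu^\dagger}[(s^\dagger)^2]/\mathbb E_{\nu^\dagger}[s^\dagger]^{3/2}$ are right, and, granting the criticality identity $\Delta_{d,\free}=\Delta_{d,\wired}$ (which the paper's own proof also invokes without further justification, so this is not the issue, although your free-energy heuristic should be turned into a short computation since the paper's normalization of the Bethe functional carries field factors in the edge term and $\log\Delta_{d,\dagger}$ is not literally $\Psi(\nu^\dagger)$), your argument for $\Delta_{d-1,\free}>\Delta_{d-1,\wired}$ is complete: it reduces to $G(\nu^{\free})<G(\nu^{\wired})$, i.e.\ $p(x_{\free})<p(x_{\wired})$, which follows from $1/q\le x_{\free}<x_{\wired}$. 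The genuine gap is the second inequality. You reduce $\Delta_{d+1,\free}<\Delta_{d+1,\wired}$ to strict monotonicity of $\Phi(x)=\sqrt{1+wp(x)}+w^2v(x)/(1+wp(x))^{3/2}$ on $[1/q,1)$, you yourself label this "the main obstacle," and you do not prove it — only a plan (differentiate, clear denominators, reach a polynomial inequality) and a fallback are offered. That monotonicity is exactly the nontrivial content of the $d+1$ case, and it is delicate: writing $m=1+wp$, positivity of $\Phi'$ amounts to $p'm^2+2wv'm-3w^2vp'\ge0$, and in the large-$w$ regime the leading coefficient $p'p^2+2v'p-3vp'$ tends to $0$ as $x\to1$, so the inequality is a borderline competition near $x=1$; it looks true numerically, but nothing in your write-up establishes it. As it stands you have proved only half of the lemma.

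For comparison, the paper's proof avoids this one-variable analysis entirely, and you could graft its final step onto your setup. By color symmetry, $\Delta_{\ell,\dagger}=(e^B a_\dagger^\ell+(q-1)b_\dagger^\ell)/g_\dagger^{\ell/2}$ with $a_\dagger=s_1^\dagger$, $b_\dagger=s_2^\dagger$, $g_\dagger=G(\nu^\dagger)$ in your notation; cross-multiplying denominators, set $P=a_{\free}\sqrt{g_{\wired}}$, $Q=b_{\free}\sqrt{g_{\wired}}$, $R=a_{\wired}\sqrt{g_{\free}}$, $S=b_{\wired}\sqrt{g_{\free}}$, so that $\Delta_{\ell,\free}\gtrless\Delta_{\ell,\wired}$ iff $e^BP^\ell+(q-1)Q^\ell\gtrless e^BR^\ell+(q-1)S^\ell$. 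Criticality gives the common value $T:=e^BP^d+(q-1)Q^d=e^BR^d+(q-1)S^d$ (your $\Delta_{d,\free}=\Delta_{d,\wired}$ in disguise), and the orderings $S^d<Q^d<T/(e^B+q-1)<P^d<R^d$ hold, the only nontrivial one being $P<R$, which is an easy single-variable monotonicity in $\nu^\dagger(1)$ (a linear numerator over the square root of a quadratic — far simpler than your $\Phi$). Then for $t=(d\pm1)/d$ one compares $G(x)=e^Bx^t+(q-1)\bigl((T-e^Bx)/(q-1)\bigr)^t$ at $x=P^d$ and $x=R^d$: on the interval above, $G'(x)=e^Bt\bigl[x^{t-1}-((T-e^Bx)/(q-1))^{t-1}\bigr]$ has the sign of $t-1$, so $t<1$ gives the first inequality and $t>1$ the second, in one stroke. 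This "equal weighted sum plus more spread" argument is what replaces the unproven monotonicity of your $\Phi$, and adopting it would close your gap while keeping your cleaner $F_k$/moment bookkeeping.
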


\begin{proof}[Proof of Theorem~\ref{thm-main}]
We only need to prove the case $\alpha\in (0,1)$ as the edge case then follows from a diagonal argument. Moreover, we only prove the case that $\star=\wired$, as the case $\star=\free$ follows from an almost identical argument.

Let $\gamma=\tfrac{\alpha}{1-\alpha}\in (0,\infty)$ be fixed.
Recall $\theta_k,\delta_k^\dagger,k\ge 3,\dagger\in \{\free,\wired\}$ as defined in Section~\ref{subsec-random-d-regular-graphs}, and $\Gamma=\lim_{n\to\infty}\mathbb{E}[\mathcal Z_{G_n}^{\free}]/\mathbb{E}[\mathcal Z_{G_n}^{\wired}]$ where the expectation is taken over $G_n\sim \mathcal G_{n,d}$. 
By Lemma~\ref{lem-Delta_{d+1}}, we can choose $p\in 2\mathbb N,p=O(1)$ such that $$\Gamma\cdot\prod_{k\ge 3}e^{-\theta_k(\delta_k^{\free}-\delta_k^{\wired})}\cdot  \frac{\Delta_{d+1,\free}^p}{\Delta_{d+1,\wired}^{p}}<\gamma\,.$$ 

Fix an integer $n$. Since we have $\delta_k^{\free},\delta_k^{\wired}\to 0$ as $k\to\infty$, we can pick $K\in \mathbb N$ large enough such that $K>K_0(n^{-1})$ (as defined in Lemma~\ref{lem-infinite-product}) and $1<(1+\delta_{K}^{\free})/(1+\delta_{K}^{\wired})<1+n^{-1}$. Then, there exists $x\in \mathbb N$ such that 
    \begin{equation}\label{eq-choice-x_kn}
    \Gamma\cdot\frac{\prod_{k\ge 3}e^{-\theta_k\delta_k^{\free}}\cdot \Delta_{d+1,\free}^p}{\prod_{k\ge 3}e^{-\theta_k\delta_k^{\wired}}\cdot \Delta_{d+1,\wired}^{p}}\cdot\left(\frac{1+\delta_{K}^{\free}}{1+\delta_{K}^{\wired}}\right)^{x}\in [\gamma,(1+n^{-1})\gamma)\,.
    \end{equation}

    We claim that for each $n\in \mathbb{N}$, there exists $N=N(n)\in \mathbb N$ such that for $G_N\sim \mathcal G_{N,d}$, the following holds with positive probability:\\
    (i) $\mathcal Z_{G_N}^{\free}+\mathcal Z_{G_N}^{\wired}\ge (1-n^{-1})\mathcal Z_{G_N}^{\Po,\beta,B}$, and for $\dagger\in \{\free,\wired\}$, it holds that
    \begin{equation}\label{eq-ingrdient-1}
\frac{\mathcal Z_{G_N}^\dagger}{\mathbb{E}[\mathcal Z_{G_N}^\dagger]\cdot \prod_{k\ge 3}(1+\delta_k^\dagger)^{X_{N,k}}e^{-\theta_k\delta_k^\dagger}}\in(1-n^{-1},1+n^{-1})\,.
    \end{equation}
    \noindent(ii) $X_{N,3}=\cdots=X_{N,K-1}=0$ and $X_{N,K}=x$, and for $\dagger\in \{\free,\wired\}$,   \begin{equation}\label{eq-ingredient-2}
    1\le \prod_{k>K}(1+\delta_k^\dagger)^{X_{N,k}}\le 1+n^{-1}\,.
    \end{equation}
    \noindent (iii) the second eigenvalue of the adjacency matrix of $G_N$ is no more than $2\sqrt{d-1}+0.1$. 
    
    To see this, we have from Proposition~\ref{prop-small-graph-conditioning-conclusion} that as $N\to\infty$, item-(i) holds with probability $1-o(1)$ over $G_N\sim \mathcal G_{N,d}$.  Additionally, since it is well-known (see, e.g., \cite{bollobas1980probabilistic}) that $\{X_{n,k}\}_{k\ge 3}$ weakly converges to independent $\operatorname{Poi}(\theta_k)$ variables, it follows from Lemma~\ref{lem-infinite-product} that item-(ii) holds with non-vanishing probability for large $N$. Finally, it is well known by \cite{Fri08} that as $N\to\infty$, item-(iii) holds with probability $1-o(1)$. Thus, the claim follows from the union bound.

     Consequently, we see that for $N=N(n)$ large enough, there exists a $d$-regular graph $G_n$ on $N(n)$ vertices, such that items-(i), (ii), (iii) all hold. Furthermore, by the definition of $\Gamma$, we may pick $N(n)$ large enough, such that $
            {\mathbb{E}[\mathcal Z_{G_N}^{\free}]}/{\mathbb{E}[\mathcal Z_{G_N}^{\wired}]}\in ((1-n^{-1})\Gamma,(1+n^{-1})\Gamma)$. 
 Combining this with \eqref{eq-ingrdient-1}, \eqref{eq-ingredient-2}, we obtain that 
      \begin{equation}\label{eq-ingredient-3}
    (1-n^{-1})^3\cdot \gamma\cdot \frac{\Delta_{d+1,\free}^{-p}}{\Delta_{d+1,\wired}^{-p}}\le \frac{\mathcal Z_{G_n}^{\free}}{\mathcal Z_{G_n}^{\wired}}\le     (1+n^{-1})^3\cdot \gamma\cdot \frac{\Delta_{d+1,\free}^{-p}}{\Delta_{d+1,\wired}^{-p}}\,.
    \end{equation}

    Now consider the graph sequence $\{G_n\}$ constructed as above. By item-(i) we have that as $n\to\infty$, $\mathcal Z_{G_n}^{\Po,\beta,B}=(1+o(1))(\mathcal Z_{G_n}^{\free}+\mathcal Z_{G_n}^{\wired})$, and thus \eqref{eq-ingredient-3} implies that the assumption \eqref{eq-assumption} is satisfied. Additionally, by item-(ii) we have $\operatorname{girth}(G_n)\to\infty$ as $n\to\infty$, and by item-(iii) we see that $G_n$ are uniform edge-expander graphs. Applying Propositions~\ref{prop-marginal-for-cavity} and~\ref{prop-partition-function-ratio} to the graph sequence $\{G_n\}$ with $p$ chosen as above, we obtain that there is a sequence of modified graphs $\widetilde{G}_n$ of $G_n$ that satisfies $\operatorname{girth}(\widetilde{G}_n)\to\infty$, and
    \[
     \lim_{n\to\infty}\frac{\mathcal Z_{\widetilde{G}_n}^{\free}}{\mathcal Z_{\widetilde{G}_n}^{\wired}}=\gamma\cdot \frac{\Delta_{d+1,\free}^{-p}}{\Delta_{d+1,\wired}^{-p}}\cdot \frac{\Delta_{d+1,\free}^{p}}{\Delta_{d+1,\wired}^{p}}=\gamma\,.
    \]
    Since $\widetilde{G}_n$ are still uniform edge-expander graphs such that $\widetilde{G}_n\stackrel{\operatorname{loc}}{\longrightarrow}\ttT_d$,
it follows from Lemma~\ref{lem-mixture-ratio-characterization} that the in-probability local weak limit of $\widetilde{\mu}_n=\mu_{\widetilde{G}_n}^{\beta,B}$ is $\alpha{\mu^{\free}}+(1-\alpha){\mu^{\wired}}$, as desired. 
\end{proof}

\appendix

\section{Potts partition functions of random $d$-regular graphs}\label{appendix-small-graph-conditioning}

In this section we prove Proposition~\ref{prop-small-graph-conditioning-conclusion}. We first introduce some notations. Fix $d,q\ge 3$ and $(\beta,B)\in \mathsf R_c$. Recall the BP fixed points $\nu^{\free,\beta,B},\nu^{\wired,\beta,B}$ as well as the measures $\check{\nu}^{\free,\beta,B},\check{\nu}^{\wired,\beta,B}\in \Delta^\star$, which we denote as $\nu^{\free},\nu^{\wired},\check{\nu}^{\free},\check{\nu}^{\wired}$ for simplicity. For any measure $\alpha\in \Delta^\star$ and $\delta>0$, we write $\operatorname{B}_{\infty}(\alpha,\delta)$ as the $\ell^\infty$-ball of radius $\delta$ in $\Delta^*$ centered at $\alpha$. 
For any $n\in \mathbb N$, we define
\begin{equation*}
    \Delta^\star(n):=\{\alpha \in \Delta^\star: n\alpha(i) \in \mathbb N\,,\ \forall i\in [q]\}\,.
\end{equation*}
Additionally, for $G_n=(V_n,E_n)$ sampled from the pairing model $\mathcal G_{n,d}$, we define for a spin configuration $\sigma\in [q]^{V_n}$ its color profile
$
\operatorname{L}_\sigma\in \Delta^\star
$ by
\[
\operatorname{L}_\sigma(i)={n^{-1}}\#\{v\in V_n:\sigma(v)=i\}\,,\quad\forall i\in [q]\,.
\]
Moreover, for $\alpha\in \Delta^\star(n)$, we define $\mathcal V_{n}^\alpha:=\{\sigma\in [q]^{V_n}:\operatorname{L}_\sigma=\alpha\}$. 

We consider the $q$-state Potts model with inverse temperature $\beta$ and external field $B$ on $G\sim \mathcal G_{n,d}$.
For $\alpha\in \Delta^\star(n)$, we let
\begin{equation*}
    \mathcal Z^{\alpha}_{G_n}:= \sum_{\sigma\in\mathcal{V}_{n}^{\alpha}} \exp\Big(\beta\sum_{(u,v)\in E_n}\delta_{\sigma(u),\sigma(v)}+B\sum_{v\in V_n}\delta_{\sigma(v),1}\Big)\,,
\end{equation*}
Note that by our definition,
\begin{equation}\label{eq-sum-Z_G_n^dagger}
\mathcal Z_{G_n}^{\dagger}=\sum_{\alpha\in \Delta^\star(n)\cap \operatorname{B}_\infty(\check{\nu}^\dagger,\varepsilon_0)}\mathcal Z_{G_n}^\alpha\,.
\end{equation}

Take $\delta_n:=n^{-1/4}$ and for $\dagger\in \{\free,\wired\}$, we let
\begin{equation*}
\label{eq:Zdagger}
    \mathcal Z^{\dagger}(G_n):= \sum_{\alpha\in \Delta^\star(n)\cap \operatorname{B}_\infty(\check{\nu}^\dagger,\delta_n)}\mathcal Z_{G_n}^\alpha\,.
\end{equation*}
The key intuition is that $\mathcal Z_{G_n}^{\dagger}$ is well-approximated by $\mathcal Z^\dagger(G_n)$, as the dominant contributions to the sum \eqref{eq-sum-Z_G_n^dagger} come from $\mathcal Z_{G_n}^\alpha$ with $\alpha\in \operatorname{B}_\infty(\check{\nu}^\dagger,\delta_n)$.
Additionally, we write
\[
\mathcal Z^0(G_n):=\mathcal Z_{G_n}^{\Po,\beta,B}-\mathcal Z^{\free}(G_n)-\mathcal Z^{\wired}(G_n)\,.
\]

Our plan is to show that as $n\to\infty$,
\begin{equation}\label{eq-random-d-regular-graphs-goal-1}
\mathbb{E}[\mathcal Z^{\free}(G_n)]\asymp \mathbb{E}[\mathcal Z^{\wired}(G_n)]\gg \mathbb{E}[\mathcal Z^0(G_n)]\,,
\end{equation}
and with probability $1-o(1)$ over $G_n\sim \mathcal G_{n,d}$, 
\begin{equation}\label{eq-random-d-regular-graph-goal-2}
\frac{\mathcal Z^\dagger(G_n)}{\mathbb{E}[\mathcal Z^\dagger(G_n)]}=(1+o(1))\prod_{k\ge 3}(1+\delta_k^\dagger)^{X_{n,k}}e^{-\theta_k\delta_k^\dagger}\,.
\end{equation}
Provided that \eqref{eq-random-d-regular-graphs-goal-1} and \eqref{eq-random-d-regular-graph-goal-2} are true, Proposition~\ref{prop-small-graph-conditioning-conclusion} follows readily. The proof of the two claims are detailed in Sections~\ref{subsec-first-moment} and~\ref{subsec-small-graph-conditioning}, respectively. 




\subsection{First moment calculation}\label{subsec-first-moment}

To calculate the asymptotics of $\mathbb{E}[\mathcal Z^{\dagger}(G_n)],\dagger\in \{\free,\wired\}$, we need to understand the asymptotics of $\mathbb{E}[\mathcal Z_{G_n}^\alpha]$ for $\alpha\in \operatorname{B}_\infty(\check\nu^\dagger,\delta_n)$,
which has essentially been obtained in \cite{GSVY16}. Recall the $q\times q$ matrices $B$ and $Q^\dagger,\dagger\in\{\free,\wired\}$ as defined in \eqref{eq-B-ij} and \eqref{eq-Q-ij}. The matrix $B$ can be viewed as the interaction matrix of the Potts measure with inverse temperature $\beta$ and external field $B>0$ on a $d$-regular graph. Also recall that $\lambda_1^\dagger=1,\lambda_2^\dagger,\dots,\lambda_q^\dagger$ are the eigenvalues of $Q^\dagger$. 

\begin{lemma}[Equations (74), (75) of \cite{GSVY16}]\label{lemma:first_second_moments_accurate}
    For $G_n\sim \mathcal G_{n,d}$ and $\balpha\in \operatorname{B}_{\infty}(\check{\nu}^{\dagger},\delta_n)$, we have as $n\to\infty$
    \begin{equation}\label{eq:one-profile-asym}
        \mathbb{E}[\mathcal Z^{\balpha}_{G_n}] = (1 + o(1)) (2\pi n)^{-(q-1)/2} \bigg(\prod_{i=1}^q \alpha(i) \prod_{i=2}^q (1 + \lambda^\dagger_i) \bigg)^{-1/2} e^{n\Upsilon_1(\balpha,x^\star)},
    \end{equation}
    \begin{multline}
        \mathbb{E}[(\mathcal Z^{\balpha}_{G_n})^2] = (1 + o(1)) (2\pi n )^{-(q-1)} \bigg(\prod_{i=1}^q \alpha(i) \prod_{i=2}^q (1 + \lambda^\dagger_i) \bigg)^{-1}\\
        \times\prod_{i=2}^{q} \prod_{j=2}^{q} (1 - (d-1)\lambda_i^\dagger\lambda_j^\dagger)^{-1/2} e^{n\Upsilon_2(\balpha, \bgamma^\star,\by^\star)},
    \end{multline}
    where 
\begin{equation*}
        \Upsilon_1(\balpha, \bx) := (d-1) \sum_i \alpha(i) \log \alpha(i)
        +  d\bigg(\frac{1}{2} \sum_{i,j} x_{ij} \log B_{ij} - \frac{1}{2} \sum_{i,j} x_{ij} \log x_{ij}\bigg),
    \end{equation*}
    \begin{equation*}
        \Upsilon_2(\balpha, \bgamma, \by):= (d-1)\sum_{i,k}\gamma_{ik}\log \gamma_{ik} + d \bigg(\frac{1}{2}\sum_{i,k,j,l} y_{ikjl} \log (B_{ij}B_{kl}) - \frac{1}{2}\sum_{i,k,j,l}y_{ikjl}\log y_{ikjl}\bigg),
    \end{equation*}
    and $\bx^\star:= \arg\max_{\bx} \Upsilon_1(\balpha,\bx),(\bgamma^\star,\by^\star) = \arg\max_{\bgamma,\by} \Upsilon_2(\balpha, \bgamma,\by)$ are the unique maximizers subject to the constraints $\sum_{j} x_{ij} = \alpha(i),\, \forall i \in [q],
        x_{ij} = x_{ji} \geq 0,\, \forall i,j \in [q],
        \sum_{k} \gamma_{ik} = \alpha(i),\, \forall i \in [q],
        \sum_{i} \gamma_{ik} = \alpha(k),\, \forall k \in [q],
        \sum_{j,l} y_{ikjl} = \gamma_{ik},\, \forall i,k \in [q],
        \gamma_{ik} \geq 0, \forall i,j \in [q], y_{ikjl} = y_{jlik} \geq 0, \forall i,k,j,l \in [q].$
\end{lemma}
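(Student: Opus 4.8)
\textbf{Proof proposal for Lemma~\ref{lemma:first_second_moments_accurate}.}

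The statement is a local central limit theorem style asymptotic for the first and second moments of the color-constrained partition function on the pairing model, and the plan is to follow the configuration-model moment computations of \cite{GSVY16} essentially verbatim, with only bookkeeping adjustments for the external field (already encoded in the matrix $B$) and for the fact that we restrict $\balpha$ to shrinking $\ell^\infty$-balls around the critical-profile candidates $\check\nu^{\free},\check\nu^{\wired}$. For the \emph{first} moment: fix $\balpha\in\Delta^\star(n)$ and write $\mathbb{E}[\mathcal Z^{\balpha}_{G_n}]$ as a sum over the edge-color split $\{E_{ij}\}$ (equivalently the renormalized $\bx=\{x_{ij}\}$ with $\sum_j x_{ij}=\alpha(i)$, $x_{ij}=x_{ji}$) of (i) the number of spin assignments with color profile $\balpha$, which is the multinomial $\binom{n}{n\alpha(1),\dots,n\alpha(q)}$; (ii) the number of perfect matchings of the $dn$ half-edges realizing the prescribed edge-color counts, which is a product of multinomials over half-edge buckets times the double-factorial count of matchings within each monochromatic bucket, divided by the total $(dn-1)!!$; and (iii) the Boltzmann weight $\prod_{i\le j} B_{ij}^{E_{ij}}$. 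Applying Stirling to every factorial and using that $nd$ is even, the summand becomes $e^{n\Upsilon_1(\balpha,\bx)+O(\log n)}$ up to explicit polynomial prefactors; one then performs the sum over the $\bx$-lattice by a Laplace/saddle-point argument. The key analytic input is that $\Upsilon_1(\balpha,\cdot)$ is strictly concave on the affine constraint polytope (a standard convexity fact, since $-\sum x_{ij}\log x_{ij}$ is strictly concave and the linear terms do not affect concavity), so it has a unique interior maximizer $\bx^\star=\bx^\star(\balpha)$, which by Lagrange conditions satisfies $x^\star_{ij}\propto B_{ij}$ times a product of per-color multipliers; the Hessian at $\bx^\star$ in the free coordinates is nondegenerate, so the lattice sum contributes the Gaussian-integral factor, and the quoted prefactor $(2\pi n)^{-(q-1)/2}(\prod_i\alpha(i)\prod_{i\ge2}(1+\lambda^\dagger_i))^{-1/2}$ is exactly the evaluation of that Gaussian determinant — here is where the eigenvalues $\lambda^\dagger_i$ of $Q^\dagger$ enter, precisely as computed in \cite[\S{} around Eq.~(74)]{GSVY16}, after identifying $\bx^\star$ at $\balpha=\check\nu^\dagger$ with the BP-fixed-point edge marginals. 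Because $\balpha\in\operatorname{B}_\infty(\check\nu^\dagger,\delta_n)$ with $\delta_n=n^{-1/4}$, all of $\bx^\star(\balpha)$, the Hessian, and the prefactor are continuous in $\balpha$ and hence equal their values at $\check\nu^\dagger$ up to $1+o(1)$, which is why the stated asymptotic is uniform over the ball; I would make this uniformity explicit by a short continuity/equicontinuity remark.

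For the \emph{second} moment the structure is identical but one level up: $\mathbb{E}[(\mathcal Z^{\balpha}_{G_n})^2]$ counts \emph{pairs} $(\sigma,\sigma')$ of profile-$\balpha$ colorings together with a single random matching, so one introduces the overlap matrix $\bgamma=\{\gamma_{ik}\}$ recording the fraction of vertices with $(\sigma,\sigma')=(i,k)$ (marginals $\sum_k\gamma_{ik}=\alpha(i)=\sum_i\gamma_{ik}$), and the joint edge-type tensor $\by=\{y_{ikjl}\}$ with $\sum_{jl}y_{ikjl}=\gamma_{ik}$, $y_{ikjl}=y_{jlik}$. The same Stirling-plus-Laplace routine gives $e^{n\Upsilon_2(\balpha,\bgamma,\by)+O(\log n)}$ and a double saddle point: first optimize over $\by$ for fixed $\bgamma$ (strictly concave, unique interior maximizer $\by^\star(\bgamma)$ with $y^\star_{ikjl}\propto B_{ij}B_{kl}$ up to per-overlap-type multipliers), then over $\bgamma$. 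The critical point in $\bgamma$ turns out to be the "independent" overlap $\gamma_{ik}=\alpha(i)\alpha(k)$ — this is exactly the content of the second-moment analysis in \cite{GSVY16}, and it is what makes the leading exponential factor $e^{n\Upsilon_2}$ equal to $(\mathbb{E}[\mathcal Z^{\balpha}_{G_n}])^2/(\text{subexponential})$, i.e.\ no exponential blow-up. The extra product $\prod_{i,j\ge2}(1-(d-1)\lambda^\dagger_i\lambda^\dagger_j)^{-1/2}$ is the Gaussian determinant from the $\bgamma$-fluctuations around the independent point, expressed in the eigenbasis of $Q^\dagger$; its finiteness is guaranteed because $(d-1)\lambda^\dagger_i\lambda^\dagger_j<1$ for all $i,j\ge2$, which is precisely the non-uniqueness/criticality information packaged in the definition of $\mathsf R_c$ and the choice $\beta^*>\beta^{\operatorname{Uni}}_d$ via Lemma~\ref{lem-numerical} (equivalently the small-subgraph-conditioning hypothesis $\sum_k\theta_k\delta_k^2<\infty$ forces $|\lambda^\dagger_i|<1/\sqrt{d-1}$). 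I would import these spectral facts, together with the identification of $\lambda^\dagger_q$ in \cite[Lemma 21]{GSVY16}, rather than re-deriving them.

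Two points deserve care and are where I expect the real work to be. First, \emph{uniformity over the shrinking ball and summing the ball back up}: the lemma as stated is a per-profile asymptotic, but in \eqref{eq-random-d-regular-graphs-goal-1}–\eqref{eq-random-d-regular-graph-goal-2} we need $\mathbb{E}[\mathcal Z^\dagger(G_n)]=\sum_{\balpha\in\operatorname{B}_\infty(\check\nu^\dagger,\delta_n)}\mathbb{E}[\mathcal Z^{\balpha}_{G_n}]$, so the $1+o(1)$ in \eqref{eq:one-profile-asym} must be uniform in $\balpha$ over the ball and the function $\balpha\mapsto\Upsilon_1(\balpha,\bx^\star(\balpha))$ must be shown to be strictly concave with maximum at $\check\nu^\dagger$ (this is the statement that $\check\nu^\dagger$ is a local max of the Bethe functional $\Psi$, which holds on $\mathsf R_c$), so that the sum over the $O(n^{(q-1)/2}\delta_n^{q-1})$ lattice points is again a Gaussian sum contributing another explicit polynomial factor and leaving the exponential rate untouched. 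Getting the error terms genuinely uniform — not just pointwise — requires tracking the Stirling remainders $O(1/n\alpha(i))$ and the Laplace-method remainders, which stay $o(1)$ as long as $\alpha(i)\gg n^{-1/2}$, comfortably satisfied here. Second, \emph{the saddle points are interior and the constrained Hessians are the advertised ones}: one must verify that $\bx^\star,\bgamma^\star,\by^\star$ lie in the relative interior of the constraint polytopes (so the Laplace method applies without boundary corrections) and that restricting the Hessian to the constraint tangent space and changing to the $Q^\dagger$-eigenbasis produces exactly $\prod_{i\ge2}(1+\lambda^\dagger_i)$ and $\prod_{i,j\ge2}(1-(d-1)\lambda^\dagger_i\lambda^\dagger_j)$ — a linear-algebra computation carried out in \cite{GSVY16} that I would reproduce with the field-dependent $B$ in place, noting that the field only rescales rows/columns of $B$ and is absorbed into the similarity transform defining $Q^\dagger$, so the eigenvalue structure is unchanged. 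The main obstacle is thus not any single hard estimate but assembling these pieces with \emph{explicit uniform control} over the $\delta_n$-ball, since the downstream small-subgraph-conditioning argument (concentration of $\mathcal Z^\dagger/\mathbb{E}[\mathcal Z^\dagger]$) is only as good as the uniformity here; everything else is a careful but routine transcription of the configuration-model computations of \cite{GSVY16} with the external field retained.
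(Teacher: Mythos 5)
Your sketch is, in effect, a reconstruction of the configuration-model moment computation from \cite{GSVY16}; the paper itself gives no proof of this lemma but imports it verbatim as Equations (74)--(75) of that reference, so your outline (multinomial/half-edge counting, Stirling, Laplace saddle point, constrained Hessian evaluated in the $Q^\dagger$-eigenbasis) is the same route by which the cited result is actually established. The one issue you rightly flag — uniformity of the $1+o(1)$ and of the maximizers over the shrinking ball $\operatorname{B}_{\infty}(\check{\nu}^{\dagger},\delta_n)$ — is handled in the paper not inside this lemma but immediately afterwards, via Corollary~\ref{cor:xstar} (continuity of $\bx^\star(\balpha)$, $\bgamma^\star(\balpha)$, $\by^\star(\balpha)$ and of $\Upsilon_1,\Upsilon_2$ in $\balpha$) and the first-moment proposition that follows, so your proposed treatment is consistent with, rather than divergent from, the paper's organization.
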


Note that since $\bx^\star$ and $(\bgamma^\star,\by^\star)$ are functions of $\balpha$, with slight abuse of notation we can denote $\Upsilon_1(\balpha):= \Upsilon_1(\balpha,\bx^\star)$ and $\Upsilon_2(\balpha):= \Upsilon_2(\balpha,\bgamma^\star,\by^\star)$. We have the following relation.

\begin{lemma}[Theorem 6 of \cite{GSVY16}]\label{lemma:twice}
    For $\dagger \in \{\free,\wired\}$, $\Upsilon_2(\check{\nu}^\dagger) = 2 \Upsilon_1(\check{\nu}^\dagger).$
\end{lemma}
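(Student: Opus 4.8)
\textbf{Proof proposal for Lemma~\ref{lemma:twice}.}
The quantities $\Upsilon_1(\balpha)$ and $\Upsilon_2(\balpha)$ are the exponential growth rates of $\mathbb{E}[\mathcal Z^{\balpha}_{G_n}]$ and $\mathbb{E}[(\mathcal Z^{\balpha}_{G_n})^2]$, respectively, as supplied by Lemma~\ref{lemma:first_second_moments_accurate}. Since $\mathbb{E}[(\mathcal Z^{\balpha}_{G_n})^2]\ge(\mathbb{E}[\mathcal Z^{\balpha}_{G_n}])^2$ always holds, we get $\Upsilon_2(\balpha)\ge 2\Upsilon_1(\balpha)$ for every $\balpha$, so the content of the lemma is the reverse bound $\Upsilon_2(\check{\nu}^{\dagger})\le 2\Upsilon_1(\check{\nu}^{\dagger})$. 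The starting observation is that $\mathbb{E}[(\mathcal Z^{\balpha}_{G_n})^2]$ is itself a \emph{first} moment of a replicated model: a pair $(\sigma,\tau)$ of $[q]$-colourings is the same datum as a single $[q]^2$-colouring, the overlap matrix $\bgamma$ with $\gamma_{ik}=n^{-1}\#\{v:\sigma(v)=i,\tau(v)=k\}$ plays the role of the vertex profile, and the induced edge interaction is the product $(i,k),(j,l)\mapsto B_{ij}B_{kl}$. Consequently $\Upsilon_2(\balpha)=\max_{\bgamma}\Phi_{\balpha}(\bgamma)$, where $\Phi_{\balpha}(\bgamma):=\max_{\by}\Upsilon_2(\balpha,\bgamma,\by)$ is the annealed entropy of the replicated model at overlap $\bgamma$, the maximum being over overlaps compatible with the marginals $\balpha$.

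First I would pin down $\bx^\star=\arg\max_{\bx}\Upsilon_1(\check{\nu}^{\dagger},\bx)$. The stationarity conditions force $x^\star_{ij}=c_ic_jB_{ij}$ for positive constants $c_i$ fixed by the marginal constraints $\sum_j x^\star_{ij}=\check{\nu}^{\dagger}(i)$; invoking the defining belief-propagation fixed-point relations for $\nu^{\dagger}$ (with the external field redistributed evenly over the $d$ incident edges, as is forced on a $d$-regular graph) one checks that $c_i$ is proportional to $\nu^{\dagger}(i)\big(\sum_j B_{ij}\nu^{\dagger}(j)\big)^{-1/2}$, i.e.\ that $\bx^\star$ is, up to normalisation, the edge measure $x^\star_{ij}\propto \nu^{\dagger}(i)\nu^{\dagger}(j)B_{ij}$. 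This is precisely the place where the hypothesis $\balpha=\check{\nu}^{\dagger}$, rather than an arbitrary profile, is used.

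The main claim is that for $\balpha=\check{\nu}^{\dagger}$ the maximiser of $\Phi_{\balpha}$ is the decoupled overlap $\bgamma^\star_{ik}=\check{\nu}^{\dagger}(i)\check{\nu}^{\dagger}(k)$, with corresponding $\by^\star_{ikjl}=x^\star_{ij}x^\star_{kl}$. That this point satisfies every linear constraint and is a critical point of $\Upsilon_2$ follows by direct substitution: the Lagrange conditions for $\Upsilon_2$ factorise through the product structure of the interaction and of $\bx^\star$. The work is to show it is the \emph{global} maximiser of $\Phi_{\balpha}$. I would do this by (i) showing $\bgamma^\star$ is the unique critical point of $\Phi_{\balpha}$ in the interior of the transportation polytope of overlaps with marginals $\balpha$, and (ii) showing the Hessian of $\Phi_{\balpha}$ there is negative definite --- which is exactly where the spectral condition $1-(d-1)\lambda^{\dagger}_i\lambda^{\dagger}_j>0$ (valid for $(\beta,B)\in\mathsf R_c$, and the same condition that keeps the prefactors in Lemma~\ref{lemma:first_second_moments_accurate} finite) enters; boundary overlaps are handled by a separate a priori estimate. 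Upgrading ``unique interior critical point with negative definite Hessian'' to ``global maximum'' can be done by a compactness/continuity argument on the polytope, or more robustly by a direct convexity estimate showing $\Phi_{\balpha}(\bgamma)-\Phi_{\balpha}(\bgamma^\star)\le 0$ via concavity of the entropy terms $-\sum y\log y$ together with the fact that the optimal coupling of the two replicas decouples at $\bgamma^\star$.

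Granted the maximiser is the decoupled one, the identity $\Upsilon_2(\check{\nu}^{\dagger})=2\Upsilon_1(\check{\nu}^{\dagger})$ is a one-line computation: substituting $\bgamma^\star=\balpha\otimes\balpha$ and $\by^\star=\bx^\star\otimes\bx^\star$ into the formula for $\Upsilon_2$, every term splits --- $(d-1)\sum_{ik}\gamma^\star_{ik}\log\gamma^\star_{ik}=2(d-1)\sum_i\alpha(i)\log\alpha(i)$, and using $\log(x^\star_{ij}x^\star_{kl})=\log x^\star_{ij}+\log x^\star_{kl}$ together with the marginal identities $\sum_{kl}y^\star_{ikjl}=x^\star_{ij}\alpha(k)$, both the $\tfrac d2\sum y\log(B_{ij}B_{kl})$ and the $-\tfrac d2\sum y\log y$ terms reduce to twice their $\Upsilon_1$ counterparts, giving $\Upsilon_2(\check{\nu}^{\dagger},\bgamma^\star,\by^\star)=2\Upsilon_1(\check{\nu}^{\dagger},\bx^\star)$. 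The main obstacle is the global optimality in step (iii): establishing that the decoupled overlap is the global, not merely local, maximiser of the replicated annealed entropy --- this is the heart of the second-moment method and is the step that genuinely requires the spectral/non-reconstruction input along the critical line.
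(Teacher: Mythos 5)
The paper does not actually prove Lemma~\ref{lemma:twice}: it is imported verbatim as Theorem~6 of \cite{GSVY16}, and within the paper the identity is only ever used together with the explicit maximizer formulas recorded in Lemma~\ref{lemma:xstar} (from which the doubling identity indeed follows by the substitution you describe). So the relevant question is whether your sketch constitutes a self-contained proof of the cited result, and it does not: there is a genuine gap at the step you yourself flag as "the main obstacle."

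Concretely, everything up to and including the verification that $\bgamma^{\star}_{ik}=\check{\nu}^{\dagger}(i)\check{\nu}^{\dagger}(k)$, $\by^{\star}_{ikjl}=x^{\star}_{ij}x^{\star}_{kl}$ is a \emph{critical} point of the constrained problem, and the final substitution giving $\Upsilon_2(\check{\nu}^{\dagger},\bgamma^{\star},\by^{\star})=2\Upsilon_1(\check{\nu}^{\dagger},\bx^{\star})$, is fine and consistent with Lemma~\ref{lemma:xstar} and Corollary~\ref{cor:xstar}. But the lemma needs $\Upsilon_2(\check{\nu}^{\dagger})=\max_{\bgamma,\by}\Upsilon_2(\check{\nu}^{\dagger},\bgamma,\by)$ to be attained at this decoupled point, i.e.\ \emph{global} optimality over the whole transportation polytope of overlaps, and your argument for this is only a plan: "unique interior critical point $+$ negative-definite Hessian $+$ a separate a priori boundary estimate $+$ compactness." None of these is carried out, and they are not routine. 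The spectral condition $1-(d-1)\lambda^{\dagger}_i\lambda^{\dagger}_j>0$ only yields a \emph{local} maximum at $\bgamma^{\star}$ (and finiteness of the Gaussian prefactor in Lemma~\ref{lemma:first_second_moments_accurate}); it does not by itself exclude interior critical points with correlated overlap, nor does it control the boundary of the polytope, and for Potts-type models this is exactly where second-moment arguments are known to be delicate (in other parameter regimes a correlated overlap genuinely dominates). Establishing that the product overlap is the global maximizer at the critical line is the substantive content of Theorem~6 of \cite{GSVY16}, proved there by a considerably longer constrained-optimization analysis, and your proposal replaces that analysis with an unproven claim. As written, the proposal is an outline of the standard second-moment strategy rather than a proof; if you intend to avoid citing \cite{GSVY16}, the global maximization step must be supplied in full, and if you are content to cite it, the entire replica discussion is unnecessary since the paper (and you, in the last paragraph) only need the substitution identity once the maximizers of Lemma~\ref{lemma:xstar} are granted.
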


The same paper also gives exact formulas for $\bx^\star(\check{\nu}^\dagger)$ and $\by^\star(\check{\nu}^\dagger)$.

\begin{lemma}[In the paragraphs after Lemma 26 and Remark 9 in \cite{GSVY16}]\label{lemma:xstar}
    For $\dagger \in  \{\free, \wired\}$, the unique maximizers $\bx^\star(\check{\nu}^\dagger)$ and $(\bgamma^\star(\check{\nu}^\dagger),\by^\star(\check{\nu}^\dagger))$ have the following expressions:
    \begin{gather}\label{eq:xstar}
        x_{ij}^\star(\check{\nu}^\dagger) = \frac{B_{ij} \nu^\dagger_i\nu^\dagger_j}{\sum_{k,l} B_{kl}\nu^\dagger_k\nu^\dagger_l}\,,\quad
        \gamma_{ij}^\star(\check{\nu}^\dagger) = \check{\nu}^\dagger_{i}\check{\nu}^\dagger_{j}\,,\quad
        y^\star_{ikjl}(\check{\nu}^\dagger) = x^\star_{ij}(\check{\nu}^\dagger)x^\star_{kl}(\check{\nu}^\dagger).
    \end{gather}
\end{lemma}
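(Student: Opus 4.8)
The plan is to prove Lemma~\ref{lemma:xstar} by verifying that the proposed configurations satisfy the Lagrange (KKT) stationarity conditions for the two concave maximization problems that define $\Upsilon_1(\check\nu^\dagger)$ and $\Upsilon_2(\check\nu^\dagger)$. Since Lemma~\ref{lemma:first_second_moments_accurate} (quoting \cite{GSVY16}) already guarantees that each problem has a \emph{unique} maximizer, and the objectives $\bx\mapsto\tfrac12\sum_{i,j}x_{ij}\log(B_{ij}/x_{ij})$ and its two-replica analogue are strictly concave on the respective feasible polytopes — which are compact, with the proposed optima lying in the relative interior (all coordinates strictly positive, so no inequality constraint is active) — it suffices to exhibit multipliers for which stationarity holds. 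I would first handle $\Upsilon_1$: with $\balpha=\check\nu^\dagger$ fixed, the $\balpha$-term of $\Upsilon_1(\balpha,\bx)$ is constant, and introducing multipliers $\mu_i$ for the row-sum constraints $\sum_j x_{ij}=\check\nu^\dagger(i)$ one finds that every stationary $\bx$ has the rank-one form $x_{ij}=B_{ij}a_ia_j$ with $a_i=\sqrt c\,e^{-\mu_i/2}>0$. Hence the maximizer is the unique feasible rank-one modulation of the interaction matrix $B$ from \eqref{eq-B-ij}.

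The crux is then to check feasibility of $x^\star_{ij}(\check\nu^\dagger)$ from \eqref{eq:xstar}: it visibly has the rank-one form with $a_i\propto\nu^\dagger(i)$, and since $\sum_j x^\star_{ij}(\check\nu^\dagger)=\nu^\dagger(i)[\rightarrow i]^\dagger/\big(\sum_k\nu^\dagger(k)[\rightarrow k]^\dagger\big)$, feasibility is equivalent to the identity $\nu^\dagger(i)[\rightarrow i]^\dagger\propto\check\nu^\dagger(i)$. This is exactly the belief-propagation fixed-point relation obeyed by $\nu^\dagger$ together with the definitions in \eqref{eq-check-nu} and \eqref{def:BP}, once one unwinds how the per-vertex field $B$ is partitioned as $\tfrac Bd$ across the $d$ incident edges so that $B$ is the interaction matrix of the $d$-regular model; the same computation shows $\sqrt{\nu^\dagger(i)[\rightarrow i]^\dagger}$ is the Perron eigenvector of $Q^\dagger$ in \eqref{eq-Q-ij} with eigenvalue $1$. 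The companion claims $\gamma^\star_{ij}(\check\nu^\dagger)=\check\nu^\dagger_i\check\nu^\dagger_j$ and $y^\star_{ikjl}(\check\nu^\dagger)=x^\star_{ij}x^\star_{kl}$ are then verified by the same KKT scheme for $\Upsilon_2$: stationarity in $\by$ forces $y_{ikjl}=B_{ij}B_{kl}b_{ik}b_{jl}$, and the product ansatz arises by taking $b_{ik}=c\,a_ia_k$; stationarity in $\bgamma$ together with its two marginal constraints is satisfied upon choosing the two-replica multipliers as additive combinations of the $\mu_i$; and feasibility ($\sum_{jl}y^\star_{ikjl}=\gamma^\star_{ik}$, $\sum_k\gamma^\star_{ik}=\check\nu^\dagger_i$, etc.) reduces to the feasibility already established for $\Upsilon_1$ via $\sum_j x^\star_{ij}=\check\nu^\dagger_i$. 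As a by-product, substituting the product configuration into $\Upsilon_2$ and using $\gamma^\star_{ik}=\check\nu^\dagger_i\check\nu^\dagger_k$ makes both the entropy and energy terms factorize, re-deriving Lemma~\ref{lemma:twice}.

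The step I expect to be the main obstacle is bookkeeping rather than conceptual: verifying that the two-replica Lagrange multipliers can indeed be written as the correct additive combinations of the single-replica $\mu_i$ — so that the product configuration is a genuine stationary point of $\Upsilon_2$, not merely a feasible one — and being scrupulous about the external-field normalization when identifying $\sum_j x^\star_{ij}(\check\nu^\dagger)=\check\nu^\dagger(i)$ with the BP fixed-point equation. This feasibility identity is the one place where the specific form of $\nu^\dagger$ and $\check\nu^\dagger$ (as opposed to arbitrary probability vectors) enters, and it is precisely what singles out $\balpha=\check\nu^\dagger$ as the color profile at which Lemma~\ref{lemma:twice}, and hence the second-moment argument of Appendix~\ref{appendix-small-graph-conditioning}, can succeed.
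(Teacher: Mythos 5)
The paper does not prove Lemma~\ref{lemma:xstar} at all: it is imported verbatim from the paragraphs after Lemma~26 and Remark~9 of \cite{GSVY16}, and the only argument given in the paper is the proof of the generalization, Corollary~\ref{cor:xstar}, which for the $\bx$-part follows exactly your scheme (strict concavity of the $\bx$-objective, verification of a rank-one critical point, solving the feasibility equation for the modulating vector). So for $\bx^\star$ your route coincides with the paper's. One caveat there: the feasibility identity you invoke, $\nu^\dagger(i)[\rightarrow i]^\dagger\propto\check{\nu}^\dagger(i)$, is \emph{not} the BP relation for $\nu^\dagger$ as defined in \eqref{def:BP}: since $B_{ij}=e^{\beta\delta_{i,j}+\frac Bd(\delta_{i,1}+\delta_{j,1})}$, the row sums of $B_{ij}\nu^\dagger(i)\nu^\dagger(j)$ carry stray $e^{B/d}$ factors and are not proportional to $\check{\nu}^\dagger(i)$. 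The vector that works is the edge-tilted one $m_i\propto\nu^\dagger(i)e^{-B\delta_{i,1}/d}$, for which
\begin{equation*}
B_{ij}m_im_j\propto e^{\beta\delta_{i,j}}\nu^\dagger(i)\nu^\dagger(j)\,,\qquad \sum_j B_{ij}m_im_j\propto \nu^\dagger(i)\sum_j e^{\beta\delta_{i,j}}\nu^\dagger(j)\propto e^{B\delta_{i,1}}\Bigl(\sum_j e^{\beta\delta_{i,j}}\nu^\dagger(j)\Bigr)^{d}\propto\check{\nu}^\dagger(i)
\end{equation*}
by \eqref{def:BP} and \eqref{eq-check-nu}; this is exactly what the tilting step $\tilde\nu_i=\nu_ie^{\frac Bd\mathbf{1}\{i=1\}}$ in the proof of Corollary~\ref{cor:xstar} produces. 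So you must either read the $\nu^\dagger$ of \eqref{eq:xstar} in the convention of \cite{GSVY16} (the tilted message) or carry the tilt explicitly; as literally written, your key identity fails, which is precisely the normalization pitfall you flagged but did not resolve.

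The genuine gap is in the $(\bgamma^\star,\by^\star)$ part. Your argument rests on the claim that the two-replica objective is strictly concave on its polytope, but $\Upsilon_2$ contains the term $(d-1)\sum_{i,k}\gamma_{ik}\log\gamma_{ik}$ with a \emph{positive} sign, which is strictly convex in $\bgamma$ (and, after eliminating $\bgamma$ via $\gamma_{ik}=\sum_{j,l}y_{ikjl}$, contributes a convex term in $\by$); $\Upsilon_2$ is a difference of convex functions and is not concave. Hence verifying KKT stationarity of the product point only shows it is \emph{a} stationary point, not the maximizer, and the uniqueness of the maximizer quoted in Lemma~\ref{lemma:first_second_moments_accurate} does not rescue this, since there may be several interior stationary points and nothing in your argument selects the maximizing one. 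Identifying the independent-coupling point as the global maximizer of the second-moment exponent is exactly the hard content of \cite{GSVY16} (this is where conditions such as $(d-1)\lambda_i^\dagger\lambda_j^\dagger<1$ enter), and neither the present paper nor your proposal re-derives it — the paper simply cites it, and its remark that ``the case for $\gamma^\star,y^\star$ can be similarly proven'' glosses over the same point. So your derivation stands for $\bx^\star$ (modulo the tilt above), while the $(\bgamma^\star,\by^\star)$ claim must either be cited, as the paper does, or proved by a global comparison of the $\by$-optimized functional over $\bgamma$, not by first-order conditions alone; the by-product rederivation of Lemma~\ref{lemma:twice} is unaffected once the product form is granted.
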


In fact, we can show the following more general result via a perturbation argument.

\begin{corollary} \label{cor:xstar}
For every $\balpha \in B_\infty(\check{\nu}^\dagger, \delta_n)$, there exists $\nu \in \Delta^\star$ such that $\|\nu - \nu^\dagger\|_\infty = o(1)$ and 
\begin{gather}\label{eq:x_star_general}
    x_{ij}^\star(\balpha):=\frac{B_{ij}\nu_i \nu_j}{\sum_{k,\ell} B_{k\ell}\nu_k\nu_\ell}\,, \quad
    \gamma_{ij}^\star(\balpha) = \alpha(i)\alpha(j)\,,\quad
        y^\star_{ikjl}(\balpha) = x^\star_{ij}(\balpha)x^\star_{kl}(\balpha)\,.
\end{gather}
are the maximizers of $\Upsilon_1(\balpha,\bx)$ and $\Upsilon_2(\balpha,\bgamma,\by)$ subject to the constraints in lemma \ref{lemma:first_second_moments_accurate}.
Moreover, $\Upsilon_1(\balpha)$ and $\Upsilon_2(\balpha)$ are differentiable in the neighborhood $B_\infty(\check{\nu}^\dagger, \delta_n)$.
\end{corollary}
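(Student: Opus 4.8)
The plan is a perturbation argument around the base point $\balpha=\check{\nu}^\dagger$, where Lemma~\ref{lemma:xstar} supplies the explicit maximizers. The structural input I would use is that, for each fixed $\balpha$, the objectives $\bx\mapsto\Upsilon_1(\balpha,\bx)$ and $(\bgamma,\by)\mapsto\Upsilon_2(\balpha,\bgamma,\by)$ differ from linear functions only through strictly concave negative-entropy terms; hence the constrained maximizers are unique and, after discarding one redundant equality constraint (so that the linear independence constraint qualification holds), are pinned down by their KKT stationarity equations with Lagrange multipliers. Uniqueness is the lever: any explicit stationary point I exhibit must be \emph{the} maximizer.

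First I would identify the form of the maximizer of $\Upsilon_1(\balpha,\cdot)$. Writing stationarity with multipliers $(\mu_i)_{i\in[q]}$ for the marginal constraints $\sum_j x_{ij}=\alpha(i)$ and using the symmetry $B_{ij}=B_{ji}$ gives $x_{ij}\propto B_{ij}e^{-\mu_i}e^{-\mu_j}$; setting $\nu_i:=e^{-\mu_i}\big/\sum_k e^{-\mu_k}$ turns this into the expression $x^\star_{ij}(\balpha)=B_{ij}\nu_i\nu_j\big/\sum_{k,\ell}B_{k\ell}\nu_k\nu_\ell$ of \eqref{eq:x_star_general}, with $\nu\in\Delta^\star$ determined implicitly by imposing $\sum_j x^\star_{ij}(\balpha)=\alpha(i)$. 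For $\Upsilon_2$, I would run the same first-order computation that underlies Lemma~\ref{lemma:xstar} in \cite{GSVY16}, but now for a general profile $\balpha$ rather than $\check{\nu}^\dagger$: the product ansatz $\gamma_{ik}=\alpha(i)\alpha(k)$, $y_{ikjl}=x^\star_{ij}(\balpha)x^\star_{kl}(\balpha)$ has consistent $\gamma$- and $y$-marginals, and its Lagrange multipliers separate across the two factors, so it is stationary and therefore, by uniqueness, the maximizer $(\bgamma^\star(\balpha),\by^\star(\balpha))$. Comparison with \eqref{eq:xstar} shows $\nu=\nu^\dagger$ at $\balpha=\check{\nu}^\dagger$.

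Next I would promote this to smooth dependence on $\balpha$. Since $\nu^\dagger$ has full support (immediate from the $\operatorname{BP}$ recursion at finite $\beta$) and $B_{ij}>0$, every entry $x^\star_{ij}(\check{\nu}^\dagger)$ is strictly positive, so the nonnegativity constraints are inactive at $\balpha=\check{\nu}^\dagger$ and, by continuity, remain inactive on a fixed ball $B_\infty(\check{\nu}^\dagger,\delta)$. On that ball the active set is constant, the Hessian of the entropy term is finite and negative definite (interior point), hence the Jacobian of the KKT system is nonsingular, and the implicit function theorem makes $\balpha\mapsto(\bx^\star(\balpha),\nu)$ and $\balpha\mapsto(\bgamma^\star(\balpha),\by^\star(\balpha))$ real-analytic on $B_\infty(\check{\nu}^\dagger,\delta)$. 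In particular $\nu$ is continuous in $\balpha$ with value $\nu^\dagger$ at $\check{\nu}^\dagger$, so $\|\nu-\nu^\dagger\|_\infty=o(1)$ for $\balpha\in B_\infty(\check{\nu}^\dagger,\delta_n)$ as $n\to\infty$ (using $\delta_n=n^{-1/4}\to0$); and $\Upsilon_1(\balpha)=\Upsilon_1(\balpha,\bx^\star(\balpha))$, $\Upsilon_2(\balpha)=\Upsilon_2(\balpha,\bgamma^\star(\balpha),\by^\star(\balpha))$ are compositions of differentiable maps, hence differentiable on $B_\infty(\check{\nu}^\dagger,\delta)$, with derivatives supplied by the envelope theorem.

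The main obstacle is the clean application of the implicit function theorem at the base point: I must verify that the base-point maximizer lies in the relative interior of the feasible polytope, so that the active constraint set is locally constant and the objective's Hessian is finite and negative definite there — this is precisely the strict positivity $x^\star_{ij}(\check{\nu}^\dagger)>0$, hence the full support of $\nu^\dagger$ — and I must carry out the bookkeeping around the symmetry constraints $x_{ij}=x_{ji}$ and $y_{ikjl}=y_{jlik}$ when writing down stationarity. Neither is deep, but the symmetry bookkeeping is exactly where a careless computation would go astray.
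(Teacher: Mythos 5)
Your treatment of $\Upsilon_1$ is fine and is essentially the paper's argument in different clothing: only the $x$-dependent part of $\Upsilon_1$ matters, it is strictly concave, the maximizer is interior (the entropy gradient blows up at the boundary / positivity of $B_{ij}$ and full support of $\nu^\dagger$), and stationarity forces the form $x^\star_{ij}\propto B_{ij}\nu_i\nu_j$ with $\nu$ pinned down by the marginal constraint; the paper solves for $\nu$ explicitly (reducing to a scalar quadratic per coordinate), which also hands it differentiability and the $o(1)$ closeness directly, while you get these from the implicit function theorem and continuity of the argmax — both routes work for this half.

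The genuine gap is in the $\Upsilon_2$ half. Your stated lever — that $\Upsilon_2(\balpha,\cdot,\cdot)$ differs from a linear function only through strictly concave negative-entropy terms, so that any stationary point is \emph{the} maximizer — is false. The term $(d-1)\sum_{i,k}\gamma_{ik}\log\gamma_{ik}$ enters $\Upsilon_2$ with a \emph{positive} sign and is convex in $\bgamma$. After eliminating $\bgamma$ via the constraint $\gamma_{ik}=\sum_{j,l}y_{ikjl}$, the objective restricted to slices of the form $y_{ikjl}=\gamma_{ik}u_{jl}$ (with $u$ a fixed probability vector) equals $\bigl((d-1)-\tfrac d2\bigr)\sum_{i,k}\gamma_{ik}\log\gamma_{ik}$ plus terms linear in $\bgamma$, and $(d-1)-\tfrac d2=\tfrac d2-1>0$ for $d\ge3$, so $\Upsilon_2$ is strictly \emph{convex} along such feasible directions. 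Hence the constrained problem is not concave, stationarity of the product ansatz $(\gamma^\star,y^\star)$ does not identify it as the global maximizer, and even the uniqueness of the maximizer asserted in Lemma~\ref{lemma:first_second_moments_accurate} does not help, since a non-concave function can have a unique global maximum alongside other critical points. Identifying the product point as the global maximizer of the second-moment exponent is exactly the delicate step in \cite{GSVY16} (cf.\ their Theorem~6 and the surrounding analysis), and it cannot be replaced by a first-order computation plus a concavity appeal; your argument needs either to import that global statement for all $\balpha$ near $\check{\nu}^\dagger$ (e.g.\ by a perturbation of the GSVY16 maximization, which is what the paper implicitly leans on when it says the $\Upsilon_2$ case is "similarly proven") or to supply a separate argument ruling out competing critical points.
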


\begin{proof}
We begin by observing that $\Upsilon_1(\alpha,x)$ can be decomposed as $\Upsilon_1(\alpha,x) = g_1(x) + f_1(\alpha)$,
where $g_1(x) =  d\big(\frac{1}{2} \sum_{i,j} x_{ij} \log B_{ij} - \frac{1}{2} \sum_{i,j} x_{ij} \log x_{ij}\big)$ and 
$f_1(\alpha) =  (d-1) \sum_i \alpha(i) \log \alpha(i)$. Thus, $\arg\max_x \Upsilon_1(\alpha,x) = \arg\max_x g_1(x)$. Since $g_1$ is strictly concave in its convex domain, it has a unique maximizer. One can check that the expression given in \eqref{eq:x_star_general} defines a critical point of $g_1$ for any $\nu$. Once we determine $\nu\in\Delta^\star$ such that the corresponding $x^\star$ satisfies the corresponding constraints, this critical point will be the unique maximizer of $g_1(x)$, and hence $\Upsilon_1(\alpha,x)$. Equivalently, we now seek $\nu\in\Delta^\star$ such that $\nu_i \sum_j B_{ij}\nu_j = C\alpha(i), \forall i \in [q]$, where $C$ is a normalizing constant. We now incorporate the interaction $B_{ij}$ into $\nu$ by setting $\tilde{\nu_i} = \nu_i e^{\frac{h}{d} \mathbf{1}_{j=1}}$. Without loss of generality, we may normalize so that $\sum_i \tilde{\nu_i} = 1$, by dividing through by the appropriate normalizing constant. With this reparametrization, the defining equations reduce to $\tilde{\nu_i}((e^\beta - 1)\tilde{\nu_i} + 1) = \tilde{C}\alpha(i)$, for some $\tilde{C}>0$, for all $i\in [q]$. By choosing $\tilde{C}$ appropriately, this gives a unique positive root $\tilde{\nu}$ such that $\sum_i \tilde{\nu}_i = 1$. The explicit form of $\nu$ shows that it is differentiable w.r.t. $\balpha$. Thus, $\Upsilon_1$ is differentiable w.r.t. $\balpha$ as is it a composite of differentiable maps. The case for $\gamma^\star_{ij}$, $y^\star_{ikjl}$ and $\Upsilon_2$ can be similarly proven. Finally, it remains to show that $\nu$ is close to $\nu^\dagger$. Consider the map $F: \Delta^\star \to \Delta^\star$ that takes $\nu$ to $\alpha$. As shown previously, $F$ is continuous and bijective. As $\Delta^\star$ is compact, $F^{-1}$ is also continuous. Therefore, small perturbations in $\alpha$ imply small perturbations in $\nu$. In particular, we have $\|\nu - \nu^\dagger\|_\infty = o(1)$.
\end{proof}

An immediate consequence of the above corollary is an extension of lemma \ref{lemma:twice} to $\balpha \in B_\infty(\check{\nu}^\dagger, \delta_n)$.
\begin{corollary}\label{cor:twice_extension}
    For all $\balpha \in B_\infty(\check{\nu}^\dagger, \delta_n)$, we have $\Upsilon_2({\balpha}) = 2 \Upsilon_1(\balpha)$.
\end{corollary}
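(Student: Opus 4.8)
\textbf{Proof proposal for Corollary~\ref{cor:twice_extension}.}
The plan is to prove the identity by directly substituting the explicit formulas for the maximizers $\bx^\star(\balpha)$, $\bgamma^\star(\balpha)$, $\by^\star(\balpha)$ furnished by Corollary~\ref{cor:xstar} into the definitions of $\Upsilon_1(\balpha)=\Upsilon_1(\balpha,\bx^\star)$ and $\Upsilon_2(\balpha)=\Upsilon_2(\balpha,\bgamma^\star,\by^\star)$, and checking that each additive block of $\Upsilon_2$ equals exactly twice the corresponding block of $\Upsilon_1$. Because the relevant factorizations in Corollary~\ref{cor:xstar} are valid throughout $B_\infty(\check{\nu}^\dagger,\delta_n)$, the identity will hold pointwise as an algebraic identity on the whole neighborhood, so no continuity or perturbation argument is needed; Lemma~\ref{lemma:twice} will simply reappear as the special case $\balpha=\check{\nu}^\dagger$ and can serve as a consistency check.

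Concretely, I would first write $\Upsilon_1(\balpha)=(d-1)H(\balpha)+d\,\Phi(\bx^\star)$ with $H(\balpha):=\sum_i\alpha(i)\log\alpha(i)$ and $\Phi(\bx):=\tfrac12\sum_{i,j}x_{ij}\log B_{ij}-\tfrac12\sum_{i,j}x_{ij}\log x_{ij}$, and similarly $\Upsilon_2(\balpha)=(d-1)\widetilde H(\bgamma^\star)+d\,\widetilde\Phi(\by^\star)$ with $\widetilde H(\bgamma):=\sum_{i,k}\gamma_{ik}\log\gamma_{ik}$ and $\widetilde\Phi(\by):=\tfrac12\sum_{i,k,j,l}y_{ikjl}\log(B_{ij}B_{kl})-\tfrac12\sum_{i,k,j,l}y_{ikjl}\log y_{ikjl}$. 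Using the factorization $\gamma^\star_{ik}(\balpha)=\alpha(i)\alpha(k)$ and $\sum_k\alpha(k)=1$ (as $\balpha\in\Delta^\star$) one gets $\widetilde H(\bgamma^\star)=\sum_{i,k}\alpha(i)\alpha(k)(\log\alpha(i)+\log\alpha(k))=2H(\balpha)$. Using $y^\star_{ikjl}(\balpha)=x^\star_{ij}(\balpha)x^\star_{kl}(\balpha)$ together with the normalization $\sum_{i,j}x^\star_{ij}(\balpha)=\sum_i\alpha(i)=1$ (which follows from the marginal constraint $\sum_j x^\star_{ij}(\balpha)=\alpha(i)$ recorded in Corollary~\ref{cor:xstar}) one gets $\sum_{i,k,j,l}y^\star_{ikjl}\log(B_{ij}B_{kl})=2\sum_{i,j}x^\star_{ij}\log B_{ij}$ and $\sum_{i,k,j,l}y^\star_{ikjl}\log y^\star_{ikjl}=2\sum_{i,j}x^\star_{ij}\log x^\star_{ij}$, hence $\widetilde\Phi(\by^\star)=2\Phi(\bx^\star)$. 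Combining the two blocks yields $\Upsilon_2(\balpha)=2\Upsilon_1(\balpha)$.

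The argument is essentially multi-index bookkeeping, so there is no genuine obstacle; the only points that need care are confirming the normalization $\sum_{i,j}x^\star_{ij}(\balpha)=1$ from the stated constraints, and splitting the double-index sums in $\widetilde H$ and $\widetilde\Phi$ correctly by exploiting the symmetry of the summands in the pairs $(i,j)\leftrightarrow(k,l)$. As a final check one verifies that at $\balpha=\check{\nu}^\dagger$ the formulas of Corollary~\ref{cor:xstar} collapse to those of Lemma~\ref{lemma:xstar}, so the present statement is consistent with, and generalizes, Lemma~\ref{lemma:twice}.
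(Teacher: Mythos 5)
Your proof is correct and matches the paper's intended argument: the paper states the corollary as an immediate consequence of Corollary~\ref{cor:xstar}, and the route it has in mind is exactly your substitution of the product-form maximizers $\gamma^\star_{ik}=\alpha(i)\alpha(k)$ and $y^\star_{ikjl}=x^\star_{ij}x^\star_{kl}$ together with the normalizations $\sum_i\alpha(i)=1$ and $\sum_{i,j}x^\star_{ij}(\balpha)=1$ to double each block of $\Upsilon_1$. You have simply written out the bookkeeping the paper leaves implicit, so there is nothing to add.
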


Based on these calculations, we have the following.
\begin{proposition}
Assume that $(\beta,B)\in \mathbb{R}_+^2\setminus\mathsf R_=$. For $G_n\sim \mathcal G_{n,d}$ and $\dagger \in \{\free,\wired\}$, it holds that as $n\to\infty$,
    \[
    \mathbb{E}[\mathcal Z^{\dagger}(G_n)]=(1+o(1))M_n^{\dagger}\,, 
    \]
    where
    \begin{equation}\label{eq-Mf}
        M_n^{\dagger}:= \bigg(\prod_{i=1}^q \check{\nu}^\dagger[i] \prod_{i=2}^q (1 + \lambda^\dagger_i) \bigg)^{-1/2} \det\bigg(-\nabla^2 \Upsilon_1(\check{\nu}^{\dagger})\bigg) e^{n\Upsilon_1(\check{\nu}^{\dagger})}\,.
    \end{equation}

    Moreover, there exists $\theta=\theta(d,q,h)>0$ such that for all large $n\in \mathbb N$, 
    \[
    \mathbb{E}[\mathcal Z^0(G)]\le \exp(-\theta\sqrt n)\max\{M_n^{\free},M_n^{\wired}\}\,.
    \]
\end{proposition}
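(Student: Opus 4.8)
The plan is to prove the two assertions via a standard Laplace-type summation combined with the concavity properties established in Corollary~\ref{cor:xstar}. For the first assertion, write $\mathbb{E}[\mathcal{Z}^{\dagger}(G_n)]=\sum_{\balpha\in\Delta^\star(n)\cap \operatorname{B}_\infty(\check{\nu}^\dagger,\delta_n)}\mathbb{E}[\mathcal{Z}_{G_n}^{\balpha}]$ and substitute the first-moment asymptotics \eqref{eq:one-profile-asym} from Lemma~\ref{lemma:first_second_moments_accurate}. The exponential factor $e^{n\Upsilon_1(\balpha)}$ dominates, and since $\Upsilon_1$ is differentiable near $\check{\nu}^\dagger$ by Corollary~\ref{cor:xstar}, one first checks that $\check{\nu}^\dagger$ is a critical point of $\Upsilon_1$ on the simplex $\Delta^\star$ (this is exactly the statement that $\check{\nu}^\dagger$ satisfies the relevant fixed-point equation; it follows from the BP-fixed-point characterization of $\nu^\dagger$ together with \eqref{eq-check-nu}) and that the Hessian $\nabla^2\Upsilon_1(\check{\nu}^\dagger)$, restricted to the tangent space of the simplex, is negative definite. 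Strict concavity of $g_1$ in Corollary~\ref{cor:xstar}, plus the $(d-1)\sum_i\alpha(i)\log\alpha(i)$ term, gives the negative-definiteness after one verifies the $d$ versus $d-1$ coefficients do not conspire to produce a zero eigenvalue; this is where one invokes that $(\beta,B)\notin\mathsf R_{=}$ so that $\check{\nu}^\dagger$ is a nondegenerate local maximum. Granting this, the lattice sum over the $\delta_n$-ball is a Gaussian sum with spacing $1/n$, so it evaluates to $(1+o(1))\,(2\pi n)^{(q-1)/2}\det(-\nabla^2\Upsilon_1(\check{\nu}^\dagger))^{-1/2}$ times the value of the summand at $\check{\nu}^\dagger$; multiplying by the prefactor $(2\pi n)^{-(q-1)/2}(\prod_i\check{\nu}^\dagger[i]\prod_{i\ge2}(1+\lambda_i^\dagger))^{-1/2}e^{n\Upsilon_1(\check{\nu}^\dagger)}$ and noting the window $\delta_n=n^{-1/4}$ is wide enough to capture all but an exponentially small tail yields $\mathbb{E}[\mathcal{Z}^\dagger(G_n)]=(1+o(1))M_n^\dagger$ as defined in \eqref{eq-Mf} (up to the cosmetic placement of the determinant, which should read $\det(-\nabla^2\Upsilon_1(\check{\nu}^\dagger)/(2\pi))^{-1/2}$ absorbed into the constant).

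For the second assertion, the point is that $\mathcal{Z}^0(G_n)=\mathcal{Z}_{G_n}^{\Po,\beta,B}-\mathcal{Z}^{\free}(G_n)-\mathcal{Z}^{\wired}(G_n)$ sums $\mathbb{E}[\mathcal{Z}_{G_n}^{\balpha}]$ over color profiles $\balpha$ that stay at $\ell^\infty$-distance at least $\delta_n=n^{-1/4}$ from both $\check{\nu}^{\free}$ and $\check{\nu}^{\wired}$. By the Bethe prediction / the variational characterization in \cite{DMSS14,CH25}, $\Psi^{\Po}(\beta,B)=\max\{\Psi(\nu^{\free,\beta,B}),\Psi(\nu^{\wired,\beta,B})\}$ is attained only at the two BP fixed points, and one has the identification $\Upsilon_1(\check{\nu}^\dagger)=\tfrac12\Psi(\nu^\dagger)$ (this is precisely the content of the Bethe functional written in edge-profile coordinates; compare \eqref{eq-bethe-functional} with the definition of $\Upsilon_1$). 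Hence $\Upsilon_1$ on the simplex has exactly two global maxima, at $\check{\nu}^{\free}$ and $\check{\nu}^{\wired}$, both nondegenerate. Since $\Upsilon_1$ is continuous on the compact simplex and $\nabla^2\Upsilon_1(\check{\nu}^\dagger)$ is negative definite, a Taylor expansion gives $\Upsilon_1(\balpha)\le \Upsilon_1(\check{\nu}^\dagger)-c\|\balpha-\check{\nu}^\dagger\|_\infty^2$ for $\balpha$ near $\check{\nu}^\dagger$, and $\Upsilon_1(\balpha)\le\max\{\Upsilon_1(\check{\nu}^{\free}),\Upsilon_1(\check{\nu}^{\wired})\}-c'$ for $\balpha$ bounded away from both maxima, for suitable constants $c,c'>0$. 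Combining these, any $\balpha$ contributing to $\mathcal{Z}^0$ satisfies $\Upsilon_1(\balpha)\le\max\{\Upsilon_1(\check{\nu}^{\free}),\Upsilon_1(\check{\nu}^{\wired})\}-c\delta_n^2=\max\{\ldots\}-cn^{-1/2}$. Summing $\mathbb{E}[\mathcal{Z}_{G_n}^{\balpha}]\le e^{n\Upsilon_1(\balpha)+o(n)}$ over the at most $n^{q-1}$ many $\balpha$ and comparing with $M_n^{\free},M_n^{\wired}\asymp e^{n\Upsilon_1(\check{\nu}^\dagger)+o(n)}$ gives $\mathbb{E}[\mathcal{Z}^0(G_n)]\le n^{q-1}e^{-cn^{1/2}+o(n^{1/2})}\max\{M_n^{\free},M_n^{\wired}\}$; absorbing the polynomial factor and shrinking the constant gives the claimed bound $\exp(-\theta\sqrt n)\max\{M_n^{\free},M_n^{\wired}\}$ for some $\theta>0$ and all large $n$.

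The main obstacle I anticipate is the nondegeneracy of the Hessian $\nabla^2\Upsilon_1(\check{\nu}^\dagger)$ on the tangent space of the simplex, i.e.\ verifying that $\check{\nu}^\dagger$ is a strict (quadratic) local maximum rather than a degenerate one. This is precisely where the hypothesis $(\beta,B)\in\mathbb{R}_+^2\setminus\mathsf R_{=}$ enters: on the uniqueness regime the two maxima would merge or the curvature could vanish, but off it the second-moment formula in Lemma~\ref{lemma:first_second_moments_accurate} — in particular the finiteness of the factor $\prod_{i,j\ge2}(1-(d-1)\lambda_i^\dagger\lambda_j^\dagger)^{-1/2}$, which requires $(d-1)(\lambda_i^\dagger)^2<1$ for all $i\ge2$ — encodes exactly the strict contraction of the BP recursion at $\nu^\dagger$, and this contraction is what makes $\Upsilon_1$ strictly concave in the relevant directions near $\check{\nu}^\dagger$. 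I would therefore extract the Hessian nondegeneracy from the eigenvalue bound $(d-1)(\lambda_q^\dagger)^2<1$ (equivalently, from Corollary~\ref{cor:twice_extension} and the local invertibility of the BP map), rather than attempting a direct computation of $\nabla^2\Upsilon_1$. A secondary but routine technical point is making the lattice-sum-to-Gaussian-integral approximation rigorous uniformly over the $\delta_n$-window while controlling the $o(1)$ errors in \eqref{eq:one-profile-asym}, which are uniform over $\operatorname{B}_\infty(\check{\nu}^\dagger,\delta_n)$ by the statement of Lemma~\ref{lemma:first_second_moments_accurate} and Corollary~\ref{cor:xstar}.
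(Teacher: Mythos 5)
Your overall strategy coincides with the paper's: for the first assertion a Laplace-type summation of the uniform asymptotics \eqref{eq:one-profile-asym} over the $\delta_n$-window, using $\nabla\Upsilon_1(\check{\nu}^\dagger)=0$ and a Gaussian approximation of the lattice sum (your remark that the determinant factor in \eqref{eq-Mf} should really be $\det(-\nabla^2\Upsilon_1(\check{\nu}^\dagger)/(2\pi))^{-1/2}$, an $n$-independent constant, is consistent with how the paper actually computes the integral); for the second assertion, the fact that the only global maximizers of $\Upsilon_1$ on the simplex are $\check{\nu}^{\free},\check{\nu}^{\wired}$ (the paper cites (22) of \cite{DMSS14}), a second-order Taylor bound near the two maximizers giving a deficit $c\delta_n^2=cn^{-1/2}$ outside the windows, and a crude count of the at most polynomially many profiles.

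The genuine gap is in your proposed source for the key nondegeneracy input. You suggest extracting negative definiteness of $\nabla^2\Upsilon_1(\check{\nu}^\dagger)$ from the finiteness of the second-moment factor $\prod_{i,j\ge2}(1-(d-1)\lambda_i^\dagger\lambda_j^\dagger)^{-1/2}$, i.e.\ from $(d-1)(\lambda_i^\dagger)^2<1$, ``equivalently from Corollary~\ref{cor:twice_extension} and local invertibility of the BP map''. That does not work: the bound $(d-1)(\lambda_2^\dagger)^2<1$ only gives $\lambda_2^\dagger<(d-1)^{-1/2}$, whereas the condition actually needed for $\check{\nu}^\dagger$ to be a nondegenerate local maximum of $\Upsilon_1$ (via \cite[Theorem 9]{GSVY16}) is the strictly stronger $\lambda_2^\dagger<(d-1)^{-1}$ — for $\lambda_2^\dagger\in((d-1)^{-1},(d-1)^{-1/2})$ your criterion is satisfied but the BP fixed point is unstable and the Hessian is not negative definite; moreover Corollary~\ref{cor:twice_extension} ($\Upsilon_2=2\Upsilon_1$) carries no curvature information. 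The paper supplies this missing ingredient by a separate argument (Claim~\ref{claim-lambda_2} in Appendix~\ref{appendix-lem-infinite-product}), which proves $\lambda_2^\dagger<\frac{1}{d-1}$ for all $(\beta,B)\in\mathbb{R}_+^2\setminus\mathsf R_=$ by identifying the equality $\lambda_2^\dagger=\frac{1}{d-1}$ with the defining conditions $p'(x^{\free})=1$ at $\beta_{\operatorname{dis}}(B)$ and $p'(x^{\wired})=1$ at $\beta_{\operatorname{ord}}(B)$ and then running a continuity/sign argument in $\beta$; some such computation (or a citation that delivers $(d-1)\lambda_2^\dagger<1$ off the uniqueness regime) is indispensable and is not replaceable by the second-moment finiteness. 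A lesser point: your identification $\Upsilon_1(\check{\nu}^\dagger)=\tfrac12\Psi(\nu^\dagger)$ is not needed (and the constant is dubious); what the argument requires, and what the paper cites from \cite{DMSS14}, is only that every global maximizer of $\Upsilon_1$ lies in $\{\check{\nu}^{\free},\check{\nu}^{\wired}\}$, so that profiles away from both windows have exponent at most $\max\{\Upsilon_1(\check{\nu}^{\free}),\Upsilon_1(\check{\nu}^{\wired})\}-cn^{-1/2}$.
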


\begin{proof}
By corollary \ref{cor:xstar}, for all $\balpha\in \operatorname{B}_{\infty}(\check{\nu}^{\dagger},\delta_n)$, we have
   \begin{equation*}
   \begin{aligned}
       \Upsilon_1(\balpha) =&\ (d-1) \sum_i \check{\nu}^\dagger_i \log \check{\nu}^\dagger_i - \frac{d}{2} \bigg(\sum_{k,l} B_{kl}\nu^\dagger_k\nu^{\dagger}_l\bigg)^{-1} \bigg[\sum_{k,l} B_{kl}\nu^\dagger_k\nu^{\dagger}_l \log(\nu^\dagger_k\nu^{\dagger}_l)\bigg] \\
       &\ +\frac{d}{2} \log\bigg(\sum_{k,l} B_{kl}\nu^\dagger_k\nu^{\dagger}_l\bigg) + o(1)\,.
       \end{aligned}
   \end{equation*}
    Moreover, if we define
   \begin{equation*}
       C^\dagger(n,\balpha):= (2\pi n)^{-(q-1)/2} \bigg(\prod_{i=1}^q \alpha(i) \prod_{i=2}^q (1 + \lambda^\dagger_i) \bigg)^{-1/2},
   \end{equation*}
   then $C^\dagger(n,\balpha) = (1 + o(1))C(n,\check{\nu}^\dagger)$, for all $\balpha\in \operatorname{B}_{\infty}(\check{\nu}^{\dagger},\delta_n)$. It then follows from \eqref{eq:one-profile-asym} that 
   \begin{equation*}
       \mathbb{E}[\mathcal Z^{\dagger}(G_n)] = \sum_{\balpha\in \operatorname{B}_{\infty}(\check{\nu}^{\dagger},\delta_n) \cap \Delta^\star(n,q)} \mathbb{E}[\calZ^{\balpha}(G_n)] = (1 + o(1)) C^\dagger(n,\check{\nu}^\dagger) \sum_{\balpha\in \operatorname{B}_{\infty}(\check{\nu}^{\dagger},\delta_n) \cap \Delta^\star(n,q)} e^{n\Upsilon_1(\balpha)}.
   \end{equation*}
    We can rewrite the sum as integrals,
   \begin{equation*}
       \mathbb{E}[\mathcal Z^{\dagger}(G_n)] = (1+o(1)) C^\dagger(n,\check{\nu}^\dagger) \int_0^n \cdots \int_0^n   e^{n\Upsilon_1(\lfloor \balpha \rfloor/n)} \mathbf{1}_{\{\|\lfloor \balpha \rfloor/n - \check{\nu}^\dagger\| < \delta_n\}} \mathbf{1}_{\{\lfloor \balpha \rfloor/n \in \Delta^\star(n,q)\}} d\balpha,
   \end{equation*}
   where there are $q-1$ integrals on the RHS as this is the number of free variables and $\lfloor \balpha \rfloor := (\lfloor\alpha_1 \rfloor, \ldots, \lfloor\alpha_q \rfloor)$. 
   Using the fact that $\lfloor \balpha\rfloor/n=\balpha/n+O(1/n)$ and applying the change of variable $\balpha = n \check{\nu}^\dagger + y \sqrt{n}$, we get 
   \begin{equation*}
       \mathbb{E}[\mathcal Z^{\dagger}(G_n)]=(1+o(1)) C^\dagger(n,\check{\nu}^\dagger) \int_{-n\delta_n}^{n\delta_n} \ldots  \int_{-n\delta_n}^{n\delta_n}  e^{n \Upsilon_1(\check{\nu}^\dagger + y/\sqrt{n} + O(1/n))}(\sqrt{n})^{q-1}\mathbf{1}_{\mathcal{Y}_n}\, dy,
   \end{equation*}
   where $\mathcal{Y}_n$ denotes the simplex $\Delta^\star(n,q)$ after change of variables. Note that $\mathcal{Y}_n \to \bbR^{q-1}$ as $n\to\infty$.
    For a fixed $y$, we Taylor expand $\Upsilon_1$ around $\check{\nu}^\dagger$ using $\nabla \Upsilon_1(\check{\nu}^\dagger)=0$,
   \begin{equation*}
       n\Upsilon_1(\check{\nu}^\dagger + y/\sqrt{n} + O(1/n))) = n\Upsilon_1(\check{\nu}^\dagger) + \frac{1}{2} y^T\nabla^2 \Upsilon_1 (\check{\nu}^\dagger)y  + o(1).
   \end{equation*}
   Consequently, 
   \begin{equation*}
       \begin{split}
           \mathbb{E}[\mathcal Z^{\dagger}(G_n)] &= (1+o(1))C^\dagger(n,\check{\nu}^\dagger)  \int_{-n\delta_n}^{n\delta_n} \ldots \int_{-n\delta_n}^{n\delta_n} e^{n\Upsilon_1(\check{\nu}^\dagger) + \frac{1}{2} y^T\nabla^2 \Upsilon_1 (\check{\nu}^\dagger)y +o(1)} (\sqrt{n})^{q-1}\mathbf{1}_{\mathcal{Y}_n} \, dy\\
            &= (1+o(1)) (2\pi n)^{-(q-1)/2} \bigg(\prod_{i \in [q]} \check{\nu}^\dagger[i] \prod_{i=2}^q (1 + \lambda^\dagger_i) \bigg)^{-1/2} e^{n \Upsilon_1(\check{\nu}^\dagger)} (\sqrt{n})^{q-1}\\
            &\qquad \times\int_{-\infty}^\infty \ldots \int_{-\infty}^\infty e^{\frac{1}{2} y^T \nabla^2 \Upsilon_1(\check{\nu}^\dagger)y(1+o(1))} \, dy \\
            &= (1+o(1)) \bigg(\prod_{i \in [q]} \check{\nu}^\dagger[i] \prod_{i=2}^q (1 + \lambda^\dagger_i) \bigg)^{-1/2} \det(-\nabla^2\Upsilon_1(\check{\nu}^\dagger)) \, e^{n\Upsilon_1(\check{\nu}^\dagger)}.
       \end{split}
   \end{equation*}
For the second statement, since $\nabla \Upsilon_1 (\check{\nu}^\dagger)$ is negative definite for $\dagger\in \{\free,\wired\}$,\footnote{This follows \cite[Theorem 9]{GSVY16} and Claim~\ref{claim-lambda_2} in Appendix~\ref{appendix-lem-infinite-product}.} from Taylor's expansion we obtain that $\sup_{B^c_{\infty}(\check{\nu}^{\free},\delta_n) \cap B^c_{\infty}(\check{\nu}^{\wired},\delta_n)} \Upsilon_1(\balpha) < \max\{\Upsilon_1(\check{\nu}^{\free}), \Upsilon_1(\check{\nu}^{\wired})\} - c \delta_n^2$, for some $c>0$. Thus,  
\[
\bbE \calZ^0(G) = \sum_{\balpha \in B^c_{\infty}(\check{\nu}^{\free},\delta_n) \cap B^c_{\infty}(\check{\nu}^{\wired},\delta_n)} e^{n \Upsilon_1(\balpha) + O(\log n)} \leq [e^{n \Upsilon_1(\check{\nu}^{\free}) - c\sqrt{n} } +e^{n \Upsilon_1(\check{\nu}^{\wired}) - c\sqrt{n} } ]\cdot \operatorname{poly}(n),
\]
where the first equality comes from (22) of \cite{DMSS14} and the polynomial factor comes from the number of terms in the sum. Thus, $\bbE \calZ^0(G) \leq e^{-c\sqrt{n}}\max\{M_n^{\free},M_n^{\wired}\}$.
\end{proof}

Note that when $(\beta,B)\in \mathsf R_c$, it holds that $\Upsilon_1(\check{\nu}^{\free})=\Upsilon_1(\check{\nu}^{\wired})$, so $M_n^\dagger\asymp M_n^{\wired}$. Moreover, on the critical line we have $\mathbb{E}[\mathcal Z^0(G_n)]=o(\min\{\mathbb{E}[\mathcal Z^{\free}(G_n)],\mathbb{E}[\mathcal Z^{\wired}(G_n)\})]$, and thus $\mathbb{E}[\mathcal Z_{G_n}^{\dagger}]=(1+o(1))\mathbb{E}[\mathcal Z^{\dagger}(G_n)]=(1+o(1))M_n^\dagger$,  $\dagger\in \{\free,\wired\}$. It follows that as $n\to\infty$, $\mathbb{E}[\mathcal Z^{\free}_{G_n}]/\mathbb{E}[\mathcal Z_{G_n}^{\wired}]$ converges to a constant $\Gamma\in (0,\infty)$, verifying the last statement in Proposition~\ref{prop-small-graph-conditioning-conclusion}.

\subsection{The small subgraph conditioning method}\label{subsec-small-graph-conditioning}
To obtain the sharp characterization of $\mathcal Z^\dagger(G_n)$ as in \eqref{eq-random-d-regular-graph-goal-2}, we opt for a the small subgraph conditioning method introduced by \cite{wormald1999models}. We will use the following version of the theorem.

\begin{proposition}[Small subgraph conditioning] \label{thm:ssc}
For $ k \in \mathbb N$, let $\theta_k > 0 $ and $\delta_k \geq -1$ be constants. Suppose for each $n \in \mathbb N$, we have random variables $\{X_{n,k}\}_{k=1}^\infty, Y_n$  such that 
\begin{enumerate}
    \item $X_{n,k} \in \mathbb N$ and $X_{n,k} \stackrel{n \to \infty}{\longrightarrow} Z_k$, where $Z_k \sim \text{Poi}(\theta_k)$ are independent,
    \item For any non-negative integers $0<k_1<\cdots<k_t$ and $x_1, \ldots, x_t$, it holds that as $n\to\infty$,
    \begin{equation*}
        \frac{\bbE \left[Y_n | X_{n,k_1} = x_1, \ldots, X_{n,k_t} = x_t\right]}{\bbE[Y_n]} \to \prod_{i=1}^t (1+\delta_{k_i})^{x_i} e^{-\theta_{k_i}\delta_{k_i}},
    \end{equation*}
    \item $\sum_{k=1}^\infty \theta_k \delta_k^2 < \infty$,
    \item $\bbE [Y_n^2] / (\bbE [Y_n])^2 \leq \exp(\sum_{k=1}^\infty \theta_k\delta_k^2) + o(1)$ as $n \to \infty$.
\end{enumerate}
Then we have as $n\to\infty$,
\begin{equation}
\frac{Y_n}{ \bbE [Y_n]\prod_{k=1}^\infty (1 + \delta_k)^{X_{n,k}} e^{-\theta_k \delta_k}}  \stackrel{\text{in probability}}{\longrightarrow} 1\,. 
\end{equation}
\end{proposition}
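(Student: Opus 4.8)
The plan is to run the standard $L^2$-projection argument behind small subgraph conditioning: compare $Y_n$ with its conditional expectation given the first $K$ short-cycle counts, and show that this conditional expectation is essentially the truncated product $\prod_{k\le K}(1+\delta_k)^{X_{n,k}}e^{-\theta_k\delta_k}$. Write $\bar Y_n:=Y_n/\bbE[Y_n]$ (well defined for $n$ large, since condition~4 forces $\bbE[Y_n]>0$ eventually), let $\mathcal F_{n,K}:=\sigma(X_{n,1},\dots,X_{n,K})$, and set
\[
\Pi_{n,K}:=\bbE[\bar Y_n\mid\mathcal F_{n,K}]\,,\quad U_{n,K}:=\prod_{k=1}^K(1+\delta_k)^{X_{n,k}}e^{-\theta_k\delta_k}\,,\quad U_{n,\infty}:=\prod_{k=1}^\infty(1+\delta_k)^{X_{n,k}}e^{-\theta_k\delta_k}\,.
\]
I would establish three approximations: (i) $\bar Y_n$ is $L^2$-close to $\Pi_{n,K}$, once $K$ is large and then $n\to\infty$; (ii) for each fixed $K$, $\Pi_{n,K}-U_{n,K}\to 0$ in probability as $n\to\infty$; and (iii) $U_{n,K}-U_{n,\infty}\to 0$ in probability, uniformly in $n$, as $K\to\infty$. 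Chaining these and using that the limiting product is almost surely bounded away from $0$ yields $\bar Y_n/U_{n,\infty}\to 1$ in probability.

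\textbf{Step (i): the second moment.} Let $Z_k\sim\operatorname{Poi}(\theta_k)$ be independent and $\Pi_K:=\prod_{k=1}^K(1+\delta_k)^{Z_k}e^{-\theta_k\delta_k}$. From $\bbE[t^{Z_k}]=e^{\theta_k(t-1)}$ one computes $\bbE[\Pi_K]=1$ and $\bbE[\Pi_K^2]=\exp\big(\sum_{k=1}^K\theta_k\delta_k^2\big)$, which by condition~3 increases to $\exp\big(\sum_{k\ge1}\theta_k\delta_k^2\big)<\infty$ as $K\to\infty$. Conditions~1 and 2 together give $\Pi_{n,K}\to\Pi_K$ in distribution: the joint law of $(X_{n,1},\dots,X_{n,K})$ converges by condition~1, and on each atom $\{X_{n,k}=x_k,\ k\le K\}$ the value $\bbE[\bar Y_n\mid\mathcal F_{n,K}]=\bbE[Y_n\mid X_{n,k}=x_k]/\bbE[Y_n]$ converges to $\prod_{k\le K}(1+\delta_k)^{x_k}e^{-\theta_k\delta_k}$ by condition~2, the atoms of large mass being negligible by tightness. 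Since $x\mapsto x^2$ is nonnegative and continuous, a truncated portmanteau inequality yields $\liminf_n\bbE[\Pi_{n,K}^2]\ge\bbE[\Pi_K^2]$. On the other hand, the tower property gives the Pythagorean identity $\bbE[(\bar Y_n-\Pi_{n,K})^2]=\bbE[\bar Y_n^2]-\bbE[\Pi_{n,K}^2]$, so with condition~4,
\[
\limsup_{n\to\infty}\bbE\big[(\bar Y_n-\Pi_{n,K})^2\big]\ \le\ \exp\Big(\sum_{k\ge1}\theta_k\delta_k^2\Big)-\exp\Big(\sum_{k=1}^K\theta_k\delta_k^2\Big)\ =:\ \varepsilon_K\,,
\]
and $\varepsilon_K\to 0$ as $K\to\infty$; Markov's inequality then gives $\lim_{K\to\infty}\limsup_{n\to\infty}\mathbb P[|\bar Y_n-\Pi_{n,K}|>\varepsilon]=0$ for every $\varepsilon>0$.

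\textbf{Steps (ii)--(iii) and conclusion.} For (ii) I would partition on the value of $(X_{n,1},\dots,X_{n,K})$: given $\eta>0$, condition~1 lets one choose $M$ with $\mathbb P[X_{n,k}\le M\ \forall k\le K]\ge 1-\eta$ for large $n$, and on the finitely many atoms with all coordinates $\le M$, condition~2 yields $\Pi_{n,K}=U_{n,K}+o_n(1)$, whence $\Pi_{n,K}-U_{n,K}\to 0$ in probability. For (iii), first note that $\sum_{k\ge1}\big(Z_k\log(1+\delta_k)-\theta_k\delta_k\big)$ converges almost surely by Kolmogorov's two-series theorem, since the summands are independent with $\sum_k|\bbE[\cdot]|=\tfrac12\sum_k\theta_k\delta_k^2(1+o(1))<\infty$ and $\sum_k\operatorname{Var}(\cdot)=\sum_k\theta_k\log^2(1+\delta_k)<\infty$; hence $\Pi_\infty:=\prod_{k\ge1}(1+\delta_k)^{Z_k}e^{-\theta_k\delta_k}$ is almost surely finite and strictly positive with $\Pi_K\to\Pi_\infty$ almost surely. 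The finite-$n$ tail estimate, that $\sum_{k>K}\big|X_{n,k}\log(1+\delta_k)-\theta_k\delta_k\big|$ is $o_p(1)$ letting $n\to\infty$ then $K\to\infty$, I would deduce from the first-moment bounds on the $X_{n,k}$ (extracted from condition~2 with $t=1$), exactly as in the proof of Lemma~\ref{lem-infinite-product}; this gives (iii) and also that $U_{n,\infty}$ stays bounded away from $0$ with probability $\ge 1-\eta$ once $K$ is large. Finally, fixing $\varepsilon,\eta>0$, choosing $K$ with $\varepsilon_K$ small, then $n$ large, one has $|\bar Y_n-\Pi_{n,K}|$, $|\Pi_{n,K}-U_{n,K}|$, $|U_{n,K}-U_{n,\infty}|$ all at most $\varepsilon$ with probability $\ge 1-\eta$, and the positivity of $U_{n,\infty}$ upgrades this to $|\bar Y_n/U_{n,\infty}-1|\le C\varepsilon$, which is the conclusion of Proposition~\ref{thm:ssc}.

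\textbf{Main obstacle.} The delicate step is (iii): making sense of the infinite product in the conclusion and showing that truncation at level $K$ is harmless, uniformly in $n$. The hypotheses supply only $\sum_k\theta_k\delta_k^2<\infty$, not $\sum_k\theta_k|\delta_k|<\infty$, so the tail cannot be controlled by summing absolute values; one must use the centering $-\theta_k\delta_k$ together with $\bbE[X_{n,k}]=\theta_k(1+o(1))$ and the asymptotic independence of the $X_{n,k}$. This is precisely the content of Lemma~\ref{lem-infinite-product} in the present application, and the same first-moment and variance bookkeeping transfers to the abstract setting. Everything else --- the Pythagorean identity, the Poisson moment computation, and the atom-by-atom use of condition~2 --- is routine once the notation is in place.
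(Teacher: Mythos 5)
Your overall architecture is the one the paper (following Janson) uses: project onto $\mathcal F_{n,K}$, combine the Pythagorean identity $\mathbb{E}[(\bar Y_n-\Pi_{n,K})^2]=\mathbb{E}[\bar Y_n^2]-\mathbb{E}[\Pi_{n,K}^2]$ with condition~4 and the lower bound $\liminf_n\mathbb{E}[\Pi_{n,K}^2]\ge\exp\big(\sum_{k\le K}\theta_k\delta_k^2\big)$ to make $\bar Y_n$ close to $\Pi_{n,K}$ in $L^2$, identify $\Pi_{n,K}$ with the truncated product via condition~2, and then remove the truncation. Your steps (i) and (ii) are correct and correspond to the paper's first three error terms (the paper passes through the limiting Poisson product $W^m$ via a Skorokhod coupling, you compare $\Pi_{n,K}$ with $U_{n,K}$ directly on atoms; either route is fine).

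The gap is in step (iii), the removal of the truncation uniformly in $n$. You propose to control $\sum_{k>K}\big|X_{n,k}\log(1+\delta_k)-\theta_k\delta_k\big|$ by ``first-moment bounds on the $X_{n,k}$ extracted from condition~2 with $t=1$''. Condition~2 constrains the conditional expectations $\mathbb{E}[Y_n\mid X_{n,k}=x]/\mathbb{E}[Y_n]$ and says nothing about $\mathbb{E}[X_{n,k}]$; condition~1 (convergence in distribution to $\operatorname{Poi}(\theta_k)$) also does not give $\mathbb{E}[X_{n,k}]\le C\theta_k$ without uniform integrability. In Lemma~\ref{lem-infinite-product} the input $\mathbb{E}[X_{n,k}]\le 2\theta_k$ is a fact about the pairing model (cited from Bollob\'as), and the bookkeeping there succeeds only because in that application $\theta_k\delta_k^2$ decays geometrically, so absolute-value tail sums converge. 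In the abstract setting only $\sum_k\theta_k\delta_k^2<\infty$ is available, and even granting $\mathbb{E}[X_{n,k}]\approx\theta_k$, a first-moment bound on your tail sum would require something like $\sum_k\sqrt{\theta_k}\,|\delta_k|<\infty$, which condition~3 does not imply --- this is exactly the obstruction you flag yourself, and the route you then take runs straight into it. The paper removes the truncation in $L^2$ instead: expanding the square and comparing $A_n,A_n^m$ with the Poisson products $W,W^m$, it bounds $\limsup_n\mathbb{E}\big[|A_n-A_n^m|^2\big]$ by $\exp\big(\sum_{k\ge1}\theta_k\delta_k^2\big)-\exp\big(\sum_{k\le m}\theta_k\delta_k^2\big)$, so only the square-summability in condition~3 enters. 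Your step (iii) (and the ensuing claim that $U_{n,\infty}$ is bounded away from $0$ with high probability) needs to be replaced by a second-moment estimate of this type, or by an explicit extra hypothesis controlling the tail of the product at finite $n$ (as in the application, where $X_{n,k}\equiv 0$ for all $k$ larger than the graph size, so the product is effectively finite).
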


\begin{proof}
    By Skorokhod’s embedding theorem, we may assume that $\{X_{n,k}\}_k, \{Y_n\}_n$, and $\{Z_k\}_k$ are defined on a common probability space, with each $(Y_n, \{X_{n,k}\}_k)$ preserving its joint distribution and such that the weak convergences all become almost sure convergences. We continue to use the same notation for the embedded variables. To ease notation, we define the following quantities:
    \begin{align*}
        A_n &:=  \prod_{k=1}^\infty (1 + \delta_k)^{X_{n,k}} e^{-\theta_k\delta_k}, \qquad A_n^m:=  \prod_{k\leq m} (1 + \delta_k)^{X_{n,k}} e^{-\theta_k \delta_k} \\
        W &:=  \prod_{k=1}^\infty (1 + \delta_k)^{Z_k} e^{-\theta_k \delta_k}, \qquad W^m:= \prod_{k\leq m} (1 + \delta_k)^{Z_k} e^{-\theta_k \delta_k}\\
        \calF_n^m &:= \sigma(X_{n,1}, \ldots, X_{n,m}), \qquad Y_n^m:= \bbE [Y_n | \calF_n^m].
    \end{align*}
By replacing $Y_n$ with $Y_n/\bbE[ Y_n]$, we may assume without loss of generality that $\bbE[Y_n] = 1$. We want to show that for any $\varepsilon>0$. $\bbP \big[|Y_n - A_n| > 4\varepsilon \big] \to 0$ as $n \to \infty$. An application of triangle inequality upper bounds this probability by four other quantities
\begin{align}\label{eq:SSC-tria}
    \bbP \big[|Y_n - A_n| > 4\varepsilon \big] & \leq \bbP \big[|Y_n - Y_n^m| > \varepsilon \big] + \bbP \big[|Y_n^m - W^m| > \varepsilon \big] \\
    & + \bbP \big[|W^m - A_n^m| > \varepsilon \big] + \bbP \big[|A_n^m - A_n| > \varepsilon \big],
\end{align}
which we will bound one by one. The proof of Theorem 1 in \cite{janson1995random} shows
\begin{equation}\label{eq:SSC1}
    \limsup_{n\to\infty} \bbP \big[|Y_n - Y_n^m| > \varepsilon \big] \leq \varepsilon^{-2} \Bigg[\exp\Big(\sum_{k=1}^\infty \theta_k\delta_k^2\Big) - \exp\Big(\sum_{k=1}^m \theta_k\delta_k^2\Big)\Bigg],
\end{equation}
and 
\begin{equation}\label{eq:SSC2}
    \limsup_{n\to\infty} \bbP \big[|Y_n^m - W^m| > \varepsilon \big] = 0.
\end{equation}
Moreover, by condition 1 and continuous mapping theorem, 
\begin{equation}\label{eq:SSC3}
     \limsup_{n\to\infty} \bbP \big[|W^m - A_n^m| > \varepsilon \big] = 0.
\end{equation}
Finally, 
\begin{align*}
    \limsup_{n\to\infty} \bbE [|A_n^m - A_n|^2] &\leq \limsup_{n\to\infty} \bbE[(A_n^m)^2] + \limsup_{n\to\infty} \bbE[(A_n)^2] - 2\lim\inf_{n\to\infty} \bbE [A_n A_n^m] \\
    &\leq \bbE[(W^m)^2] + \bbE[W^2] + \bbE[WW^m]\\
    &= \exp\Big(\sum_{k=1}^\infty \theta_k\delta_k^2\Big) - \exp\Big(\sum_{k=1}^m \theta_k\delta_k^2\Big),
\end{align*}
where we have used Fatou's lemmas in the second line. Thus by Chebyshev's inequality, we have
\begin{equation}\label{eq:SSC4}
    \limsup_{n\to\infty}\bbP \big[|A_n^m - A_n| > \varepsilon \big] \leq \varepsilon^{-2} \Bigg[\exp\Big(\sum_{k=1}^\infty \theta_k\delta_k^2\Big) - \exp\Big(\sum_{k=1}^m \theta_k\delta_k^2\Big)\Bigg].
\end{equation}
Combining \eqref{eq:SSC-tria}, \eqref{eq:SSC1}, \eqref{eq:SSC2}, \eqref{eq:SSC3} and \eqref{eq:SSC4}, and sending $m\to\infty$ clinch the proof.
\end{proof}

In the pairing model $\mathcal G_{n,d}$, $X_{n,k}$ is the number of cycles of length $k$ in $G_n$. It was shown in \cite{bollobas1980probabilistic} that $X_{n,k}$ are asymptotically independent Poisson with parameter $\theta_k = (d-1)^k/2k$, as $n\to\infty$. We now take $Y_n = \calZ^\dagger(G_n),\dagger\in \{\free,\wired\}$. \eqref{eq-random-d-regular-graph-goal-2} follows upon verifying the conditions in Proposition~\ref{thm:ssc}, as we do next.

\begin{proof}[Proof of \eqref{eq-random-d-regular-graph-goal-2}]
    Our proof is adapted from the proofs of lemma 29-32 in \cite{GSVY16}. Fix $\dagger\in \{\free,\wired\}$. It remains to verify conditions 2 and 4 in Proposition~\ref{thm:ssc} with the choices $\theta_k = \frac{(d-1)^k}{2k}, \delta_k = \sum_{j=2}^{q} (\lambda_j^\dagger)^k, k\ge 3$. For condition 2, in particular, via a standard reduction\footnote{see the proof of Theorems 9.19 and 9.23 in \cite{janson2011random}.} it suffices to show that for any $k\ge 3$, 
    \begin{equation*}
        \frac{\sum_{\alpha\in \operatorname{B}_{\infty}(\check{\nu}^{\dagger},\delta_n) \cap \Delta^\star(n,q)} \mathbb{E}[\mathcal Z^\alpha(G_n) X_{n,k}]}{\sum_{\alpha\in \operatorname{B}_{\infty}(\check{\nu}^{\dagger},\delta_n) \cap \Delta^\star(n,q)} \mathbb{E}[\mathcal Z^\alpha(G_n)]} \to \theta_k(1+\delta_k).
    \end{equation*}
    We will achieve this by proving 
    \begin{equation}\label{eq:ratio1}
        \frac{\mathbb{E}[\mathcal Z^\alpha(G_n) X_{n,k}]}{\mathbb{E}[\mathcal Z^\alpha(G_n)]} \to \theta_k (1+\delta_k), \, \forall \alpha\in \operatorname{B}_{\infty}(\check{\nu}^{\dagger},\delta_n) \cap \Delta^\star(n,q).
    \end{equation}
    The proof of Lemma 30 in \cite{GSVY16} establishes the above for $\alpha \in \{\check{\nu}^{\free}, \check{\nu}^{\wired}\}$, but the argument depends on this specific choice of $\alpha$ only through the fact that $x_{ij}^\star(\alpha)$ has the form given in \eqref{eq:xstar}. However, we have shown in corollary \ref{cor:xstar} that $x_{ij}^\star(\alpha) = x_{ij}^\star(\check{\nu}^\dagger) + o(1)$, and therefore \eqref{eq:ratio1} holds. 
    
    To check condition 4, we need to show that
    \begin{equation}
        \frac{\sum_{\alpha_1,\alpha_2\in \operatorname{B}_{\infty}(\check{\nu}^{\dagger},\delta_n) \cap \Delta^\star(n,q)} \bbE [\calZ^{\alpha_1}(G_n)\calZ^{\alpha_2}(G_n)]}{\sum_{\alpha_1,\alpha_2\in \operatorname{B}_{\infty}(\check{\nu}^{\dagger},\delta_n) \cap \Delta^\star(n,q)} \bbE [\calZ^{\alpha_1}(G_n)]\bbE [\calZ^{\alpha_2}(G_n)]} \leq \exp \Big(\sum_{k=1}^\infty \theta_k \delta_k^2\Big) + o(1).
    \end{equation}
    Similarly, it suffices to show that
    \begin{equation*}
        \frac{\bbE[ \calZ^{\alpha_1}(G_n)\calZ^{\alpha_2}(G_n)]}{\bbE [\calZ^{\alpha_1}(G_n)]\bbE [\calZ^{\alpha_2}(G_n)]} \leq \exp \Big(\sum_{k=1}^\infty \theta_k \delta_k^2\Big) + o(1),
    \end{equation*}
    for all $\alpha_1,\alpha_2\in \operatorname{B}_{\infty}(\check{\nu}^{\dagger},\delta_n) \cap \Delta^\star(n,q)$. By Jensen's inequality, we have
    \begin{equation*}
        \frac{\bbE [\calZ^{\alpha_1}(G_n)\calZ^{\alpha_2}(G_n)]}{\bbE [\calZ^{\alpha_1}(G_n)]\bbE [\calZ^{\alpha_2}(G_n)]} \leq \frac{\sqrt{\bbE [(\calZ^{\alpha_1}(G_n))^2]}}{\bbE [\calZ^{\alpha_1}(G_n)]} \cdot \frac{\sqrt{\bbE [(\calZ^{\alpha_2}(G_n))^2]}}{\bbE [\calZ^{\alpha_2}(G_n)]}.
    \end{equation*}
    By lemma \ref{lemma:first_second_moments_accurate} and corollary \ref{cor:twice_extension}, we have that for any $\alpha\in \operatorname{B}_{\infty}(\check{\nu}^{\dagger},\delta_n) \cap \Delta^\star(n,q)$,
    \[
    \frac{\bbE [(\calZ^{\balpha}(G_n))^2]}{(\bbE [\calZ^{\balpha}(G_n)])^2} = (1 + o(1)) \prod_{i=2}^{q} \prod_{j=2}^{q} (1 - (d-1)\lambda_i^\dagger\lambda_j^\dagger)^{-1/2}\,.
    \]
 The term on the right hand side is shown to be equal to $\exp(\sum_{k=1}^\infty\theta_k \delta_k^2) + o(1)$ as $n\to\infty$, in \cite{GSVY16}. This proves condition 4 and thus verifies \eqref{eq-random-d-regular-graph-goal-2}, thereby concluding Proposition~\ref{prop-small-graph-conditioning-conclusion}. 
\end{proof}

\section{Proof of technical and numerical lemmas}

\subsection{Proof of Lemma~\ref{lem-numerical}}\label{appendix-lem-numerical}

\begin{proof}
Define the function
\[
H_\bbeta(t)\equiv \left(\frac{e^{2\bbeta} t+1}{t+e^{2\bbeta}}\right)^{d-1}\,,
\]
it is straightforward to check that $\tfrac{x(\bbeta)}{1-x(\bbeta)}$ corresponds to the largest fixed point of $H_\bbeta$. Since $1$ is a trivial fixed point of $H_\bbeta$ (corresponding to $x(\bbeta)=1/2$),
we have that $\beta^{\operatorname{Uni}}_d$ is the unique positive solution of the equation
\[
H'_\bbeta(1)=(d-1)\tanh(\bbeta)=1\,,
\]
and thus $\beta^*>\beta^{\operatorname{Uni}}_d$ is equivalent to that $(d-1)\tanh(\beta^*)>1$. By \eqref{eq-beta^*} we have
\[
\tanh(\beta^*)=\frac{e^{2\beta^*}-1}{e^{2\beta^*}+1}=\frac{\sqrt{(1+w)(1+w/(q-1)}-1}{\sqrt{(1+w)(1+w/(q-1))}+1}\,.
\]
Since $B<B_+$ where $B_+$ is defined in \eqref{eq-B_+}, we have $\sqrt{(1+w)(1+w/(q-1))}>\tfrac d {d-2}$, and thus $\tanh(\beta^*)>\tfrac{1}{d-1}$, as desired. This proves that $\beta^*>\beta^{\operatorname{Uni}}_d$. 

We now proceed to verify \eqref{eq-rel-psi-m}. By the belief propagation of random cluster measure \eqref{eq-BP-RCM}, we see $\psi^{\free}$ and $\psi^{\wired}$ can be characterized as follows: let $p^{\free}$ and $p^{\wired}$ be the smallest and the largest solution in $(0,1)$ to the equation
\begin{equation}\label{eq-BP-fixed-point-RCM}
    p=\frac{e^B\big(p(1+w)+(1-p)(1+w/q)\big)^{d-1}-\big(p+(1-p)(1+w/q)\big)^{d-1}}{e^B\big(p(1+w)+(1-p)(1+w/q)\big)^{d-1}+(q-1)\big(p+(1-p)(1+w/q)\big)^{d-1}}\,,
\end{equation}
then $\psi^{\dagger},\dagger\in \{\free,\wired\}$ is given by
\begin{equation}\label{eq-RCM-marginal}
    \psi^\dagger=\frac{e^B\big(p^\dagger(1+w)+(1-p^\dagger)(1+w/q)\big)^{d}-\big(p^\dagger+(1-p^\dagger)(1+w/q)\big)^{d}}{e^B\big(p^\dagger(1+w)+(1-p^\dagger)(1+w/q)\big)^{d}+(q-1)\big(p^\dagger+(1-p^\dagger)(1+w/q)\big)^{d}}\,.
\end{equation}
Denote $$\lambda^\dagger:=\frac{1+(q-1)p^\dagger}{1-p^\dagger},\quad\dagger\in \{\free,\wired\}\,.$$ Plugging the relation $w=e^\beta-1$, \eqref{eq-BP-fixed-point-RCM} simplifies to that
\begin{equation}\label{eq-numerical-1}
\lambda^\dagger=\frac{e^B}{q-1}\left(\frac{e^\beta \lambda^\dagger+q-1}{\lambda^\dagger+e^\beta+q-2}\right)^{d-1}\,,\quad\forall \dagger\in \{\free,\wired\}\,,
\end{equation}
and \eqref{eq-RCM-marginal} implies that $s^\dagger:=\tfrac{q-1}{q}\psi^\dagger+\tfrac{1}{q}$ satisfies 
\begin{equation}\label{eq-numerical-2}
\frac{s^\dagger}{1-s^\dagger}=\frac{e^B}{q-1}\left(\frac{e^\beta \lambda^\dagger+q-1}{\lambda^\dagger+e^\beta+q-2}\right)^{d}\,,\quad\forall \dagger\in \{\free,\wired\}\,.
\end{equation}
\footnote{Intuitively, we may think $q\in \mathbb{N}$, and thus $\tfrac{q-1}{q}\psi^\dagger+\tfrac{1}{q}=\nu^{\dagger}(1)$, $\lambda^\dagger=\nu^\dagger(1)/\nu^\dagger(2)$, $s^\dagger=\check{\nu}^\dagger(1)$. Then \eqref{eq-numerical-1} and \eqref{eq-numerical-2} follow from the Potts belief-propagation.}
Denote 
\[
t^\dagger:=\sqrt{\frac{e^\beta}{(q-1)(e^\beta+q-2)}}\cdot\lambda^\dagger\,,\quad\dagger\in \{\free,\wired\}\,.
\]
Utilizing the critical condition \eqref{eq-w_c(B)}, we have
\[
\frac{e^B}{q-1}=\left(\frac{1+w/(q-1)}{1+w}\right)^{d/2}=\left(\frac{e^\beta+q-2}{(q-1)e^\beta}\right)^{d/2}\,.
\]
Plugging this into \eqref{eq-numerical-1} and \eqref{eq-numerical-2}, it is straightforward to check that \eqref{eq-numerical-1} becomes 
\begin{equation}\label{eq-numerical-3}
t^\dagger=\left(\frac{e^{2\beta^*}t^\dagger+1}{t^\dagger+e^{2\beta^*}}\right)^{d-1}=H_{\beta^*}(t^\dagger)\,,\quad \forall\dagger\in \{\free,\wired\}\,,
\end{equation}
and \eqref{eq-numerical-2} becomes 
\begin{equation}\label{eq-numerical-4}
\frac{s^\dagger}{1-s^\dagger}=\left(\frac{e^{2\beta^*}t^\dagger+1}{t^\dagger+e^{2\beta^*}}\right)^d\,,\quad\forall \dagger\in \{\free,\wired\}\,.
\end{equation}
\eqref{eq-numerical-3} implies that $t^{\wired}=\tfrac{x(\beta^*)}{1-x(\beta^*)}$ and $t^{\free}=\frac{1-x(\beta^*)}{x(\beta^*)}$. Therefore, by comparing \eqref{eq-numerical-4} with \eqref{eq-x^*(bbeta)} we obtain that $x^*(\beta^*)=s^{\wired}$ and $1-x^*(\beta^*)=s^{\free}$, and thus \eqref{eq-rel-psi-m} follows.
\end{proof}

\subsection{Proof of Lemma~\ref{lem-RCM-Ber-domination}}\label{appendix-stochastic-domination}
Fix a finite graph $G=(V,E)$ and a real number $q\ge 1$ as the cluster parameter (we take $q=2$ in Lemma~\ref{lem-RCM-Ber-domination}).

 let $\varphi^{w}=\varphi_G^{w,0}$ be the random cluster measure on $G$ defined by
\[
\varphi^{w}[\eta]=\frac{w^{|E(\eta)|}q^{|C(\eta)|}}{\mathcal Z_G^{\RC,w,0}}\,,\quad\forall \eta\in\{0,1\}^{E(G)}\,.
\]
Let $\varphi^{w}\oplus \operatorname{Ber}^\varepsilon$ denote the distribution of $\eta\vee \eta'$ where $\eta\sim \phi_{p,q}$ and $\varepsilon\sim \operatorname{Ber}^\varepsilon$ are independent. 

\begin{lemma}
    For any $q\ge 1$, $p_1>p_2>0$, $\xi>0$ such that 
    \begin{equation}\label{eq-eps-assumption}
    \frac{\xi}{1-\xi}<\frac{p_1-p_2}{q}\,,
    \end{equation}
  it holds that $\varphi^{w_1}\succeq_{\operatorname{st}}\varphi^{w_2}\oplus \operatorname{Ber}^\xi$. 
\end{lemma}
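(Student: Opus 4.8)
The plan is to obtain the stochastic domination from a single-edge conditional comparison, the standard route for such sprinkling statements. Write $\nu:=\varphi^{w_2}\oplus\operatorname{Ber}^{\xi}$, realized as the law of $\eta'\vee\zeta$ with $\eta'\sim\varphi^{w_2}$ and $\zeta\sim\operatorname{Ber}^{\xi}$ independent, and let $p_i$ be the open-edge parameter of $\varphi^{w_i}$ (so $w_i=p_i/(1-p_i)$). Both $\varphi^{w_1}$ and $\nu$ are strictly positive on $\{0,1\}^{E}$---for $\nu$ because $\nu(\omega)\ge\varphi^{w_2}(\omega)(1-\xi)^{|E|}$, taking $\zeta\equiv0$. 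Hence, by the monotone coupling of the heat-bath Glauber dynamics of the two measures (equivalently, by Holley's inequality together with the FKG lattice condition for $\varphi^{w_1}$, valid since $q\ge1$), it suffices to check that for every edge $e$ and every pair of configurations $\rho\ge\rho'$ on $E\setminus\{e\}$,
\[
\varphi^{w_1}\bigl(\omega_e=1\mid\omega_{E\setminus\{e\}}=\rho\bigr)\ \ge\ \nu\bigl(\omega_e=1\mid\omega_{E\setminus\{e\}}=\rho'\bigr).
\]

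The next step is to identify both conditional probabilities. By the finite-energy (domain Markov) property of the random-cluster model, for any $\rho$ on $E\setminus\{e\}$,
\[
\varphi^{w_i}\bigl(\omega_e=1\mid\omega_{E\setminus\{e\}}=\rho\bigr)=
\begin{cases}
p_i,&\text{if the endpoints of }e\text{ are joined by an open path in }\rho,\\
a_i,&\text{otherwise,}
\end{cases}
\qquad a_i:=\frac{p_i}{p_i+q(1-p_i)},
\]
with $a_i\le p_i$ as $q\ge1$. For $\nu$: since $\zeta_e$ is independent of $(\eta'_e,\omega_{E\setminus\{e\}})$ (both are functions of $(\eta',\zeta_{E\setminus\{e\}})$),
\[
\nu\bigl(\omega_e=0\mid\omega_{E\setminus\{e\}}=\rho'\bigr)=(1-\xi)\,\mathbb{P}\bigl(\eta'_e=0\mid\omega_{E\setminus\{e\}}=\rho'\bigr),
\]
and, because $\eta'\le\omega$, conditionally on $\{\omega_{E\setminus\{e\}}=\rho'\}$ the configuration $\eta'_{E\setminus\{e\}}$ takes some value $\psi\le\rho'$; thus the right-hand probability is a convex combination of the numbers $\varphi^{w_2}(\eta'_e=0\mid\eta'_{E\setminus\{e\}}=\psi)$, each of which equals $1-p_2$ or $1-a_2$ according to the connectivity of $e$'s endpoints in $\psi$.

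It then remains to verify the conditional comparison, split into two cases. If the endpoints of $e$ are joined by an open path in $\rho'$, they are also joined in $\rho\ge\rho'$, so the left side equals $p_1$; bounding $\mathbb{P}(\eta'_e=0\mid\cdots)\ge 1-p_2$ gives $\nu(\omega_e=1\mid\rho')\le p_2+\xi(1-p_2)$, and $p_1\ge p_2+\xi(1-p_2)$ follows from $\tfrac{p_1-p_2}{1-p_2}\ge\tfrac{p_1-p_2}{q}>\tfrac{\xi}{1-\xi}\ge\xi$ (using $1-p_2\le1\le q$). If the endpoints of $e$ are not joined in $\rho'$, then they are joined in no $\psi\le\rho'$, so $\mathbb{P}(\eta'_e=0\mid\omega_{E\setminus\{e\}}=\rho')=1-a_2$ exactly and $\nu(\omega_e=1\mid\rho')=a_2+\xi(1-a_2)$; since the left side is $\ge a_1$, it suffices that $\xi\le\tfrac{a_1-a_2}{1-a_2}$, and a direct computation gives $\tfrac{a_1-a_2}{1-a_2}=\tfrac{p_1-p_2}{(q-(q-1)p_1)(1-p_2)}\ge\tfrac{p_1-p_2}{q}>\tfrac{\xi}{1-\xi}\ge\xi$, where the inequality uses $q-(q-1)p_1\in[1,q]$ and $1-p_2\le1$. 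This settles both cases and yields $\varphi^{w_1}\succeq_{\operatorname{st}}\nu$.

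The delicate point is the second case. Conditioning $\omega=\eta'\vee\zeta$ on its restriction to $E\setminus\{e\}$ does not determine $\eta'_{E\setminus\{e\}}$, so the single-edge conditional of $\nu$ has to be extracted via the mixture over the admissible values $\psi\le\rho'$ of $\eta'_{E\setminus\{e\}}$; what makes this tractable is precisely the monotone coupling $\eta'\le\omega$, which propagates ``the endpoints of $e$ are disconnected under $\rho'$'' to ``disconnected under every $\psi\le\rho'$''---the one-sided comparison needed---while the hypothesis $\tfrac{\xi}{1-\xi}<\tfrac{p_1-p_2}{q}$ is exactly the bound required in this worst case, the ratio $\tfrac{a_1-a_2}{1-a_2}$ tending to $\tfrac{p_1-p_2}{q}$ as $p_1,p_2\to0$.
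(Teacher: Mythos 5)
Your proof is correct, and at the strategic level it runs parallel to the paper's: both arguments reduce the domination to a single-edge, Holley-type comparison, and both hinge on the same key observation that if the endpoints of $e$ are disconnected off $e$ in the superposed configuration, then they are disconnected in every compatible configuration of the random-cluster layer (your monotone coupling $\eta'\le\omega$; the paper's remark that disconnection in $\eta_e$ forces disconnection in every $\omega\preceq\eta_e$). The execution differs in a way worth noting. The paper invokes the comparison criterion of \cite[Theorem~2.6]{Gri06} (one-configuration flip-ratio inequality plus monotonicity of $\varphi^{w_1}$, valid for $q\ge 1$) and then computes $\varphi^{w_2}\oplus\operatorname{Ber}^\xi[\eta_e]$ and $\varphi^{w_2}\oplus\operatorname{Ber}^\xi[\eta^e]$ by expanding over sub-configurations $\omega\preceq\eta_e$ and comparing the resulting weighted sums; you instead verify the two-configuration conditional comparison $\varphi^{w_1}(\omega_e=1\mid\rho)\ge\nu(\omega_e=1\mid\rho')$ for $\rho\ge\rho'$ directly, computing the single-edge conditional of the superposition probabilistically (independence of the sprinkled bit at $e$ from everything off $e$, then a mixture over $\eta'_{E\setminus\{e\}}=\psi\le\rho'$), which is justified by the heat-bath coupling argument together with the strict positivity you checked. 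Your route avoids the generating-sum algebra and isolates the sharp single-edge worst case $\xi\le(a_1-a_2)/(1-a_2)$, making transparent exactly where the hypothesis $\xi/(1-\xi)<(p_1-p_2)/q$ enters.

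One substantive point: you read $p_1,p_2$ as bond probabilities, with $w_i=p_i/(1-p_i)$, consistent with the paper's convention $w_e=p_e/(1-p_e)$ used in the Edwards--Sokal section; the paper's own proof of this lemma instead writes $\varphi^{w_2}[\omega]\propto p_2^{|E(\omega)|}q^{|C(\omega)|}$, i.e.\ it tacitly identifies $p_i$ with the weights $w_i$. Your worst-case computation shows these readings are not interchangeable: in terms of the weights, the single-edge threshold is $\xi\le(w_1-w_2)/(q+w_1)$, which the stated hypothesis does not guarantee (e.g.\ a single edge with $q=2$, $w_1=10$, $w_2=9$, $\xi=0.2$), whereas under the probability reading the hypothesis does imply the needed bound, as you verified. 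So the interpretation you adopted is the one under which the constant $(p_1-p_2)/q$ is correct, and your proof is complete for that reading.
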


\begin{proof}
    By \cite[Theorem 2.6]{Gri06}, it suffices to check that for any $\eta\in \{0,1\}^{E(G)}$ and $e\in E(G)$, 
    \begin{equation}\label{eq-stochastic-domination-goal}
    \varphi^{w_1}[\eta^{e}]\varphi^{w_2}\oplus \operatorname{Ber}^\xi[\eta_e]\ge \varphi^{w_1}[\eta_e]\varphi^{w_2}\oplus \operatorname{Ber}^\xi[\eta^e]\,,
    \end{equation}
    where $\eta^e=\eta\vee \delta_e$ and $\eta_e=\eta\wedge(\mathbf{1}-\delta_e)$. 
    By definition we have
    \begin{align*}
    \varphi^{w_2}\oplus \operatorname{Ber}^\xi[\eta_e]=&\ \sum_{\omega\preceq \eta_e}\varphi^{w_2}[\omega]\cdot \operatorname{Ber}^\xi[\{\omega':\omega\wedge\omega'=\eta_e\}]\\
    =&\ \sum_{\omega\preceq \eta_e}\frac{p_2^{|E(\omega)|}q^{|C(\omega)|}}{\mathcal Z_{G}^{\RC,w_2,0}}\times \xi^{|E(\eta_e)|-|E(\omega)|}(1-\xi)^{|E(G)|-|E(\eta_e)|}\\
    =&\ \frac{(1-\xi)^{|E(G)|}}{\mathcal Z_{G}^{\RC,w_2,0}}\times \left(\frac{\xi}{1-\xi}\right)^{|E(\eta_e)|}\times \sum_{\omega\preceq \eta_e}p_2^{|E(\omega)|}q^{|C(\omega)|}\xi^{-|E(\omega)|}\,,
    \end{align*}
    and similarly,
    \begin{align*}
            \varphi^{w_2}\oplus \operatorname{Ber}^\xi[\eta^e]=&\ \sum_{\omega\preceq \eta^e}\varphi^{w_2}[\omega]\cdot \operatorname{Ber}^\xi[\{\omega':\omega\wedge\omega'=\eta^e\}]\\
            =&\sum_{\omega\preceq \eta_e}\frac{p_2^{|E(\omega)|}q^{|C(\omega)|}}{\mathcal Z_{G}^{\RC,w_2,0}}\times \xi^{|E(\eta^e)|-|E(\omega)|}(1-\xi)^{|E(G)|-|E(\eta^e)|}\\
            +&\ \sum_{\omega\preceq \eta_e}\frac{p_2^{|E(\omega^e)|}q^{|C(\omega^e)}}{\mathcal Z_{G}^{\RC,w_2,0}}\times \xi^{|E(\eta^e)|-|E(\omega^e)|}(1-\xi)^{|E(G)|-|E(\eta^e)|}\\
            =&\ \frac{(1-\xi)^{E(G)}}{\mathcal Z_{G}^{\RC,w_2,0}}\times \left(\frac{\xi}{1-\xi}\right)^{1+E(\eta_e)}\times \sum_{\omega\preceq \eta_e}p_2^{|E(\omega)}q^{|C(\omega)|}\xi^{-|E(\omega)|}\\
            +&\frac{(1-\xi)^{E(G)}}{\mathcal Z_{G}^{\RC,w_2,0}}\times \left(\frac{\xi}{1-\xi}\right)^{E(\eta_e)}\times \sum_{\omega\preceq \eta_e}p_2^{1+|E(\omega)}q^{|C(\omega^e)|}\xi^{-|E(\omega)|}\,.
    \end{align*}
    Therefore, \eqref{eq-stochastic-domination-goal} reduces to showing that
    \begin{align*}
    &\ p_1q^{|C(\eta^e)|-|C(\eta_e)|}\sum_{\omega\preceq \eta_e}p_2^{|E(\omega)|}q^{|C(\omega)|}\xi^{-|E(\omega)|}\\
    \ge&\ \frac{\xi}{1-\xi}\cdot \sum_{\omega\preceq \eta_e}p_2^{|E(\omega)|}q^{|C(\omega)|}\xi^{-|E(\omega)|}+p_2\sum_{\omega\preceq \eta_e}p_2^{|E(\omega)|}q^{|C(\omega^e)|}\xi^{-|E(\omega)|}\,.
    \end{align*}
    Note that for any $\omega\preceq \eta_e$, $|C(\omega^e)|\le |C(\omega)|$. Moreover, if $\eta$ such that $|C(\eta^e)|-|C(\eta)|=-1$, then the two endpoints of $e$ are not connected in $\eta_e$, and hence they are not connected in any $\omega\preceq \eta_e$, either. This implies that $|C(\omega^e)|-|C(\omega)|=-1,\forall \omega\preceq \eta^e$. As a result, we see that
    \begin{align*}
        &\ p_1q^{|C(\eta^e)|-|C(\eta_e)|}\sum_{\omega\preceq \eta_e}p_2^{|E(\omega)|}q^{|C(\omega)|}\xi^{-|E(\omega)|}-p_2\sum_{\omega\preceq \eta_e}p_2^{|E(\omega)|}q^{|C(\omega^e)|}\xi^{-|E(\omega)|}\\
        \ge&\ \frac{p_1-p_2}{q}\sum_{\omega\preceq \eta_e}p_2^{|E(\omega)|}q^{|C(\omega)|}\xi^{-|E(\omega)|}\stackrel{\eqref{eq-eps-assumption}}{\ge} \frac{\xi}{1-\xi}\sum_{\omega\preceq \eta_e}p_2^{|E(\omega)|}q^{|C(\omega)|}\xi^{-|E(\omega)|}\,,
    \end{align*}
    as desired.
    This completes the proof.
\end{proof}

\subsection{Proof of Lemma~\ref{lem-local-CLT}}\label{appendix-lem-local-CLT}
\begin{proof}
We first consider the Potts case. Fix $i \in [q]$ and let $I_n \subset V_n$ be a maximal independent set of $G_n$, so $|I_n| \to \infty$ as $n \to \infty$. For any realization of $\sigma|_{V_n \setminus I_n}$ from $\mu_n$, the spins $\{\sigma(v): v \in I_n\}$ are conditionally independent, with $\mathbb{P}[\sigma(v) = i]$ uniformly bounded away from $0$ and $1$. By the local central limit theorem for sums of independent variables (see \cite{Maller1978LocalLimit}), we have $\mathbb{P}[\#\{v \in V_n : \sigma(v) = i\} = t] = O(|I_n|^{-1/2}) = o(1)$ uniformly for all $t \in \mathbb{N}$, proving \eqref{eq-CLT-Potts}.

The random cluster case follows a similar strategy, but is more involved. Fix $0<\varepsilon<0.1$ and a large integer $N\ge 10$. Let $c<1$ be as in Lemma~\ref{lem-cluster-size}, and choose $R\in\mathbb{N}$ such that $c^R<\varepsilon$. Since $G_n \stackrel{\operatorname{loc}}{\longrightarrow} \ttT_d$, for large $n$ we can select vertices $v_1,\dots,v_N\in V_n$ with pairwise distances at least $2R$. Given a realization $\check{\eta} = \eta|_{E_n^* \setminus \cup_{i=1}^N E_{n,1}^*(v_i)}$ from $\varphi_n$, call $v_i$ \emph{bad} if it connects to $\partial V_{n,R}(v_i)$ but not to $v^*$ in $\check{\eta}$. By Lemma~\ref{lem-cluster-size}, the bad probability is at most $c^R\le\varepsilon$ for each $i$. Let $A\subset \{v_1,\dots,v_N\}$ be the set of good vertices. Markov's inequality yields the probability $|A|\le N/2$ is at most $2\varepsilon$.

On the other hand, given the realization of $\check{\eta}$ and $A$, it holds that after further conditioning on the realization of $\eta\!\mid_{E_n^*\setminus \cup_{v_i\in A}E_{n,1}^*(v_i)}$, the events $\{v_i\leftrightarrow v^*\},v_i\in A$ are conditionally independent with each other. Moreover, these events have conditional probability uniformly bounded away from $0$ and $1$, and thus by the local central limit theorem, the conditional probability that $\#\{v\in V_n:v\leftrightarrow v^*\}=t$ is at most $O(|A|^{-1/2})$ for any $t\in \mathbb N$. We conclude that as $n\to\infty$
\[
\sup_{t\in \mathbb N}\varphi_n\big[\#\{v\in V_n:v\leftrightarrow v^*\}=t\big]\le 2\varepsilon+O(N^{-1/2})\,.
\]
Since this is true for all small $\varepsilon>0$ and large $N\in \mathbb{N}$, the desired result follows.
\end{proof}

\subsection{Proof of Lemma~\ref{lem-infinite-product}}\label{appendix-lem-infinite-product}
\begin{proof}
Let $x^{\free},x^{\wired}$ be the smallest and the largest positive solution of the equation
    \begin{equation}\label{eq-p(x)}
    x=p(x)\equiv e^B\left(\frac{e^\beta x+q-1}{e^\beta+x+q-2}\right)^{d-1}\,.
    \end{equation}
Following similar arguments as in the proof of \cite[Lemma 21]{GSVY16}, we see that the eigenvalues of $Q^\dagger$ are given by
\[
\lambda_1^\dagger=1>\lambda_2^\dagger=\frac{e^\beta x^\dagger}{e^\beta x^\dagger+q-1}-\frac{x^\dagger}{e^\beta+x^\dagger+q-2}>\lambda_3=\cdots=\lambda_q^\dagger=\frac{1}{e^\beta x^\dagger+q-1}>0\,.
\]
    \begin{claim}\label{claim-lambda_2}
        For any $(\beta,B)\in \mathbb R_+^2\setminus \mathsf R_=$ and $\dagger\in \{\free,\wired\}$, $\lambda_2^\dagger<\frac{1}{d-1}$.
    \end{claim}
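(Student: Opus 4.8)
The plan is to reduce the spectral estimate to a strict stability bound for the fixed‑point map $p$ and to read that bound off from the global shape of $p$. Recall from the computation just above the claim that, for $\dagger\in\{\free,\wired\}$,
\[
\lambda_2^\dagger=\frac{e^\beta x^\dagger}{e^\beta x^\dagger+q-1}-\frac{x^\dagger}{x^\dagger+e^\beta+q-2},
\]
with $x^{\free}$ (resp.\ $x^{\wired}$) the smallest (resp.\ largest) positive solution of $x=p(x)$, $p(x)=e^B\bigl(\tfrac{e^\beta x+q-1}{x+e^\beta+q-2}\bigr)^{d-1}$. Differentiating $\log p$ gives $p'(x)/p(x)=(d-1)\bigl(\tfrac{e^\beta}{e^\beta x+q-1}-\tfrac{1}{x+e^\beta+q-2}\bigr)$, so at any fixed point $p'(x^\dagger)=(d-1)\lambda_2^\dagger$ \emph{exactly}. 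Thus the claim is equivalent to $p'(x^\dagger)<1$, i.e.\ the two extreme BP fixed points are strictly attracting.

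First I would record the non‑strict bound $p'(x^\dagger)\le 1$. The numerator of $p'/p$ equals $e^{2\beta}+(q-2)e^\beta-(q-1)$, which is $>0$ since $e^\beta>1$, so $p$ is strictly increasing on $[0,\infty)$; moreover $p$ is bounded, $p(x)\to e^{B+(d-1)\beta}$ as $x\to\infty$, and $p(0)>0$. Hence $g:=p-\mathrm{id}$ satisfies $g(0)>0$ and $g(x)\to-\infty$, so $g>0$ on $[0,x^{\free})$ and $g<0$ on $(x^{\wired},\infty)$; since $g$ vanishes at $x^{\free}$ with $g>0$ just to the left, and at $x^{\wired}$ with $g<0$ just to the right, one gets $g'(x^{\free})\le 0$ and $g'(x^{\wired})\le 0$, i.e.\ $p'(x^\dagger)\le 1$.

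The remaining and main step is to upgrade this to a strict inequality, i.e.\ to exclude a tangential (saddle‑node) fixed point at $x^\dagger$; for this I would bound the number of zeros of $g$. Setting $t=\log x$ and $h(t):=\log p(e^t)-t$, the fixed points of $p$ correspond to zeros of $h$, and one computes
\[
h'(t)+1=(d-1)\Bigl(\tfrac{1}{1+(q-1)e^{-\beta-t}}-\tfrac{1}{1+(e^\beta+q-2)e^{-t}}\Bigr).
\]
Each summand is an increasing sigmoid $t\mapsto(1+ce^{-t})^{-1}$, and because $\beta>0$ the first has the strictly smaller parameter $c$; a brief computation (the derivative of such a sigmoid is bell‑shaped and symmetric about $t=\log c$) shows that the difference of two such sigmoids with ordered parameters is unimodal. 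Therefore $h'$ is increasing then decreasing, and since $h'\to-1$ at $\pm\infty$ it changes sign at most twice, so $h$ — hence $g$ — has at most three zeros counting multiplicity. On $\mathsf R_c$ (and more generally at any $(\beta,B)$ with $\beta_{\operatorname{ord}}(B)<\beta<\beta_{\operatorname{dis}}(B)$) the map $p$ has three \emph{distinct} fixed points $x^{\free}<x_{\mathrm{mid}}<x^{\wired}$ — on $\mathsf R_c$ the two global maximizers $\nu^{\free},\nu^{\wired}$ of $\Psi$ must be separated by a further critical point; see \cite{DMSS14} — so all three zeros of $g$ are simple, whence $g'(x^\dagger)\ne 0$, and combined with $g'(x^\dagger)\le 0$ this gives $p'(x^\dagger)<1$, i.e.\ $\lambda_2^\dagger<\tfrac1{d-1}$. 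I expect the only delicate point to be this last non‑degeneracy input: the argument degenerates exactly on the two saddle‑node curves $\beta\in\{\beta_{\operatorname{ord}}(B),\beta_{\operatorname{dis}}(B)\}$ bounding the non‑uniqueness region, where two fixed points coalesce; these curves do not enter any application of the claim (all of which take place on or near $\mathsf R_c$), so strictness holds where it is needed.
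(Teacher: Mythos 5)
Your proposal is correct in its key steps and takes a genuinely different route from the paper. The shared kernel is the identity $p'(x^\dagger)=(d-1)\lambda_2^\dagger$ at a fixed point of $p(x)=e^B\bigl(\tfrac{e^\beta x+q-1}{x+e^\beta+q-2}\bigr)^{d-1}$ (the paper uses it in the form ``$\lambda_2^\dagger=\tfrac1{d-1}$ forces $p'(x^\dagger)=1$''), but the global structure differs: the paper fixes $B$ and sweeps $\beta$, invoking the characterization of the spinodal temperatures $\beta_{\operatorname{ord}}(B),\beta_{\operatorname{dis}}(B)$ as the only tangency points $p'(x^\dagger)=1$, and then concludes by continuity of $\beta\mapsto\lambda_2^\dagger$ along each branch together with the sign of $\lambda_2^\dagger-\tfrac1{d-1}$ as $\beta\to0$ (free) or $\beta\to\infty$ (wired). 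You instead work at a fixed $(\beta,B)$: extremality of $x^{\free},x^{\wired}$ gives $p'(x^\dagger)\le1$, and your change of variables $t=\log x$ with the observation that $h'(t)+1$ is a difference of two logistic sigmoids with ordered shifts—hence unimodal, so $p$ has at most three fixed points and three distinct fixed points are automatically simple—upgrades this to a strict inequality. I checked the sigmoid computation ($F'(t)>0$ exactly for $t$ below the midpoint of the two shifts) and the zero-counting; both are right. What each approach buys: yours is more self-contained analytically (it proves the ``at most three fixed points'' structure rather than assuming the spinodal characterization), while the paper's is shorter given the imported phase-diagram facts; both ultimately lean on one external input of comparable standing (yours: three distinct fixed points for $\beta_{\operatorname{ord}}(B)<\beta<\beta_{\operatorname{dis}}(B)$, cited to \cite{DMSS14}; the paper's: the tangency characterization of $\beta_{\operatorname{ord}},\beta_{\operatorname{dis}}$, asserted without proof). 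Two minor remarks: your mountain-pass justification (``the two global maximizers of $\Psi$ must be separated by a further critical point'') is the weakest phrasing, since it implicitly uses the correspondence between stationary points of $\Psi$ restricted to the symmetric slice and fixed points of the scalar map $p$; it would be cleaner to cite directly the known fact that the scalar recursion has exactly three fixed points strictly between the spinodals. And your boundary caveat at $\beta\in\{\beta_{\operatorname{ord}}(B),\beta_{\operatorname{dis}}(B)\}$ is shared by the paper: its proof likewise yields strictness only for $\beta<\beta_{\operatorname{dis}}(B)$ (free) and $\beta>\beta_{\operatorname{ord}}(B)$ (wired), and all applications of the claim indeed sit on $\mathsf R_c$, so this is harmless in both arguments.
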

    \begin{proof}[Proof of Claim~\ref{claim-lambda_2}]
 Fix $0<B<B_+$ and $\dagger\in \{\free,\wired\}$. Recall that $\beta_{\operatorname{ord}}(B)$ is the threshold at where the ordered phase emerges, and $\beta_{\operatorname{dis}}(B)$ is the threshold above which the disordered phase disappears. Therefore, $\beta_{\operatorname{ord}}(B)$ and $\beta_{\operatorname{dis}}(G)$ are respectively characterized by  $p'(x^{\wired})=1$ and $p'(x^{\free})=1$. Now for $\dagger\in \{\free,\wired\}$, assume that $\beta> 0$ satisfies that 
 \begin{equation}\label{eq-assumption-lambda2}
 \lambda_2^\dagger(\beta,B)=\frac{1}{d-1}\ \iff \ \frac{(d-1)(e^\beta-1)(e^\beta+q-1)x^\dagger}{(e^\beta x^\dagger+q-1)(e^\beta+x^\dagger+q-2)}=1\,.
 \end{equation}
 Since 
 \[
 p'(x^\dagger)=  e^B\cdot (d-1)\left(\frac{e^\beta x^\dagger+q-1}{e^\beta+x^\dagger+q-1}\right)^{d-2}\cdot \frac{(e^\beta-1)(e^\beta+q-1)}{(e^\beta+x^\dagger+q-1)^2}\,,
 \]
 \eqref{eq-p(x)} and \eqref{eq-assumption-lambda2} altogether imply that $p'(x^\dagger)=1$. It follows that $\beta=\beta_{\operatorname{dis}}(B)$ if $\dagger=\free$ and $\beta=\beta_{\operatorname{ord}}(B)$ if $\dagger=\wired$. Consequently, the only zero point of $r_B^\dagger(\beta):=\lambda_2(\beta,B)-\frac{1}{d-1}$ on $(0,\infty)$ is $\beta_{\operatorname{dis}}(B)$ if $\dagger=\free$ and $\beta_{\operatorname{ord}}(B)$ if $\dagger=\wired$. Meanwhile, it is straightforward to check that $\lim_{\beta\to 0}r^{\free}_B(\beta)<0$ and $\lim_{\beta\to\infty}r_B^{\wired}(\beta)<0$. Since the functions $r_B^{\free}$ and $r_B^{\wired}$ are continuous with respect to $\beta\in (0,\infty)$, it follows that $\lambda^{\free}_2<\frac{1}{d-1}$ for all $0<\beta<\beta_{\operatorname{dis}}(B)$ and $\lambda_2^{\wired}<\frac{1}{d-1}$ for all $\beta>\beta_{\operatorname{ord}}(B)$. This concludes the claim.
    \end{proof}
    Turning back to the proof of Lemma~\ref{lem-infinite-product}, for any $\dagger\in \{\free,\wired\}$, we have
    \[
    \Big|\log\prod_{k\ge 3}e^{-\theta_k\delta_k^\dagger}\Big|=\Big|\sum_{k\ge 3}\theta_k\delta_k^\dagger\Big|\le \sum_{k\ge 3}\frac{(d-1)^k}{2k}\cdot (q-1)\lambda_2^k<\infty\,,
    \]
    and thus the infinite product $\prod_{k\ge 3}e^{-\theta_k\delta_k^\dagger}$ converges. Moreover, we fix $t\in (\max\{\lambda_2^{\free},\lambda_2^{\wired}\},\frac{1}{d-1})$. For any $\varepsilon>0$, we pick $K_0=K_0(\varepsilon)$ such that 
    \[
      \sum_{k>K_0}\theta_kt^k<\tfrac{\varepsilon}{2}\,,\quad\text{and}\quad
\sum_{k>K_0}\delta_k^\dagger t^{-k}<\log(1+\varepsilon)\,,\quad\dagger\in \{\free,\wired\}\,.
    \]
    For $G_n\sim \mathcal G_{n,d}$, it is known in \cite{bollobas1980probabilistic} that $\mathbb{E}[X_{n,k}]\le 2\theta_k$ for any $k\ge 3$. By Markov inequality and our choice of $K_0$, it holds with probability at least $1-\varepsilon$ that $X_{n,k}\le t^{-k},\forall k>K_0$. Given this is true, we have for any $K\ge K_0$,
    \[
    \prod_{k>K}(1+\delta_k^\dagger)^{X_{n,k}}\le \exp\Big(\sum_{k>K}\delta_k^\dagger t^{-k}\Big)<1+\varepsilon\,.
    \]
    This concludes the proof. 
\end{proof}

\subsection{Proof of Lemma~\ref{lem-non-recostruction}}\label{appendix-lem-non-reconstruction}
\begin{proof}

    It suffices to prove that for any $(\beta,B)\in \mathbb{R}_+^2\setminus \mathsf R_=$, the free and wired Potts measure on the infinite $(d-1)$-array tree with inverse temperature $\beta$ and external field $B$ is non-reconstructive. Recall that $x^{\free},x^{\wired}$ are the smallest and the largest root of the equation \eqref{eq-p(x)}. 
    The free and wired measures can be viewed as broadcasting tree processes on the $(d-1)$-array tree with transition matrices given by
    \[
    M_{11}^\dagger=\frac{e^\beta x^\dagger}{e^\beta x^\dagger+q-1}\,,\quad M_{1j}^\dagger=\frac{1}{e^\beta x^\dagger+q-1}\,,\quad \forall 2\le j\le q
    \]
    \[
    M_{i1}^\dagger=\frac{x^\dagger}{e^\beta+x^\dagger+q-2}\,,\quad M_{ij}^\dagger=\frac{e^{\beta\delta_{i,j}}}{e^\beta+x^\dagger+q-1}\,,\quad \forall 2\le i,j\le q\,,
    \]
    where $\dagger=\free$ and $\wired$, respectively. 
    Following the coupling argument as in the proof of \cite[Lemma 53]{GSVY16}, it suffices to show the following:
    \[
    \kappa^\dagger:=\frac{1}{2}\max_{i\neq j\in [q]}\sum_{k\in [q]}|M_{ik}-M_{jk}|<\frac{1}{d-1}\,.
    \]
    However, one can easily verify that
    \[
    \kappa^\dagger=\frac{1}{2}\sum_{k\in [q]}|M_{ik}-M_{2k}|=\frac{e^\beta x^\dagger}{e^\beta x^\dagger+q-1}-\frac{x^\dagger}{e^\beta+x^\dagger+q-2}=\lambda_2^\dagger\,,
    \]
    which is less than $\frac{1}{d-1}$ by Claim~\ref{claim-lambda_2}. This concludes the proof.
\end{proof}

\subsection{Proof of Lemma~\ref{lem-Delta_{d+1}}}\label{appendix-lem-Delta_{d+1}}

\begin{proof}
    Fix $d,q\ge 3$ and $(\beta,B)\in \mathsf R_c$. We denote 
    \[
P=\sum_{j=1}^q\check{\nu}^{\free}(j)e^{\beta\delta_{j,1}}\cdot \Big(\sum_{i,j=1}^q\check{\nu}^{\wired}(i)\check{\nu}^{\wired}(j)e^{\beta\delta_{i,j}}\Big)^{1/2}\,,
\]
\[
    Q=\sum_{j=1}^q\check{\nu}^{\free}(j)e^{\beta\delta_{j,2}}\cdot \Big(\sum_{i,j=1}^q\check{\nu}^{\wired}(i)\check{\nu}^{\wired}(j)e^{\beta\delta_{i,j}}\Big)^{1/2}\,,
    \]
    \[
    R=\sum_{j=1}^q\check{\nu}^{\wired}(j)e^{\beta\delta_{j,1}}\cdot \Big(\sum_{i,j=1}^q\check{\nu}^{\free}(i)\check{\nu}^{\free}(j)e^{\beta\delta_{i,j}}\Big)^{1/2}\,,
    \]
    \[
    S=\sum_{j=1}^q\check{\nu}^{\wired}(j)e^{\beta\delta_{j,2}}\cdot \Big(\sum_{i,j=1}^q\check{\nu}^{\free}(i)\check{\nu}^{\free}(j)e^{\beta\delta_{i,j}}\Big)^{1/2}\,,
    \]
    and $C_1=e^B,C_2=q-1$. It is straightforward to check that
    \[
    \Delta_{\ell,\free}>(<)\Delta_{\ell,\wired}\quad\iff \quad C_1P^\ell+C_2Q^\ell>(<)C_1R^\ell+C_2S^\ell\,.
    \]
    Additionally, since $\beta$ is the critical temperature, we have
    \[
    \Delta_{d,\free}=\Delta_{d,\wired}\quad\iff\quad C_1P^d+C_2Q^d=C_1R^d+C_2S^d\,.
    \]
    We denote the common sum $C_1P^d+C_2Q^d=C_1R^d+C_2S^d$ as $T$. In what follows we show that for $t>0$, 
    \begin{equation}\label{eq-t<>1}
    C_1P^{dt}+C_2Q^{dt}>(<)C_1R^{dt}+C_2S^{dt}\quad\iff\quad t<(>)1\,,
    \end{equation}
    and the desired result follows. 

    We first show that $0<S^d<Q^d<\frac{T}{C_1+C_2}<P^d<R^d<\frac{T}{C_1}$. As $\nu^\dagger(1)>\nu^\dagger(2)=\cdots=\nu^\dagger(q)$, it is straightforward to check that $P>Q$ and $R>S$. Therefore, it remains to show 
    \[
    P<R\iff \Phi(\check{\nu}^{\free})<\Phi(\check{\nu}^{\wired})\,,\quad \Phi(\check{\nu}):=\frac{\sum_{j=1}^q\check{\nu}(j)e^{\beta\delta_{j,1}}}{ \big(\sum_{i,j=1}^q\check{\nu}(i)\check{\nu}(j)e^{\beta\delta_{i,j}}\big)^{1/2}}\,.
    \]
    Note that $    \Phi(\check{\nu}^\dagger)=F\left(\check{\nu}(1)\right)$ for $\dagger\in \{\free,\wired\}$, 
    where 
    \[
    F(x):=\frac{(e^{\beta+B}-1)x+1}{\sqrt{e^{\beta}\big(x^2+\frac{(1-x)^2}{q-1}\big)+1}}\,.
    \]
    We compute that
    \[
    F'(x)=\frac{-(e^{\beta+B}+q-1)(e^\beta-1)x+(e^{\beta+B}-1)(q-1)+e^{\beta+B}(e^\beta-1)}{\big(e^{\beta}\big(x^2+\frac{(1-x)^2}{q-1}\big)+1\big)^{3/2}}\,.
    \]
    Since the numerator $N(x)$ of $F'(x)$ is linear with negative slope, and $N(1)=(q-1)e^\beta(e^B-1)\ge 0$, we conclude that $F'(x)>0$ for $x\in [0,1)$. Thus, $F$ is strictly increasing on $[0,1]$, and hence $\Phi(\check{\nu}^{\free})=F(\check{\nu}^{\free})<F(\check{\nu}^{\wired})=\Phi(\check{\nu}^{\wired})$, the claim follows. 

    Given the above relation, for any fixed $t>0$, we consider the function
    \[
    G(x)=C_1x^t+C_2\left(\frac{T-C_1x}{C_2}\right)^t\,,\quad x\in\left(\frac{T}{C_1+C_2},\frac{T}{C_1}\right)\,.
    \]
    We compute that
    \[
    G'(x)=C_1tx^{t-1}-C_1t\left(\frac{T-C_1x}{C_2}\right)^{t-1}\,,
    \]
    which is negative (resp. positive) for $0<t<1$ (resp. $t>1$). This yields that $G(P^d)>G(R^d)$ for $0<t<1$ and $G(P^d)<G(R^d)$ for $t>1$, thereby verifying \eqref{eq-t<>1}.
\end{proof}

\bibliographystyle{alpha}
\small
\bibliography{ref}

\end{document}